\documentclass{amsart}
\usepackage{amsmath,amssymb,amsthm}
\usepackage{fancybox,ascmac}
\usepackage{bbm}
\usepackage{amsmath}
\usepackage{bbm}
\usepackage[dvips]{graphicx}
\usepackage{color}
\usepackage{comment}

\newtheorem{dfn}{Definition}[section]
\newtheorem{thm}[dfn]{Theorem}
\newtheorem{lem}[dfn]{Lemma}
\newtheorem{cor}[dfn]{Corollary}
\newtheorem{rem}[dfn]{Remark}

\newtheorem{asmp}[dfn]{Assumption}
\newtheorem{open}[dfn]{Open problem}
\newtheorem{prop}[dfn]{Proposition}\makeatletter

\newcommand{\red}[1]{{\color{red}#1}}

\newcommand{\ca}{{\rm Cap}}
\newcommand{\Z}{{\mathbb Z}}

\newcommand{\N}{{\mathbb N}}

\newcommand{\G}{{\mathcal G}}

\numberwithin{equation}{section}

\usepackage{comment}
\usepackage[utf8]{inputenc}

\title[Fluctuation of the structure of random walk ranges]{Phase transition on the fluctuation of the structure of random walk ranges}

\author{Arka Adhikari}
\address{Arka Adhikari\hfill\break
Department of Mathematics, University of Maryland- College Park, College Park, MD, USA}
\email{arkaa@umd.edu}

\author{Izumi Okada}
\address{Izumi Okada\hfill\break
Department of Mathematics, University of Tokyo, 3-8-1 Komaba, Meguro-ku, Tokyo 153-0041, Japan.}
\email{iokada@ms.u-tokyo.ac.jp}

\thanks{Research supported in part by JSPS KAKENHI Grant-in-Aid  for Early-Career Scientists (No.~JP24K16931) (I.O.).}

\begin{document}
\maketitle
\begin{abstract}
We investigate fluctuation phenomena for the graph distance and the number of cut points associated with random media arising from the range of a random walk. Our results demonstrate a sequence of dimension-dependent phase transitions in the scaling behavior of these fluctuations, leading to qualitatively different regimes, with a distinct phase transition in dimension 6. In particular, we remark that convergence in dimension 6 occurs with a non-standard rescaling.  
\end{abstract}

\section{Random walk on random walk range}
Consider the simple random walk $(S_n)_{n=0}^\infty$ on $\Z^d$ for $d\ge 4$. 
We define the graph of the random walk range as follows: 
\begin{align*}
    &V(\G):=\{S_m: m\ge 0\}, \quad
    E(\G):=\{\{S_m,S_{m+1}\}: m\ge 0 \}.
\end{align*}
Originally, Lawler and Burdzy \cite{BL90} discussed the exponents for the fluctuations of the random walk range and proposed many open problems. 
Croydon and Shiraishi \cite{CroydonShiraishi2023, Shiraishi2010, Shiraishi2012} estimated the graph distance, the effective resistance of $(V(\G), E(\G))$  and the number of cut points of the random walk range between $S_0$ and $S_n$ in $d=4$, and showed that their typical orders are $n/(\log n)^{1/2+o(1)}$. 
In addition, Croydon \cite{Croydon2009b} did similar work in $d\ge5$ and showed that their typical order is linear. In \cite{Shiraishi2018}, he discussed  the loop-erased random walk and other related concepts for the graph of the random walk range in $d=2,3$.

As our motivation, random walks in random media have attracted considerable interest as mathematical models for transport phenomena in disordered environments. Typical examples include random conductance models, random walks in random environments, and random walks on percolation clusters. One of the central themes in this area is to understand how the geometry and randomness of the underlying medium influence the long-time behavior of the walk (e.g., see \cite{Croydon2018, CroydonHambly2008}). For example, for supercritical percolation clusters on 
$\Z^d$ with $d \ge 2$, the walk behaves diffusively, in close analogy with the simple random walk on $\Z^d$ (e.g., see \cite{Barlow2004}). The situation is markedly different at criticality. In highly irregular media, such as large critical percolation clusters, it is widely believed that the random walk exhibits subdiffusive behavior (e.g., \cite{Kesten1986}). 
 In addition, \cite{Croydon2018, CroydonHamblyKumagai2017} deal with  time changes of processes on resistance spaces, which cover the case when the resistance has a non-trivial scaling limit. We refer the reader to \cite{Kumagai2010} for further references on these topics.

Independent of the study of random walks on random media, understanding the graph distance in random media is of intrinsic interest. The graph distance captures the geometry induced by the random environment itself and provides a natural notion of distance that reflects how connectivity and paths are shaped by randomness. 

The behavior of graph distance plays a central role in a variety of problems, including volume growth, metric scaling, and universality phenomena in random geometry. In particular, in light of recent developments,  it is conjectured that graph distances in several models of random media exhibit non-trivial scaling behavior, potentially related to general universality phenomena. These considerations provide strong motivation for a detailed analysis of graph distances in random media, which is the focus of the present work.

Motivated by these considerations, we aim to understand the fluctuation behavior of the graph distance in random media. In this paper, we take the range of a random walk as a concrete example of a random medium. According to the results of Croydon \cite{Croydon2009b}, in $d\ge 5$ the leading-order behavior is linear regardless of the dimension and does not possess a phase transition, which naturally raises the question of how fluctuations behave around this leading term. 

Our analysis reveals that the fluctuation behavior depends sensitively on the dimension. In particular, we find distinct regimes in dimensions four, five, six, and seven and higher. A major phase transition occurs at dimension six: for $d\ge 6$, the fluctuations, when normalized by the square root of the variance, converge in distribution to a Gaussian random variable, whereas in dimensions four and five, the same normalization leads to convergence of the fluctuation to zero. We also note that the variance of the normal random variable in $d=6$ that appears in the central limit theorem is non-standard in the sense that it is actually not equal to the variance of the original random variable. 


We will give the definitions necessary to state the main results. 
We consider the graph $\G[a,b]$ as the set of edges constructed by the random walk trace: 
\begin{align*}
    E(\G[a,b]):=\{\{S_m,S_{m+1}\}: a\le m \le b-1  \}.
\end{align*}
The graph distance between two points, as is standard, counts the number of edges in the shortest path connecting them in the graph. In addition, we define the number of cut points on $\mathcal{G}[1,n]$ by
\begin{align*}
    \sum_{i=2}^n 
    1\{S[1,i-1 ]\cap  S[i,n] = \emptyset\}. 
\end{align*}

We will now define the notation used to represent the graph distance and the number of cut points. Let $X^1_{[a,b]}(x,y):= d_{\mathcal{G}_{[a,b]}}(x,y)$ and  
$X^2_{[a,b]}(i,j)$ be the number of cut points on  $\mathcal{G}_{[a,b]}$ between the time $i$ and $j$.
A trivial relationship among these variables is $ X^1_{[a,b]}(S_i,S_j) \ge X^2_{[a,b]}(i,j)$. We sometimes write $X^i[a,b]$ as $X^i_{[a,b]}(S_a,S_b)$ 
and $X^1_n=X^1_{[0,n]}(S_0,S_n)$ and $X^2[a,b]$ as $X^2_{[a,b]}(a,b)$ 
and $X^2_n=X^2_{[1,n]}(1,n)$. 
Since the following proof is the same, we omit the symbol ``$i$", wherever it appears. 
To ease the presentation, we omit hereafter the integer-part symbol $\lceil \cdot \rceil$. 
If there exists $c\in (0,\infty)$ 
such that $c a_n \le b_n \le c^{-1}a_n$, 
we write $a_n \asymp b_n$. 
In addition, if $ a_n /b_n \to 1$ as $n\to \infty$,  we write $a_n \sim b_n$. If $ \log a_n / \log b_n \to 1$ as $n\to \infty$,  we write $a_n \approx b_n$.  
$a_n\gg b_n$ means $a_n/b_n \to \infty$ as $n\to \infty$ and $a_n\ll b_n$ means $b_n/a_n \to \infty$ as $n\to \infty$. 
We write $f (n) \lesssim g(n)$ if there exists a (deterministic) constant $c>0$ such that $f (n) \le cg(n)$  for all $n$, and $f (n) \gtrsim g(n)$ if $g(n) \lesssim f (n)$. 

We now present our main results on the fluctuation of the graph distance and the number of cut points. 
\begin{prop}\label{prop1}
Consider $i=1,2$. 
It holds that, as $n\to \infty$, 
\begin{equation*}
\mathrm{Var}(X_n^i) 
\quad 
\begin{cases}
\sim \alpha^i_d n & \text{for } d\ge 7, \\
\asymp n \log n & \text{for } d=6,\\
\asymp n^{3/2} & \text{for } d=5,\\
=  \frac{n^2}{(\log n)^{2\pm o(1)}} & \text{for } d=4,
\end{cases}
\end{equation*}
for some constant $\alpha^i_d >0$ with $i=1,2$. 
\end{prop}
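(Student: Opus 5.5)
We decompose $X_n$ into block contributions and identify the dominant source of fluctuation as the two-sided intersection probability of the walk. The key heuristic is that $X_n$ is nearly additive: if we split $[0,n]$ at time $m$, then
\[
X[0,n]=X[0,m]+X[m,n]-\Delta_m ,
\]
where the defect $\Delta_m$ is nonzero only when $S[0,m]$ and $S[m,n]$ intersect near the junction. For cut points, $\Delta_m$ counts times $i$ that fail to be cut points because of an intersection across $m$. For the graph distance, $\Delta_m$ is the shortcut gain from such intersections. Both are controlled by the number of intersection pairs of two independent walks started at the same point. The probability that these walks intersect at distance $\ge r$ from the origin is $\asymp r^{(4-d)/2}$ for $d\ge5$, and roughly $1/\log r$ for $d=4$. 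A defect created by intersecting at times $m-r$ and $m+s$ has size of order $r+s$.

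The plan is a martingale (Doob) decomposition $X_n-\mathbb{E}X_n=\sum_{k=1}^n D_k$, where $D_k=\mathbb{E}[X_n\mid \mathcal F_k]-\mathbb{E}[X_n\mid\mathcal F_{k-1}]$ and $\mathcal F_k=\sigma(S_1,\dots,S_k)$, so that $\mathrm{Var}(X_n)=\sum_k\mathbb{E}D_k^2$. Resampling the $k$-th step changes $X_n$ by $O(1)$ locally. Its influence on the long-range loops is of order $\sum_{r}r\cdot P(\text{loop of scale }r\text{ through }k)$. A loop of length $\ge r$ enclosing time $k$ has probability $\sim r^{-(d-4)/2}\cdot r^{-1}$ per scale, by the two-sided intersection estimate. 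Hence
\[
\mathbb{E}D_k^2\asymp \sum_{r\le n} r^2\cdot r^{-d/2}\cdot r^{-1}\cdot(\text{correction}),
\]
and more precisely $\mathrm{Var}(X_n)\approx n+\sum_{r\le n}(n/r)\,r^2\,r^{-(d-4)/2}r^{-1}\cdot r^{-1}$. The contribution of scale $r$ to the variance is $n\, r^{(4-d)/2}$ per dyadic block, i.e. $\sum_{r\text{ dyadic}} n\, r^{(6-d)/2}$. This gives $n$ for $d\ge7$ (convergent sum), $n\log n$ for $d=6$ (each dyadic scale contributes equally), $n\cdot n^{1/2}=n^{3/2}$ for $d=5$ (largest scale dominates), and $n^2/(\log n)^{2}$ for $d=4$ after inserting the intersection probability $(\log n)^{-1}$ squared.

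The concrete steps, in order, are as follows.

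\begin{enumerate}
\item \textbf{Upper bounds.} Write $X_n$ via the dyadic splitting above and bound $\mathrm{Var}(\Delta)$ at each scale using second moments of intersection counts: $\mathbb{E}[(\#\text{intersections in }[m-r,m]\times[m,m+r])^2]$, from standard Green's function sums. Then use Efron--Stein or the martingale bound to sum over blocks.
\item \textbf{Lower bounds for $d\le 6$.} At a single dyadic scale $r$, construct events of probability $\gtrsim r^{(4-d)/2}$, namely a long-range intersection that changes $X$ by $\gtrsim r$. Show that conditional on the outside, these events carry independent-ish variance. Summing over the $n/r$ disjoint blocks and, for $d=6$, over the $\log n$ scales then requires near-orthogonality of contributions from different scales, which is obtained by conditioning on the walk outside a block.
\item \textbf{The case $d\ge7$.} Here exact asymptotics $\sim\alpha_d n$ follow by showing that $X_n$ is a sum of a stationary, summably mixing sequence of local increments. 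Write $X_n=\sum_k \xi_k+o(\sqrt n)$ in $L^2$, where $\xi_k$ depends on the two-sided infinite walk shifted to time $k$; cut points of the bi-infinite walk exist with positive density in $d\ge5$. Summability of covariances $\mathrm{Cov}(\xi_0,\xi_k)\lesssim k^{(4-d)/2}$ holds precisely when $d\ge7$, and then $\alpha_d=\sum_k\mathrm{Cov}(\xi_0,\xi_k)$, with positivity from the lower bound argument.
\item \textbf{The case $d=4$.} Use the Croydon--Shiraishi estimates: the defect at the top scale is of size $n$ with probability $(\log n)^{-1\pm o(1)}$. This gives both bounds $n^2(\log n)^{-2\pm o(1)}$.
\end{enumerate}

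The main obstacle is the graph distance $i=1$. Cut points are a local functional, but the shortcut gain $\Delta$ for the distance is nonlocal, so relating it to intersection events requires showing that the geodesic passes through cut points separating scales. I would handle this by using the abundance of cut points (Croydon) to localize: between consecutive cut points the distance is determined locally, and $X^1$ is the sum of local distances between consecutive cut points. This reduces the problem to the cut-point analysis with local-variable weights.
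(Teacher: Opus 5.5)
Your proposal is a scaling heuristic rather than a proof. The exponents come out right, but the steps that carry the real difficulty are either asserted or would fail. Even the heuristic is not consistent: $\sum_r r^2 r^{-d/2} r^{-1}$ is $O(1)$ in $d=6$, and ``$n r^{(4-d)/2}$ per dyadic block'' contradicts your next line. The correct count is $(n/r)\,\mathbb{E}[E_r^2]\asymp (n/r)\sum_{l\le r} l^{(6-d)/2}$, i.e.\ $n r^{(6-d)/2}$ for $d=5,6$.

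\textbf{Upper bound in $d=6$.} Within one dyadic level the cross terms are independent, so $\mathrm{Var}(\mathcal{E}_k)\lesssim n$ is easy. Cauchy--Schwarz over the $\log n$ levels, however, only gives $n(\log n)^2$. (In $d=5$ the level bounds decay geometrically, so Cauchy--Schwarz suffices there.) The whole content of the $d=6$ upper bound is near-orthogonality across levels, and your proposed tools do not supply it.
\begin{itemize}
\item Efron--Stein fails. Resampling the $k$-th increment translates the entire future, which changes the cross term at that split by an amount comparable to the cross term itself. Hence $\sum_k \mathbb{E}[(X_n-X_n^{(k)})^2]\asymp \sum_k \min(k,n-k)\asymp n^2$.
\item The martingale identity is exact, but a bound $\mathbb{E}D_k^2\lesssim \log n$ would require a gradient estimate for $\mathbb{E}[E\mid\mathcal{F}_k]$ in the current position, which you do not give.
\end{itemize}
The paper stops the decomposition at blocks of size $(\log n)^3$. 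It then proves $\mathbb{E}[E_n^{(k_1,l_1)}E_n^{(k_2,l_2)}]\lesssim(\log n)^2$ for nested, non-adjacent pairs by four-point Green's function sums.

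\textbf{Lower bounds in $d=5,6$.} Conditioning on the walk outside a block does not decouple scales, because a larger-scale cross term depends on its whole interval, including that block. The missing idea is that the cross terms are nonnegative (subadditivity). Consequences:
\begin{itemize}
\item Every covariance is at least $-\mathbb{E}E^{(k_1,l_1)}\,\mathbb{E}E^{(k_2,l_2)}\ge -(\log n)^2$ in $d=6$.
\item If the decomposition is stopped at $n^\delta$ blocks, there are only $O(n^\delta\log n)$ dependent pairs.
\item So the diagonal $\sum_{k,l}\mathrm{Var}(E_n^{(k,l)})\gtrsim \delta n\log n$ survives. The main term is then removed using the upper bound, with $\delta$ close to $1$.
\end{itemize}

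\textbf{The case $d\ge7$.} Your stationary-sequence route to the existence of the limit is a reasonable alternative to the paper's Hammersley argument, although the decay $\mathrm{Cov}(\xi_0,\xi_k)\lesssim k^{(4-d)/2}$ is only asserted. The real gap is $\alpha_d>0$: it does not follow from any long-range intersection argument. The dominant contribution is local, and $\sum_k\mathrm{Cov}(\xi_0,\xi_k)$ could a priori vanish. You need a non-degeneracy mechanism. The paper uses a switch between single and double backtracks, which leaves the graph distance unchanged but changes the length, combined with $\mathbb{E}X_n-\mathbb{E}X_{n-\sqrt n}\gtrsim \sqrt n$ along a subsequence.

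\textbf{The case $d=4$.} Step 4 is wrong as written: a defect of size $n$ with probability $(\log n)^{-1}$ gives $n^2/\log n$, not $n^2/(\log n)^2$. The defect has the size of the typical distance, $n/(\log n)^{1/2}$. That is where the second factor $(\log n)^{-1}$ comes from, not from squaring the intersection probability. More seriously, Croydon--Shiraishi provides no cross-term bounds.
\begin{itemize}
\item Upper bound. $E_n$ is controlled by $\tilde X_n\mathbf{1}\{\text{long-range intersection}\}$, so you must rule out that the intersection event concentrates on walks with atypically large $\tilde X_n$. The paper shows, by a multiscale capacity and local-time analysis, that outside an event of probability $o((\log n)^{-2})$ the conditional intersection probability given $\tilde S$ is at most $(\log n)^{-c}$. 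This yields $\mathbb{E}[E_n^2]\le Cn^2\log\log n/(\log n)^2+(\log n)^{-\epsilon}\mathrm{Var}(X_n)$. Since $\mathrm{Var}(X_n)$ appears on the right, the paper then runs a bootstrap recursion on $f(n)$, where $\mathrm{Var}(X_n)=n^2/(\log n)^{f(n)}$.
\item Lower bound. This needs $\mathbb{E}[E_n]\ge n/(\log n)^{3/2+o(1)}$, obtained by constructing explicit good events. It also needs the localization of $E_n$ on an event of probability $O(\log\log n/\log n)$, followed by Cauchy--Schwarz.
\end{itemize}
None of these ingredients is in your plan.

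Finally, the obstacle you single out for $i=1$ is not the real one. A geodesic in the union graph must pass through an intersection point, so the graph-distance cross term is bounded by the depth of the deepest intersection across the junction. Its tails therefore reduce directly to two-walk intersection estimates.
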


\begin{thm}\label{m1}
Consider $i=1,2$. 
For $d\ge 7$, as $n\to \infty$, 
\begin{align*}
    \frac{X_n^i-\mathbb{E}X_n^i}{\sqrt{ \mathrm{Var}(X_n^i) }}
    \stackrel{d}{\rightarrow} \mathcal{N}(0,1)
\end{align*}
and for $d=6$, there is a deterministic function $\phi (n) \asymp n \log n$ such that as $n\to \infty$, 
\begin{align*}
    \frac{X_{n}^i-\mathbb{E}X_{n}^i}{\sqrt{ \phi(n) }}
    \stackrel{d}{\rightarrow} \mathcal{N}(0,1).   
\end{align*}
For $d=4,5$, as $n\to \infty$, 
\begin{align*}
    \frac{X_n^i-\mathbb{E}X_n^i}{\sqrt{ \mathrm{Var}(X_n^i) }}
    \stackrel{d}{\rightarrow} 0. 
\end{align*}
\end{thm}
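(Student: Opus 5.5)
The plan is a block decomposition. We compare $X_n$ with a sum of nearly independent block contributions plus boundary corrections. Fix a block length $m=m(n)$ with $1\ll m\ll n$ and split $[0,n]$ into $k=n/m$ intervals $I_j=[(j-1)m,jm]$.

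The key structural input is an approximate additivity, with error controlled by intersections between blocks:
\begin{align*}
X_n=\sum_{j=1}^k X[I_j] - \sum_{j} R_j,
\end{align*}
where $R_j$ is determined by the cut points (or shortcuts) destroyed by intersections of $S[I_j]$ with the rest of the path. A cut time $i$ of $S[I_j]$ fails to be global only if $S[0,i-1]\cap S[i,n]\ne\emptyset$ with one of the two pieces leaving $I_j$. For $X^1$ we use the analogous monotonicity $X^1_n\le \sum_j X^1[I_j]$ together with a lower bound through global cut points, as in Croydon's linear-order argument. The error $R_j$ is then controlled by the probability that two pieces at time separation $t$ intersect. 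In $\Z^d$ this decays like $t^{-(d-4)/2}$. Summing the covariance contributions $\sum_{t\le n} t\cdot t^{-(d-4)/2}$ reproduces Proposition \ref{prop1}: it is convergent for $d\ge7$, gives $\log n$ for $d=6$, $n^{1/2}$ for $d=5$, and $n$ up to logarithms for $d=4$. This is where the dichotomy comes from.

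For $d\ge 7$, covariances between local contributions are summable, so $X_n$ is a sum of weakly dependent, stationary-like increments. I would run a Bernstein big-block/small-block argument. Take blocks of length $m=n^{a}$ separated by gaps of length $n^{b}$ with $b<a<1$. The contributions from gaps and from long-range intersections have variance $o(n)$, by Proposition \ref{prop1}'s covariance bounds applied to the error terms. The truncated block variables, each depending only on the walk increments in an enlarged window, are independent. Lyapunov's condition follows from uniform fourth-moment bounds $\mathbb E|X[I_j]-\mathbb E X[I_j]|^4\lesssim m^2$, which we would obtain from the same intersection estimates. Since $\mathrm{Var}\sim\alpha_d n$, normalizing by $\sqrt{\mathrm{Var}}$ gives $\mathcal N(0,1)$.

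For $d=6$ the same scheme applies, but the variance $n\log n$ comes from intersections at all scales, with each dyadic scale $t\in[2^\ell,2^{\ell+1}]$ contributing order $n$. Cutting blocks of size $m=n^{a}$ loses the scales above $m$. Those scales carry variance of order $n\log(n/m)=(1-a)\,n\log n$, which is not negligible; this term is the main obstacle. I would treat it by letting $a\to1$ slowly, so that the long-range part has vanishing relative variance, while still using enough blocks for the CLT. Alternatively, I would decompose the variance into contributions from well-separated scales, which are asymptotically uncorrelated, and prove a martingale CLT with respect to the filtration generated by the walk. Here $\phi(n)$ is the sum of conditional variances of martingale differences $D_j=\mathbb E[X_n\mid\mathcal F_j]-\mathbb E[X_n\mid\mathcal F_{j-1}]$. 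The non-standard normalization is consistent with this picture: $\sum D_j^2/\phi(n)\to1$ in probability holds with $\phi(n)\asymp n\log n$, even though rare large intersections inflate the true variance by a different constant. The step I expect to be hardest is proving the concentration of $\sum\mathbb E[D_j^2\mid\mathcal F_{j-1}]$ together with a conditional Lindeberg condition.

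For $d=4,5$ the heuristic is the opposite. The variance is dominated by rare events: with probability $\asymp n^{-1/2}$ (in $d=5$), the far ends of the path intersect, creating a shortcut or killing order $n$ cut points. Such events produce deviations of order $n$, which give $n^2\cdot n^{-1/2}=n^{3/2}$. To show convergence to $0$, it suffices to show that the typical fluctuation is $o(\sqrt{\mathrm{Var}})$. We condition on the event $A_n$ that no two pieces at time separation at least $\epsilon n$ intersect, which has probability $1-o(1)$. We then show that $\mathrm{Var}(X_n\mathbbm 1_{A_n})$ is of order at most $n\cdot$(truncated intersection sum), which is $O(n^{3/2}\epsilon^{1/2})$ in $d=5$ and $n^2/(\log n)^{2+c}$ in $d=4$. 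We also show that $\mathbb E[X_n]-\mathbb E[X_n\mid A_n]$ is negligible relative to $\sqrt{\mathrm{Var}}$. Letting $\epsilon\to0$ then gives convergence in probability to $0$.
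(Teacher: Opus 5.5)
\textbf{The gap is in $d=6$, and your primary route there fails.} Take blocks of length $m=n^{a}$ with $a>\tfrac12$, in particular $a\to1$. The CLT for the block sum then breaks down, and the cause is not the number of blocks.

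Each block carries its own internal cross terms at sub-scales $s\in(\sqrt{n\log n},m)$:
\begin{itemize}
\item by \eqref{lower+}, each one exceeds $cs$ with probability $\gtrsim s^{-1}$;
\item there are $m/s$ of them per block;
\item on that event the block is essentially shifted by order $s\gg\epsilon\sqrt{n\log n}$.
\end{itemize}
Write $Z_j=X[I_j]-\mathbb E X[I_j]$ and $s_n^2=\sum_j\mathrm{Var}(Z_j)\asymp n\log m$. Summing over dyadic $s$ and over the $n/m$ blocks gives $\sum_j\mathbb E[Z_j^2\mathbf 1\{|Z_j|>\epsilon s_n\}]\gtrsim n\log(m/\sqrt n)$. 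This is a non-vanishing fraction of $s_n^2$, roughly $(a-\tfrac12)/a$.

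So Lindeberg fails. Since $\max_j\mathrm{Var}(Z_j)/s_n^2=m/n\to0$, Feller's converse shows that $\sum_j Z_j/s_n$ is not asymptotically $\mathcal N(0,1)$. Pushing $a\to1$ makes this worse, because it moves more heavy-tailed scales into the blocks. Had your scheme worked, it would have produced the normalization $\sqrt{\mathrm{Var}(X_n)}$, which is exactly what the deterministic $\phi(n)$ in the statement replaces.

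\textbf{The missing idea is that the levels above $\sqrt n$ must be removed by truncation, not absorbed into the blocks.} The threshold is $\sqrt n$ because at level $k$ the expected number of cross terms reaching their natural size $n2^{-k}$ is about $4^k/n$. The paper's argument runs as follows.
\begin{itemize}
\item For $2^k\le n_{\delta,-}=\sqrt n\,e^{-(\log n)^{(1-\delta)/2}}$, with high probability none of the $2^k$ independent cross terms exceeds $(n2^{-k})^{1-o(1)}$.
\item On that event, using $\mathbb E[E_N]\lesssim\log N$, the truncated second moments of all these levels together are $o(n)$. So these levels are $o_P(\sqrt n)$, even though each level has variance $\asymp n$.
\item The $O((\log n)^{(1-\delta)/2})$ mesoscopic levels up to $n_\delta=\sqrt n\,e^{(\log n)^{(1-\delta)/2}}$ cost only $O(n(\log n)^{1-\delta})$ in second moment.
\item What remains is a sum of $n_\delta$ i.i.d.\ blocks of length $n/n_\delta\ll\sqrt{n\log n}$. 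Lindeberg is trivial for them because $0\le X[a,b]\le b-a$.
\item This gives $\phi(n)=n_\delta\mathrm{Var}(X_{n/n_\delta})\asymp n\log n$ by Lemmas \ref{lem:uprbndvar} and \ref{lem:lwrbndvar}.
\end{itemize}
Your martingale alternative is compatible with this picture. However, the two steps you leave open are exactly where this heavy-tail truncation must be carried out: concentration of the conditional variances at a deterministic $\phi(n)$, and a conditional Lindeberg condition. As written, $d=6$ is not proved.

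\textbf{Secondary issues in the other dimensions.}
\begin{itemize}
\item For $d=7$, the bound $\mathbb E|\overline{X}_m|^4\lesssim m^2$ is false. The event in the proof of \eqref{lower+} has probability $\gtrsim m^{-3/2}$ and moves $X_{2m}$ by order $m$, so $\mathbb E|\overline{X}_{2m}|^4\gtrsim m^{5/2}$. Use instead a $(2+\delta)$-moment bound $\lesssim m^{1+\delta/2}$ from the dyadic $L^{2+\delta}$ recursion, as in Lemma \ref{2+e}.
\item The gaps between blocks are unnecessary, since $X[I_j]$ depends only on $S[I_j]$.
\item Your sum $\sum_{t\le n}t\cdot t^{-(d-4)/2}$ gives the stated orders only if $t$ runs over dyadic scales.
\item For $d=4,5$ your mechanism matches the paper's: rare long-range intersections dominate the variance but not the typical fluctuation. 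Three details need repair, listed next.
\item $\mathrm{Var}(X_n\mathbbm 1_{A_n})$ is not small. The indicator alone contributes about $(\mathbb E X_n)^2\mathbb P(A_n^c)$, already of order $n^{3/2}$ in $d=5$; you mean the conditional variance.
\item Conditioning on the global event $A_n$ destroys the independence of the blocks.
\item In $d=4$, a fixed $\epsilon$ cannot beat the $(\log n)^{\pm o(1)}$ uncertainty in $\mathrm{Var}(X_n)$.
\end{itemize}
It is cleaner to keep the unconditioned dyadic decomposition. Control the block sum by Chebyshev; its variance must be a small fraction of $\mathrm{Var}(X_n)$, which in $d=4$ forces $(\log n)^{\epsilon}$ blocks rather than a fixed number. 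The remaining top-level cross terms are nonnegative and few: $O(\epsilon^{-1})$ in $d=5$, $O((\log n)^{\epsilon})$ in $d=4$. Bound them by Markov through their first moments, using $\mathbb E[E_N]\lesssim N^{1/2}$ in $d=5$ and Corollary \ref{coruprboundendime4} in $d=4$.
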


\begin{open}
    Does a strong law of large numbers hold for $X_n$? (In \cite{CroydonShiraishi2023}, a weak law has been proved on $(V(\G), E(\G))$. In \cite[Remark 2.2.2]{Shiraishi2012}, the upper bound for a strong law for the effective resistance was essentially shown.)
One of the difficulties in the proof is that $X_n$ does not satisfy monotonicity with respect to $n$. Even if this obstacle could be overcome, especially in the case $d=4$, the variance is not much smaller than that of $\mathbb{E}[X_n]^2$ (they differ only by a factor of $``\log n"$ in $d=4$), so it seems unlikely that the result can be proved using standard concentration inequalities.
\end{open}

We emphasize that the theorem above on the fluctuations of the graph distance and the number of cut points demonstrates a key phase transition in dimension 6. While in dimensions strictly greater than 6, one has the standard central limit theorem, this fails in dimensions less than 6. The standard scaling for the fluctuation leads to a trivial random variable in dimensions 4 and 5. In addition, dimension 6 presents a subtle, but deep change in the behavior of the fluctuation; instead of fluctuations on the scale of $\sqrt{n}$, as in higher dimensions, the fluctuation will be of the scale $\sqrt{n \log n}$. Moreover, the precise order of the fluctuation in dimension 6 is not given by the square root of the variance, but differs from the variance by a constant. Indeed, by looking at the proof, one can intuit that the main contribution to the fluctuation in dimensions higher than seven arises primarily from local fluctuations in the random walk range. In dimension 6, the proof demonstrates that the primary source of the fluctuation arises from interactions at intermediate scales of order $\sqrt{n}$. As such, in dimension 6, we observe the first demonstration of the effect of long-range interactions for the graph distance of the random walk range.  
We remark that the source of this new phase transition is due to the effect of the heavy tailed nature of the long range interactions of the random walk that can cause significant effects on the graph distance. 

One of the key ingredients of our proof is to quantify the effect of the long-range interactions of the random walk range by a decomposition. On the highest level, one can measure the effect of the interaction between the first and second halves of the random walk by splitting the trajectory $S[0,n]$ into two subpaths $S[0,n/2]$ and $S[n/2,n]$. Once this is done, one can compare the corresponding graph distances by studying the variable
\[
X[0,n/2] + X[n/2,n] - X[0,n],
\]
that naturally arises as an error term. 


By iteratively applying this decomposition procedure, we obtain a representation consisting of a sum of finely decomposed random variables that behave approximately as independent and identically distributed terms, together with accumulated error terms. The key question here is to determine which of the terms contributes the most to the variance, and on which scale they operate. In dimensions $d \ge 7$, one can very finely decompose the random variables and show that local contributions will contribute the most to the fluctuations; since the local contributions are independent of each other, this will lead to a Gaussian fluctuation. In dimensions 4 and 5, one will observe that the variance almost entirely comes from the error term at the largest scale; namely, it comes from considering $X[0,n/2] + X[n/2,n] - X[0,n]$.

In dimension 6, there becomes a key interplay between the contributions of the error terms and the sum of the independent terms. Indeed, fully decomposing the  random walk will not lead to a correct understanding of the fluctuation; rather, the most important contributions to the fluctuation in dimension 6 come from intermediate scales of order $\sqrt{n}$. Due to the heavy tails, the probability that the error term $X[0,n/2] + X[n/2] - X[0,n]$ will make significant contributions to the fluctuation is around $n^{-1}$. At the intermediate scale of size $\sqrt{n}$, we will have $\sqrt{n}$ terms that have a probability $(\sqrt{n})^{-1}$ of contributing non-trivially. Thus, the intermediate scales of size $\sqrt{n}$ are the first scales that start to contribute with high probability to the fluctuation. The total fluctuation of $n \log n$ comes from accumulating all the interactions from scale $\sqrt{n}$ or lower and carefully taking into account their interactions. This is in contrast to the naive computation of the variance, which accumulates some level of contributions from all scales, even those that are not seen in the fluctuation. 


Many of the key challenges in determining the fluctuations in the graph distance or the number of cut points require a deep understanding of the nature of the error terms that arise in our analysis. In contrast to the study of many other variables for the random walk, we remark that the number of cut points or the graph distance is a purely discrete quantity. That is, one cannot understand the graph distance by considering the scaling limit of the random walk to the Brownian motion, and then analyzing the appropriate statistics of the Brownian motion. As a result, there is a lack of closed formulas that can be used to analyze the error terms. 

Another aspect of the analysis of the error terms is that they are very heavy tailed. This occurs because large values of this error term are governed by events that, while rather unlikely, involve very long-range dependencies in the random walk. This behavior of the error terms causes two fundamental problems for our analysis. First, long-range dependencies cause the error terms at different scales to become correlated. Secondly, one now has to take into account a balance between the rarity of a given interaction and the effect of a given error term to determine whether it contributes to the scale of our fluctuations.

Solving these problems requires a very precise analysis of the covariance structure of the error terms. In particular, many previous estimates we found in the literature were insufficient for our purposes. In higher dimensions, we sharpen a result of \cite[Lemma 1.1]{BSZ03} to identify the optimal order of fluctuation of the given error terms and remove an important $O(\log n)$ factor. A delicate analysis on the correlation of cut times and long-range intersections of the random walk allows us to determine the covariance structures of the cross terms in dimensions $d \ge 5$ (observe the proofs of Lemma \ref{lem:uprbndvar} and Lemma \ref{lem:Var5d}). In dimension 4, we  used a link between intersection properties and the capacity of the random walk, combined with a multiscale analysis in a series of  lemmas, in order to decouple the interaction between two subpaths of a random walk and the graph distance on either subpath and obtain optimal estimates on the variance of the cross terms in Section \ref{sec:dim4}. Finally, in the proof of Theorem \ref{m1}, we performed the careful balancing act that allowed us to exactly determine the correct order of fluctuation;  this involved showing that, due to the heavy tails, the highest scales occur too rarely, and then carefully analyzing the correlations between the cross terms at intermediate scales. Our proof reveals that the intermediate scales of order $\sqrt{n}$ determined the main contribution to the fluctuation in six dimensions.

In addition to our analysis of the graph distance, we present, for independent interest, an analysis of quantities for the random walk range via our decomposition strategy, whose error terms exhibit the same heavy tailed behavior as the graph distance or the number of cut points. In this more general analysis, we do not make any assumptions about the correlations of the cross term. As a result, it will not be possible to obtain the same sharp results as one could obtain for the graph distance; however, our analysis demonstrates that heavy tailed behavior, by itself, is enough to demonstrate a type of phase transition similar to those shown for the graph distance. Indeed, we believe that these more general results would be helpful for analyzing other random variables of interest, and, furthermore, demonstrate the key role of the size of the heavy tails in the cross terms in creating a phase transition  for the fluctuations. 

\begin{prop}\label{prop1+}
Let $X^3_{[a,b]}$ be any collection positive random variables that can be written as a measurable function, $\mathcal{F}(S[a,b])$,  with respect to the random walk range $S[a,b]$ (with $X^3_n:= X^3_{[0,n]}$). Here, we treat $S[a,b]$ as a subgraph of $\mathbb{Z}^d$. We further assume that $\mathcal{F}$ is translation invariant and can be bounded from above by the volume. Define $E_n:= X^3_n + X^3_{[n,2n]} - X^3_{[0,2n]}$ and assume further that $E_n$ satisfies the bounds
$$
\mathbb{P}( E_n \ge l) \asymp l^{-(d/2 -2)}, 
\quad d \ge 5,
$$
and $E_n$ is always positive, that is, $\mathcal{F}$ is subadditive. 
For $d=4$, the right-hand side above does not meaningfully characterize heavy tailed behavior, so the natural analogs of the heavy tailed assumption are those found in Assumption \ref{asmp:weaksmpheavy}.

    Then, for $d\ge 7$ and $\alpha^3_d>0$, as $n\to \infty$, 
    \begin{align*}
     \mathrm{Var}(X_n^3) \sim \alpha^3_d  n. 
\end{align*}
For $d=6$, 
    \begin{align*}
    \limsup_{n\to \infty} \frac{\mathrm{Var}(X_n^3)}{n \log n} > 0, \quad 
    \limsup_{n\to \infty} \frac{\mathrm{Var}(X_n^3)}{n (\log n)^{2}} < \infty. 
\end{align*}
For $d=5$, 
    \begin{align*}
    \mathrm{Var}(X_n^3)\approx  n^{3/2}. 
\end{align*}
For $d=4$ and $\epsilon>0$, 

\begin{align*}
    \limsup_{n\to \infty} \frac{\mathrm{Var}(X_n^3)}{n^2 /(\log n)^{2+\epsilon}} > 0, \quad 
    \limsup_{n\to \infty} \frac{\mathrm{Var}(X_n^3)}{n^2/ (\log n)^{1-\epsilon}} < \infty. 
\end{align*}


\end{prop}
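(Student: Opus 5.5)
We would prove Proposition \ref{prop1+} by the dyadic decomposition described above. Take $n=2^K$ and iterate the identity $X^3_{[0,2m]}=X^3_{[0,m]}+X^3_{[m,2m]}-E$ down to a base scale $2^L$. This gives
\begin{align*}
X^3_n=\sum_{j=1}^{2^{K-L}} X^3_{[(j-1)2^L,j2^L]} \;-\; \sum_{k=L}^{K-1}\sum_{j=1}^{2^{K-k-1}} E^{(k)}_j ,
\end{align*}
where $E^{(k)}_j$ is the error term of the $j$-th block at scale $2^k$. The blocks $S[(j-1)2^L,j2^L]$, viewed up to translation, are i.i.d. Since $\mathcal{F}$ is translation invariant, the first sum consists of i.i.d. terms. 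Likewise, at each fixed scale $k$, the $E^{(k)}_j$ are i.i.d. copies of $E_{2^k}$. The heavy-tail assumption gives the moments of a single error term. With $\beta=d/2-2$ and the truncation $E_m\le 2m$ coming from the volume bound:
\begin{align*}
\mathbb{E}E_m^2\asymp \textstyle\int_1^{m} l^{1-\beta}\,dl \asymp
\begin{cases} 1 & d\ge 7,\\ \log m & d=6,\\ m^{3/2} & d=5,\end{cases}
\end{align*}
and $\mathbb{E}E_m<\infty$ uniformly in $m$ for $d\ge 7$, and $\mathbb{E}E_m \asymp \log m$ for $d=6$. For $d=4$ the analogous moments would be read off Assumption \ref{asmp:weaksmpheavy}.

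\textbf{Upper bounds.} Apply Minkowski's inequality for the standard deviation across scales, and use independence within each scale:
\begin{align*}
\mathrm{Var}(X^3_n)^{1/2}\le \Big(\tfrac{n}{2^L}\mathrm{Var}X^3_{2^L}\Big)^{1/2}+\sum_{k}\Big(\tfrac{n}{2^{k}}\mathrm{Var}E_{2^k}\Big)^{1/2}.
\end{align*}
Taking $L=0$ and inserting the moment bounds gives the following.
\begin{itemize}
\item For $d\ge7$: the sum is $O(\sqrt n)$.
\item For $d=6$: $\sum_k (n\,k/2^k)^{1/2}=O(\sqrt n)$, which is too good. The top scale alone needs care, since $\mathrm{Var}(E_n)\lesssim \log n$. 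Redoing the estimate with the crude bound $\mathrm{Var}(X_n)\le 2\mathrm{Var}(X_{n/2})+2\,\mathrm{Var}(E_{n/2})+\text{cov}$, where the covariance is bounded by Cauchy--Schwarz and nothing is assumed about correlations, gives the recursion $v(2m)\le 2v(m)+C\sqrt{v(m)\log m}+C\log m$. Solving it yields $v(n)\lesssim n(\log n)^2$.
\item For $d=5$: the top scale dominates and gives $n^{3/2}$. The recursion $v(2m)\le 2v(m)+C\sqrt{v(m)}m^{3/4}+Cm^{3/2}$ closes at $n^{3/2}$.
\item For $d=4$: the same recursion with the $d=4$ moment $\mathbb{E}E_m^2\lesssim m^2/(\log m)^{1-\epsilon}$ gives the stated upper bound.
\end{itemize}

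\textbf{Lower bounds and the $d\ge7$ asymptotic.} For $d\ge7$, let $v_m=\mathrm{Var}(X_m)/m$. The relation $X_{2m}=X_m+X'_m-E_m$ with $\mathrm{Var}E_m=O(1)$ and Cauchy--Schwarz gives
\begin{align*}
|v_{2m}-v_m|\lesssim \sqrt{v_m/m}+1/m.
\end{align*}
The right side is summable, so $v_{2^k}$ converges to some $\alpha^3_d$. Non-dyadic $n$ is handled by a binary expansion of $n$, with the same error control. Positivity of $\alpha^3_d$ requires a local source of fluctuation. Condition on all steps except a few inside each block: changing finitely many steps changes $X^3$ by a nondegenerate amount with probability bounded below, and summing these independent contributions gives a conditional variance $\gtrsim n$. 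Alternatively, if $\mathcal{F}$ could be degenerate, one argues from the $E$-terms, which have nondegenerate law by the $\asymp$ assumption.

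For $d=5$, $6$ and $4$ the lower bound comes from the error terms. Use the law of total variance with respect to the two halves' local data. The clean tool is the Efron--Stein lower bound (or a martingale decomposition along blocks), writing $X_n$ in terms of the independent increments. For $d=5$, the top-scale $E_{n/2}$ has $\mathrm{Var}\gtrsim n^{3/2}$. Pinning $\mathrm{Var}(X_n)\gtrsim n^{3/2-o(1)}$ then requires showing that this variance is not cancelled by $X_{n/2}+X'_{n/2}$. I would exploit that $E$ is large only on the rare event that the two halves intersect far from the cut time $n/2$, and on that event the halves' own distances are essentially typical. Concretely, I would bound the covariance $\mathrm{Cov}(X_{n/2},E_{n/2})$ by $o(n^{3/2})$ by truncating $E$ at level $l$ and using the tail $\mathbb{P}(E\ge l)$. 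For $d=6$, the same balancing applied at all scales $k\le \tfrac12\log_2 n$ gives the $n\log n$ lower bound along a subsequence, which is why the statement is phrased with $\limsup$.

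\textbf{Main obstacle.} The hardest step will be these lower bounds in $d\le6$: excluding cancellation between the heavy-tailed error terms and the main terms when nothing is assumed on their correlations. I would try to show that the top-scale contribution's variance is an honest lower bound by conditioning on $\{E\le l\}$ versus $\{E> l\}$ and using that this event has vanishing probability. The losses in this step are what produce the $n^{o(1)}$ and $\log$ gaps in the statement.
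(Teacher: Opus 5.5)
Your upper bounds for $d=4$, $d=5$ and $d\ge 7$ go through, but your moment table is wrong exactly where it matters. In $d=6$ the hypothesis $\mathbb{P}(E_m\ge l)\asymp l^{-1}$, together with $E_m\lesssim m$ from the volume bound, gives $\mathbb{E}E_m^2\asymp\sum_{l\lesssim m} l\,\mathbb{P}(E_m\ge l)\asymp m$, not $\log m$; only the first moment is $\asymp\log m$. (Similarly, $\mathbb{E}E_m^2\asymp m^{1/2}$ for $d=7$ and $\asymp\log m$ for $d=8$.) For $d\ge7$ the error is harmless, since your increments become $|v_{2m}-v_m|\lesssim m^{-1/4}$, which is still summable. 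In $d=6$, however, it makes your argument incoherent. With $\mathrm{Var}(E_m)\lesssim\log m$, your Minkowski bound gives $\mathrm{Var}(X^3_n)=O(n)$, which contradicts the lower bound you are trying to prove. Your recursion $v(2m)\le 2v(m)+C\sqrt{v(m)\log m}+C\log m$ also solves to $O(n)$, not to $n(\log n)^2$. With the correct $\mathrm{Var}(E_m)\lesssim m$, each of the $\log_2 n$ dyadic levels has variance $\lesssim n$ by independence within the level, and Minkowski across levels gives standard deviation $\lesssim\sqrt n\log n$; this is the paper's upper bound. The $n\log n$ lower bound comes precisely from every level contributing $\gtrsim n$ to the diagonal, which your numbers cannot see.

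The genuinely missing idea is how to obtain lower bounds with no correlation information. Bounding $\mathrm{Cov}(X_{n/2}+X'_{n/2},E_{n/2})$ by $o(n^{3/2})$ in one step does not follow from tails alone: truncation plus H\"older gives only $O(n^{3/2})$, the same order as both $\mathrm{Var}(E_{n/2})$ and $\mathrm{Var}(X_{n/2}+X'_{n/2})$. Your heuristic about intersections far from the cut time uses graph-distance structure that a general $\mathcal F$ lacks, and Efron--Stein bounds variances from above, not below. The paper instead decomposes over many levels and uses only $E^{(k,l)}\ge 0$ and independence on disjoint intervals. Let $\mathcal E$ be the sum of all cross terms and $MT$ the sum of the bottom-level blocks. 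Positivity gives $\mathrm{Cov}(E^{(k_1,l_1)},E^{(k_2,l_2)})\ge-\mathbb{E}E^{(k_1,l_1)}\,\mathbb{E}E^{(k_2,l_2)}$ for overlapping pairs, so
\[
\mathrm{Var}(\mathcal E)\ge\sum_{k,l}\mathrm{Var}(E^{(k,l)})-\sum_{\text{overlapping}}\mathbb{E}E^{(k_1,l_1)}\,\mathbb{E}E^{(k_2,l_2)},
\]
and the heavy tails make the first moments small relative to the second. The main/cross interaction is then absorbed by $\mathrm{Var}(X_n)\ge(\mathrm{Var}(\mathcal E)^{1/2}-\mathrm{Var}(MT)^{1/2})^2$. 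In $d=5$ one stops at block size $n^\alpha$, so that $\mathrm{Var}(MT)\le n^{1+\alpha}$ by the volume bound while the overlap sum is $\le n^{3/2-\alpha/2+o(1)}$. In $d=6$ one stops at $n^{1-\delta}$: the diagonal is $\ge c\delta n\log n$, the overlap sum is $\lesssim n^{\delta}(\log n)^3$, and one argues by contradiction from $\mathrm{Var}(X_m)\le\kappa m\log m$ for all large $m$, which is why only a $\limsup$ results. In $d=4$ the needed gap comes from Assumption \ref{asmp:weaksmpheavy}(4) via $\mathbb{E}[E_n]^2\lesssim\mathbb{E}[E_n^2]\log\log n/\log n$, together with the dichotomy on $\mathrm{Var}(X_{n2^{-K}})$ behind Lemma \ref{lem:freltog}. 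For $\alpha^3_d>0$, your claim that local changes move $X^3$ by a nondegenerate, additive amount is not implied by the hypotheses. The fallback through the $E$-terms also fails, because the diagonal is only $\asymp n$ while the available control on the overlapping covariances is of order $n\log n$. The paper's argument runs the other way. Turning single backtracks ($S_{2i}=S_{2i+2}$) into double ones leaves the range, hence $X^3$, unchanged while shifting the length by $\asymp\sqrt n$. A Stirling comparison then gives $\mathbb{P}(X_n=x,A_n)\le C\,\mathbb{P}(X_{n-\sqrt n}=x)$ on the event $A_n$ of typical backtrack counts. Combined with $\mathbb{E}X_n-\mathbb{E}X_{n-\sqrt n}\gtrsim\sqrt n$ along a subsequence, this rules out $\mathrm{Var}(X_n)=o(n)$.
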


\begin{thm}\label{m1+}
Let $X_n^3$ satisfy the assumptions outlined in the previous lemma. 
For $d\ge 7$, as $n\to \infty$, 
\begin{align*}
    \frac{X_n^3-\mathbb{E}X_n^3}{\sqrt{ \mathrm{Var}(X_n^3) }}
    \stackrel{d}{\rightarrow} \mathcal{N}(0,1)
\end{align*}
for $d=6$, there is a subsequence $(n_k)$ and $\phi (n) \ge c n(\log n)$ for some $c>0$ such that as $n\to \infty$, 
\begin{align*}
    \frac{X_{n_k}^3-\mathbb{E}X_{n_k}^3}{\sqrt{ \phi(n_k) }}
    \stackrel{d}{\rightarrow} \mathcal{N}(0,1).   
\end{align*}
For $d=5$, as $n\to \infty$, 
\begin{align*}
    \frac{X_n^3-\mathbb{E}X_n^3}{\sqrt{ \mathrm{Var}(X_n^3) }}
    \stackrel{d}{\rightarrow}0, 
\end{align*}
and for $d=4$, there is a subsequence $(n_k)$ such that as $n\to \infty$, 
\begin{align*}
    \frac{X_{n_k}^3-\mathbb{E}X_{n_k}^3}{\sqrt{ \mathrm{Var}(X_{n_k}^3) }}
    \stackrel{d}{\rightarrow} 0.  
\end{align*}
\end{thm}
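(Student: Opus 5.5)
We decompose the walk dyadically into independent blocks and separate a Gaussian bulk from a heavy-tailed error. Fix a level $L=L(n)$, split $[0,n]$ into $2^L$ blocks of length $m=n2^{-L}$, and write
\begin{align*}
X^3_n=\sum_{j=1}^{2^L} X^3_{[(j-1)m,jm]} - \sum_{\ell=0}^{L-1}\sum_{k=1}^{2^{\ell}} E^{(\ell)}_k ,
\end{align*}
where $E^{(\ell)}_k$ is the cross term obtained when the $k$-th block of length $n2^{-\ell}$ is split into its two halves. By translation invariance and the Markov property, the block terms $X^3_{[(j-1)m,jm]}$ are i.i.d. copies of $X^3_m$. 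Also, $E^{(\ell)}_k$ has the law of $E_{n2^{-\ell-1}}$, and the $E^{(\ell)}_k$ are independent across $k$ at a fixed level.

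\textbf{Step 1: error bounds at each level.} From $\mathbb{P}(E_N\ge l)\asymp l^{-(d/2-2)}$ and the volume bound $E_N\le 2N$ we get the truncated moments.
\begin{itemize}
\item For $d\ge7$: $\mathrm{Var}(E_N)=O(1)$.
\item For $d=6$: $\mathbb{E}E_N^2\asymp \log N$.
\item For $d=5$: $\mathbb{E}E_N^2\asymp N^{1/2}$.
\end{itemize}
Since the terms at one level are independent, the level-$\ell$ error sum $\mathcal{E}_\ell$ has variance $2^\ell\mathrm{Var}(E_{n2^{-\ell}})$. Summing over levels with Cauchy--Schwarz (or Minkowski in $L^2$) handles the correlations between levels, which is the point at which we lose information because we make no correlation assumption. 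Together with the recursion $\mathrm{Var}(X_{2N})=2\mathrm{Var}(X_N)+\mathrm{Var}(E_N)-2\mathrm{Cov}(X_N+X'_N,E_N)$, this yields Proposition~\ref{prop1+}, which we take as given. Its lower bounds come from the contribution of a single scale.

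\textbf{Step 2: $d\ge7$.} Choose $m=m(n)\to\infty$ slowly. By Minkowski, $\|\sum_\ell(\mathcal{E}_\ell-\mathbb{E}\mathcal{E}_\ell)\|_2\le\sum_{\ell<L}(2^\ell C)^{1/2}\lesssim (n/m)^{1/2}$, which is $o(\sqrt n)$. Hence $X^3_n-\mathbb{E}X^3_n$ equals an i.i.d. sum of $n/m$ centered copies of $X_m$ up to an error that is $o(\sqrt{\mathrm{Var}X_n})$ in $L^2$, using $\mathrm{Var}X_n\sim\alpha n$ from Proposition~\ref{prop1+}. The i.i.d. sum satisfies Lindeberg's condition. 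Indeed, $\mathrm{Var}(X_m)\sim\alpha m$, and uniform integrability of $(X_m-\mathbb{E}X_m)^2/m$ follows from a fourth-moment bound proved by the same decomposition applied to $X_m$. Thus the CLT holds.

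\textbf{Step 3: $d=6$.} Take $m\approx\sqrt n$ (more precisely $m=\sqrt n/\omega_n$ with $\omega_n\to\infty$ slowly). At levels with block length $N\gg\sqrt n$, each level contains $n/N$ terms, and each is nonzero at scale $\gtrsim\sqrt{n\log n}$ with probability $\lesssim (\sqrt{n})^{-2}\cdot$polylog. Truncating $E$ at $\sqrt{n}$ therefore changes the sum only on an event of vanishing probability. The truncated pieces have variance $\lesssim(n/N)\log n$, which sums to $O(\sqrt n\,\omega_n\log n)=o(n\log n)$.

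The remaining part is an i.i.d. sum of $n/m$ copies of $X_m$. Its variance is $(n/m)\mathrm{Var}(X_m)$, which is $\ge c\,n\log m\asymp n\log n$ along a subsequence where the $\limsup$ in Proposition~\ref{prop1+} is attained. We set $\phi(n)=(n/m)\mathrm{Var}(X_m)$ and choose $n_k$ so that the corresponding $m_k$ lie on that subsequence. The CLT then follows from Lindeberg, which needs $\mathbb{E}[(X_m-\mathbb{E}X_m)^2;|X_m-\mathbb{E}X_m|>\epsilon\sqrt{n\log n}]=o(m\log m)$. This holds because the tail of $X_m$ is controlled by the cubic tail of its cross terms, each of which is at most $m\ll\sqrt{n\log n}$.

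\textbf{Step 4: $d=5$ and $d=4$.} Show that $\mathrm{Var}X_n$ is dominated by rare large values of the top-level errors. The bulk term (the i.i.d. sum with $m$ small) plus the truncated errors has variance $o(n^{3/2})$. The untruncated part is nonzero only with probability $\to0$. The reason is that $P(E_N\ge \delta n^{3/4})\lesssim n^{-3/8}$ at the top scale, and the union over levels sums to $o(1)$ after truncating at $\delta n^{3/4}$. On the other hand, $\mathrm{Var}X_n\approx n^{3/2}$ by Proposition~\ref{prop1+}. So the normalized variable is $o(1)$ in $L^2$ off an event of vanishing probability, which gives convergence to $0$ in probability. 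For $d=4$ the same argument works along the subsequence where the lower bound $n^2/(\log n)^{2+\epsilon}$ holds, using Assumption~\ref{asmp:weaksmpheavy}.

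\textbf{Main obstacle.} The hardest step is $d=6$: controlling the cross-scale covariances between the truncated error terms and the block sums without any correlation assumption. This is exactly why the conclusion holds only along a subsequence, with $\phi$ bounded below rather than identified.
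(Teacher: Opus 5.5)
Your treatment of $d=4$ has a genuine gap: ``the same argument works'' does not follow from what you have. In $d=4$ there are no tail asymptotics for $E_N$. You only have the window $n^2/(\log n)^{b_1}\le\mathbb{E}E_n^2\le n^2/(\log n)^{b_2}$ and the event $\Omega$ of probability $O(\log\log n/\log n)$. A union bound therefore handles only $o(\log n/\log\log n)$ cross terms, i.e.\ a decomposition of depth $K$ with $2^K\ll\log n/\log\log n$. Write $\mathrm{Var}(X_N)=N^2/(\log N)^{f(N)}$ and $\mathbb{E}E_N^2=N^2/(\log N)^{g(N)}$.
\begin{itemize}
\item The bulk $2^K\mathrm{Var}(X_{n2^{-K}})$ is $\mathrm{Var}(X_n)$ times roughly $2^{-K}(\log n)^{f(n)-f(n2^{-K})}$. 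Proposition~\ref{prop1+} leaves $f$ essentially unconstrained in $[1,2]$, while $2^{-K}\ge(\log n)^{-1+o(1)}$. So nothing prevents the bulk from dominating.
\item On the good event, the errors are controlled only at the scale $\sum\mathbb{E}E\approx n/(\log n)^{(g+1)/2}$. This is $o(\sqrt{\mathrm{Var}X_n})$ only if $f(n)<g+1$, and the a priori information ($f\le 2+\epsilon$ along your subsequence, $g\ge b_2$) does not give that.
\end{itemize}
The missing idea is the paper's comparison of $f(n)$ with $g(n,K)=\min_{j\le K}g(n2^{-j})$ for fixed $K$. Lemma~\ref{lem:freltog} gives $f(n)\le g(n,K)+o(1)$ whenever $f(n2^{-K})\ge g(n,K)+\frac14$, and then both pieces are negligible. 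Otherwise, boundedness of $f$ forces infinitely many $n$ with $f(n)-f(n2^{-K})\le 1/\log\log n$. For those $n$ the bulk is at most $2e\,2^{-K}\mathrm{Var}(X_n)$, and a diagonal argument in $K$ produces the subsequence. So the subsequence comes from this dichotomy, not merely from where the variance lower bound holds.

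Separately, your moment table in Step~1 lists first moments instead of second moments. From $\mathbb{P}(E_N\ge l)\asymp l^{-(d/2-2)}$ and $E_N\le 2N$ one gets $\mathbb{E}E_N^2\asymp\sum_{l\le N}l^{3-d/2}$. That is $N^{1/2}$ for $d=7$, $\log N$ for $d=8$, $O(1)$ only for $d\ge 9$, $N$ for $d=6$, and $N^{3/2}$ for $d=5$.

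For $d\ge7$ this is repairable. Minkowski still gives $o(\sqrt n)$: the bound is $n^{1/2}m^{-1/4}$ at $d=7$, and at $d=8$ you need $m\gg\log n$. But your fourth-moment route to Lindeberg is false at $d=7$. There $\mathbb{E}E_N^4\asymp N^{5/2}$, and $\bar E_N=\bar X_N+\bar X'_N-\bar X_{2N}$ then rules out $\|\bar X_N\|_4\lesssim N^{1/2}$. Use $2+\epsilon$ moments with $\epsilon<1$ as in Lemma~\ref{2+e}, or take $m\ll\sqrt n$ and the deterministic bound $0\le X_m\le m+1$.

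For $d=6$ the accounting in Step~3 is wrong as written:
\begin{itemize}
\item every level contributes variance $\asymp n$, not $(n/N)\log n$;
\item truncated second moments are $\asymp\min(N,t)$;
\item $\mathbb{P}(E_N\ge\sqrt{n\log n})\lesssim(n\log n)^{-1/2}$, not $n^{-1}$ times a polylog.
\end{itemize}
Hence the union bound over the $n/N$ terms of a level disposes only of levels with $N\gg\sqrt n$ (truncating at $\sqrt n$). The levels between $m$ and that threshold each have variance $\asymp n$, and with no cross-level correlation assumption they must be summed by Minkowski. This forces the window around $\sqrt n$ to span only $o(\sqrt{\log n})$ dyadic levels. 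That is exactly the paper's choice $n/n_{\delta},\,n/n_{\delta,-}=\sqrt n\,e^{\mp(\sqrt{\log n})^{1-\delta}}$ in Part~2.

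With this constraint imposed on $\omega_n$, your $d=6$ argument goes through. Taking $\phi(n)=(n/m)\mathrm{Var}(X_m)$ with $m$ on the $\limsup$ subsequence is then a legitimate shortcut past the paper's Part~4.

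Your $d=5$ truncation at $\delta n^{3/4}$ is correct. The truncated errors have variance $\lesssim n^{11/8}$, a polynomial saving, which is what you need since $\approx$ only pins $\mathrm{Var}X_n$ to $n^{3/2\pm o(1)}$. It is an equivalent alternative to the paper's split into Chebyshev on small scales and first-moment Markov on the top scales.
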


\section{Preparations in $d\ge 5$}

In this section, we will establish various estimates that will allow us to understand the cross terms that appear in the course of our analysis.
We will give the definitions here. 
Consider the error term 
$$
E_n:= X^i[0,n]  + X^i[n,2n]  - X^i[0,2n].
$$
In general, we may write
\begin{equation*}
\begin{aligned}
E_n^{(k,l)}(i):=& X^i\bigg[  \frac{2l}{2^{k+1}}n, \frac{2l+1}{2^{k+1}}n \bigg] + X^i\bigg[ \frac{2l+1}{2^{k+1}}n, \frac{2l+2}{2^{k+1}}n \bigg]
-X^i\bigg[ \frac{l}{2^{k}}n, \frac{l+1}{2^{k}}n \bigg]. 
\end{aligned}
\end{equation*}
Since the following proofs do not depend on the quantity of interest, we omit the symbol ``$i$", wherever it appears.

In the next lemma,  we improve the result of \cite[Lemma 1.1]{BSZ03}.  
We will use the following asymptotics on the tail probabilities of $E_n$ in the following sections.  
\begin{lem}
Let $d\ge 5$. 
There exist some constants $0<c,C<\infty$ such that
\begin{equation}\label{upper+}
\mathbb{P}(E_n \ge l) \le C l^{-d/2+2}, \quad  \forall l, 
\end{equation}
and
\begin{equation}\label{lower+}
\mathbb{P}(E_n  \ge l) \ge c l^{-d/2+2},  \quad  an \le \forall l\le bn.  
\end{equation}
\end{lem}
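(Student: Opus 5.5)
We will prove both bounds by tying $E_n$ to the intersection structure of the two independent halves $S[0,n]$ and $S[n,2n]$, which we view as two independent walks $S^1, S^2$ started at $S_n$. The heuristic is that if $E_n\ge l$, then the two halves must intersect at some pair of times $(s,t)$ with $\max(s,t)\gtrsim l$ (measured backward from time $n$ on $S^1$ and forward on $S^2$). For two independent walks in $d\ge5$, the probability that $S^1[0,\infty)$ and $S^2[m,\infty)$ intersect is
\[
\sum_{t\ge m} t^{-d/2+1}\asymp m^{-d/2+2}.
\]

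\textbf{Upper bound.} For both $X^1$ and $X^2$, we will show that $E_n$ is controlled by the time-extent of the region where the halves interact. Let $T_1$ be the largest $s$ with $S^1_s\in S^2[0,n]$, and $T_2$ the largest $t$ with $S^2_t\in S^1[0,n]$.
\begin{itemize}
\item For cut points, every cut time of $S[0,n]$ before $n-T_1$ and every cut time of $S[n,2n]$ after $n+T_2$ remains a cut time of $S[0,2n]$, up to a boundary correction of $O(1)$. Hence $E_n\le T_1+T_2+O(1)$.
\item For graph distance, we use the triangle inequality together with the fact that $d_{\G}(x,y)$ is at most the time between visits. Take the intersection point $x=S_{n-s}=S_{n+t}$ with the smallest $s+t$. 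The geodesic in $\G[0,2n]$ must pass through the cut points of the whole range lying outside the interaction window. This gives $E_n\lesssim T_1+T_2$ plus the distance deficit inside the window, and that deficit is itself bounded by the window length.
\end{itemize}
By the display above, $\mathbb P(T_1\vee T_2\ge l)\lesssim l^{-d/2+2}$, so \eqref{upper+} follows. For $l>2n$ the bound is trivial, since $E_n\le 2n$.

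The subtle point for $X^1$ is that $E_n$ could be large without a far intersection, through reroutings inside $S[0,n]$ itself. This cannot happen, because $X^1[0,n]$ already accounts for those. We will check carefully that
\[
E_n\le d_{[0,n]}(S_0,S_{n-T_1})+d_{[n,2n]}(S_{n+T_2},S_{2n})\text{-complement terms},
\]
i.e. $E_n \le X[n-T_1,n]+X[n,n+T_2]\le T_1+T_2$. This uses the fact that a shortest path in $\G[0,2n]$ crosses a cut point of $S[0,2n]$ lying before $n-T_1$.

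\textbf{Lower bound.} For $an\le l\le bn$, we construct an event of probability $\gtrsim l^{-d/2+2}$ on which $E_n\ge l$. The event is:
\begin{itemize}
\item $S^1$ and $S^2$ are disjoint apart from their common starting point near the origin, except for one intersection $S^1_{s}=S^2_{t}$ with $s\in[l,2l]$, $t\le K$. A last-exit decomposition and the Green's function give this probability as $\asymp l\cdot l^{-d/2+1}=l^{-d/2+2}$ (positive probability of no further intersections in $d\ge5$).
\item Each half has a positive density of cut times on the relevant window, which holds with probability bounded below by the linear order results of \cite{Croydon2009b}.
\end{itemize}
On this event the loop created by the intersection kills all cut points in the window $[n-s,n]$, which number $\gtrsim s\ge l$. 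It also provides a shortcut of length $\le t+O(1)$ in place of a distance of order $s$, so $E_n\gtrsim l$; adjusting constants gives \eqref{lower+}.

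The main obstacle is this lower bound. It requires decoupling the "single intersection" event from the "many cut points in the window" event without losing constants. We plan to handle this by conditioning on the intersection configuration, using the Markov property at the intersection times, and invoking the separation/cut-point estimates of \cite{BSZ03} to show that the cut-time density persists with uniformly positive conditional probability.
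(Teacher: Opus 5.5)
Your upper bound is correct, and it takes a simpler route than the paper. The paper bounds $E_n$ by the distances from time $n$ to the nearest two-sided global cut times $T^{\pm1}$. It then has to prove the sharp tail $\mathbb{P}(T^1>l)\lesssim l^{-d/2+2}$ by a multiscale block argument refining \cite[Lemma 1.1]{BSZ03}; the delicate part is the event that there is no cut point even though there is no long-range intersection. Your bound $E_n\le T_1+T_2$ by last intersection times needs only the first-moment estimate $\mathbb{P}(T_1\ge m)\le\sum_{s\ge m}\sum_{t\ge0}\mathbb{P}(S_{s+t}=0)\lesssim m^{-d/2+2}$.

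For cut points your argument is complete. For the graph distance, drop the appeal to ``a cut point of $S[0,2n]$ lying before $n-T_1$'': no such cut point need exist, and controlling its absence is exactly the hard step of the paper. The bound holds deterministically, as follows.
\begin{itemize}
\item Set $I:=S[0,n]\cap S[n,2n]$, so $I\subset S[n-T_1,n]\cap S[n,n+T_2]$.
\item Let $x$ and $y$ be the first and last vertices in $I$ of a geodesic of $\mathcal{G}[0,2n]$ from $S_0$ to $S_{2n}$.
\item Before $x$ the geodesic can only use edges of $\mathcal{G}[0,n]$, and after $y$ only edges of $\mathcal{G}[n,2n]$. Hence $X[0,2n]\ge d_{\mathcal{G}[0,n]}(S_0,x)+d_{\mathcal{G}[n,2n]}(y,S_{2n})$.
\item The triangle inequality then gives
\[
E_n\le d_{\mathcal{G}[0,n]}(x,S_n)+d_{\mathcal{G}[n,2n]}(S_n,y)\le T_1+T_2 .
\]
\end{itemize}
The intermediate inequality $E_n\le X[n-T_1,n]+X[n,n+T_2]$ is not needed.

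The lower bound has a genuine gap: the event you construct is too rare. Requiring $S^1_s=S^2_t$ with $s\in[l,2l]$ and $t\le K$ forces $S^1$ to return, during $[l,2l]$, to the bounded set $S^2[0,K]$. Then
\[
\mathbb{P}\bigl(\exists\, s\in[l,2l],\ t\le K:\ S^1_s=S^2_t\bigr)\le\sum_{s=l}^{2l}\sum_{t=0}^{K}\mathbb{P}(S_{s+t}=0)\lesssim K\,l^{-d/2+1},
\]
which is smaller than $l^{-d/2+2}$ by a factor $l/K$. The factor $l$ in your count $l\cdot l^{-d/2+1}$ comes from summing the Green's function over all $t$. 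It is available only if $t$ ranges over a window of length $\asymp l$, equivalently if the target set has capacity $\asymp l$.

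But once $t\asymp l$, your accounting ``a shortcut of length $\le t$ in place of a distance of order $s$'' gives only $cs-t$, which need not be positive. For the graph distance you would instead need $d_{\mathcal{G}[0,n]}(S_0,S_n)-d_{\mathcal{G}[0,n]}(S_0,S_{n-s})\gtrsim s$, and the analogue on $S[n,2n]$ at $S_{n+t}$, at a \emph{random} intersection point. That is exactly the decoupling of cut-point structure from the intersection event that you leave to an unspecified use of separation estimates. For the cut-point functional alone, the argument can be repaired by conditioning on a typical $S^1$ and letting $S^2$ hit $S^1[l,2l]$ at any time: then $E_n$ is at least the number of cut times of $S[0,n]$ in $(n-l,n]$. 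Requiring disjointness elsewhere is unnecessary in any case, since extra intersections only increase $E_n$.

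The paper avoids this by putting the intersection at the two extreme ends: it asks that $S[1,sn]$ meet $S[2n-sn,2n]$ for a small $s$.
\begin{itemize}
\item Then $X[0,2n]\le 2sn$ trivially, for both functionals.
\item Since $X[0,n]\ge0$, one gets $E_n\ge X[n,2n]-2sn\ge(c_d/2-2s)n$ on the typical event $X[n,2n]\ge c_dn/2$. No information about cut points near the intersection is needed.
\item The good events used ($X[n,2n]$ linear, $\mathrm{Cap}(S[2n-sn,2n])\gtrsim sn$, diffusive positions) are all measurable with respect to $S[sn,2n]$.
\item Conditionally on $S[sn,2n]$, the reversed initial piece is an independent walk from $S_{sn}$. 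A last-exit decomposition with the local CLT gives hitting probability $\gtrsim n^{-d/2+1}\,\mathrm{Cap}(S[2n-sn,2n])\gtrsim n^{-d/2+2}$.
\end{itemize}
This suffices because $l\asymp n$ in the lower bound.
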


\begin{rem}
    By the same proof, we also have for $l\le \max(m,n)$, 
    \begin{align*}
        \mathbb{P}(X^i[0,n]  + X^i[n,n+m]  - X^i[0,n+m] \ge l)
        \le C l^{-d/2+2}. 
    \end{align*}
\end{rem}

\begin{proof}[Proof of \eqref{upper+}]
For $I \subset  \Z^d$, let
\begin{align*}
    N(I)
    := \sum_{n\in I} 
    1\{S(-\infty,n-1 ]\cap  S[n, \infty) = \emptyset\},
\end{align*}
for $I \subset [a,b] \subset \Z^d$,  
\begin{align*}
    N(I,[a,b])
    := \sum_{n\in I} 
    1\{S[a,n-1 ]\cap  S[n, b] = \emptyset\}
\end{align*}
and $T^1$ (resp. $T^{-1}$) be the first global cut point after (resp. before) the origin (see the details in \cite[Lemma 1.1]{BSZ03}). 
Note that $T^1$ has the same distribution (up to a sign change) as  $T^{-1}$ and 
\begin{align*}
    \mathbb{P}(E_n \ge l ) 
    \le& \mathbb{P}( X_{[1,2n]}(n+1,T^1(\theta^{n} \omega))\ge l/2-1) + 
    \mathbb{P}( X_{[1,2n]}(n,T^{-1}(\theta^{n-1} \omega))\ge l/2-1)\\
    \le & \mathbb{P}(T^1 \ge l/2-1)+ \mathbb{P}(T^{-1} \ge l/2-1),
\end{align*}
then it suffices to show the result for $T^1$. In the equation above, we used the notation of \cite{BSZ03}, where $\omega$ is a  variable in the state space that represents the sequence of steps taken by the random walk. $\theta^n \omega$ is the translation of the walk by $n$ time steps; namely, the new walk $\hat{S}$ given by $\theta^n \omega$ satisfies $\hat{S}_i = S_{i - l}$.


We now fix a parameter $J$, whose value will be set sufficiently large later. We then define $L= l/(3J)$ and set $k_j := Lj$ for $0 \le j \le 3J$. 
If we let 
\begin{align*}
    &a_1 := \sum_{0\le j\le 3J-1} 
    \mathbb{P}(S(-\infty,k_j-1 ]\cap  S[k_{j+1}, \infty) \neq \emptyset ), \\
    &a_2 := \mathbb{P} (\cap_{j=0}^{3J-1} \{N([1, l]) = 0, S(-\infty,k_j-1 ]\cap  S[k_{j+1}, \infty) = \emptyset\}), 
\end{align*}
we have
\begin{align}\label{cc*}
\mathbb{P}(T^1 > l) \leq a_1 + a_2.
\end{align}
By the same proof in \cite[Lemma 1.1]{BSZ03} (which essentially computes the probability of having a large loop), we can obtain 
\begin{align}\label{cc**}
a_1 \leq C(J) l^{-\frac{d}{2}+2}.
\end{align}

Note that if $S(-\infty, k_{3j}-1] \cap S[k_{3j+1},\infty) = \emptyset$, $S(-\infty, k_{3j+2}-1] \cap S[k_{3j+3},\infty) = \emptyset$ and $N([1, l]) = 0$, then 
$N([k_{3j+1},k_{3j+2}-1], [k_{3j},k_{3j+3}-1])=0$.  To see the reason for this, note that $N([1,l]) = 0$ means that no point in the interval $[k_{3j+1}, k_{3j+2} -1]$ can be a cut point; thus, for $t \in [k_{3j+1},k_{3j+2} -1]$, we must have that $S(-\infty, t-1] \cap S[t, \infty) \ne \emptyset $. However, since $S(-\infty, k_{3j }-1] \cap S[k_{3j+1},\infty)=\emptyset$ and $S(-\infty,k_{3j+2} -1] \cap S[k_{3j+3}, \infty) = \emptyset$, $S[k_{3j},t-1] \cap S[t, k_{3j+3} -1]\ne \emptyset$ holds instead. 
Hence, 
\begin{align*}
    &a_2 \le 
    \mathbb{P} \bigg( \bigcap_{j=1}^J 
    \{ N([k_{3j+1},k_{3j+2}-1], [k_{3j},k_{3j+3}-1])=0\} \bigg). 
\end{align*}
For $0 \leq j < J$ and $0<\alpha<1$, we have:
\begin{align*}
A_1^{j,\alpha} 
= &\Big\{ 
   S[k_{3 j},\, k_{3 j+1} -1] \cap 
   S[k_{3 j+1} + l^\alpha, k_{3 j+3}-1 ] \neq \emptyset \Big\}\\
  &\cup  \Big\{ S[k_{3j},\, k_{3 j+2}-l^\alpha] 
  \cap S[k_{3j+2} ,\, k_{3 j+3}-1] \neq \emptyset \Big\}.
\end{align*}
Then, 
\begin{align*}
a_2 &\leq \mathbb{P}\left( \bigcap_{j=1}^J (A_1^{j,\alpha} \cup (A_1^{j,\alpha})^c) \cap \{ N([k_{3j+1},k_{3j+2}-1], [k_{3j},k_{3j+3}-1])=0\} \right) \\
& \le \prod_{j=1}^J \mathbb{P} ( A_1^{j,\alpha} 
\cup 
((A_1^{j,\alpha})^c \cap \{ N([k_{3j+1},k_{3j+2}-1], [k_{3j},k_{3j+3}-1])=0\} )).
\end{align*}
By the same proof as that of \eqref{cc**}, 
\begin{align*}
\mathbb{P}(A_1^{j,\alpha}) \leq C (l^\alpha)^{-\frac{d}{2}+2}.
\end{align*}

Now, we can split ``$[k_{3j+1}, k_{3j+2}-1]$'' like in \eqref{cc*}.  
That is, let $L' = l^\alpha$, $J' = \frac{k_{3j+2} - k_{3j+1}}{2L'}$, 
$k'_u = u L'$ for $1\le u \le 2J'$. 
Then, we have 
\begin{align*}
&\mathbb{P}\Big( N([k_{3j+1},k_{3j+2}-1], [k_{3j},k_{3j+3}-1])=0, (A_1^{j,\alpha})^c \Big) \\
&\quad \leq  \bigg\{\sum_{u=1}^{2J-1} 
   \mathbb{P}\Big( S[k_{3j+1}, \, k_{3j+1}  + k'_{u} -1 ] 
   \cap S[k_{3j+1} + k'_{u +1}, \, k_{3j+2}-1] \neq \emptyset \Big) \bigg\} (:=\hat{a}_1)\\
&\qquad + \mathbb{P}\Big( \text{for all } 1 \leq u \leq 2J', \,
   S[k_{3j+1} ,\, k_{3j+1} +  k'_{u -1}] 
   \cap S[k_{3j+1} +k'_{u},\, k_{3j+2}-1] = \emptyset, \,\\
   & \quad \quad N([k_{3j+1}, k_{3j+2}-1],[k_{3j}, k_{3j+3}-1]) = 0,\\
   & \quad \quad S[k_{3j}, \, k_{3j+1}-1 ] \cap S[k_{3j+1}+k'_1, \, k_{3j+3}-1] = \emptyset, \\
   & \quad \quad S[k_{3j}, \, k_{3j+2}+k'_{2J'-1}-1 ] \cap S[k_{3j+2}, \, k_{3j+3}-1] = \emptyset \Big) 
   (:=\hat{a}_2).
\end{align*}
Note that
\begin{align*}
    \hat{a}_2
    \le & \mathbb{P}\Big( \text{for all } 0 \leq u \leq 2J'+1, \,
  S(k_{3j},\, k_{3j+1} +  k'_{u -1}] 
   \cap S[k_{3j+1} +k'_{u},\, k_{3j+3}) = \emptyset, \,\\
   & \quad \quad N([k_{3j+1}, k_{3j+2}-1],[k_{3j}, k_{3j+3}-1]) = 0 \Big).
\end{align*}
By the same proof as that of \cite[Lemma 1.1]{BSZ03}, we have
\begin{align*}
\hat{a}_1 \leq C \, l^{1-\alpha} \, (n^\alpha)^{-\frac{d}{2}+2}, 
\quad \hat{a}_2 \leq C \, l^{-\alpha}.
\end{align*}
Then, if we choose $\alpha$ close to $1$, for some $\delta > 0$ (note that the obtained value of $\delta$ does not depend on $J$), 
\begin{align*}
\hat{a}_1 + \hat{a}_2 \ll l^{-\delta} 
\end{align*}
and hence
\begin{align*}
a_2 \leq C(J) (l^{-\delta})^J.
\end{align*}

Therefore, if we pick sufficiently large $J$, 
we obtain
$a_2 \leq C l^{-\frac{d}{2}+2}$ and, in addition, the desired result,  equation \eqref{upper+}, follows by combining \eqref{cc*} and \eqref{cc**}. 
\end{proof}

\begin{proof}[Proof of \eqref{lower+}]

We start by defining a collection of sets which, when they hold together, will imply  $E_n > cn$ for an appropriate constant $c$. 
Let 
\begin{align*}
    &D_1:=\{\ X[n,2n] \ge c_dn/2  \},
    && D_2:=\{\ca(S[2n-sn,2n]) \ge c'_d sn  \},\\
    &D_3:=\{ \max_{2n-sn \le i \le 2n } \|S_i-S_n\| \le C_1 n^{1/2}   \},
    && D_4:=\{  \|S_{sn}-S_n\| \le C_1 n^{1/2}   \},
\end{align*}
{(see the definition of the capacity in \cite{DemboOkada} et al.).}
Here, $c_d$ and $c_d'$ are some dimension-dependent constants (by the law of large numbers for the capacity of the random walk range, we choose $c_d'$ to satisfy $\ca(S[0,n]) \sim 2c'_d n$). In addition, we choose  $0<s<c_d/8$. Recall the (slightly modified) definition of $T^1$ as the first cut point after time $n$. 
By \cite[Lemma 2.2]{Croydon2009b}, we know that there is $c_d>0$ such that $\mathbb{P}(X^2_{(-\infty,\infty)}[T^1,n+T^1]\ge c_dn)=1-o(1)$. 
Noting that $X^2_{(-\infty,\infty)}[a,b]$ will be a lower bound for $X[a,b]$ for any times $a$ and $b$ and that $X^2_{(-\infty,\infty)}[a,b]$ is monotonically increasing as we increase the interval $[a,b]$, we obtain 
\begin{align*}
\mathbb{P}(D_1) \ge &\mathbb{P}(T^1 \le n+ f(n),  X^2_{(-\infty,\infty)}[T^1,2n]\ge c_d n/2 )- \mathbb{P}(T^1 \ge n+ f(n))\\
\ge & \mathbb{P}( X^2_{(-\infty,\infty)}[0,n-f(n)]\ge c_d n/2 | 0 \text{ is cut point} )- \mathbb{P}(T^1 \ge n+ f(n))
= 1-o(1)
\end{align*}
if $f(n)=o(n)$ and $f(n)\to \infty$ as $n\to \infty$ with the aid of the estimate in the proof of \eqref{upper+}. 
Moreover, we have $\mathbb{P}(D_2)=1-o(1)$ (see \cite{JO68}) and for any $\epsilon>0$, there is $C_1<\infty$ such that $\mathbb{P}(D_i)\ge 1-\epsilon$ for $i=3,4$ by the local central limit theorem and the reflection principle for random walks (see \cite[Theorem 1.2.1]{Lawler1991}). 
Hence, if we pick sufficiently large $C_1<\infty$, we have 
$\mathbb{P}(\cap_{i=1}^4 D_i) \ge 1/2$. 
Let us define the set $A:=S[2n-sn,2n]$ and $z:=S_{sn}$.  

On the event $D_3 \cap D_4$, we see that we have $\max_{x\in A} \|x-z\| \le 2C_1n^{1/2}$. We now reverse the random walk $S_0,\ldots,S_{sn}$, to create a new random walk $\tilde{S}_0,\ldots,\tilde{S}_{sn}$, defined as,
\begin{equation*}
\tilde{S}_i := S_{sn-i}-S_{sn}.
\end{equation*}
$\tilde{S}$ is distributed as a random walk starting at the point $z$ and, conditional on the value of $z$, it is independent of $A$.

We now seek to compute the probability that the random walk $\tilde{S}$ intersects the set $A$. To do this, we derive a last hitting time decomposition of the random walk $\tilde{S}$ with the set $A$ before time $sn$, by considering the last time of intersection between $\tilde{S}$ and the set $A$. Then, 
\begin{align*}
    \mathbb{P}^z(T_{A}< sn )
    &= \sum_{x \in A} \sum_{i=1}^{sn} \mathbb{P}^z(\tilde{S}_i=x) \mathbb{P}^x(T_A >sn-i)\\
    \\& \ge \sum_{x \in A} \sum_{i=1}^{sn}  \mathbb{P}^z(\tilde{S}_i=x) \mathbb{P}^x(T_A = \infty) 
     \ge \min_{a \in A} \sum_{i=1}^{sn} \mathbb{P}^z(\tilde{S}_i=a) \sum_{x \in A} \mathbb{P}^x(T_A = \infty)\\
    &= \min_{a \in A} \sum_{i=1}^{sn} \mathbb{P}^z(\tilde{S}_i=a) \ca(A)
    \ge c(s) n^{-d/2+1} \ca(A). 
\end{align*}
In the last line, we used the local central limit theorem. It is important that $\|x-z\| < C n^{1/2}$, so for times $ \frac{sn}{2}\le i \le sn $, we know that $\min_{a \in A}\mathbb{P}^z(\tilde{S}_i =a) \ge \frac{c}{n^{d/2}}$, for some constant $c$ (cf. \cite[Theorem 1.2.1]{Lawler1991}). 

On the event $D_2$, we assumed that the capacity of $A$ is greater than $c_d'sn$. Thus, conditionally on the intersection of the events $D_2$, $D_3$, and $D_4$, we can derive a lower bound on the probability of intersection between $\tilde{S}$ (which is also $S[1, sn]$) and $A$ (which is $S[2n - sn, 2n]$) as
\begin{align*}
    \mathbb{P}(S[1,sn] \cap S[2n-sn,2n]\neq \emptyset 
    | \cap_{i=1}^4 D_i) \ge c(s) n^{-d/2+2}.
\end{align*}

We remark now that on this intersection event, we can derive the following lower bound on the cross term $E_n$. First, notice that,
\begin{equation*}
\begin{aligned}
& X[n,2n] \ge c_d  n/2, \quad
 X_n \ge 0,\quad 
 X_{2n} \le 2sn. 
\end{aligned}
\end{equation*}
The first line comes from the event $D_1$, and the second line is trivial.
To see the last line (for the graph distance), notice that 
if we let $S_{t_1}$ and $S_{t_2}$ be the points  of the intersection $\{S_1,\ldots,S_{sn}\}$ and $\{S_{2n-sn},\ldots,S_{2n}\}$ with $t_1 \in [0,sn]$ and $t_2 \in [2n -sn, 2n]$,  the shortest geodesic must be smaller than $t_1 + (2n-t_2)$. Namely, the shortest geodesic must be shorter than the path $S_0,\ldots, S_{t_1}, S_{t_2 +1},\ldots, S_{2n}$.  Now, $t_1$ and $2n-t_2$ are both less than $sn$. The same argument applies to the number of cut points. 


Thus, we have that $E_n =X_n+X[n+1,2n]-X_{2n} \ge (c_d/2- 2s)n  $. 
Therefore, 
\begin{align*}
\mathbb{P}(E_n\ge 2( c_d/2-s)n )\ge 
    \mathbb{P}(\{S[1,sn] \cap S[2n-sn,2n]\neq \emptyset\}
    \cap (\cap_{i=1}^4 A_i) ) \ge c(s) n^{-d/2+2}.
\end{align*}
\end{proof}

\begin{rem}\label{conv:ex1}
By \eqref{upper+}, we obtain $\mathbb{E}[X_n]/n \to c_0$ for some constant $c_0<\infty$ with the aid of Hammersley’s lemma for $d\ge 5$. 
Since $\mathbb{P}(S[1, \infty)\cap  \tilde{S}[0,\infty) \neq \emptyset)< 1$ for independent two random walks in $d\ge 5$ (e.g., see \cite[Theorem 1.2.1]{Lawler1991}), 
we have 
\begin{align*}
\mathbb{E}[T_{\mathcal{G}_n}]
    =\sum_{i=2}^n 
    \mathbb{P}(S[1,i-1 ]\cap  S[i,n] = \emptyset)
    \gtrsim n. 
\end{align*} 
Hence, we have $c_0>0$. 
\end{rem}


As a consequence, we obtain the following lemma.
\begin{lem}[$d=6$]\label{moment:d=6}
We have the following  estimates for the error terms $E_n$:
\begin{equation*}
\mathbb{E}[E_n] \lesssim(\log n), \quad 
\mathbb{E}[E_n^2] \asymp n.
\end{equation*}
As a consequence of the first and last inequality above, $\mathrm{Var}(E_n) \gtrsim n$. 

Let $\Omega_n$ be the event that $\mathbb{E}[E_n] - E_n \ge 0$, then, 
\begin{equation}\label{eq:bndneg}
\mathbb{E}[(E_n - \mathbb{E}[E_n])^2 \mathbbm{1}[\Omega_n]] \lesssim (\log n)^{2} \ll n.
\end{equation}
The previous  equation implies that the largest contribution to the variance of $E_n$ occurs when $E_n - \mathbb{E}[E_n]$ is positive. 

\end{lem}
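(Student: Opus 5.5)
The plan is to derive everything from the tail bounds \eqref{upper+} and \eqref{lower+} of the previous lemma, specialized to $d=6$, where $-d/2+2=-1$. Two elementary facts are used throughout. First, $E_n\ge 0$: concatenating geodesics gives $X[0,2n]\le X[0,n]+X[n,2n]$, and the corresponding inequality holds for cut points. Second, $E_n\lesssim n$ deterministically: for the graph distance $X[0,n]\le n$, and similarly for cut points. Hence $E_n\in[0,Cn]$ always.

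For the first moment we write
\[
\mathbb{E}[E_n]=\sum_{l=1}^{Cn}\mathbb{P}(E_n\ge l)\le \sum_{l=1}^{Cn} C l^{-1}\lesssim \log n,
\]
using \eqref{upper+}. For the second moment we write
\[
\mathbb{E}[E_n^2]=\sum_{l\le Cn}(2l-1)\mathbb{P}(E_n\ge l).
\]
The upper bound follows from \eqref{upper+}: the sum is at most $\sum_{l\le Cn} 2l\cdot Cl^{-1}\lesssim n$. The lower bound follows from \eqref{lower+} applied only on the window $an\le l\le bn$: there each term is at least $l\cdot c l^{-1}=c$, and the window has about $(b-a)n$ terms, giving $\mathbb{E}[E_n^2]\gtrsim n$.

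Combining these, $\mathrm{Var}(E_n)=\mathbb{E}[E_n^2]-\mathbb{E}[E_n]^2\ge cn-C(\log n)^2\gtrsim n$ for large $n$.

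For \eqref{eq:bndneg}, the point is that on $\Omega_n$ we have $0\le E_n\le \mathbb{E}[E_n]$, because $E_n$ is nonnegative. Therefore $0\le \mathbb{E}[E_n]-E_n\le \mathbb{E}[E_n]\lesssim\log n$ on $\Omega_n$, and so
\[
\mathbb{E}\bigl[(E_n-\mathbb{E}[E_n])^2\mathbbm{1}[\Omega_n]\bigr]\le (\mathbb{E}[E_n])^2\lesssim(\log n)^2\ll n.
\]
Subtracting this from $\mathrm{Var}(E_n)\gtrsim n$ shows that the contribution from $\Omega_n^c$ is of order $n$, which is the concluding remark of the statement.

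The only point that needs care is the positivity and the deterministic $O(n)$ bound for $E_n$, in particular for the cut-point count. For $X^2$, subadditivity should come from the fact that a cut point of $\mathcal{G}[0,2n]$ lying in $[0,n]$ is also a cut point of $\mathcal{G}[0,n]$, and similarly for points in $[n,2n]$; the boundary indices must be checked. If exact subadditivity fails there by an $O(1)$ term, this only shifts $E_n$ by a constant and changes none of the estimates. Apart from this, the argument is a routine tail-sum computation.
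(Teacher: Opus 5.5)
Your proposal is correct and matches the paper's proof: layer-cake sums against the tail bounds \eqref{upper+} and \eqref{lower+} with $-d/2+2=-1$, followed by the observation that $0\le E_n\le \mathbb{E}[E_n]\lesssim \log n$ on $\Omega_n$, which gives \eqref{eq:bndneg}. Your additional checks make explicit what the paper uses implicitly: the exact coefficient $2l-1$ in the second-moment sum, the deterministic bound $E_n\le 2n$ that justifies truncating the sums at order $n$, and exact subadditivity for the cut-point count.
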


\begin{proof}
Most of the previous estimates are a consequence of the layer cake representation of the expectation. For example, we have
\begin{equation*}
\mathbb{E}[E_n] = \sum_{l =1}^n \mathbb{P}(E_n \ge l) 
\lesssim \sum_{l=1}^n \frac{1}{l} \lesssim (\log n).
\end{equation*}
In addition, we obtain
\begin{equation*}
\mathbb{E}[E_n^2] = \sum_{l=1}^n l \mathbb{P}(E_n \ge l) 
\lesssim \sum_{l=1}^n  \frac{1}{l} l \lesssim  \sum_{l=1}^n 1 =  n.
\end{equation*}
Finally, we have,
\begin{equation*}
\mathbb{E}[E_n^2] = \sum_{l=1}^n l \mathbb{P}(E_n \ge l) \gtrsim \sum_{l= an}^{bn} l \frac{1}{l} \gtrsim n.
\end{equation*}
Since $E_n \ge 0$, we obtain that $\mathbb{E}[E_n] - E_n \lesssim \log n$. This gives us the final estimate we desire.
\end{proof}

\section{Proof of CLT for $d\ge 7$} 
In this section, we will show  the central limit theorem for $d\ge 7$. 
\subsection{Proof of variance}
In this subsection, we will give an estimate for the variance. 
\begin{lem}
    
For $d\ge 7$, 
\begin{align*}
    \mathrm{Var}(X^{(i)}_n) 
    \sim 
    \alpha_d^{i} n
\end{align*} 
with $\alpha_d^{i}>0$ for $i=1,2$. 
\end{lem}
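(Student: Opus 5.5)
The plan is to run the dyadic decomposition already described, with the error terms $E_n^{(k,l)}$, on lengths $n=2^N$ first and then interpolate to general $n$. The upper tail bound \eqref{upper+} for $d\ge7$ gives $\mathbb{P}(E_n\ge l)\lesssim l^{-d/2+2}$ with $d/2-2\ge 3/2$. Consequently $\mathbb{E}[E_n^2]=\sum_l l\,\mathbb{P}(E_n\ge l)\lesssim \sum_{l\le 2n} l^{-1/2}\lesssim n^{1/2}$, and by the same computation $\mathbb{E}[E_m^2]\lesssim m^{1/2}$ at every scale $m$; for $d\ge 8$ this is even $O(\log m)$ or $O(1)$.

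Set $v_m:=\mathrm{Var}(X_m)$. Write $X_{2m}=X[0,m]+X[m,2m]-E_m$, where the two halves are independent copies of $X_m$ by the Markov property and translation invariance. This gives
\begin{align*}
v_{2m}=2v_m+\mathrm{Var}(E_m)-2\,\mathrm{Cov}(X[0,m]+X[m,2m],E_m).
\end{align*}
The key step is to show $|\mathrm{Cov}(X[0,m],E_m)|\lesssim m^{1/2-\delta}$ for some $\delta>0$, rather than the trivial Cauchy--Schwarz bound $\sqrt{v_m}\,m^{1/4}\asymp m^{3/4}$. This bound would give $v_{2m}=2v_m+O(m^{1/2-\delta})$ once $v_m\lesssim m$ is known; the a priori bound $v_m\lesssim m$ itself follows by induction, using the crude Cauchy--Schwarz estimate $v_{2m}\le 2v_m+C\sqrt{v_m}\,m^{1/4}+Cm^{1/2}$. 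Dividing by $2m$, the sequence $v_{2^N}/2^N$ has summable increments $O(2^{-N(1/2+\delta)})$, so it converges to some $\alpha_d$.

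To get the covariance bound, I would localise $E_m$. Outside an event of probability $\lesssim L^{-d/2+2}$ (the large-loop event used in the proof of \eqref{upper+}), $E_m$ depends only on the walk in $[m-L,m+L]$ with $L=m^{1-\epsilon}$, and on this good event it is a function $F$ of that window. The walk on $[0,m-2L]$ is then nearly independent of the window. By the same loop-localisation, $X[0,m]$ equals $X[0,m-2L]$ plus a window-dependent correction plus an error of small $L^2$-norm. Splitting along these pieces, and using the bounds on $\mathbb{E}[E_m^2]$ and on the bad event together with Cauchy--Schwarz, should give the power saving. This decorrelation is the main obstacle: the cross covariance is exactly where long-range intersections enter, and we must beat the trivial bound by a power of $m$. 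For $d\ge 8$ one can simply afford Cauchy--Schwarz, so the real difficulty is $d=7$.

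For general $n$, write $n$ in binary and decompose $[0,n]$ into dyadic blocks of decreasing size. The same expansion, with the Remark after \eqref{upper+} applied to unequal lengths, gives $v_n=\sum_j v_{2^{a_j}}+o(n)=\alpha_d n+o(n)$.

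It remains to show $\alpha_d>0$. Here I use a martingale or Efron--Stein lower bound, which looks only at local structure. Condition on a stretch of the walk lying strictly between two cut times (cut times occur with positive density by Remark \ref{conv:ex1}), and resample a single block of $r$ steps inside it. With probability bounded below, this changes the local shape, for example a straight segment versus a short loop, and so changes $X$ by a nonzero amount without altering the rest, since cut points separate the graph. Summing over $\asymp n$ disjoint blocks gives $\mathrm{Var}(X_n)\ge \sum_j \mathbb{E}[\mathrm{Var}(\mathbb{E}[X_n\mid\mathcal F_j]-\mathbb{E}[X_n\mid\mathcal F_{j-1}])]\gtrsim n$ in a martingale-difference form. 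Hence $\alpha_d>0$ for both $i=1,2$.
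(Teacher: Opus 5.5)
\textbf{Existence of the limit.} This part follows the paper's route: split at the midpoint, apply Cauchy--Schwarz to the cross term, then a Hammersley/Fekete-type argument. However, the step you call the main obstacle is not needed.

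The crude bound you already have is
$|v_{2m}-2v_m|\lesssim \sqrt{v_m}\,m^{1/4}+m^{1/2}\lesssim m^{3/4}$.
It gives $|v_{2^{N+1}}/2^{N+1}-v_{2^N}/2^N|\lesssim 2^{-N/4}$, which is summable. In your binary interpolation the merging errors are $O(2^{3a_j/4})$, and these sum to $O(n^{3/4})=o(n)$. This is exactly what the paper does, feeding $\delta_{n+m}=O((n+m)^{3/4})$ into Hammersley's lemma. You do need $v_m\lesssim m$ for all $m$, not only dyadic $m$; the paper gets this by taking the supremum over $2^k\le n<2^{k+1}$.

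So drop the localisation sketch. Its advertised outcome $v_{2m}=2v_m+O(m^{1/2-\delta})$ cannot hold anyway in $d=7$: \eqref{lower+} gives $\mathrm{Var}(E_m)\gtrsim m^{1/2}$, while $\mathbb{E}[E_m]=O(1)$.

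\textbf{The genuine gap is $\alpha_d>0$.} Efron--Stein is an \emph{upper} bound. Resampling one block with the rest of the walk frozen controls exactly the Efron--Stein quantity $\mathbb{E}[(X_n-X_n^{(j)})^2]$. A lower bound needs the \emph{averaged} influence of a block. For example, $\mathrm{Var}(X_n)\ge\sum_j\mathrm{Var}(\mathbb{E}[X_n\mid Y_j])$ for the independent block increments $Y_j$, so you need $|\mathbb{E}[X_n\mid Y_j=A]-\mathbb{E}[X_n\mid Y_j=B]|\ge c$ for two fixed configurations $A,B$.

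Your sketch does not deliver this, for three reasons:
\begin{itemize}
\item Knowing that $X_n$ changes by a nonzero amount on a good event is not enough. Off that event, changing a few edges can move the graph distance by an unbounded amount of either sign, so cancellation is not ruled out.
\item Your example (straight segment versus short loop) changes the block's displacement. The whole later walk is then translated, so ``without altering the rest'' fails.
\item Whether a block lies between cut times depends on the future blocks, so you cannot condition on it inside the martingale increments.
\end{itemize}

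The idea can be repaired. Take two configurations of equal length and displacement whose traces are nested, started from $x$:
$A=(e_2,e_1,e_1,-e_2,-e_1,e_1,-e_1,e_1)$ and $B=(e_1,-e_1,e_2,e_1,e_1,-e_2,-e_1,e_1)$.
Both end at $x+2e_1$ and visit the same six sites, and the trace of $B$ is that of $A$ plus the edge $\{x,x+e_1\}$. Hence $X^i_A\ge X^i_B$ deterministically for $i=1,2$: adding an edge cannot increase the distance, and $B$ has no internal cut times. The gaps are exactly $2$ and $4$ when the block is isolated between cut points, an event of probability bounded below for $d\ge5$. This yields $\mathrm{Var}(X_n)\gtrsim n$ directly, for every $n$.

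The paper instead switches single and double backtracks. This leaves the trace graph and the endpoint unchanged while changing the number of steps. Combined with $\mathbb{E}X_n\sim c_0 n$ (Remark \ref{conv:ex1}), it rules out concentration of $X_n$ on scale $o(\sqrt n)$. That argument needs the linear growth of the mean but no monotone local surgery.
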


To show it, we first prove the following lemma. 
\begin{lem}\label{variance}
For $d \geq 7$, the limit
\[
\lim_{n \to \infty} \frac{\operatorname{Var}( X_n )}{n}
\]
exists.
\end{lem}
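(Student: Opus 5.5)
Set $a_n := \mathrm{Var}(X_n)$. The plan is to show that $a_n/n$ is a Cauchy-type sequence along dyadic scales and then interpolate to general $n$. We first use the decomposition $X_{2n} = X[0,n] + X[n,2n] - E_n$. Here $X[0,n]$ and $X[n,2n]$ are independent copies of $X_n$, because they depend on disjoint increments and the functional is translation invariant. Hence
\begin{align*}
\mathrm{Var}(X_{2n}) = 2a_n + \mathrm{Var}(E_n) - 2\,\mathrm{Cov}(X[0,n]+X[n,2n],E_n).
\end{align*}
For $d\ge 7$, the tail bound \eqref{upper+} gives $\mathbb{P}(E_n\ge l)\lesssim l^{-d/2+2}$ with $d/2-2>3/2$, so by the layer cake formula $\mathbb{E}[E_n^2]\lesssim \sum_l l\cdot l^{-d/2+2}<\infty$ uniformly in $n$. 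Thus $\mathrm{Var}(E_n)=O(1)$, and Cauchy--Schwarz bounds the covariance by $O(\sqrt{a_n})$. This yields the recursion
\begin{align*}
|a_{2n} - 2a_n| \le C(1+\sqrt{a_n}).
\end{align*}

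Next we need an a priori bound $a_n \lesssim n$. We get it by iterating the decomposition down to blocks of size $1$ (or of size $2^{-k}n$). This writes $X_n$ as the sum of $2^k$ independent block terms minus $\sum_{k,l} E_n^{(k,l)}$. Each $E^{(k,l)}_n$ has a uniformly bounded second moment. The key point is that the error terms at a fixed level $k$ are nearly uncorrelated. Their centered versions depend mostly on the walk near the junction point $\frac{2l+1}{2^{k+1}}n$. We expect the covariance between errors at junctions separated by distance $m$ to decay like a power of $m$ that is summable when $d\ge7$. One route is to bound it by the probability that a single loop spans both junctions, using $a_1$-type loop estimates as in the proof of \eqref{upper+}. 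The alternative route is to feed the recursion itself into induction. Suppose $a_n\le Kn$. Then $a_{2n}\le 2Kn + C(1+\sqrt{Kn})\le 2K(n)(1+ C'/\sqrt{n})$, and the product $\prod_j(1+C'2^{-j/2})$ converges. So $a_{2^j m}\le K' 2^j m$ along each dyadic chain. This second route avoids the correlation estimate entirely, and we adopt it.

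With $a_n\lesssim n$ in hand, we divide the recursion by $2n$:
\begin{align*}
\left|\frac{a_{2n}}{2n}-\frac{a_n}{n}\right|\lesssim n^{-1/2}.
\end{align*}
This is summable along $n=2^jm$, so $\lim_j a_{2^jm}/(2^jm)=:\beta(m)$ exists for every $m$. It remains to show that $\beta$ does not depend on $m$ and that the full sequence converges. For this we use the unequal-split version from the Remark after the first lemma of Section 2: the error $X[0,n]+X[n,n+m]-X[0,n+m]$ also has an $O(1)$ second moment. This gives $|a_{n+m}-a_n-a_m|\lesssim 1+\sqrt{a_n+a_m}\lesssim 1+\sqrt{n+m}$. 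Hence $a_n$ is approximately additive with error $O(\sqrt{n})$. A de Bruijn--Erdős type near-additivity lemma, or a direct argument writing $n$ in binary as a sum of dyadic blocks and summing the errors geometrically, then gives convergence of $a_n/n$.

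The main obstacle is the uniform second-moment bound for the unequal-split error. The Remark states the tail bound only for $l\le\max(m,n)$. The error is trivially bounded by $2\max(m,n)$ in the graph-distance and cut-point cases, so larger $l$ contribute nothing. We must still check that the constant $C$ is uniform in the ratio $m/n$. This requires going back through the loop estimates $a_1,a_2$ in the proof of \eqref{upper+}.
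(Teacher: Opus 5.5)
Your strategy is the paper's: split $X_{n+m}=X_n+X[n,n+m]-E_{n,m}$, use independence of the two pieces and Cauchy--Schwarz against the error, obtain an a priori bound $\mathrm{Var}(X_n)\lesssim n$, then conclude by near-additivity and Hammersley's lemma. However, the quantitative input you feed into it is wrong in the two lowest dimensions.

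From \eqref{upper+} you only get $\mathbb{E}[E_n^2]\lesssim\sum_{l\le 2n}l\cdot l^{-d/2+2}=\sum_{l\le 2n}l^{3-d/2}$. This is bounded uniformly in $n$ only when $3-d/2<-1$, i.e.\ $d\ge 9$. Your threshold $d/2-2>3/2$ is the wrong one, and it fails at $d=7$ anyway. For $d=8$ this gives only $O(\log n)$. For $d=7$ your claim is actually false: by \eqref{lower+}, $\mathbb{E}[E_n^2]\gtrsim\sum_{an\le l\le bn}l^{-1/2}\asymp n^{1/2}$, while $\mathbb{E}[E_n]\lesssim\sum_l l^{-3/2}=O(1)$, so $\mathrm{Var}(E_n)\asymp n^{1/2}\to\infty$.

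Consequently the recursion $|a_{2n}-2a_n|\le C(1+\sqrt{a_n})$ does not hold for $d=7$. What is true is $|a_{2n}-2a_n|\le \mathrm{Var}(E_n)+2\sqrt{2a_n\mathrm{Var}(E_n)}\lesssim n^{1/2}+n^{1/4}\sqrt{a_n}$. Once $a_n\lesssim n$ is known, this is an error $O(n^{3/4})$, which is exactly the paper's $\delta_{n+m}=O((n+m)^{3/4})$; for $d=8$ it is $O(\sqrt{n\log n})$. The argument survives, because $n^{3/4}/n=n^{-1/4}$ is still summable along dyadic scales and $\sum_n n^{3/4}/n^2<\infty$ for de Bruijn--Erd\H{o}s. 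But your induction and your near-additivity step must be redone with these error sizes; as written they rest on a false bound.

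There is a second, smaller gap in how you get $a_n\lesssim n$. Inducting along one chain $m,2m,4m,\dots$ yields $a_{2^jm}\le K(m)\,2^jm$, where $K(m)$ depends on the base value $a_m/m$, which a priori can be as large as $m$. That is not a uniform bound, yet you use a uniform one when you write $\sqrt{a_n+a_m}\lesssim\sqrt{n+m}$. The fix is to induct over all $n$ at once, which is what the paper's supremum over dyadic blocks is for. Set $M_k:=\max_{n\le 2^k}\mathrm{Var}(X_n)^{1/2}$ and split each $n\le 2^{k+1}$ as $\lfloor n/2\rfloor+\lceil n/2\rceil$, controlling the unequal-split error by the Remark after the first lemma of Section 2. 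This gives $M_{k+1}\le\sqrt2\,M_k+C2^{k/4}$ in $d=7$, hence $M_k\lesssim 2^{k/2}$ uniformly. Once this is in place, your intermediate limits $\beta(m)$ along chains are unnecessary. The two-sided near-additivity $|a_{n+m}-a_n-a_m|\lesssim(n+m)^{3/4}$ together with Hammersley's lemma gives the existence of $\lim_n a_n/n$ directly, as in the paper.
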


For simplicity, we write $\overline{X}:=X-\mathbb{E}[X]$. 

\begin{proof}
We refer to \cite[Lemma 6.2]{LeGall1986}. 

Define
\[
a_k = \sup \left\{ \|\overline{X}_n\|_2 : 2^k \leq n < 2^{k+1} \right\}, 
\]
where $\|X\|_m :=\mathbb{E}[|X|^m]^{1/m}$ for $m\ge 1$. 
For $k \geq 2$, let $n$ be such that $2^k \leq n < 2^{k+1}$. 
Taking expectations and using the triangle inequality, we obtain
\[
\|\overline{X}_{2n}\|_2 \leq \|\overline{X}_n + \overline{X}[n,2n]\|_2 + 2\max\left( \|E_n\|_2, \mathbb{E}[E_n] \right). 
\]
Since $X_n$ and $X[n+1,2n]$ are independent, we have
\[
\|\overline{X}_n + \overline{X}[n+1,2n]\|_2 = \left( \|\overline{X}_n\|_2^2 + \|\overline{X}[n,2n]\|_2^2 \right)^{1/2}.
\]
Using \eqref{upper+} and that $\max\left( \|E_n\|_2, \mathbb{E}[E_n] \right) \leq C(\log n)^M$ with $M<\infty$ for $d \geq 8$ and $\max\left( \|E_n\|_2, \mathbb{E}[E_n] \right) \leq n^{1/4}$ for $d =7$, 
we find
\[
\|\overline{X}_{2n}\|_2 \leq \left( \|\overline{X}_n\|_2^2 + \|\overline{X}[n,2n]\|_2^2 \right)^{1/2} + c_1 n^{1/4}.
\]

From the definition of $a_k$, it follows that
\[
a_k \leq \sqrt{2} a_{k-1} + c_2 2^{k/4},
\]
for some constant $c_2$. Setting $b_k = \frac{a_k}{2^{k/2}}$, we get a recursive inequality $b_k \le b_{k-1} + c_1 2^{-k/4}$. This implies $a_k \lesssim 2^{k/2}$, and hence
\[
\operatorname{Var}(X_n) \lesssim n.
\]
Letting $\gamma_n = \mathbb{E}[(X_n - \mathbb{E}[X_n])^2]$, we have 
\[
\gamma_{n+m} \leq \gamma_n + \gamma_m + \delta_{n+m}, \quad \gamma_{n+m} \geq \gamma_n + \gamma_m - \delta_{n+m}', 
\]
where $\delta_{n+m}, \delta_{n+m}' = \mathcal{O}((n+m)^{3/4})$.
Indeed, if $n\le m$ and $E_{n,m}:= X_n+ X[n,n+m]-X_{n+m}$, 
\begin{align*}
\gamma_{n+m} = &\gamma_n + \gamma_m + 2 \mathbb{E}[(X_n - \mathbb{E}[X_n])(E_{n,m} - \mathbb{E}[E_{n,m}])] \\
+& \mathbb{E}[(X[n,n+m] - \mathbb{E}[X[n,n+m]])(E_{n,m} - \mathbb{E}[E_{n,m}])] + \mathbb{E}[(E_{n,m}- \mathbb{E}[E_{n,m}])^2].
\end{align*}
Then, the Cauchy-Schwarz inequality would give,
\begin{align*}
&\mathbb{E}[(X_n - \mathbb{E}[X_n])(E_{n,m} - \mathbb{E}[E_{n,m}])] \\
&\le \mathbb{E}[(X_n - \mathbb{E}[X_n])^2]^{1/2}\mathbb{E}[(E_{n,m} - \mathbb{E}[E_{n,m}])^2]^{1/2} \lesssim n^{1/2} m^{1/4}, \\
&\mathbb{E}[(X[n,n+m] - \mathbb{E}[X[n,n+m]])(E_{n,m} - \mathbb{E}[E_{n,m}])] 
\lesssim m^{1/2}  m^{1/4},\\
&\mathbb{E}[(E_{n,m} - \mathbb{E}[E_{n,m}])^2] \lesssim m^{1/2}
\end{align*}
and hence
\begin{align*}
    \max(\delta_{n+m}, \delta_{n+m}')
     \lesssim (m^{1/2}m^{1/4}+m^{1/2}).
\end{align*}
Therefore, by Hammersley’s lemma, the existence of the limit
\[
\lim_{n \to \infty} \frac{\gamma_n}{n}
\]
follows.
\end{proof}

To show that the above limit is positive, we will give a toy model to understand the proof. 

\begin{lem}[Toy model]
    We consider i.i.d. $(Y_i)_{i=1}^\infty$ such that $\mathbb{P}(Y_i=1)=p$ for $0<p<1$. 
    Set $T_i=\{s> T_{i-1}: Y_s \neq 1 \}$ 
    and $\hat{G}_m=\{l\le m: T_{l+1}-T_l = 2 \text{ or }3\}$, 
    $G_n=\{l: T_{l+1} \le n, T_{l+1}-T_l = 2 \text{ or }3\}$, 
    $\mathcal{\hat{F}}_m := \sigma ( (Y_{T_l})_{l=1}^{m} \cup  \cup_{l\notin \hat{G}_m} (Y_i)_{i=T_l+1}^{T_{l+1}-1} )$,   
    $\mathcal{F}_n := \sigma ( (Y_{T_l \wedge n})_{l=1}^{\infty} \cup  \cup_{l\notin G_n} (Y_i)_{i=T_l+1}^{T_{l+1}-1} )$. 
    Set $\hat{D}_m:=\{i\in [1, T_m]: Y_i=1 \}$, $D_n:=\{i\in [1, n]: Y_i=1 \}$. 
Let $\mathbb{P}_m$ and $\mathbb{E}_m$ be the probability and expectation for $(Y_{l})_{l=1}^{T_m}$ and 
    \begin{align*}
        \hat{N}_1 &:=|\{i \in [1, T_m] : i-1, i+1 \not\in \hat{D}_m, i \in \hat{D}_m \}|,\\ 
        \hat{N}_2 &:=|\{i \in [1, T_m] : i-1, i+2 \not\in \hat{D}_m, i,i+1 \in \hat{D}_m \}|. 
    \end{align*}
Moreover, let 
    \begin{align*}
        N_1 &:=|\{i \in [1, n] : i-1, i+1 \not\in D_n, i \in D_n \}|,\\ 
        N_2 &:=|\{i \in [1, n] : i-1, i+2 \not\in D_n, i,i+1 \in D_n \}|. 
    \end{align*}
    Namely, $N_1$ denotes the number of events in which $Y_i=1$ occurs exactly once, and $N_2$ denotes the number of events in which $Y_i=1$ appears exactly twice in a row. 
    $\mathbb{P}^n$ denotes the probability for $(Y_{T_l \wedge n})_{l=1}^{\infty}$.

    Now, we state our desired conclusions. For large $M$, there exists $c>0$ such that 
\begin{align}\notag
    &A_n:= \{ |N_1-\mathbb{E}[N_1]| \le M\sqrt{n}, \quad  
    |N_2-\mathbb{E}[N_2]| \le M\sqrt{n}\}, \\
    \label{comb:claim1}
    &\mathbb{P}(A_n) >c>0. 
\end{align}
In addition, for $|a-\mathbb{E}_m[\hat{N}_1]|\le \sqrt{m}$, $|b-\mathbb{E}_m[\hat{N}_2]|\le \sqrt{m}$ and $c\le \sqrt{m}$, 
\begin{align}\label{comb:claim2-}
    \mathbb{P}_m(\hat{N}_1 =a, \hat{N}_2 =b+ c |\mathcal{\hat{F}}_m) 
    \le C \mathbb{P}_m (\hat{N}_1 =a+c, \hat{N}_2 =b |\mathcal{\hat{F}}_m).
\end{align}
As a consequence, for $|a-\mathbb{E}[N_1]|\le M\sqrt{n}$, $|b-\mathbb{E}[N_2]|\le M\sqrt{n}$, 
\begin{align}\label{comb:claim2}
    \mathbb{P}^n(N_1 =a, N_2 =b+ \sqrt{n} ) 
    \le C \mathbb{P}^{n-\sqrt{n}} (N_1 =a+ \sqrt{n}, N_2 =b  ).
\end{align}
\end{lem}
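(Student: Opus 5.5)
Write $Z_l := T_{l+1}-T_l$, the gap between consecutive non-one times. The $Z_l$ are i.i.d. geometric: $Z_l-1$ counts the ones between two successive non-ones, so $\mathbb{P}(Z_l=k)=p^{k-1}(1-p)$. A run of exactly one $1$ corresponds to $Z_l=2$, and a run of exactly two $1$'s corresponds to $Z_l=3$. Apart from boundary effects at $0$ and $n$, which change each count by $O(1)$, we get
\[
N_1=\sum_l \mathbbm{1}\{Z_l=2\},\qquad N_2=\sum_l \mathbbm{1}\{Z_l=3\}.
\]
Everything therefore reduces to statements about multinomial-type counts of an i.i.d. sequence, and I treat the three claims in turn.

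\emph{Claim \eqref{comb:claim1}.} Chebyshev gives it at once. Each $N_j$ is a sum of indicators of $1$-dependent events (for $N_2$, $2$-dependent events) along the time axis, so $\mathrm{Var}(N_j)\lesssim n$. Hence $\mathbb{P}(|N_j-\mathbb{E}N_j|>M\sqrt n)\le C/M^2$ for $j=1,2$. Choosing $M$ large gives $\mathbb{P}(A_n)\ge 1-2C/M^2>c$.

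\emph{Claim \eqref{comb:claim2-}.} Condition on $\hat{\mathcal F}_m$. This fixes the gaps $Z_1,\dots,Z_m$ that are not in $\{2,3\}$, as well as the internal configuration of those long or short runs. The only remaining randomness is which of the $K:=|\hat G_m|$ indices with $Z_l\in\{2,3\}$ have $Z_l=2$ and which have $Z_l=3$. Conditionally, these are i.i.d. with
\[
\mathbb{P}(Z_l=2\mid Z_l\in\{2,3\})=\frac{1}{1+p}=:q .
\]
Thus $\hat N_1\sim\mathrm{Bin}(K,q)$ and $\hat N_2=K-\hat N_1$, with $K$ being $\hat{\mathcal F}_m$-measurable. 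The events $\{\hat N_1=a,\hat N_2=b+c\}$ and $\{\hat N_1=a+c,\hat N_2=b\}$ both force $K=a+b+c$. The ratio of their conditional probabilities is
\[
\frac{\binom{K}{a}q^a(1-q)^{b+c}}{\binom{K}{a+c}q^{a+c}(1-q)^{b}} .
\]
Since $a,b$ lie within $\sqrt m$ of their means, $K$ is of order $m$ and $a,b$ sit within $O(\sqrt m)$ of $Kq$ and $K(1-q)$ respectively (here one uses $\mathbb{E}_m\hat N_1/\mathbb{E}_m\hat N_2=q/(1-q)$). Stirling's formula, i.e.\ the local CLT for the binomial, shows that both probabilities are $\asymp K^{-1/2}e^{-O(x^2)}$, where $x=O(1)$ is the standardized displacement, provided $c\le\sqrt m$. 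The ratio is therefore bounded by a constant $C$, uniformly in the allowed range.

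\emph{Claim \eqref{comb:claim2}.} The remaining difficulty is that $n$ is fixed rather than $m$: shifting a window of size $\sqrt n$ changes the length constraint. I would condition on $\mathcal F_n$ and apply \eqref{comb:claim2-} with $c=\sqrt n$. Converting $\sqrt n$ runs of length two into runs of length one shortens the total time by exactly $\sqrt n$, since each $Z_l$ decreases from $3$ to $2$. This explains the change from $\mathbb{P}^n$ to $\mathbb{P}^{n-\sqrt n}$. Concretely, I would define an injection from configurations on $[1,n]$ with $(N_1,N_2)=(a,b+\sqrt n)$ to configurations on $[1,n-\sqrt n]$ with $(a+\sqrt n,b)$, obtained by deleting one $1$ from $\sqrt n$ selected length-two runs. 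The probability weight changes by a factor of $p^{-\sqrt n}$, and the number of ways to select the runs is compared with the number of preimages through the binomial ratio estimated above. I expect this last step to be the main obstacle: tracking the boundary terms at time $n$ and matching the mean shifts under $\mathbb{P}^n$ versus $\mathbb{P}^{n-\sqrt n}$ (which differ by $O(\sqrt n)$, still inside the $M\sqrt n$ window) is where the constants $C$ and $M$ have to be reconciled.
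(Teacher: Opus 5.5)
Your proposal is correct. For \eqref{comb:claim2-} it is the paper's argument: conditionally on $\hat{\mathcal F}_m$ the short runs are i.i.d.\ with $\mathbb{P}(\text{length one})=1/(1+p)$, and your binomial ratio equals the paper's $\frac{(a+c)!\,b!}{a!\,(b+c)!}p^{c}$, which is then bounded by Stirling.

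\textbf{Claim \eqref{comb:claim1}.} Here you take a different route. The paper uses a CLT for $|G_n|$ and for the block counts under $\mathbb{P}_m$, and then transfers it by conditioning on the number of blocks. That conditioning step is handled rather loosely, since the conditional law is identified with $\mathbb{P}_m$. Your Chebyshev argument is shorter and fully rigorous: $N_1,N_2$ are sums of finitely dependent indicators, so $\mathrm{Var}(N_j)\lesssim n$. It even gives $\mathbb{P}(A_n)\ge 1-C/M^2$.

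\textbf{Claim \eqref{comb:claim2}.} The paper simply writes down the ratio and invokes Stirling. Your double counting is the justification it omits, but two points need tidying.

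First, conditioning on $\mathcal F_n$ does not help at fixed length $n$. Once the long blocks and the number $K$ of short runs are known, $N_1+N_2=K$, while $2N_1+3N_2$ is fixed by the total length. So $(N_1,N_2)$ is already determined.

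Second, the map is not an injection on configurations. It is a bijection between pairs (configuration, chosen set of $\sqrt n$ runs), and it gives the exact identity
\[
\binom{b+\sqrt n}{\sqrt n}\,\mathbb{P}^n(N_1=a,N_2=b+\sqrt n)=p^{\sqrt n}\binom{a+\sqrt n}{\sqrt n}\,\mathbb{P}^{n-\sqrt n}(N_1=a+\sqrt n,N_2=b).
\]
This is exactly the paper's ratio.

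The obstacle you anticipate does not arise. Since $0,n+1\notin D_n$, runs touching the boundary are counted like interior ones, so the bijection is exact. Bounding $\prod_{j=1}^{\sqrt n}\frac{p(a+j)}{b+j}$ uses only $b\asymp n$ and $pa-b=O(M\sqrt n)$. Both follow from the hypotheses on $a,b$ relative to the $\mathbb{P}^n$-means, together with $\mathbb{E}[N_2]=p\,\mathbb{E}[N_1]+O(1)$. No comparison of means under $\mathbb{P}^{n-\sqrt n}$ is needed.
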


\begin{proof}
It is easy to see that $\mathbb{P}(||G_n|-\mathbb{E}[|G_n|]| \le \sqrt{n})>c$. 
Since $(T_{l+1}-T_l)_{l=1}^m$ is i.i.d. and $\mathbb{E}_m[\hat{N}_1]=p(1-p)m$, $\mathbb{E}_m[\hat{N}_2]=p^2(1-p)m$, by the central limit theorem for the sum of Bernoulli random variables, there exists $c>0$ such that 
\begin{align*}
    &\hat{A}_m:= \{ |\hat{N}_1-\mathbb{E}_m[\hat{N}_1]| \le \sqrt{m}, \quad  
    |\hat{N}_2-\mathbb{E}_m[\hat{N}_2]| \le \sqrt{m}\}, \\
    &\mathbb{P}(\hat{A}_m) >c>0. 
\end{align*}

 We now define $m$ to be the largest value of $l$ such that $T_l \le n$. We consider values of $m$ that satisfy $|m- \mathbb{E}[|G_n|]| \le \sqrt{n}$.
Under these conditions, note that if $|\hat{N}_1-\mathbb{E}_m[\hat{N}_1]| \le \sqrt{m}$, then 
$|\mathbb{E}[N_1] - \mathbb{E}_m[\hat{N}_1] |+ \sqrt{m}= O(\sqrt{n})$ 
and hence $|N_1-\mathbb{E}[N_1]| = |\hat{N}_1-\mathbb{E}[N_1]| \le M\sqrt{n}$ for large $M<\infty$. 
In addition, if $|\hat{N}_2-\mathbb{E}_m[\hat{N}_2]| \le \sqrt{m}$, then $|N_2-\mathbb{E}[N_2]| \le M\sqrt{n}$ for  large $M<\infty$.

Then, 
\begin{align*}
    \mathbb{P}(A_n) 
    \ge &\mathbb{P}(||G_n|-\mathbb{E}[|G_n|]| \le \sqrt{n})\\
    \times &\mathbb{P}\left( |N_1-\mathbb{E}[N_1]| \le M\sqrt{n}, 
    |N_2-\mathbb{E}[N_2]| \le M\sqrt{n} \bigg|||G_n|-\mathbb{E}[|G_n|]| \le \sqrt{n}\right)\\
    \ge & c 
    \min_{  |m- \mathbb{E}[|G_n|]| \le \sqrt{n}} 
    \mathbb{P}\left(|N_1-\mathbb{E}[N_1]| \le M\sqrt{n}, 
    |N_2-\mathbb{E}[N_2]| \le M\sqrt{n} \bigg||G_n|  =m \right)\\
   \ge & c  \min_{  |m- \mathbb{E}[|G_n|]| \le \sqrt{n}} 
   \mathbb{P}_m(\hat{A}_m) \ge c
\end{align*}
 and hence we obtain \eqref{comb:claim1}. 
 
Noting that $\mathbb{P}_m(\cdot|\mathcal{\hat{F}}_m )$ is the conditional probability that specifies the value of the variable $Y_i$ except at those locations where $Y_i$ occurs exactly once or twice in a row. Under this conditional probability, we are potentially allowed to change these locations where $Y_i$ occurs exactly once to those  where $Y_i$ occurs twice in a row, or vice versa. 
Then, comparing the combinations, we notice that
\begin{align*}
   \frac{\mathbb{P}_m(\hat{N}_1 =a, \hat{N}_2 =b+ c |\mathcal{\hat{F}}_m)}
    {\mathbb{P}_m (\hat{N}_1 =a+c, \hat{N}_2 =b |\mathcal{\hat{F}}_m)}
    = \frac{(a+b+c)!}{a!(b+c)!}p^{a}p^{2(b+c)} 
    \times  \bigg(\frac{(a+b+c)!}{(a+c)!b!}p^{a+c}p^{2b} \bigg)^{-1}. 
\end{align*}
Hence, Stirling's formula yields \eqref{comb:claim2-}. 
Similarly, 
    \begin{align*}
   \frac{ \mathbb{P}^n(N_1 =a, N_2 =b+ \sqrt{n} )}
    {\mathbb{P}^{n-\sqrt{n}} (N_1 =a+\sqrt{n}, N_2 =b  )}
   = \frac{(a+b+\sqrt{n})!}{a!(b+\sqrt{n})!}p^{a}p^{2(b+\sqrt{n})} 
    \times  \bigg(\frac{(a+b+\sqrt{n})!}{(a+\sqrt{n})!b!}p^{a+\sqrt{n}}p^{2b} \bigg)^{-1}
\end{align*}
and Stirling's formula yields \eqref{comb:claim2}. 
\end{proof}

\begin{lem}
    It holds that $\alpha_d>0$ for $d\ge 7$. 
\end{lem}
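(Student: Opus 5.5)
We have to show that the limit $\alpha_d=\lim_n \mathrm{Var}(X_n)/n$, which exists by Lemma \ref{variance}, is strictly positive. The plan is to prove $\mathrm{Var}(X_n)\gtrsim n$ by exhibiting local random structures whose numbers fluctuate on the scale $\sqrt{n}$ and which change $X_n$ by a definite amount, in direct analogy with the toy model.

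\textbf{Local configurations.} We work with the sequence of cut times of $S[0,n]$. Between consecutive cut times the walk makes some excursion, and we single out two local configurations between successive cut times, say of bounded length $L$:
\begin{itemize}
\item a configuration of type $1$, in which the piece is a straight segment, so every point on it is a cut point and the piece contributes its full length to $X_n$;
\item a configuration of type $2$, a small loop of the same time-length that is traversed so that it contributes strictly less (by at least $1$) to both the graph distance and the number of cut points.
\end{itemize}
Because $d\ge 5$ and the probability that distant pieces intersect is summable, the events "this block is a type $1$ (resp. type $2$) configuration and its neighbours are cut times" play the role of the Bernoulli pattern events in the toy model. Let $N_1,N_2$ count the blocks of each type.

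\textbf{Resampling argument.} We condition on the $\sigma$-field $\mathcal F_n$ generated by everything except the choice, within each designated block, between type $1$ and type $2$. This is an exchangeable choice, since both shapes have the same number of steps and hence equal probability $(2d)^{-L}$. Conditionally, the pair $(N_1,N_2)$ is distributed combinatorially exactly as in the toy model. Moreover, $X_n = F(\mathcal F_n) - \kappa N_2$ for some $\kappa\ge 1$: switching a block between the two types does not affect the cut structure elsewhere, because the blocks are separated by cut times and the two shapes occupy sites inside a common small box, which we require to be avoided by the rest of the walk.

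Given this, \eqref{comb:claim1} and \eqref{comb:claim2-} show that, conditionally on $\mathcal F_n$, with probability bounded below $N_2$ takes two values differing by $\sqrt n$ with comparable probabilities. Hence $\mathrm{Var}(X_n\mid\mathcal F_n)\gtrsim \kappa^2 n$ on an event of probability at least $c$. This gives $\mathrm{Var}(X_n)\ge \mathbb E[\mathrm{Var}(X_n\mid\mathcal F_n)]\gtrsim n$, and therefore $\alpha_d>0$.

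\textbf{Main obstacle.} The delicate step is verifying that switching a block's type changes $X_n$ by exactly the deterministic amount $\kappa$ and leaves everything else invariant. For this we must require the small box around each designated block to be avoided by the rest of the path. That requirement is not local, because it depends on far-away returns. We would handle it by letting the "protected" status be $\mathcal F_n$-measurable: the avoidance event uses the union of both shapes' boxes, so it does not depend on which shape is chosen.

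It then remains to check that the number of protected blocks is still of linear order with positive probability. This follows from transience in $d\ge 5$ together with an ergodic or law-of-large-numbers argument for the indicators, as in Remark \ref{conv:ex1}. The boundary effects at times near $0$ and $n$ are handled by the $\mathbb P^n$ versus $\mathbb P^{n-\sqrt n}$ comparison in \eqref{comb:claim2}.
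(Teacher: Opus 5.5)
Your resampling strategy, bounding $\mathrm{Var}(X_n)\ge\mathbb E[\mathrm{Var}(X_n\mid\mathcal F_n)]$ by swapping local gadgets, is a legitimate alternative to the paper's argument. However, the gadget you specify does not work.

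\textbf{The gadget must preserve endpoints.} A straight segment of length $L$ is the only nearest-neighbour path of length $L$ with its displacement, since all $L$ steps must equal the same unit vector. So a loop of the same time-length necessarily ends at a different point. Swapping the two shapes then translates $S_s$ for every $s$ after the block. This changes the past/future intersections, the cut structure and the protected status of other blocks, so the swap does not fix the rest of the path and $X_n=F(\mathcal F_n)-\kappa N_2$ fails.

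You need two increment words of equal length \emph{and equal sum}, one self-avoiding and one with a backtrack. For example, take $A=(e_2,e_1,-e_2)$ and $B=(e_1,e_2,-e_2)$. Both lead from $x$ to $x+e_1$ inside the square $U=\{x,x+e_1,x+e_2,x+e_1+e_2\}$. $A$ traverses three distinct edges, while $B$ traverses only $\{x,x+e_1\}$ and $\{x+e_1,x+e_1+e_2\}$. Hence the internal graph distance drops from $3$ to $1$ and the block loses two cut times, so $\kappa=2$ for both quantities.

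\textbf{Box-avoidance is not enough.} If $S[0,t_j-1]\cap S[t_j+L+1,n]\neq\emptyset$, a geodesic can bypass the gadget, and no time in the block is a cut time under either shape. The effect of the swap then depends on the rest of the path. The protected event should therefore require all of the following:
\begin{enumerate}
\item blocks at deterministic times, not ``between cut times'', since cut times move under the swap;
\item increments in $\{A,B\}$;
\item all times outside candidate blocks avoid $U_j$ (the square $U$ at $x=S_{t_j}$);
\item $U_j\cap U_k=\emptyset$ for every other candidate block $k$, so another block's swap cannot alter $j$'s status;
\item the separation $S[0,t_j-1]\cap S[t_j+L+1,n]=\emptyset$.
\end{enumerate}
All of these are swap-invariant. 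Each protected gadget then separates $S_0$ from $S_n$ in $\mathcal G$, so $X_n=F(\mathcal F_n)-2N_2$.

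Conditionally on $\mathcal F_n$, $N_2\sim\mathrm{Bin}(N,1/2)$ exactly. Hence $\mathrm{Var}(X_n)\ge\mathbb E[\mathrm{Var}(X_n\mid\mathcal F_n)]=\mathbb E[N]$. The bound $\mathbb E[N]\ge cn$ follows from transience and the fact that, in $d\ge5$, two independent walks from neighbouring points never meet with positive probability. You need none of \eqref{comb:claim1}--\eqref{comb:claim2-}, no law of large numbers, and no $\mathbb P^n$ versus $\mathbb P^{n-\sqrt n}$ comparison \eqref{comb:claim2}, because your swap does not change the time-length.

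\textbf{Comparison with the paper.} Once repaired, your argument is genuinely different. The paper swaps single and double backtracks, which changes no edge of $\mathcal G$. So $X$ is exactly unchanged without any isolation or separation hypotheses. The price is that the walk's length changes by order $\sqrt n$. The paper therefore needs the Stirling comparison of the toy model, together with the linear growth $\mathbb E X_n\sim c_0 n$, $c_0>0$ (Remark \ref{conv:ex1}). These turn ``same value at two different lengths'' into a contradiction with $\mathrm{Var}(X_n)=o(n)$.

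Your length-preserving swap costs the invariance bookkeeping above. In return it gives $\mathrm{Var}(X_n)\ge cn$ directly, for every $n$ and every $d\ge5$, without using $c_0>0$ or any comparison of laws.
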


\begin{proof}
Set $D:=\{i\in [0, n/2]: S_{2i}=S_{2i+2}=S_{2i+1}+e \}$, where $e$ is some unit vector, $Y_i:=1\{i\in D\}$ and $T_i=\{s> T_{i-1}: Y_s \neq 1 \}$. 
    Let 
    \begin{align*}
        N_1 &:=|\{i \in [1, n/2] : i-1, i+1 \not\in D, i \in D \}|,\\ 
        N_2 &:=|\{i \in [1, n/2] : i-1, i+2 \not\in D, i,i+1 \in D \}|,  
    \end{align*}
    and $\mathbb{P}^n$ denotes the probability for $(Y_{T_l \wedge (n/2)})_{l=1}^{\infty}$. 
$N_1$ counts the number of backtracks, while $N_2$ counts the number of double backtracks. We remark that changing a single backtrack to a double backtrack will not change the graph distance of the walk. However, the act of changing all single backtracks to double backtracks and vice versa would substantially change the length of the random walk. The existence of a procedure to change the volume without changing the graph distance essentially allows us to argue that the variance of the graph distance must be non-trivial. 
    
By \eqref{comb:claim1}, there exists $c_1>0$ such that 
\begin{align*}
    &A_n:= \{ |N_1-\mathbb{E}[N_1]| \le M\sqrt{n}, 
    \quad |N_2-\mathbb{E}[N_2]| \le M\sqrt{n} \}, \\
    &\mathbb{P}(A_n) >c_1>0. 
\end{align*}
In addition, by \eqref{comb:claim2}, for any $d>0$, 
\begin{align*}
& \mathbb{P}^n( X_n = d, A_n ) \\
= &  \sum_{\substack{|a-\mathbb{E}[N_1]| \le M\sqrt{n},\\
    |b-\mathbb{E}[N_2]| \le M\sqrt{n}}}
\mathbb{P}^n( X_n = d, N_1=a, N_2=b+\sqrt{n} ) \\
\le & c_2 \sum_{\substack{|a-\mathbb{E}[N_1]| \le M\sqrt{n},\\
    |b-\mathbb{E}[N_2]| \le M\sqrt{n}}}
\mathbb{P}^{n-\sqrt{n}}( X_{n-\sqrt{n}} = d, N_1=a+\sqrt{n}, N_2=b ) \\
  \le & c_2 \mathbb{P}(X_{n-\sqrt{n}} = d). 
\end{align*}

Now, assume for contradiction that for any $\epsilon>0$, as $n\to \infty$, 
\begin{align}\label{cont:conv}
    \mathbb{P}(|\overline{X}_n| \le \epsilon \sqrt{n})\to 1. 
\end{align}
Since $\mathbb{E}X_n/n \to c_0 $ by Remark \ref{conv:ex1}, we have $\limsup_{n\to \infty}(\mathbb{E}X_n- \mathbb{E}X_{n-\sqrt{n}})/\sqrt{n} \ge c_3 $ for some $c_3>0$. 
Indeed, if $\limsup_{n\to \infty}(\mathbb{E}X_n- \mathbb{E}X_{n-\sqrt{n}})/\sqrt{n} < c_3 $ and we set $a_m = \mathbb{E}X_{m^2}$,  
then $(a_m -a_{m-1})/m \le c_3$ for large $m$; hence, $a_m \le  c_0 m^2/2$ for large $m$ if we pick $c_3$ were chosen small enough. 
We now pick a subsequence $n$ such that $(\mathbb{E}X_n- \mathbb{E}X_{n-\sqrt{n}}) \ge c_3 \sqrt{n}$ and choose $\epsilon < c_3/2$. 
Then, 
\begin{align*}
    c_1/2 
    \le& \mathbb{P}(\{|\overline{X}_n| \le \epsilon \sqrt{n}\} \cap  A_n)\\ 
    \le & c_2 \mathbb{P}(X_{n-\sqrt{n}} \ge \mathbb{E}X_n - \epsilon \sqrt{n}  )\\
    \le & c_2  \mathbb{P}( X_{n-\sqrt{n}} \ge \mathbb{E}X_{n-\sqrt{n}} + c_3 \sqrt{n}/2  ). 
\end{align*}
Hence, we obtain 
\begin{align*}
     \mathbb{P}( X_{n-\sqrt{n}} \ge \mathbb{E}X_{n-\sqrt{n}} + c_3 \sqrt{n}/2  ) >c 
\end{align*}
and this contradicts \eqref{cont:conv}. 
Therefore, we obtain $\liminf_{n\to \infty}\mathbb{P}(|\overline{X}_n| \le \epsilon \sqrt{n})<1$ and hence $\alpha_d>0$. 
\end{proof}

\subsection{Proof of CLT}

\begin{lem}\label{2+e}
For $d \geq 7$ and $n \in \mathbb{N}$, we have
\[
\mathbb{E}[(\overline{X}_n)^{2+\epsilon}] \lesssim n^{1+\epsilon/2}.
\]
\end{lem}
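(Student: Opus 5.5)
We run the same dyadic recursion used in the proof of Lemma \ref{variance}, now in the $L^{2+\epsilon}$ norm; here $\epsilon>0$ is small, fixed depending on $d$. Set
\[
a_k:=\sup\{\|\overline{X}_n\|_{2+\epsilon}: 2^k\le n<2^{k+1}\}.
\]
Fix $n$ in this range. The decomposition $X_{2n}=X_n+X[n,2n]-E_n$ gives
\[
\|\overline{X}_{2n}\|_{2+\epsilon}\le \|\overline{X}_n+\overline{X}[n,2n]\|_{2+\epsilon}+2\|E_n\|_{2+\epsilon}.
\]
For the error term, the layer cake formula and \eqref{upper+} give
\[
\mathbb{E}[E_n^{2+\epsilon}]\lesssim \sum_{l\le 2n} l^{1+\epsilon}\, l^{-d/2+2}.
\]
For $d\ge 7$ this is $O(n^{\epsilon-1/2})$ when $\epsilon>1/2$, and bounded when $d\ge 8$ and $\epsilon$ is small. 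Either way $\|E_n\|_{2+\epsilon}\lesssim n^{1/4}$, say, which is $o(n^{1/2})$ with a power to spare. Non-dyadic $n$ are handled exactly as in Lemma \ref{variance}, using the Remark after \eqref{upper+} for unequal splits.

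The main step is the sum of the two independent centered pieces $U=\overline{X}_n$ and $V=\overline{X}[n,2n]$. For these we use a Rosenthal-type (or von Bahr--Esseen-type) inequality for $p=2+\epsilon\in(2,3)$:
\[
\mathbb{E}|U+V|^{p}\le \big(\mathbb{E}U^2+\mathbb{E}V^2\big)^{p/2}+C_p\big(\mathbb{E}|U|^{p}+\mathbb{E}|V|^{p}\big)^{?}.
\]
A cleaner route is the elementary inequality
\[
|u+v|^p\le |u|^p+|v|^p+p\,(uv)\,\mathrm{sgn}\cdot\ldots+C_p\big(|u|^{p-1}|v|+|u||v|^{p-1}\big).
\]
Independence kills the linear cross term because the variables are centered. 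The remaining terms are controlled by Hölder:
\[
\mathbb{E}|U|^{p-1}\,\mathbb{E}|V|\le \|U\|_p^{p-1}\|V\|_2 .
\]
Lemma \ref{variance} gives $\|V\|_2\lesssim n^{1/2}$, so
\[
\mathbb{E}|U+V|^p\le \mathbb{E}|U|^p+\mathbb{E}|V|^p+C n^{1/2}\big(\|U\|_p^{p-1}+\|V\|_p^{p-1}\big).
\]
Writing $A_k:=a_k^p$ and plugging this in yields the recursion
\[
A_{k+1}\le 2A_k+C2^{k/2}A_k^{(p-1)/p}+\text{(error from $E_n$)}.
\]
The error from $E_n$ is lower order, since Minkowski together with $\|E_n\|_p\lesssim 2^{k/4}$ only perturbs $a_{k+1}$ by $O(2^{k/4})$.

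Normalize $b_k:=A_k/2^{kp/2}$; note $2^{kp/2}=2^{k(1+\epsilon/2)}$. Since $p>2$, the factor $2A_k$ becomes $2^{1-p/2}b_k$ with $2^{1-p/2}<1$. The cross term becomes
\[
C\,2^{k/2}\,2^{k(p-1)/2}\,b_k^{(p-1)/p}\big/2^{(k+1)p/2}=C' b_k^{(p-1)/p}.
\]
So
\[
b_{k+1}\le \theta b_k+C'b_k^{1-1/p}+C'',\qquad \theta<1.
\]
This forces $\sup_k b_k<\infty$: if $b_k$ is large, the sublinear term is absorbed into $(1-\theta)b_k/2$. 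The conclusion is $\mathbb{E}|\overline{X}_n|^{2+\epsilon}\lesssim n^{1+\epsilon/2}$.

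The expected obstacle is the moment inequality for sums of two independent centered variables in the range $p\in(2,3)$. It must keep the leading coefficient in front of $\mathbb{E}|U|^p+\mathbb{E}|V|^p$ exactly 1, so that the contraction factor $2^{1-p/2}<1$ appears. The cross terms must then be of strictly lower homogeneity, using the already established variance bound $\mathrm{Var}(X_n)\lesssim n$. One should check the elementary pointwise inequality carefully, or alternatively cite Rosenthal's inequality with constant $C_p$. With Rosenthal, the recursion is instead $a_{k+1}^p\le C_p\big(2a_k^p+(2^{k})^{p/2}\big)$, and a constant $C_p>2^{p/2-1}$ would break the contraction. For that reason the sharp expansion above, exploiting the vanishing of the first-order cross term, is preferred.
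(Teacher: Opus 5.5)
Your argument is correct and is essentially the paper's proof. Both use the same dyadic recursion in $L^{2+\epsilon}$, driven by the bound $\mathbb{E}|U+V|^p\le \mathbb{E}|U|^p+\mathbb{E}|V|^p+C(\mathbb{E}|U|^{p-1}\mathbb{E}|V|+\mathbb{E}|U|\,\mathbb{E}|V|^{p-1})$, which follows from the pointwise inequality $|u+v|^p\le |u|^p+|v|^p+C_p(|u|^{p-1}|v|+|u||v|^{p-1})$; that pointwise inequality has no linear term, so centering plays no role and you can drop the garbled first-order term. The only difference is that the paper bounds $\mathbb{E}|U|^{1+\epsilon}\le \|U\|_2^{1+\epsilon}$ via $\mathrm{Var}(X_n)\lesssim n$, so the cross terms are simply $O(n^{1+\epsilon/2})$ and the recursion $a_k\le 2^{1/(2+\epsilon)}a_{k-1}+c\,2^{k/2}$ is linear, which avoids your sublinear absorption step.

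There is one harmless arithmetic slip. For $d=7$ the layer-cake sum is $O(n^{1/2+\epsilon})$, and for $d=8$ it is $O(n^{\epsilon})$ rather than bounded. So $\|E_n\|_{2+\epsilon}\lesssim n^{(1+2\epsilon)/(4+2\epsilon)}$ rather than $n^{1/4}$, which is still $o(n^{1/2})$ for $\epsilon<1$.
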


\begin{proof}
Define
\[
a_k = \sup \left\{ \|\overline{X}_n \|_{2+\epsilon} : 2^k \leq n < 2^{k+1} \right\}.
\]
Fix $k \geq 2$, and let $n$ satisfy $2^k \leq n < 2^{k+1}$. Then, by the triangle inequality,
\[
\|\overline{X}_{2n}\|_{2+\epsilon}  
\le \|\overline{X}_{n} + \overline{X}[n,2n]\|_{2+\epsilon} +  \|E_n\|_{2+\epsilon}.
\]
Using the inequality
\begin{align*}
&\|\overline{X}_{n} + \overline{X}[n,2n]\|_{2+\epsilon} \\
\le &\big( \mathbb{E}[|\overline{X}_{n}|^{2+\epsilon}] + 
\mathbb{E}[|\overline{X}[n,2n]|^{2+\epsilon}] +  C\mathbb{E}[|\overline{X}_{n}|^{1+\epsilon}] \mathbb{E}[|\overline{X}[n,2n]|] \\&\hspace{1cm} 
 +C\mathbb{E}[|\overline{X}_{n}|] \mathbb{E}[|\overline{X}[n,2n]|^{1+\epsilon}]\big)^{1/(2+\epsilon)},
\end{align*}
and \eqref{upper+} and Lemma \ref{variance}, which gives $\mathbb{E}[|\overline{X}_{n}|^{1+\epsilon}] \mathbb{E}[|\overline{X}[n,2n]|] \lesssim n^{1+\epsilon/2}$, we obtain 
\[
\|\overline{X}_{2n}\|_{2+\epsilon} \le \left( \mathbb{E}[\overline{X}_{n}^{2+\epsilon}] + \mathbb{E}[\overline{X}[n,2n]^{2+\epsilon}] \right)^{1/(2+\epsilon)} + C n^{1/2}.
\]

From the definition of $a_k$, it follows that
\[
a_k \leq 2^{1/(2+\epsilon)} a_{k-1} + c_2 \cdot (2^{k})^{1/2},
\]
for some constant $c_2 > 0$. 
Define $b_k = 2^{-k/(2+\epsilon)}  a_k$. Then we get
\[
b_k \leq  b_{k-1} + c_2 2^{-k/(2+\epsilon)} (2^{k})^{1/2},
\]
which implies that $(b_k) \lesssim 2^{-k/(2+\epsilon)} (2^{k})^{1/2}$. Thus, $a_k \lesssim 2^{k/2} $, and hence for all $n$,
\[
\left( \mathbb{E}[\overline{X}_n^{2+\epsilon}] \right)^{1/(2+\epsilon)} \lesssim \sqrt{n} .
\]
This completes the proof.
\end{proof}

\begin{proof}[Proof of Theorem 1.1]
We decompose $X_n$ as follows: 
\[
X_n = \sum_{i=1}^{2^L} \tilde{X}_{(L,i)} + \mathcal{E},
\]
where $\tilde{X}_{(L,i)}:=X[(i-1)n2^{-L}, in2^{-L}]$ and  $\mathcal{E}:= -\sum_{k=0}^L \sum_{l=1}^{2^k} E_n^{(k,l)}$. 
Set $L = \lfloor \frac{1}{4} \log_2 n \rfloor$ so that $2^L = n^{1/4}$. Then,
\begin{align}\label{cross:assum1}
\frac{\mathbb{E}[|\mathcal{E}|]}{\sqrt{n}} \lesssim \frac{n^{1/4} \log n}{\sqrt{n}} \to 0 \quad \text{as } n \to \infty.
\end{align}
Hence, it suffices to prove that
\[
\frac{1}{\sqrt{n}} \sum_{i=1}^{2^L} \left( \tilde{X}_{(L,i)} - \mathbb{E}[\tilde{X}_{(L,i)}] \right) \xrightarrow{d} \mathcal{N}(0,\alpha_d).
\]

We now apply the Lindeberg–Feller central limit theorem. 
We write $Z_{L,i} := \tilde{X}_{(L,i)} - \mathbb{E}[\tilde{X}_{(L,i)}]$. 

We verify the Lindeberg–Feller conditions:

\textbf{(i) Variance Condition.} By Lemma 3.3, we have
\[
\sum_{i=1}^{2^L} \operatorname{Var}(\tilde{X}_{(L,i)}) \sim 2^L \cdot \alpha_d \cdot \frac{n}{2^L} = \alpha_d n,
\]
hence
\[
\frac{1}{n} \sum_{i=1}^{2^L} \operatorname{Var}(\tilde{X}_{(L,i)}) \to \alpha_d > 0.
\]

\textbf{(ii) Lindeberg Condition.} For any $\epsilon > 0$, we must show that as $n\to \infty$, 
\[
\sum_{i=1}^{2^L} \frac{1}{n} \mathbb{E}\left[ |Z_{L,i}|^2 \cdot \mathbf{1}_{\{|Z_{L,i}| > \epsilon \sqrt{n} \}} \right] \to 0.
\]
Using H\"{o}lder and Chebyshev’s inequality:
$$
\mathbb{E}\left[ |Z_{L,i}|^2 \cdot \mathbf{1}_{\{|Z_{L,i}| > \epsilon \sqrt{n} \}} \right] \le \mathbb{E}[|Z_{L,i}|^{2+\delta}]^{2/(2+\delta)} \mathbb{P}(|Z_{L,i}| \ge \epsilon \sqrt{n})^{\delta/(2+\delta)}.
$$
By Lemma \ref{2+e}, 
$$
\mathbb{E}[|Z_{L,i}|^{2+\delta}]^{2/(2+\delta)} \lesssim  \frac{n}{2^L}.
$$

Furthermore, Chebyshev's inequality should give,
$$
\mathbb{P}(|Z_{L,i}| > \epsilon \sqrt{n}) \le \frac{\mathrm{Var}(\tilde{X}_{L,i})}{\epsilon^2 n} \lesssim \frac{n}{\epsilon^2 n (2^L)} = \frac{1}{\epsilon^2 (2^L)}.
$$
Thus, we have,
$$
\mathbb{E}\left[ |Z_{L,i}|^2 \cdot \mathbf{1}_{\{|Z_{L,i}| > \epsilon \sqrt{n} \}} \right] \le \frac{n}{(\epsilon^2)^{\delta/(2+\delta)} (2^L)^{1 + \delta/(2+\delta)}}.
$$
Finally, we can sum up all of the appropriate terms and see that,
$$
\sum_{i=1}^{2^L} \frac{1}{n} \mathbb{E}\left[ |Z_{L,i}|^2 \cdot \mathbf{1}_{\{|Z_{L,i}| > \epsilon \sqrt{n} \}} \right] \lesssim 2^{L} \frac{1}{n} \frac{n}{(\epsilon^2)^{\delta/(2+\delta)}(2^L)^{1+ \frac{\delta}{(2+\delta)}}} \to 0,
$$
as $n \to \infty$ and $2^L \to \infty$.

Therefore, both Lindeberg–Feller conditions are satisfied, and we conclude
\[
\frac{\sum_{i=1}^{2^L} (\tilde{X}_{(L,i)} - \mathbb{E}[\tilde{X}_{(L,i)}])}{\sqrt{n}} \xrightarrow{d} \mathcal{N}(0,\alpha_d)
\]
as $n \to \infty$ and $2^L \to \infty$ and hence combining this with \eqref{cross:assum1}, we see that  
\begin{align*}
    \frac{X_n-\mathbb{E}X_n}{\sqrt{ \mathrm{Var}(X_n) }}
    \stackrel{d}{\rightarrow} \mathcal{N}(0,1)
\end{align*}
as $n \to \infty$. 
\end{proof}

\begin{rem}
We remark here that all the proofs in this section will hold without change as long as we assume the tail statistic $\mathbb{P}(X_n + X_{[n,2n]} - X_{2n} \ge l) \le C l^{-d/2 +2}$, for $d \ge 7$. This establishes Proposition \ref{prop1+} and Theorem \ref{m1+} for $d \ge 7$. 
\end{rem}

\section{6 Dimensions}

In this section, we estimate the variance and the limit in distribution of $\frac{X_n -\mathbb{E}[X_n]}{\sqrt{\mathrm{Var}(X_n)}}$ in $d=6$. 

\subsection{Lower and Upper Bounds on the Variance}

\subsubsection{Graph distance and cut points}

Now, we produce an upper bound to the variance of the graph distance.
\begin{lem} \label{lem:uprbndvar}
We have the following upper bound on the variance of $X_n$:   
\begin{equation*}
 \mathrm{Var}(X_n) \lesssim n (\log n). 
 \end{equation*}

\end{lem}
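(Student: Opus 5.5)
We prove the bound through the dyadic recursion $X_{2n} = X_n + X[n,2n] - E_n$, following the template of Lemma \ref{variance}. The new ingredient is a sharp control of the cross covariances: the crude Cauchy--Schwarz bound would only give $\mathrm{Var}(X_n)\lesssim n(\log n)^2$. Write $\gamma_n := \mathrm{Var}(X_n)$. Since $X_n$ and $X[n,2n]$ are independent,
\begin{align*}
\gamma_{2n} = 2\gamma_n + \mathrm{Var}(E_n) - 2\,\mathrm{Cov}(X_n,E_n) - 2\,\mathrm{Cov}(X[n,2n],E_n).
\end{align*}
By Lemma \ref{moment:d=6} we have $\mathrm{Var}(E_n)\le \mathbb{E}[E_n^2]\lesssim n$. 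If the two covariances are also $O(n)$, then $\gamma_{2n}\le 2\gamma_n + Cn$. Setting $b_k:=\gamma_{2^k}/2^k$ gives $b_k\le b_{k-1}+C$, hence $b_k\lesssim k$, i.e.\ $\gamma_{2^k}\lesssim 2^k k$. Non-dyadic $n$ are handled as in Lemma \ref{variance}: take a sup over $2^k\le n<2^{k+1}$ and use the unbalanced version of the recursion, with the Remark after \eqref{upper+} bounding the tails of $X[0,n]+X[n,n+m]-X[0,n+m]$.

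So the task reduces to proving
\begin{align*}
|\mathrm{Cov}(X_n,E_n)| \lesssim n.
\end{align*}
Cauchy--Schwarz gives only $\sqrt{n\log n}\cdot\sqrt n$, which is too weak by a factor $\sqrt{\log n}$ and would produce the extra log. The key step is therefore to decorrelate $E_n$ from the bulk of $X_n$. I would do this with the layer-cake decomposition of $E_n$ together with the structure used in proving \eqref{upper+}: the event $\{E_n\ge l\}$ is essentially contained in the event that a loop of length $\gtrsim l$ straddles time $n$. This event depends only on $S[n-Cl,n+Cl]$ and on an intersection event between $S[n-Cl,n]$ and $S[n,n+Cl]$ (up to the tails of $T^{\pm1}$ controlled in \eqref{upper+}).

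Accordingly, I would split $X_n = X[0,n-l] + X[n-l,n] - E'_l$, where $E'_l$ is again a cross term. The piece $X[0,n-l]$ is then nearly independent of $\mathbf 1\{E_n\ge l\}$. The dependence enters only through the conditioning of $S[n-l,n]$, which shifts $\mathbb{E}[X[0,n-l]]$ by $O(1)$ at most, since a cut point near time $n-l$ separates the two pieces with high probability. This gives
\begin{align*}
\mathrm{Cov}(X_n,\mathbf 1\{E_n\ge l\}) \lesssim \bigl(\mathrm{Var}(X[n-l,n])^{1/2}+ \|E'_l\|_2\bigr)\,\mathbb P(E_n\ge l)^{1/2}+\dots
\end{align*}
A cleaner route is to bound $|\mathrm{Cov}(X_n,\mathbf 1\{E_n\ge l\})|\le \mathbb{E}[|X[n-l,n]-\mathbb{E}X[n-l,n]|\,;\,E_n\ge l] + O(\mathbb P(E_n\ge l))$. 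On $\{E_n\ge l\}$, the deviation of $X[n-l,n]$ is at most $l$, which bounds the first term by $l\,\mathbb P(E_n\ge l)\lesssim l\cdot l^{-1}=1$. Summing over $l\le 2n$ gives $|\mathrm{Cov}(X_n,E_n)|\lesssim n$, as needed. The same argument applies to $\mathrm{Cov}(X[n,2n],E_n)$ by time reversal.

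The main obstacle is making the near-independence of $X[0,n-l]$ and $\{E_n\ge l\}$ rigorous. Long-range intersections between $S[0,n-l]$ and $S[n,2n]$ can themselves create large $E_n$, and they correlate with $X[0,n-l]$. To handle this, I would enlarge the window dyadically: contributions from loops reaching back a distance $\in[2^j,2^{j+1}]$ carry probability $\lesssim 2^{-j}$ in $d=6$ and affect $E_n$ and the relevant piece of $X_n$ by at most $2^{j+1}$. The goal is for each dyadic shell to contribute $O(n/\log n)$-summable amounts, so that the total covariance stays $O(n)$. If this fails, the fallback is a direct bound $\mathrm{Cov}\lesssim n$ obtained by conditioning on the first global cut times $T^{\pm1}$ around $n$ and using the renewal structure of cut points from \cite{BSZ03}.
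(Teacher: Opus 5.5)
Your reduction is correct and differs from the paper's route. Since $X_n$ and $X[n,2n]$ are independent and $\mathbb{E}[E_n^2]\lesssim n$ (Lemma \ref{moment:d=6}), the bound $|\mathrm{Cov}(X_n,E_n)|+|\mathrm{Cov}(X[n,2n],E_n)|\lesssim n$ yields $\gamma_{2n}\le 2\gamma_n+Cn$, and hence the lemma. The gap is that this covariance bound is never proved. Your ``cleaner route'' inequality amounts to the claim that $X[0,n-l]-G_l$, where $G_l:=X[0,n-l]+X[n-l,n]-X_n$, has covariance $O(\mathbb{P}(E_n\ge l))$ with $\mathbf{1}\{E_n\ge l\}$. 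The justification you offer for this does not hold. The event $\{E_n\ge l\}$ is not localized in $[n-Cl,n+Cl]$: a loop of length $\gtrsim l$ straddling $n$ can reach back to any time in $[0,n-l]$. Moreover, intersections of $S[0,n-l]$ with $S[n,2n]$ occur with probability of order $l^{-1}$, the same order as $\mathbb{P}(E_n\ge l)$ itself. Conditioning on such an intersection changes the law of all of $S[0,n-l]$, whose graph distance fluctuates on the scale $\sqrt{n\log n}$. So the dependence does not enter only through $S[n-l,n]$, and the fact that a cut point near $n-l$ exists with high probability says nothing at the scale $l^{-1}$ of the event you condition on. Without a quantitative decoupling, the only available bound is Cauchy--Schwarz, which loses exactly the factor $\sqrt{\log n}$. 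You acknowledge this obstacle, but the dyadic-shell and renewal fallbacks are only sketched, so the central step is missing.

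The step can be repaired within your framework by telescoping $E_n$ itself over windows, instead of slicing it into the events $\{E_n\ge l\}$. For $n=2^J$ set $F_j:=X[n-2^j,n]+X[n,n+2^j]-X[n-2^j,n+2^j]$ and $\Delta_j:=F_j-F_{j-1}$, so that $E_n=F_0+\sum_{j=1}^J\Delta_j$ with $F_0$ bounded. Each $\Delta_j$ has three useful properties:
\begin{itemize}
\item it is a function of the increments on $[n-2^j,n+2^j]$;
\item it satisfies $|\Delta_j|\lesssim 2^j$;
\item it vanishes as soon as both $[n-2^{j-1},n]$ and $[n,n+2^{j-1}]$ contain a global cut time.
\end{itemize}
For the last point, such times are cut times of the window's range, so $F_j$ and $F_{j-1}$ both equal the cross term of the window between them. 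Hence $\mathbb{P}(\Delta_j\ne 0)\lesssim 2^{-j}$ by the proof of \eqref{upper+}. Now write $X_n=X[0,n-2^j]+X[n-2^j,n]-G_j$, with $G_j$ the cross term of $[0,n]$ at $n-2^j$. The first piece is exactly independent of $\Delta_j$. The second is bounded by $2^j$ and contributes at most $2^j\,\mathbb{E}|\Delta_j|\lesssim 2^j$. The third contributes at most $\|G_j\|_2\|\Delta_j\|_2\lesssim\sqrt{n}\,2^{j/2}$, using $\mathbb{E}[G_j^2]\lesssim n$ from the Remark after \eqref{upper+}. Summing over $j\le J$ gives $|\mathrm{Cov}(X_n,E_n)|\lesssim n$, and the same argument applies at near-balanced split points for non-dyadic $n$.

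For comparison, the paper avoids a one-step recursion. It expands $X_n$ over the full binary tree of cross terms $E_n^{(k,l)}$ and bounds $\mathbb{E}[E_n^{(k_1,l_1)}E_n^{(k_2,l_2)}]\lesssim(\log n)^2$ for non-adjacent nested pairs via four-point local CLT sums. It uses Cauchy--Schwarz only for pairs meeting at a split point, and truncates at blocks of length $(\log n)^3$ to control the number of pairs. The telescoping version needs only \eqref{upper+} and the cut-time estimate behind it.
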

\begin{proof}
We now perform the full binary decomposition as,
\begin{equation*}
X_n = \sum_{k=0}^{\log_2 n} \sum_{l=1}^{2^k} -E_n^{(k,l)} + \sum_{k=0}^n X[k,k+1]. 
\end{equation*}
We define 
$$
\mathcal{E}_k := \sum_{l=1}^{2^k} -E_n^{(k,l)}, \quad 
\mathcal{E}:=\sum_{k=0}^{\log_2 n} \mathcal{E}_k, \quad 
MT:= \sum_{k=0}^n X[k,k+1].
$$
By independence of the components  $E_n^{(k,l)}$, we have that,
\begin{equation}\label{UB:var d=6}
\mathrm{Var}(\mathcal{E}_k) = \sum_{l=1}^{2^k} \mathrm{Var}(E_n^{(k,l)}) \lesssim \sum_{l=1}^{2^k}  \frac{n}{2^k}  = n .
\end{equation}
By expanding the variance and applying the Cauchy-Schwarz inequality, we obtain
\begin{equation}\label{variance:dec:d6}
\mathrm{Var}\left(\sum_{k=0}^{\log_2 n} \mathcal{E}_k\right) \le \sum_{k=0}^{\log_2 n} \mathrm{Var}(\mathcal{E}_k) + 2\sum_{k_1=0}^{\log_2 n}\sum_{k_2=0}^{\log_2 n} \mathrm{Var}(\mathcal{E}_{k_1})^{1/2} \mathrm{Var}(\mathcal{E}_{k_2})^{1/2} \lesssim   n (\log n)^2 +  n .
\end{equation}
In addition, since each $X[k,k+1]$ is deterministically 1, we have that $\mathrm{Var}(X[k,k+1]) =0$.
Then,
\begin{equation*}
 \mathrm{Var}(X_n) \lesssim n (\log n)^{2}. 
 \end{equation*}
By our binary decomposition, we find that, for $a_n=n/(\log n)^3$, 
\begin{equation*}
X_n = \sum_{l=1}^{a_n} X[(l-1)\frac{n}{a_n},l\frac{n}{a_n}] + \sum_{k=0}^{\log_2(a_n)} \sum_{l=1}^{2^k} (-E_n^{(k,l)}).
\end{equation*}

Furthermore, we remark that if the intervals $[ n/2^{k_1+1}(2l_1-2), n/2^{k_1+1} (2l_1)]$ (, which is the interval corresponding to the cross term $E^{(k_1,l_1)}$) and $[n/2^{k_2+1}(2l_2-2), n/2^{k_2 +1}(2l_2)]$(this is the interval corresponding to $E^{(k_2,l_2)}$) are completely disjoint, then $E^{(k_1,l_1)}$ and $E^{(k_2,l_2)}$ are independent and the covariance is $0$. 
Thus, the only non-zero covariances $\text{Cov}(E^{(k_1,l_1)},E^{(k_2,l_2)})$ are those with $k_2 \ge k_1$ and $ 2^{k_2 -k_1} l_1 \le l_2 \le 2^{k_2 - k_1} (l_1 +1)$.  

Now, we claim that 
\begin{align*}
    \mathbb{E}[E_n^{(k_1,l_1)} E_n^{(k_2,l_2)}]
    \lesssim (\log n)^2, 
\end{align*}
for most pairs of $(k_1,l_1)$ and $(k_2,l_2)$ with non-trivial intersection. 
Let $\tilde{S}$ be the reverse of the first part, that is $\tilde{S}_i^{(k,l)}= S_{l\frac{n}{2^k}-i} - S_{l\frac{n}{2^k}}$ . $\hat{S}$ will be the second part, that is $\hat{S}_i^{(k,l)} = S_{l\frac{n}{2^k}+i} - S_{l\frac{n}{2^k}}$. 
Indeed, if $E_n^{(k,l)} \ge a$, 
$\Omega (a,k,l)$ holds, where
$$
\begin{aligned}
\Omega (a,k,l):=& \{ \hat{S}^{(k,l)}[a/2, \frac{n}{2^k}] \cap \tilde{S}^{(k,l)}[0,\frac{n}{2^k}] \ne \emptyset \}  (=: A_1^{(a,k,l)})\\
& \bigcup \{ \tilde{S}^{(k,l)}[a/2,\frac{n}{2^k}] \cap \hat{S}^{(k,l)}[0,\frac{n}{2^k}] \ne \emptyset \}(=: A_2^{(a,k,l)}).
\end{aligned}
$$
Then, 
\begin{equation*} 
\mathbb{E}[E_n^{(k_1,l_1)} E_n^{(k_2,l_2)}] = \sum_{a,b} \mathbb{E}[ \mathbbm{1}[E_n^{(k_1,l_1)} \ge a] \mathbbm{1}[E_n^{(k_2,l_2)} \ge b] ]
\le \sum_{a,b}\sum_{i,j=1}^2 \mathbb{P}(A_i^{(a,k_1,l_1)} \cap A_j^{(b,k_2,l_2)}). 
\end{equation*}
In addition, 
\begin{align*}
&\mathbb{P}(A_1^{(a,k_1,l_1)} \cap A_1^{(b,k_2,l_2)})\\
 \le  &\mathbb{P}\bigg(\{\hat{S}^{(k_1,l_1)}\left[\frac{a}{2}, \frac{n}{2^{k_1}}\right] \cap \tilde{S}^{(k_1,l_1)}\left[0,\frac{n}{2^{k_1}}\right] \ne \emptyset\} \cap \{\hat{S}^{(k_2,l_2)}\left[\frac{b}{2}, \frac{n}{2^{k_2}}\right] \cap \tilde{S}^{(k_2,l_2)}\left[0,\frac{n}{2^{k_2}}\right] \ne \emptyset\} \bigg)\\
 \le & \sum_{u_1= \frac{a}{2}+ l_1\frac{n}{2^{k_1}}}^{(l_1+1)\frac{n}{2^{k_1}}} \sum_{u_2= (l_1-1)\frac{n}{2^{k_1}}}^{l_1\frac{n}{2^{k_1}}} \sum_{u_3= \frac{b}{2}+ l_2\frac{n}{2^{k_2}}}^{(l_2+1)\frac{n}{2^{k_2}}} \sum_{u_4= (l_2-1)\frac{n}{2^{k_2}}}^{l_2\frac{n}{2^{k_2}}} \sum_{x} \sum_{y}\\
 & \mathbb{P}(S_{u_1} = S_{u_2}=x, S_{u_3}=S_{u_4}=y). 
\end{align*}
By \cite[Theorem 1.2.1]{Lawler1991}, 
letting $p(x,n):=\frac{1}{(2d\pi n)^{d/2}}\exp(-\frac{|x|^2}{2dn})$ for $d=6$, 
we have that if $| \frac{2l_1 -1}{2^{k_1+1}} n - \frac{2l_2 -1}{2^{k_2+1}}n| > \frac{n}{2^{k_2+1}} $ for $k_1 < k_2$,
\begin{align*}
  &\sum_{a,b}\mathbb{P}(A_1^{(a,k_1,l_1)} \cap A_1^{(b,k_2,l_2)})\\
  \le & \sum_{a,b}\sum_{u_1= a/2+ l_1\frac{n}{2^{k_1}}}^{(l_1+1)\frac{n}{2^{k_1}}} \sum_{u_2= (l_1-1)\frac{n}{2^{k_1}}}^{l_1\frac{n}{2^{k_1}}} \sum_{u_3= b/2+ l_2\frac{n}{2^{k_2}}}^{(l_2+1)\frac{n}{2^{k_2}}} \sum_{u_4= (l_2-1)\frac{n}{2^{k_2}}}^{l_2\frac{n}{2^{k_2}}} \sum_{x} \sum_{y}\\
 & 1_{\{u_1 \le u_2 \le u_3 \le u_4\}} 
 Cp(x,u_1)p(0,u_2-u_1)p(y-x,u_3-u_2)p(0,u_4-u_3)
 \\
 +& 1_{\{u_1 \le u_3 \le u_2 \le u_4\}} 
 Cp(x,u_1)p(y-x,u_3-u_1)p(x-y,u_2-u_3)p(y-x,u_4-u_2)
 \ldots  \\
 \lesssim & (\log n)^2,
\end{align*}
where $\ldots$ represents other possible orderings of the variables $u_1,u_2,u_3,u_4$. 
In addition, 
if $| \frac{2l_1 -1}{2^{k_1+1}} n - \frac{2l_2 -1}{2^{k_2+1}}n| = \frac{n}{2^{k_2+1}} $, by Lemma \ref{moment:d=6}, 
\begin{align*}
  &\mathbb{E}[E_n^{(k_1,l_1)} E_n^{(k_2,l_2)}] 
 \le \mathbb{E}[(E_n^{(k_1,l_1)})^2]^{1/2} 
 \mathbb{E}[(E_n^{(k_2,l_2)})^2]^{1/2}
 \lesssim  \frac{n}{2^{k_2/2}2^{k_1/2}}. 
\end{align*}
We can obtain the same result for $\mathbb{P}(A_i^{(a,k_1,l_1)} \cap A_j^{(b,k_2,l_2)})$ by a similar computation for other cases. 
Hence, 
\begin{align*}
    &\sum_{k_1=0}^{\log_2(a_n)} \sum_{l_1=1}^{2^{k_1}} \sum_{k_2 =k_1+1}^{\log_2(a_n)} \sum_{l_2 =2^{k_2 -k_1}l_1}^{2^{k_2 - k_1}(l_1+1)} \mathbb{E}[E_n^{(k_1,l_1)} E_n^{(k_2,l_2)}]\\
    \le & \sum_{k_1=0}^{\log_2(a_n)} 2^{k_1}[a_n/2^{k_1}] (\log n)^2 \le  a_n(\log n)^3 \ll n(\log n).
\end{align*}
Therefore, with the aid of \eqref{UB:var d=6}, we obtain,
\begin{align}\label{disjont*}
&\mathrm{Var}\left(\sum_{k=0}^{a_n} \mathcal{E}_k\right) \\
\notag
\le & \sum_{k=0}^{\log_2 n} \mathrm{Var}(\mathcal{E}_k) + 2\sum_{k_1=0}^{\log_2(a_n)} \sum_{l_1=1}^{2^{k_1}} \sum_{k_2 =k_1+1}^{\log_2(a_n)} \sum_{l_2 =2^{k_2- k_1}l_1}^{2^{k_2 - k_1}(l_1+1)} 
\text{Cov}(E_n^{(k_1,l_1)}, E_n^{(k_2,l_2)})\\
\notag
\le & \sum_{k=0}^{\log_2 n} \mathrm{Var}(\mathcal{E}_k) + 2\sum_{k_1=0}^{\log_2(a_n)} \sum_{l_1=1}^{2^{k_1}} \sum_{k_2 =k_1+1}^{\log_2(a_n)} \sum_{l_2 =2^{k_2-k_1} l_1}^{2^{k_2 - k_1}(l_1+1)} \mathbb{E}[E_n^{(k_1,l_1)} E_n^{(k_2,l_2)}]
\lesssim   n (\log n)
\end{align}
and 
$$
\mathrm{Var}(MT) \le C a_n[n/a_n (\log (n/a_n))^2] \le Cn(\log \log n)^2.
$$
Finally, we can apply the Cauchy-Schwarz inequality to say that,
\begin{equation*} 
\mathrm{Var}(X_n) \le \mathrm{Var}(MT) + \mathrm{Var}(\mathcal{E})+2 \mathrm{Var}(MT)^{1/2} \mathrm{Var}(\mathcal{E})^{1/2} 
\lesssim n (\log n).
\end{equation*}
This completes the proof of the assertion.
\end{proof}

We start with the following lower bound.
\begin{lem} \label{lem:lwrbndvar}
There exists $0<c< \infty$ such that
\begin{equation*}
\mathrm{Var}(X_n) \ge c n \log n.
\end{equation*}
\end{lem}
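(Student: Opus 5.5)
The plan is a recursion on dyadic scales, using only the splitting $X_{2m}=X_m+X[m,2m]-E_m$. Write $v(m):=\mathrm{Var}(X_m)$. Since $X_m$ and $X[m,2m]$ are independent with the same law,
\begin{equation*}
v(2m)=2v(m)+\mathrm{Var}(E_m)-2\,\mathrm{Cov}(X_m+X[m,2m],E_m).
\end{equation*}
By Lemma \ref{moment:d=6}, $\mathrm{Var}(E_m)\ge c_0 m$. If we also had
\begin{equation*}
\mathrm{Cov}(X_m+X[m,2m],E_m)\le \eta m
\end{equation*}
with $2\eta<c_0$, then $v(2m)\ge 2v(m)+c_1 m$ with $c_1=c_0-2\eta$. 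Setting $w_k:=v(2^k)/2^k$ this gives $w_k\ge w_{k-1}+c_1/2$, hence $v(2^k)\gtrsim k2^k$.

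To pass from dyadic $n=2^k$ to general $n$, I would run the same recursion along $m\mapsto 2m$ from a base point $m_0\in[n/2^{k},2n/2^k]$. Alternatively, split $n=m+m'$ with $m\le m'\le 2m$ and use the Remark after \eqref{upper+}, which gives the same tail bound for unequal halves. The lower bound \eqref{lower+} also holds for unequal halves by the same capacity argument.

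The covariance bound is the main obstacle, since Cauchy--Schwarz only gives $\sqrt{m\log m}\sqrt m$. My plan is to use that the bulk of $\mathrm{Var}(E_m)$ comes from the rare event $\{E_m\ge \epsilon m\}$, which has probability $\asymp m^{-1}$; this is the content of \eqref{eq:bndneg} together with the tail \eqref{upper+}. Split $E_m=E_m\mathbbm 1[E_m<\epsilon m]+E_m\mathbbm 1[E_m\ge\epsilon m]$. For the truncated part, $\mathbb E[E_m^2\mathbbm 1[E_m<\epsilon m]]\lesssim \epsilon m$ by the layer-cake computation. Cauchy--Schwarz against $\mathrm{Var}(X_m)^{1/2}\lesssim (m\log m)^{1/2}$ (Lemma \ref{lem:uprbndvar}) then still gives too much, so the truncated part should instead be handled at a finer level. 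The event $E_m\ge a$ is contained in $\Omega(a,\cdot)$, a long-range intersection event, and given the intersection times $u_1,u_2$ the walk pieces away from them are nearly unaffected. Concretely, I would show that
\begin{equation*}
\bigl|\mathbb E[\overline X_m\mid \Omega(a)]\bigr|\lesssim \sqrt{m}
\end{equation*}
uniformly in $a$. The argument would decompose $X_m$ into the blocks before and after the intersection time, note that only an $O(a)$-long stretch is conditioned, and use $|X[s,t]-X[s',t']|\le |s-s'|+|t-t'|$ to control its effect.

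Then
\begin{equation*}
\mathrm{Cov}(X_m,E_m)=\sum_a \mathbb E\bigl[\overline X_m\mathbbm 1[E_m\ge a]\bigr]\lesssim \sum_a \mathbb P(\Omega(a))\sqrt m\lesssim \sqrt m\log m,
\end{equation*}
which is $o(m)$, closing the recursion.

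If this conditioning estimate proves too hard, a fallback lower bound would go through independence across blocks. Decompose $X_n$ at level $L$ with $2^L\asymp\sqrt n$, so each block has length $\sqrt n$, and keep only the error terms $E^{(k,l)}$ at scales between $n^{1/4}$ and $\sqrt n$. Within a fixed scale these are independent, each scale contributes variance $\asymp n$, and there are $\asymp\log n$ such scales. Cross-scale covariances would be bounded by $(\log n)^2$ per nested pair as in the proof of Lemma \ref{lem:uprbndvar}, which totals $o(n\log n)$. One would then still need to show that the block sums at the top level do not cancel this contribution, which again reduces to a covariance bound between $X$ on a block and the error terms inside it.
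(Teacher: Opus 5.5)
\textbf{The gap is the covariance estimate.} Your recursion needs $\mathrm{Cov}(X_m+X[m,2m],E_m)\le\eta m$ with $2\eta<c_0$. That is, the covariance must be not merely $O(m)$ but quantitatively smaller than $\mathrm{Var}(E_m)$, and your sketch does not produce this. There are three problems with it.
\begin{itemize}
\item The tool $|X[s,t]-X[s',t']|\le|s-s'|+|t-t'|$ is false. Only the one-sided bound $X[s,t']\le X[s,t]+(t'-t)$ for $t'\ge t$ holds. The other direction fails because of exactly the shortcuts you are trying to control: if $S_{t+1}=S_s$, then $X[s,t+1]=0$, while $X[s,t]$ can be of order $t-s$.
\item Even granting it, perturbing an $O(a)$-long stretch only yields $|\mathbb{E}[\overline X_m\mid\Omega(a)]|\lesssim a$, not $\sqrt m$. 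Since $\mathbb{P}(\Omega(a))\asymp a^{-1}$, the sum $\sum_{a\le m}\mathbb{P}(\Omega(a))\,a\asymp m$ is of the same order as $\mathrm{Var}(E_m)$, so no $\eta<c_0/2$ comes out.
\item The quantity $\mathbb{E}[\overline X_m\mathbbm{1}[E_m\ge a]]$ is not controlled by $\mathbb{P}(\Omega(a))\,|\mathbb{E}[\overline X_m\mid\Omega(a)]|$. Here $\overline X_m$ is signed and $\{E_m\ge a\}$ is a strict subset of $\Omega(a)$. You would need the conditional mean given $\{E_m\ge a\}$ itself, or a bound on $\mathbb{E}[|\overline X_m|\mathbbm{1}[\Omega(a)]]$.
\end{itemize}
Your fallback stops at the same unproved covariance bound. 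Moreover, with blocks of length $\sqrt n$, Lemma \ref{lem:uprbndvar} only bounds the block sum's variance by $\frac{C}{2}n\log n$. That is the same order as the error contribution you keep, so it cannot simply be discarded.

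\textbf{The missing idea is that no decorrelation estimate between $X$ and the error terms is needed.} The paper decomposes down to blocks of length $n^{1-\delta}$ with $\delta$ close to $1$, writing $X_n=MT+\mathcal{E}$.
\begin{itemize}
\item Lemma \ref{lem:uprbndvar}, applied at scale $n^{1-\delta}$, gives $\mathrm{Var}(MT)\le C(1-\delta)n\log n$, which is small.
\item For $\mathcal{E}$, independence within each level and $\mathrm{Var}(E_m)\gtrsim m$ give a diagonal contribution $\ge c\delta n\log n$.
\item Positivity of the cross terms gives $\mathrm{Cov}(E_n^{(k_1,l_1)},E_n^{(k_2,l_2)})\ge-\mathbb{E}[E_n^{(k_1,l_1)}]\,\mathbb{E}[E_n^{(k_2,l_2)}]\ge-(\log n)^2$. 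Only $\lesssim n^{\delta}\log n$ nested pairs are dependent, so $\mathrm{Var}(\mathcal{E})\ge c\delta n\log n-n^{\delta}(\log n)^3$.
\item The term $\mathrm{Cov}(MT,\mathcal{E})$ is then absorbed by $\mathrm{Var}(X_n)\ge(\mathrm{Var}(\mathcal{E})^{1/2}-\mathrm{Var}(MT)^{1/2})^2$.
\end{itemize}
In the language of your recursion: unroll $X_m$ inside $\mathrm{Cov}(X_m,E_m)$ into its own sub-errors and small blocks. The error--error covariances then appear with the favourable sign, and positivity bounds them from below. Only the small blocks remain, and Cauchy--Schwarz handles them once, globally. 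This works because the upper bound makes their variance small, not because of any independence. A single-level bound of the form you want is never needed, and your method cannot deliver one.
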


\begin{proof}
Fix $\delta>0$ very close to 1. We will see where this condition  comes into play near the end of the proof. 
By our binary decomposition, we find that,
\begin{equation}\label{binary decomposition1}
X_n = \sum_{l=1}^{n^{\delta}} X[(l-1)n^{1-\delta},l n^{1-\delta}] + \sum_{k=0}^{\log_2(n^{\delta})} \sum_{l=1}^{2^k} (-E_n^{(k,l)}).
\end{equation}
We will write
$$
MT:= \sum_{l=1}^{n^{\delta}} X[(l-1)n^{1-\delta},l n^{1-\delta}] ,
\quad 
\mathcal{E}:=\sum_{k=0}^{\log_2(n^{\delta})} \sum_{l=1}^{2^k} (-E_n^{(k,l)}).
$$
We have that 
$$
\mathrm{Var}(X_n) = \mathrm{Var}(MT) +\mathrm{Var}(\mathcal{E}) + 2 \text{Cov}(MT, \mathcal{E})
$$
and 
\begin{equation}\label{eq:varcalE}
\mathrm{Var}(\mathcal{E}) =  \sum_{k=0}^{\log_2(n^{\delta})} \sum_{l=1}^{2^k} \mathrm{Var}(-E_n^{(k,l)}) + \sum_{k_1=0}^{\log_2(n^{\delta})} \sum_{l_1=1}^{2^{k_1}}\sum_{k_2=0}^{\log_2(n^{\delta})} \sum_{l_2=1}^{2^{k_2}}\text{Cov}(-E_n^{(k_1,l_1)}, -E_n^{(k_2,l_2)}).
\end{equation}
In addition, we obtain that, for sufficiently large $n$,
$$
\sum_{k=0}^{\log_2(n^{\delta})} \sum_{l=1}^{2^k} \mathrm{Var}(-E_n^{(k,l)}) \ge \sum_{k=0}^{\log_2(n^{\delta})} \sum_{l=1}^{2^k}  c \frac{n}{2^k} \ge c  \delta n \log n.
$$
By Lemma \ref{moment:d=6}, we have 
\begin{align*}
\text{Cov}(-E_n^{(k_1,l_1)},-E_n^{(k_2,l_2)}) 
=& -\mathbb{E}[E_n^{(k_1,l_1)}] \mathbb{E}[E_n^{(k_2,l_2)}]+
\mathbb{E}[E_n^{(k_1,l_1)}E_n^{(k_2,l_2)}]\\
\ge& - \mathbb{E}[E_n^{(k_1,l_1)}] \mathbb{E}[E_n^{(k_2,l_2)}] 
\ge - (\log n)^2.
\end{align*}

$$
\begin{aligned}
&\sum_{k_1=0}^{\log_2(n^{\delta})} \sum_{l_1=1}^{2^{k_1}} \sum_{k_2 =k_1+1}^{\log_2(n^{\delta})} \sum_{l_2 =2^{k_2 - k_1} l_1}^{2^{k_2 - k_1}(l_1+1)} -(\log n)^2 \ge  -\sum_{k_1=0}^{\log_2(n^{\delta})} 2^{k_1}[n^{\delta}/2^{k_1}] (\log n)^2 \\&
=- n^{\delta}(\log n)^3 \gg -n(\log n), \text{  if } \delta<1.
\end{aligned}
$$
By combining these estimates, we have, 
\begin{equation} \label{eq:lowerbndvarE}
\mathrm{Var}(\mathcal{E}) \ge c\delta n\log n.
\end{equation}
Furthermore, notice that by Lemma \ref{lem:uprbndvar}, we have the upper bound $\mathrm{Var}(X_n) \le C n \log n$. Thus, we can show that
$$
\mathrm{Var}(MT) \le C n^{\delta}[n^{1-\delta} \log (n^{1-\delta})] \le C(1-\delta) n \log n,
$$
 setting $\delta$ sufficiently close to $1$, this can be made very small.

Finally, we can apply the Cauchy-Schwarz inequality to say that,
\begin{equation*} 
\mathrm{Var}(X_n) \ge \mathrm{Var}(MT) + \mathrm{Var}(\mathcal{E})- 2 \mathrm{Var}(MT)^{1/2} \mathrm{Var}(\mathcal{E})^{1/2}.
\end{equation*}

Then for $\delta$ close to but not equal to $1$, we have
$\mathrm{Var}(\mathcal{E}) \ge c\delta n \log n$ and $\mathrm{Var}(MT) \le C(1-\delta) n \log n$. Thus, $\mathrm{Var}(X_n) \gtrsim  n \log n$, as desired.
\end{proof}


\subsubsection{General Analysis with heavy tailed Cross Terms }

In this section, we will just assume that we are considering a collection of variables $X_{[a,b]}$ that are measurable with respect to the walk $\{S_a,\ldots,S_b\}$ and that the cross term $E_n:= X_n + X_{[0,n]} - X_{2n}$ satisfies the tail bounds $\mathbb{P}(E_n \ge l) \asymp \frac{1}{l}$. 
We start with the following lower bound.
\begin{lem} \label{lem:lwrbndvar+}
There exist a constant $\epsilon >0$ and a subsequence $(n_k)$ such that $\mathrm{Var}(X_{n_k}) \ge \epsilon n_k \log n_k$.
In other words,
\begin{equation*}
\limsup_{n \to \infty} \frac{\mathrm{Var}(X_n) }{n \log n} > 0.
\end{equation*}
\end{lem}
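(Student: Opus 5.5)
We argue by contradiction, mirroring the proof of Lemma \ref{lem:lwrbndvar}. The difference is that no upper bound $\mathrm{Var}(X_n)\lesssim n\log n$ is available a priori, and no correlation estimate for the cross terms at different scales is available either. Suppose that $\mathrm{Var}(X_n)\le \epsilon\, n\log n$ for all large $n$, where $\epsilon>0$ is small and fixed later. This assumption will replace Lemma \ref{lem:uprbndvar}.

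\textbf{Step 1 (decomposition).} Fix $\delta\in(0,1)$ close to $1$ and use the binary decomposition \eqref{binary decomposition1}, writing $X_n=MT+\mathcal{E}$. By the contrary assumption, applied to each block of length $n^{1-\delta}$, and by independence of the blocks,
\begin{equation*}
\mathrm{Var}(MT)=n^{\delta}\,\mathrm{Var}(X_{n^{1-\delta}})\le \epsilon(1-\delta)\,n\log n .
\end{equation*}

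\textbf{Step 2 (variance of $\mathcal{E}$).} The tail assumption $\mathbb{P}(E_n\ge l)\asymp l^{-1}$ gives the analogue of Lemma \ref{moment:d=6} through the layer cake computation: $\mathbb{E}[E_n]\lesssim\log n$, $\mathbb{E}[E_n^2]\asymp n$, and hence $\mathrm{Var}(E_n^{(k,l)})\ge c\,n2^{-k}$. The diagonal part of \eqref{eq:varcalE} is therefore at least $c\delta\, n\log n$, with $c$ depending only on the tail constants.

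For the off-diagonal part we use positivity. Since $E\ge 0$, we have $\mathrm{Cov}(E^{(k_1,l_1)},E^{(k_2,l_2)})\ge -\mathbb{E}[E^{(k_1,l_1)}]\,\mathbb{E}[E^{(k_2,l_2)}]\ge -(\log n)^2$. Covariances between pieces on disjoint intervals vanish. The number of nested pairs is $\lesssim n^{\delta}\log n$, so the negative contribution is at least $-n^{\delta}(\log n)^3$, which is $o(n\log n)$. Upper bounds on the covariances are never needed, because we only want a lower bound. This yields \eqref{eq:lowerbndvarE}: $\mathrm{Var}(\mathcal{E})\ge c\delta\, n\log n$.

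\textbf{Step 3 (conclusion).} By Cauchy--Schwarz,
\begin{equation*}
\mathrm{Var}(X_n)\ge \Bigl(\sqrt{\mathrm{Var}(\mathcal{E})}-\sqrt{\mathrm{Var}(MT)}\Bigr)^2\ge \Bigl(\sqrt{c\delta}-\sqrt{\epsilon(1-\delta)}\Bigr)^2 n\log n .
\end{equation*}
Choose $\epsilon$ small relative to $c$, for instance $\epsilon<c/4$. Then the right-hand side exceeds $\epsilon\, n\log n$ for large $n$, contradicting the assumption. Hence $\mathrm{Var}(X_{n_k})\ge\epsilon\, n_k\log n_k$ along some subsequence.

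\textbf{Main obstacle.} The delicate point is the self-referential use of the assumption in Step 1. The contradiction hypothesis must hold at the scale $n^{1-\delta}$, which holds for large $n$ if we assume the bound for all large $n$. Without monotonicity, however, one obtains only a $\limsup$ statement, which is exactly why the lemma is phrased along a subsequence.

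A second concern is that the tail bound is assumed only for dyadic splittings of equal halves. If block lengths are not powers of two, we restrict to $n$ a power of $2$ and take $n^{1-\delta}$ dyadic; the subsequence can then be chosen among powers of two.
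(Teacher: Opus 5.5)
Your proposal is correct and follows essentially the same route as the paper. You apply the contradiction hypothesis at the block scale $n^{1-\delta}$ to get $\mathrm{Var}(MT)\le \epsilon(1-\delta)\,n\log n$, reuse the lower bound \eqref{eq:lowerbndvarE} on $\mathrm{Var}(\mathcal{E})$ (which needs only positivity of the cross terms and the layer-cake moments), and conclude with the reverse triangle inequality in $L^2$. Fixing one small $\epsilon$ in the hypothesis, rather than the paper's ``for every $\kappa$'', is a slightly cleaner way to obtain the stated fixed constant along a subsequence.
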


\begin{proof}
Assume for contradiction that this is not true. Then, it must be the case that for every $\kappa>0$, there exists $N_{\kappa}$ such that for all $n \ge N_{\kappa}$, we have,
$$
\mathrm{Var}(X_n) \le \kappa n \log n.
$$
Fix $\delta>0$. For $\delta>0$, we will choose a sufficiently small $\kappa>0$ and derive a contradiction later.
Recall our binary decomposition in \eqref{binary decomposition1}. 

As long as $n^{1-\delta} \ge N_{\kappa}$,
we have that
\begin{equation*}
\mathrm{Var}(MT)  \le n^{\delta} [\kappa n^{1-\delta} \log n^{1-\delta}] = \kappa(1-\delta) n \log n.
\end{equation*}
In addition,  equation \eqref{eq:lowerbndvarE}, will give us $\text{Var}(\mathcal{E}) \ge c \delta n \log n.$
Finally, we can apply the Cauchy-Schwarz inequality to say that,
\begin{equation} \label{eq:Varlwrbnd}
\mathrm{Var}(X_n) \ge \mathrm{Var}(MT) + \mathrm{Var}(\mathcal{E})- 2 \mathrm{Var}(MT)^{1/2} \mathrm{Var}(\mathcal{E})^{1/2}.
\end{equation}
Notice that, as a function of the value of $\mathrm{Var}(\mathcal{E})$, the right-hand side increases when $\mathrm{Var}(\mathcal{E}) \ge \mathrm{Var}(MT)$.  
Indeed, notice that if $\mathrm{Var}(MT) < \frac{c \delta}{8} n \log n$, then the right hand side of \eqref{eq:Varlwrbnd} will be greater than,
\begin{equation*} 
(\mathrm{Var}(\mathcal{E})^{1/2} - \mathrm{Var}(MT)^{1/2})^2 \ge  \left( \frac{\sqrt{c  \delta}}{ \sqrt{2}} \sqrt{n \log n} - \frac{\sqrt{c  \delta}}{2\sqrt{2}} \sqrt{n \log n}\right)^2 \ge \frac{c  \delta}{8} n \log n.
\end{equation*}

Then, by setting $\kappa$ sufficiently small and applying the previous inequality, we have that $\mathrm{Var}(X_n) \ge \frac{c \delta}{8} n \log n$. This is a contradiction to the assumption that we cannot produce a lower bound to the $\limsup$ of $\frac{\mathrm{Var}(X_n)}{n \log n}$.
\end{proof}

Now, we produce an upper bound to the variance of general variables with the same heavy tail on the cross terms. 
Since the proof is the same as that of \eqref{variance:dec:d6}, we omit it. 
\begin{lem} \label{lem:uprbndvar+}
We have the following upper bound of the variance of $X_n$:   
\begin{equation*}
 \mathrm{Var}(X_n) \lesssim n (\log n)^{2}. 
 \end{equation*}

\end{lem}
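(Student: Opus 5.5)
We will follow the first half of the proof of Lemma \ref{lem:uprbndvar}, i.e.\ the argument leading to \eqref{variance:dec:d6}. Only the tail assumption $\mathbb{P}(E_n \ge l) \lesssim l^{-1}$ and the nonnegativity of $E_n$ enter; no information on correlations between cross terms is needed. We perform the full binary decomposition
\begin{equation*}
X_n = \sum_{k=0}^{\log_2 n} \mathcal{E}_k + MT, \qquad \mathcal{E}_k := \sum_{l=1}^{2^k} \bigl(-E_n^{(k,l)}\bigr), \qquad MT := \sum_{j} X[j,j+1],
\end{equation*}
where the base pieces are at unit (or bounded) scale. Since $\mathcal{F}$ is bounded by the volume, each base piece is bounded by a deterministic constant. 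The $X[j,j+1]$ are functions of disjoint increments, hence independent, so $\mathrm{Var}(MT) \lesssim n$.

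Next we bound each level. By the layer cake formula, exactly as in Lemma \ref{moment:d=6}, we have
\begin{equation*}
\mathbb{E}\bigl[(E^{(k,l)}_n)^2\bigr] = \sum_{l' \le n/2^k} l'\, \mathbb{P}\bigl(E_n^{(k,l)} \ge l'\bigr) \lesssim \frac{n}{2^k}.
\end{equation*}
The truncation at $l' \le n/2^k$ uses that $E^{(k,l)}_n$ is bounded by the volume of its block: by positivity $E \le X[\cdot]+X[\cdot] \lesssim n/2^k$. For a fixed $k$ the $E_n^{(k,l)}$, $l=1,\dots,2^k$, depend on disjoint blocks of increments and are therefore independent. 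Hence
\begin{equation*}
\mathrm{Var}(\mathcal{E}_k) = \sum_{l} \mathrm{Var}\bigl(E_n^{(k,l)}\bigr) \lesssim n .
\end{equation*}

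Across levels we make no covariance assumption and simply use Cauchy--Schwarz:
\begin{equation*}
\mathrm{Var}\Bigl(\sum_k \mathcal{E}_k\Bigr) \le \Bigl(\sum_{k=0}^{\log_2 n} \mathrm{Var}(\mathcal{E}_k)^{1/2}\Bigr)^2 \lesssim (\log n)^2\, n .
\end{equation*}
Combining this with $\mathrm{Var}(MT) \lesssim n$ via $\mathrm{Var}(A+B) \le (\mathrm{Var}(A)^{1/2}+\mathrm{Var}(B)^{1/2})^2$ gives $\mathrm{Var}(X_n) \lesssim n(\log n)^2$.

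The only delicate point is justifying the second-moment bound: we need that $E_n^{(k,l)}$ satisfies the assumed tail at scale $n/2^k$, and that it has a cutoff at order $n/2^k$. The tail follows from translation invariance and the Markov property, since $E_n^{(k,l)}$ has the law of $E_{n/2^{k+1}}$. The cutoff follows from the volume bound, and without it the sum $\sum l \cdot l^{-1}$ would not be controlled. When $n$ is not a power of $2$, we handle rounding of block lengths using the remark following the tail lemma for unequal halves.
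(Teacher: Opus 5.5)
Your proposal is correct and is essentially the paper's own proof, which is omitted with a pointer to the argument for \eqref{variance:dec:d6}: full dyadic decomposition, $\mathrm{Var}(\mathcal{E}_k)\lesssim n$ from independence within a level together with the layer-cake bound truncated at the block size, Cauchy--Schwarz (Minkowski) across the $\log_2 n$ levels, and at most $O(n)$ from the unit-scale pieces. One small caveat concerns non-dyadic $n$: the unequal-halves remark you cite was proved only for the graph distance and cut points, so under the general hypothesis you should either restrict to dyadic $n$ (as the paper implicitly does by ignoring integer parts) or assume the tail bound for unequal splits as well.
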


\subsection{The Limit in Distribution of $\frac{X_n -\mathbb{E}[X_n]}{\sqrt{\mathrm{Var}(X_n)}} $}


\subsubsection{Graph distance and cut points}

\begin{proof}[Proof of Theorem \ref{m1} for $d=6$]

\textit{Part 1: A CLT contribution}

Define the value $n_{\delta}:= \exp\left[\frac{1}{2} \log n +  (\sqrt{\log n})^{1-\delta} \right]$. 
Now, 
\begin{equation*} 
\mathcal{G}:=\sum_{i=1}^{n_{\delta}}   X[(i-1) n/n_{\delta}, i n/n_{\delta}] - \mathbb{E}[  X[(i-1) n/{n_{\delta}}, i n/n_{\delta}]]
\end{equation*}
will be a normal random variable of variance of order $n\log n$. 
Define
$$
Z_{n_{\delta},i}:= \frac{  X[(i-1) n/n_{\delta}, i n/n_{\delta}] - \mathbb{E}[  X[(i-1) n/n_{\delta}, i n/n_{\delta}]]}{ \sqrt{\mathrm{Var}(X_{n/n_{\delta}}) }}.
$$
The variables $Z_{n_{\delta},i}$ are all independent of each other and one obtains
$$
\sum_{i=1}^{n_{\delta}}  \mathbb{E} \left[ Z_{n_{\delta},i}^2  \right] =1.
$$
Furthermore, we have that,
\begin{equation*}
\sum_{i=1}^{n_{\delta}} \mathbb{E}[Z_{n_{\delta},i}^2 \mathbbm{1}[ |Z_{n_{\delta},i}| \ge \epsilon ]] =0,
\end{equation*}
since deterministically $  X[(i-1) n/n_{n_{\delta}}, i n/n_{\delta}] \le n/n_{\delta} \ll \sqrt{n (\log n/n_{\delta}) }$ holds.

\textit{Part 2: Controlling the error term at mesoscopic scales}

Define $n_{\delta,-}:=\exp\left[ \frac{1}{2} \log n - (\sqrt{\log n})^{1- \delta} \right]$. 
Let us control
\begin{equation*}
 \mathcal{M}:=\sum_{k= \log_2 n_{\delta,-} }^{\log_2 n_{\delta}} \sum_{l=1}^{2^k}(-E_{n}^{(k,l)} + \mathbb{E}[E_{n}^{(k,l)}]).
\end{equation*} 
Since at a given level $k$, all of the $-E_n^{(k,l)}$ are independent of each other, we can compute
\begin{equation*}
\mathbb{E}\left[\left(\sum_{l=1}^{2^k}(-E_{n}^{(k,l)} + \mathbb{E}[E_{n}^{(k,l)}])\right)^2\right] = \sum_{l=1}^{2^k} \mathbb{E}[(-E_{n}^{(k,l)} + \mathbb{E}[E_{n}^{(k,l)}])^2]
\lesssim \frac{n}{2^k} 2^k = n. 
\end{equation*}

Thus, we see that $\mathbb{E}[\mathcal{M}^2] \le 4 n (\sqrt{ \log n})^{2 - 2 \delta}$. 
Hence, by Markov's inequality, we have that 
\begin{equation} \label{eq:Mesoscopic}
\mathbb{P}\bigg(\mathcal{M} \ge \sqrt{n (\log n)^{1-\delta/2}}\bigg) \to 0
\end{equation}
as $n \to \infty$ for any $\epsilon>0$.

\textit{Part 3: Controlling the error terms at the smallest scales}

We let $\Omega_{k,l}$ be the event that,
\begin{equation*}
 \Omega_{k,l}:= \left\{ |E_n^{(k,l)} - \mathbb{E}[E_n^{(k,l)}]|\ \le \left(\frac{n}{2^{k}} \right)^{1 - \frac{1}{(\sqrt{\log  n})^{1+ 2\delta}}}\right\}.
\end{equation*}
Notice that for fixed $k$, the events $\Omega_{k,l}$ are all independent of each other.
Define $\Omega_k:= \bigcap_{l=1}^{2^k} \Omega_{k,l}$
Thus, we see that when $2^k \le n_{\delta, -} \le n^{1/2}$, by \eqref{eq:bndneg}, 
\begin{equation*}
\begin{aligned}
&\mathbb{E}\left[ \left(\sum_{l=1}^{2^k} E_n^{(k,l)} - \mathbb{E}[E_n^{(k,l)}]\right)^2 \mathbbm{1}[\Omega_k] \right] = \sum_{l=1}^{2^k} \mathbb{E}[(E_n^{(k,l)} - \mathbb{E}[E_n^{(k,l)}])^2 \mathbbm{1}[\Omega_{k,l}]] \\&+ \sum_{l_1,l_2=1}^{2^k} \mathbb{E}[(E_n^{(k,l_1)} - \mathbb{E}[E_n^{(k,l_1)}]) \mathbbm{1}[\Omega_{k,l_1}]] \mathbb{E}[(E_n^{(k,l)} - \mathbb{E}[E_n^{(k,l)}]) \mathbbm{1}[\Omega_{k,l_2}]]\\
&\le 2^k \left(\frac{n}{2^k} \right)^{ 1- \frac{1}{(\sqrt{\log n})^{1+ 2\delta}}} + (2^k)^2 (\log n)^2 = n^{1 - \frac{1}{(\sqrt{\log n})^{1+2\delta}}}(2^{k})^{ \frac{1}{(\sqrt{\log n})^{1+2\delta}}} + n_{\delta,-}^2(\log n)^2 \\& \le  n^{1 - \frac{1}{2 (\sqrt{\log n})^{1+2\delta}}} +\exp[ \log n  - \sqrt{\log n}^{1-\delta} + 2\log \log n]. 
\end{aligned}
\end{equation*}

In addition, we also have that when $2^k \le n_{\delta,-}$,
\begin{equation*}
\begin{aligned}
&\mathbb{P}(\Omega_k^c) \le \sum_{l=1}^{2^k}
\mathbb{P}(\Omega_{k,l}^c) \le \frac{2^k}{\left(\frac{n}{2^{k}} \right)^{1 - \frac{1}{(\sqrt{\log  n})^{1+2\delta}}}} \\
=&\frac{ (2^{k})^{2 - \frac{1}{(\sqrt{\log n})^{1+2\delta}}}}{n^{1- \frac{1}{(\sqrt{\log n})^{1+2\delta}}}} \le  \frac{\exp\left[\left(\frac{\log n}{2} - \sqrt{\log n}^{1-\delta}\right) \left(2- \frac{1}{(\sqrt{\log n})^{1+2 \delta}}\right)\right]}{\exp [\log n -  (\sqrt{\log n})^{1- 2\delta}]} \le \exp [- (\sqrt{\log n})^{1-\delta}],
\end{aligned}
\end{equation*}
for $n$ sufficiently large.

If we now define $\Omega= \bigcap_{k=1}^{\log_2 n_{\delta,-}} \Omega_k$, then we see that,
\begin{equation*}
\mathbb{P}(\Omega^c) \le  \sum_{k=1}^{\log_2 n_{\delta,-}} \mathbb{P}(\Omega_k^c) \le \log n  \exp [- (\sqrt{\log n})^{1-\delta}] \ll 1.
\end{equation*}
Furthermore, if we define 
$$
\mathcal{L}:=\sum_{k=0}^{\log_2 n_{\delta,-}}\sum_{l=1}^{2^k} (E_n^{(k,l)} - \mathbb{E}[E_n^{(k,l)}]), 
$$
we have
\begin{equation*}
\begin{aligned}
&\mathbb{E}\left[ \mathcal{L}^2 \mathbbm{1}[\Omega] \right] \\
&\le \sum_{k_1,k_2=0}^{\log_2 n_{\delta,-}}\mathbb{E}\left[\left(\sum_{l=1}^{2^{k_1}} E_n^{(k_1,l)} - \mathbb{E}[E_n^{(k_1,l)}]\right) \left(\sum_{l=1}^{2^{k_2}} E_n^{(k_2,l)} - \mathbb{E}[E_n^{(k_2,l)}]\right) \mathbbm{1}[\Omega]\right]\\
& \le \sum_{k_1,k_2 =0}^{\log_2 n_{\delta,-}} \mathbb{E}\left[\left(\sum_{l=1}^{2^{k_1}} E_n^{(k_1,l)} - \mathbb{E}[E_n^{(k_1,l)}]\right)^2 \mathbbm{1}[\Omega_{k_1}] \right]^{1/2}\mathbb{E}\left[\left(\sum_{l=1}^{2^{k_2}} E_n^{(k_2,l)} - \mathbb{E}[E_n^{(k_2,l)}]\right)^2 \mathbbm{1}[\Omega_{k_2}] \right]^{1/2} \\&\le (\log n)^2 n^{1- \frac{1}{2 (\sqrt{\log n})^{1+2 \delta}}} + (\log n)^2\exp[ \log n  - \sqrt{\log n}^{1-\delta} + 4\log \log n]\ll n .
\end{aligned}
\end{equation*}
By using Markov's inequality, we see that,
\begin{equation} \label{eq:localerror}
\begin{aligned}
&\mathbb{P}\left(\sum_{k=0}^{\log_2 n_{\delta,-}}\sum_{l=1}^{2^k} (E_n^{(k,l)} - \mathbb{E}[E_n^{(k,l)}] >  \sqrt{n }\right)\\& \le \mathbb{P}(\Omega^c) + n^{-1}\mathbb{E}\left[ \left(\sum_{k=0}^{\log_2 n_{\delta,-}}\sum_{l=1}^{2^k} (E_n^{(k,l)} - \mathbb{E}[E_n^{(k,l)}]\right)^2 \mathbbm{1}[\Omega] \right] \to 0,
\end{aligned}
\end{equation}
as $n \to \infty$.

Recall that $\mathcal{G}$ from Part 1 was a Gaussian random variable, and the order of the variance of $\mathcal{G}$ was $n \log n$, which is much larger than the error terms of Parts 2 and 3. Then, combining Parts 1, 2, and 3, we can obtain the desired result using the Lindeberg–Feller central limit theorem.


\subsubsection{General Analysis with heavy tailed Cross Terms}
In this section, we will demonstrate a version of the phase transition demonstrated for the graph distance of the random walk range in six dimensions for variables whose cross terms have the same heavy tailed asymptotics: $\mathbb{P}(E_n \ge l) \asymp l^{-1}$. In particular, we do not make any assumptions about the covariance structure for the cross terms. We emphasize that the results of this section show that heavy tailed behavior is the key source of the phase transition.  We note that our results will apply to the effective resistance; we briefly give the definitions here.

We define a quadratic form $\mathcal{E}$ on functions $f,g : V(G) \to \mathbb{R}$ by
\begin{equation*}
  \mathcal{E}(f,g)
  = \frac12 \sum_{\substack{x,y \in V \\ \{x,y\} \in E}}
      (f(x)-f(y))(g(x)-g(y)).
\end{equation*}
This corresponds to the energy dissipation of an electrical network on the graph $G$. 
We also introduce the space
\begin{equation*}
  H^{2} := \{\, f \in \mathbb{R}^{V} : \mathcal{E}(f,f) < \infty \,\},
\end{equation*}
which serves as the natural domain of the energy form. 
The effective resistance between $A$ and $B$ is defined via the energy minimization 
\begin{equation*}
  R_{G}(A,B)^{-1}
  \;=\;
  \inf\Biggl\{
       \mathcal{E}(f,f)
     \;\Big|\; \quad 
     f|_A=1,\ f|_B=0, f\in H^2
  \Biggr\}. 
\end{equation*}
This quantity represents the voltage difference required to send the current
from $A$ to $B$, and therefore corresponds to the usual notion of effective
resistance in an electrical network.
For vertices $x,y\in V$, we write $R_{G}(x,y)$ to denote $R_{G}(\{x\},\{y\})$. 

Now, we will give the proof of Theorem \ref{m1+} for $d=6$. We present only the necessary adjustments from the proof of Theorem \ref{m1}. This involves changing Parts 1 and 4, while keeping Parts 2 and 3. As before, we mention that the particular value of this result is that it demonstrates that the right order of heavy tails is what ultimately leads to a phase transition in dimension 6, rather than detailed estimates of the covariance structure. 


We now define the function $f(n)$ as,
\begin{equation*}
\mathrm{Var}(X_n) =n (\log n)^{f(n)}.
\end{equation*}

\textit{Part 1: A CLT contribution}

Define the value $n_{\delta}:= \exp\left[\frac{1}{2} \log n +  (\sqrt{\log n})^{1-\delta} \right]$. 
Now, 
\begin{equation*} 
\mathcal{G}:=\sum_{i=1}^{n_{\delta}}   X[(i-1) n/n_{\delta}, i n/n_{\delta}] - \mathbb{E}[  X[(i-1) n/{n_{\delta}}, i n/n_{\delta}]]
\end{equation*}
will be a normal random variable of variance $n (\log (n/n_{\delta}))^{f(n/n_{\delta})}$, provided $f(n/n_{\delta}) \ge 0. $
Define
$$
Z_{n_{\delta},i}:= \frac{  X[(i-1) n/n_{\delta}, i n/n_{\delta}] - \mathbb{E}[  X[(i-1) n/n_{\delta}, i n/n_{\delta}]]}{ \sqrt{n (\log n/n_{\delta})^{f(n/n_{\delta})} }}.
$$
The variables $Z_{n_{\delta},i}$ are all independent of each other and one 
$$
\sum_{i=1}^{n_{\delta}}  \mathbb{E} \left[ Z_{n_{\delta},i}^2  \right] =1.
$$
Furthermore, we have that,
\begin{equation*}
\sum_{i=1}^{n_{\delta}} \mathbb{E}[Z_{n_{\delta},i}^2 \mathbbm{1}[ |Z_{n_{\delta},i}| \ge \epsilon ]] =0,
\end{equation*}
since deterministically, we have that $  X[(i-1) n/n_{n_{\delta}}, i n/n_{\delta}] \le n/n_{\delta} \ll \sqrt{n (\log n/n_{\delta})^{f(n/n_{\delta})} }$, provided $f(n^{1/2-\delta}) \ge 0$.

\textit{Part 4: Nontriviality of the Limit}

We will consider those values of $n$ such that $f(n/n_{\delta}) \ge 1- \delta/4$ (by Lemma \ref{lem:lwrbndvar}, this should happen infinitely often.) We will divide the analysis into two cases.

\textit{Case 1: $f(n/n_{\delta}) \ge 1- \delta/4$ and $f(n) \le f(n/n_{\delta}) +  \frac{1}{\log \log n}$ happen infinitely often together.}

Then, we have that 
$$
\begin{aligned}
&\mathrm{Var}(X_n) = n (\log n)^{f(n/n_{\delta})} (\log n)^{f(n) - f(n/n_{\delta})} \le n (\log (n/n_{\delta})^3)^{f(n/n_{\delta})} \log n^{\frac{1}{\log \log n}} \\&\le 27 e n (\log (n/n_{\delta}))^{f(n/n_{\delta})}.
\end{aligned}
$$
Since $\mathcal{G}$ is Gaussian with variance $n (\log (n/n_{\delta}))^{f(n/n_{\delta})}$,  with constant probability, it will be greater than $\sqrt{n (\log (n/n_{\delta}))^{f(n/n_{\delta})}}$. Equations \eqref{eq:localerror} and \eqref{eq:Mesoscopic} show that the remaining error terms will not be larger than $\epsilon\sqrt{n (\log n)^{1-\delta/2}}$ with high probability. Thus, on a set of non-vanishing probability, we must have that as $n\to \infty$, 
$$
\begin{aligned}
\mathbb{P} \bigg( \frac{\mathcal{M}  + \mathcal{L}}{  \sqrt{n (\log (n/n_{\delta}))^{f(n/n_{\delta})}} } \ge \epsilon \bigg) \to 0
\end{aligned}$$
for any $\epsilon>0$. 
Thus, $\frac{X_n- \mathbb{E}[X_n]}{\sqrt{\mathrm{Var}(X_n)}}$ converges to $\mathcal{N}(0,a)$. 

\textit{Case 2: $f(n/n_{\delta}) \ge 1- \delta/4$ and $f(n) \le f(n/n_{\delta}) +  \frac{1}{\log \log n}$ do not happen infinitely often together.} 

Then there exists a sequence $m_1,m_2,\ldots, m_k, \ldots$ such that $\frac{m_{k+1}}{(m_{k+1})_\delta} = m_k, \forall k$ and $f(m_{k+1}) - f(m_{k}) \ge \frac{1}{ \log \log m_{k+1}}$.  (Notice that by Lemma \ref{lem:lwrbndvar}, we will always be able to find elements such that $f(n/n_{\delta}) \ge 1- \delta/4$, so we do not need to worry about the possibility that $f(n/n_{\delta}) \le 1- \delta/4$ for all large $n$.)

The relationship $m_{k} = \frac{m_{k+1}}{(m_{k+1})_{\delta}}$ will imply that $m_{k+1} \le (m_k)^3$.  Thus, $m_{k+1} \le (m_1)^{3^k}$.
Therefore, we have that
$$
f(m_{k+1}) - f(m_k) \ge \frac{1}{\log \log m_{k+1}} 
\ge \frac{1}{(k+1) \log 4},
$$
for all $k$ large enough. 
This will imply that $\lim_{k \to \infty}f(m_{k}) = \infty$. This contradicts the upper bound of $f$ from Lemma \ref{lem:uprbndvar}.

Thus, we see that this proof establishes that there is a subsequence where
$
\frac{X_n - \mathbb{E}[X_n]}{\sqrt{\mathrm{Var}(X_n)}},
$
does not converge to $0$. Moreover, along this subsequence,  $X_n - \mathbb{E}[X_n]$ will be distributed (after appropriate normalization) as a normal random variable whose variance is given by $n (\log (n/n_{\delta}))^{f(n/n_{\delta})}$.
\end{proof}

\section{The fluctuations converge to 0 in dimension $d=5$}

In this section, we will give the proof of Propositions \ref{prop1} and \ref{prop1+} and Theorems \ref{m1} and \ref{m1+} in $d=5$. Since the proof is similar, we will omit the proof of Proposition \ref{prop1+} in $d=5$. 
\begin{lem} \label{lem:Var5d}
    For $d=5$, we have
    \begin{align*}
        \mathrm{Var}(X_n) \asymp  n^{3/2}. 
    \end{align*}
\end{lem}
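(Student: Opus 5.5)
We plan to run the same binary decomposition as in six dimensions, but now the top-level cross terms dominate. In $d=5$ the tail bounds \eqref{upper+} and \eqref{lower+} read $\mathbb{P}(E_n\ge l)\asymp l^{-1/2}$ for $an\le l\le bn$, with the upper bound valid for all $l$. The layer cake formula then gives
\[
\mathbb{E}[E_n]\lesssim \sum_{l\le 2n} l^{-1/2}\lesssim n^{1/2},\qquad \mathbb{E}[E_n^2]\asymp \sum_{l\le 2n} l\cdot l^{-1/2}\asymp n^{3/2},
\]
where the lower bound on the second moment comes from the range $an\le l\le bn$. Hence $\mathrm{Var}(E_n)\gtrsim n^{3/2}$, because $\mathbb{E}[E_n]^2\lesssim n \ll n^{3/2}$.

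\textbf{Upper bound.} Write $X_{2n}=X_n+X[n,2n]-E_n$ and set $a_k=\sup\{\|\overline{X}_m\|_2: 2^k\le m<2^{k+1}\}$, exactly as in Lemma \ref{variance}. By independence and the triangle inequality,
\[
\|\overline{X}_{2n}\|_2\le (\|\overline{X}_n\|_2^2+\|\overline{X}[n,2n]\|_2^2)^{1/2}+\|E_n-\mathbb{E}E_n\|_2,
\]
so $a_k\le \sqrt2\,a_{k-1}+C2^{3k/4}$. With $b_k=a_k2^{-3k/4}$ this becomes $b_k\le 2^{-1/4}b_{k-1}+C$, so $b_k$ is bounded. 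This yields $\mathrm{Var}(X_n)\lesssim n^{3/2}$, and the same estimate handles general $m\neq n$ via the Remark after \eqref{upper+}. Equivalently, one can sum the full binary decomposition: the level-$k$ error block $\mathcal{E}_k$ has $\|\mathcal{E}_k\|_2\lesssim (2^k(n/2^k)^{3/2})^{1/2}=n^{3/4}2^{-k/4}$, which is geometrically summable, so the triangle inequality gives $n^{3/4}$ directly. This route needs no covariance analysis.

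\textbf{Lower bound.} Write $X_n=X_{n/2}+X[n/2,n]-E_{n/2}$. Then
\[
\mathrm{Var}(X_n)\ge \mathrm{Var}(E_{n/2})-2\,\mathrm{Cov}(E_{n/2},\,X_{n/2}+X[n/2,n])+\mathrm{Var}(X_{n/2}+X[n/2,n]).
\]
Cauchy–Schwarz is not enough here, since $\mathrm{Var}(X_{n/2})$ is also of order $n^{3/2}$. The main obstacle is therefore to show that the covariance $\mathrm{Cov}(E_{n/2}, X_{n/2})$ is $o(n^{3/2})$, or at least smaller than a small multiple of $n^{3/2}$, which is the delicate correlation analysis alluded to in the introduction.

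The approach is to split by the event $\Omega$ that the two halves have a long-range intersection, i.e.\ $A_1^{(a)}\cup A_2^{(a)}$ from Lemma \ref{lem:uprbndvar} with $a=\eta n$. On $\Omega^c$, $E_{n/2}$ is at most $\eta n$ plus a contribution with a lighter tail, giving second moment at most $\eta^{1/2}n^{3/2}$. On $\Omega$, which has probability $\asymp n^{-1/2}$, we condition on the two intersection times $u_1,u_2$ at macroscopic distance. Given these, $X_{n/2}$ depends on the walk only through pieces whose fluctuations are controlled, and the large-scale configuration changes $X_{n/2}$ by at most $O(n)$ only through the conditioned endpoint. We estimate
\[
\mathbb{E}\bigl[E_{n/2}\,\overline{X}_{n/2}\,\mathbbm{1}_\Omega\bigr]\le \|E_{n/2}\mathbbm{1}_\Omega\|_2\,\bigl\|\overline{X}_{n/2}\mathbbm{1}_\Omega\bigr\|_2 .
\]
Using the local CLT factorization as in Lemma \ref{lem:uprbndvar}, the second factor is $\lesssim (n^{-1/2}\cdot n^{3/2})^{1/2}=n^{1/2}$, since conditioning on a single intersection does not inflate the fluctuation of $X_{n/2}$. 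The product is then $n^{3/4}\cdot n^{1/2}=n^{5/4}\ll n^{3/2}$. The first factor only needs to be bounded by $(\mathbb{E}E_{n/2}^2)^{1/2}\asymp n^{3/4}$.

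Combining these, $\mathrm{Var}(X_n)\ge c\,n^{3/2}-C\eta^{1/4}n^{3/2}-o(n^{3/2})$, and choosing $\eta$ small gives the matching lower bound $\mathrm{Var}(X_n)\gtrsim n^{3/2}$.
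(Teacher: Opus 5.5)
Your upper bound is correct, and it is cleaner than the paper's. Independence within a level gives $\|\mathcal{E}_k-\mathbb{E}\mathcal{E}_k\|_2\lesssim n^{3/4}2^{-k/4}$, and Minkowski over levels then closes without the cross-level covariance estimate \eqref{correlation:d=5}.

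The gap is in the lower bound, at exactly the step you flag as the main obstacle. The bound $\|\overline{X}_{n/2}\mathbbm{1}_\Omega\|_2\lesssim n^{1/2}$ amounts to $\mathbb{E}[\overline{X}_{n/2}^2\mid\Omega]\lesssim\mathrm{Var}(X_{n/2})$. The bulk of $\mathrm{Var}(X_{n/2})\asymp n^{3/2}$ comes from long-range self-intersections of $S[0,n/2]$, which have probability $\asymp n^{-1/2}$ and effect of order $n$. So your claim is a decorrelation statement between two rare long-range intersection events, and the reason you give for it is false.

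Your $\Omega$ contains $A_2^{(\eta n)}$, which allows $u_1\le n/2-\eta n/2$ together with $u_2=n/2$. Then $S_{u_1}=S_{n/2}$, so $X_{n/2}\le X[0,u_1]$. Since the law of $S[0,u_1]$ is unaffected by this pinning, $\overline{X}_{n/2}\le -c\eta n$ with conditional probability $1-o(1)$. For $u_2=n/2+t$, the endpoint $S_{n/2}$ lies within distance $\sqrt t$ of $S_{u_1}$, and the same happens with conditional probability of order $t^{-1/2}$. So a single pinned intersection can push the conditional second moment of $\overline{X}_{n/2}$ up to order $n^2$.

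Summed over $(u_1,u_2)$, these configurations presumably contribute only $O(n)$, so your estimate may well be true. Establishing it, however, needs a genuine multi-point local CLT computation that the proposal does not contain. A term-by-term expansion of $\overline{X}_{n/2}$ into centered cross terms also cannot go down to scale $1$. The deterministic centering $\sum_{k,l}\mathbb{E}[E^{(k,l)}]\asymp n$, multiplied by $\mathbb{P}(\Omega)\asymp n^{-1/2}$, is already of order $n^{3/2}$. It cancels only if you prove a law of large numbers for $\sum_{k,l}E^{(k,l)}$ conditionally on $\Omega$.

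The missing idea is positivity of the cross terms, which makes the conditioning unnecessary. Decompose both halves down to blocks of length $n^{\alpha}$ with $0<\alpha<1/2$, so that $Y:=X_{n/2}+X[n/2,n]=MT'-\sum_{(k,l)}E^{(k,l)}$. Here $\mathrm{Var}(MT')\le n^{1+\alpha}$, because $MT'$ is a sum of $n^{1-\alpha}$ independent blocks each bounded by $n^{\alpha}$. Since $E_{n/2}$ and $E^{(k,l)}$ are nonnegative, $-\mathrm{Cov}(E_{n/2},E^{(k,l)})\le\mathbb{E}[E_{n/2}]\,\mathbb{E}[E^{(k,l)}]$. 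Using $\mathbb{E}[E_m]\lesssim m^{1/2}$, this gives
\[
\mathrm{Cov}(E_{n/2},Y)\le\|E_{n/2}\|_2\,\mathrm{Var}(MT')^{1/2}+\mathbb{E}[E_{n/2}]\sum_{(k,l)}\mathbb{E}[E^{(k,l)}]\lesssim n^{5/4+\alpha/2}+n^{3/2-\alpha/2}=o(n^{3/2}).
\]
Your identity then yields $\mathrm{Var}(X_n)\gtrsim n^{3/2}$.

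This is essentially the paper's argument. The paper organizes it as $\mathrm{Var}(\mathcal{E})\ge\sum\mathrm{Var}(E^{(k,l)})-\sum_{\Lambda_1}\mathbb{E}[E^{(k_1,l_1)}]\,\mathbb{E}[E^{(k_2,l_2)}]\gtrsim n^{3/2}$, followed by Cauchy--Schwarz against $MT$, whose variance $n^{1+\alpha}$ is negligible.
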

\begin{proof}
Fix some small $\alpha>0$. 
We consider the variance of 
$\mathcal{E} = -\sum_{k=0}^{(1-\alpha)\log_2 n} \sum_{l=1}^{2^k} E_n^{(k,l)}$. 
For $0 \le k_1,k_2 \le (1-\alpha)\log_2 n$, 
$1\le l_1 \le 2^{k_1}$, $1\le l_2 \le 2^{k_2}$, let 
\begin{align*}
&\Lambda_1 := \{((k_1, l_1),(k_2, l_2)): 
[2^{-k_1}(l_1-1)n,2^{-k_1}l_1 n] \cap [2^{-k_2(l_2 -1)n},2^{-k_2}l_2n] 
\neq \emptyset  \},\\ 
&\Lambda_2 := \{((k_1, l_1),(k_2, l_2)): 
[2^{-k_1)}(l_1-1) n,2^{k_1}l_1n] \cap [2^{-k_2}(l_2-1)n,2^{-k_2}l_2 n] 
= \emptyset \}.   
\end{align*}
Since 
\begin{align*}
    \sum_{((k_1, l_1),(k_2, l_2)) \in \Lambda_2} \mathbb{E}[E_n^{(k_1,l_1)} E_n^{(k_2,l_2)} ] = \sum_{((k_1, l_1),(k_2, l_2)) \in \Lambda_2} \mathbb{E}[E_n^{(k_1,l_1)} ]\mathbb{E}[E_n^{(k_2,l_2)} ], 
\end{align*}
we have 
\begin{align*}
    \mathrm{Var} (\mathcal{E})
    \ge \sum_{k=0}^{(1-\alpha)\log_2 n} \sum_{l=1}^{2^k} \mathbb{E}[(E_n^{(k,l)})^2]
    - \sum_{((k_1, l_1),(k_2, l_2)) \in \Lambda_1} \mathbb{E}[E_n^{(k_1,l_1)} ]\mathbb{E}[E_n^{(k_2,l_2)} ]. 
\end{align*}
By (\ref{lower+}), we have $\sum_{k=0}^{(1-\alpha)\log_2 n} \sum_{l=1}^{2^k} \mathbb{E}[(E_n^{(k,l)})^2] \ge cn^{3/2}$. 
Now we claim that 
\begin{align*}
    \sum_{((k_1, l_1),(k_2, l_2)) \in \Lambda_1} 
    \mathbb{E}[E_n^{(k_1,l_1)} ]\mathbb{E}[E_n^{(k_2,l_2)} ] \le n^{3/2-\alpha/2+o(1)}. 
\end{align*}
Indeed, 
\begin{align*}
    &\sum_{((k_1, l_1),(k_2, l_2)) \in \Lambda_1} 
    \mathbb{E}[E_n^{(k_1,l_1)} ]\mathbb{E}[E_n^{(k_2,l_2)} ] \\
    \lesssim & (\log n)^2 \sup_{\alpha \le \beta_1 \le \beta_2 \le 1}
    \sum_{l_1,l_2: ((1-\beta_1) \log_2 n ,l_1)((1-\beta_2) \log_2 n, l_2)\in \Lambda_1}
    \mathbb{E}[E_n^{((1-\beta_1) \log_2 n,l_1)} ]\mathbb{E}[E_n^{{( 1- \beta_2)} \log_2 n,l_2)} ] \\
    \lesssim & (\log n)^2 \sup_{\alpha \le \beta_1 \le \beta_2 \le 1}
    n^{\beta_2-\beta_1}n^{1-\beta_2} n^{\beta_2/2}n^{\beta_1/2}
    \le n^{3/2-\alpha/2+o(1)}. 
\end{align*}
Hence, we obtain
$\mathrm{Var} (\mathcal{E})\gtrsim n^{3/2}$. 
In addition,  (\ref{lower+}) yields $\sum_{k=1}^{(1-\alpha)\log_2 n} \sum_{l=1}^{2^k} \mathbb{E}[(E_n^{(k,l)})^2] \le cn^{3/2}$. 
Now by the same argument as that of Lemma \ref{lem:uprbndvar}, we have 
\begin{align}\label{correlation:d=5}
    \sum_{k_1=0}^{(1-\alpha)\log_2 n} \sum_{l_1=1}^{2^{k_1}} \sum_{k_2 =k_1+1}^{(1-\alpha)\log_2 n} \sum_{l_2 =0}^{2^{k_2 - k_1}}
    \mathbb{E}[E_n^{(k_1,l_1)} E_n^{(k_2,l_2)} ] \le n^{3/2-\alpha/2+o(1)}. 
\end{align}
Hence, we obtain
$\mathrm{Var} (\mathcal{E})\lesssim n^{3/2}$. 

If we let $MT=\sum_{l=1}^{n^{1-\alpha}} X[(l-1)n^\alpha, ln^\alpha ]$, 
$\mathrm{Var} (MT)\le \sum_{l=1}^{n^{1-\alpha}} \mathbb{E}[X[(l-1)n^\alpha, ln^\alpha ]^2] \lesssim n^{1+\alpha}$ for sufficiently small $\alpha>0$. 
Then, $\mathrm{Var} (X_n) 
\ge (\mathrm{Var} (\mathcal{E})^{1/2}- \mathrm{Var} (MT)^{1/2})^2
\ge c n^{3/2}$ and $\mathrm{Var} (X_n) 
\le (\mathrm{Var} (\mathcal{E})^{1/2}+ \mathrm{Var} (MT)^{1/2})^2
\lesssim   n^{3/2}$. 
\end{proof}

\begin{proof}[Proof of Theorems \ref{m1} and \ref{m1+} in $d=5$]

For small $\delta>0$, we use Markov's inequality in the computation of the variance to derive,
\begin{align*}
    &\sup_{\alpha\le \beta \le 1-\delta}
    \mathbb{P}\bigg(\frac{\sum_{l=1}^{n^{1-\beta}}(E_n^{((1-\beta) \log n,l)}-\mathbb{E}[E_n^{((1-\beta) \log n,l)}])}{\sqrt{\mathrm{Var}(X_n) }} \ge \frac{\epsilon}{\log n} \bigg)\\
    \le& \sup_{\alpha\le \beta \le 1-\delta} (\log n)^2 \epsilon^{-2} \frac{\sum_{l=1}^{n^{1-\beta}}\mathrm{Var}(E_n^{((1-\beta) \log n,l)})}{\mathrm{Var}(X_n) } \\
\lesssim & \sup_{\alpha\le \beta \le 1-\delta}   (\log n)^2
 \frac{n^{1-\beta}n^{3\beta/2}}{n^{3/2}} 
\lesssim  (\log n)^2 n^{-\delta/2}.
\end{align*}
Now, we use Markov's inequality, along with the triangle inequality, to get,  
\begin{align*}
&\sup_{1-\delta \le \beta \le 1}
 \mathbb{P}\bigg(\frac{\sum_{l=1}^{n^{1-\beta}}(E_n^{((1-\beta) \log n,l)}-\mathbb{E}[E_n^{((1-\beta) \log n,l)}])}{\sqrt{\mathrm{Var}(X_n) }} \ge \frac{\epsilon}{\log n} \bigg)\\
    \le&  \sup_{1-\delta \le \beta \le 1}(\log n)\epsilon^{-1} \frac{\sum_{l=1}^{n^{1-\beta}} \mathbb{E}[E_n^{((1-\beta) \log n,l)}]}{\sqrt{\mathrm{Var}(X_n)} }\\
\lesssim & \sup_{1-\delta \le \beta \le 1} 
(\log n)
\frac{n^{1-\beta} n^{ \beta/2}}{n^{3/4}} 
\lesssim (\log n) n^{-1/4+\delta/2}. 
\end{align*}
Therefore, 
\begin{align*}
&\mathbb{P}\bigg( \frac{|X_n-\mathbb{E}[X_n]|}{\sqrt{\mathrm{Var}(X_n) }} \ge \epsilon \bigg)\\
    \le&  \mathbb{P}\bigg( \frac{|MT-\mathbb{E}[MT]|}{\sqrt{\mathrm{Var}(X_n) }} \ge \epsilon/2 \bigg)\\
   &+ C(\log n) \sup_{\alpha\le \beta \le 1}
    \mathbb{P}\bigg(\frac{\sum_{l=1}^{n^{1-\beta}}(E_n^{(\beta \log n,l)}-\mathbb{E}[E_n^{(\beta \log n,l)}])}{\sqrt{\mathrm{Var}(X_n) }} \ge \epsilon/(2\log n) \bigg)
   \lesssim n^{-\alpha/4}
\end{align*}
as desired. 
\end{proof}


\section{The fluctuations converge to 0 in dimension $d=4$} \label{sec:dim4}

In this section, we estimate the variance and the limit in distribution of $\frac{X_n -\mathbb{E}[X_n]}{\sqrt{\mathrm{Var}(X_n)}}$ in $d=4$. In subsection \ref{cross term}, we will give the lower bounds for the cross term, which holds for the graph distance and the cut points. In subsections \ref{upper:cross}-\ref{analysis of centered}, we will show the upper bounds for the cross term and main results in $d=4$. In subsection \ref{analysis of ER}, we will present our results for general random variables whose cross terms have the same heavy tailed behavior as the graph distance in dimension 4. 



\subsection{Lower bounds for the Cross Term}\label{cross term}

We will first start by obtaining a lower bound of $\mathbb{E}[E_n].$ Our heuristic argument is the following: if there is an intersection between $S[ 3n/2, 2n]$ and $S[0,n]$, then this will form a shortcut in the graph distance between the points $S_{2n}$ and $S_0$. The amount that the graph distance would reduce via this shortcut will be related to the graph distance between $S_{n}$ and $S_{2n}$, which, by a law of large numbers, should be $O\left(\frac{n}{(\log n)^{1/2}}\right)$. Furthermore, the probability of the intersection of $S[3n/2,2n]$ and $S[0,n]$ is approximately $O((\log n)^{-1})$. Thus, one would heuristically argue that $\mathbb{E}[E_n] = \frac{n}{(\log n)^{3/2\pm o(1)}}$.  

However, the exact value of the reduction of the graph distance is a complicated function of the loop intersection structure of the two walks. Thus, knowing that there is a long-range intersection may not be enough to deduce the reduction of the graph distance. Furthermore, to get the exact order $\frac{n}{(\log n)^{3/2}}$, that we desire, we would need to argue that there is an exact independence between the event that there is an intersection between $S[3n/2,2n]$ and $S[0,n]$ and the event that the graph distance between $S_n$ and $S_{2n}$ is $O\left( \frac{n}{(\log n)^{1/2}}\right) $. The following lemma will address these issues by finding appropriate events that will allow us to constrain the geometry of the random walk range.

\begin{lem} \label{lem:lowerbndexpdim4}
Let $E_n$ denote the cross term that appears when considering the number of cut points or the graph distance.
We have $\mathbb{E}[E_n] \ge \frac{n}{(\log n)^{3/2+o(1)}}$.

\end{lem}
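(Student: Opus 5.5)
We will follow the heuristic just before the statement and mimic the proof of \eqref{lower+}: build an event $G$ of probability at least $(\log n)^{-1-o(1)}$ on which $E_n \ge n/(\log n)^{1/2+o(1)}$. Since $E_n \ge 0$, this gives $\mathbb{E}[E_n] \ge \mathbb{E}[E_n\mathbbm{1}_G] \ge n/(\log n)^{3/2+o(1)}$.

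\emph{Choice of events.} Fix a small $s>0$ and use the same shortcut mechanism as in the proof of \eqref{lower+}: an intersection of $S[0,sn]$ with $S[2n-sn,2n]$. With $t_1 \le sn$ and $t_2 \ge 2n-sn$ the intersection times, such an intersection gives $X_{2n} \le t_1 + (2n-t_2)$. However, $2sn$ is no longer negligible against $n/(\log n)^{1/2}$. We therefore shrink the window and use $S[0,s_n n]$ and $S[2n-s_nn,2n]$ with $s_n = (\log n)^{-1}$, so that $X_{2n}\le 2s_n n \ll n/(\log n)^{1/2}$.

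Define $D_1=\{X[n,2n] \ge n/(\log n)^{1/2+o(1)}\}$. The results of Croydon--Shiraishi \cite{CroydonShiraishi2023, Shiraishi2012} give $\mathbb{P}(D_1) = 1-o(1)$, using cut points of the two-sided walk as in the proof of \eqref{lower+} to get a lower bound by $X^2_{(-\infty,\infty)}$ that is monotone in the interval. Let $D_2=\{\ca(S[2n-s_nn,2n]) \ge s_n n/(\log (s_nn))^{1+o(1)}\}$, which holds with high probability by the law of large numbers for capacity in $d=4$. Let $D_3,D_4$ be the diffusive localization events from the proof of \eqref{lower+}. On $D_1$, since $X_n\ge 0$, we get $E_n \ge X[n,2n]-X_{2n} \ge n/(\log n)^{1/2+o(1)}$ whenever the intersection occurs.

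\emph{Intersection probability.} Next, reverse $S[0,s_nn]$ and apply the last-exit decomposition exactly as in the proof of \eqref{lower+}. In $d=4$, the sum $\sum_{i\le s_n n}\mathbb{P}^z(\tilde S_i=a)$ for $|a-z|\lesssim n^{1/2}$ is only of order $s_n n\cdot n^{-2}$, because the walk of length $s_nn$ must travel distance $\sqrt n$. This is the problem: with this window the intersection is too unlikely. To fix it, take $S[0,n/2]$ against $S[3n/2,2n]$ (windows of size $\asymp n$). The intersection probability is then $\gtrsim n^{-1}\ca(S[3n/2,2n]) \gtrsim (\log n)^{-1}$. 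To keep $X_{2n}$ small, we require the intersection to happen near the ends: intersect $S[0,s_nn]$ with a set at distance $\sqrt{s_nn}$. Concretely, use nested scales, or restrict to the event that $S_{2n}$ lies within $\sqrt{s_nn}$ of the origin, which has probability $s_n^{2}$, polylog and too costly.

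\textbf{Main obstacle.} Reconciling a large intersection probability, of order $(\log n)^{-1}$, with a short resulting path, $o(n/(\log n)^{1/2})$, is the main difficulty. My first attempt would be the following. Do not demand $X_{2n}$ small. Instead, note that an intersection at times $t_1\le n/2$ and $t_2 \ge 3n/2$ gives $X_{2n}\le X[0,t_1]+X[t_2,2n]$. Then show, via independence of the middle piece $S[n/2,3n/2]$ from the endpoint pieces, that $X[n/2,3n/2] \ge n/(\log n)^{1/2+o(1)}$ with high probability. Hence $E_n \ge X[n/2,3n/2]$ minus cut-point boundary losses. Here the concern is that cut points of the subpaths may be destroyed near the junction. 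This is controlled by \eqref{upper+}-type estimates in $d=4$, i.e.\ the first global cut time being $n^{o(1)}$-close with high probability, losing only $(\log n)^{o(1)}$ factors. Finally, the decoupling of the intersection event from $D_1$ on the middle piece uses conditional independence given $S_{n/2},S_{3n/2}$, as in the proof of \eqref{lower+}.
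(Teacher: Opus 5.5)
Your proposal never settles on a complete argument, and the route you finally adopt has a gap at its decisive step.

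\textbf{The gap in the symmetric-window argument.} On the intersection event you have $X_{2n}\le X[0,t_1]+X[t_2,2n]$, so
\[
E_n\ge \bigl(X_n-X[0,t_1]\bigr)+\bigl(X[n,2n]-X[t_2,2n]\bigr).
\]
You need the first bracket to be at least $X[n/2,n]-o(n/(\log n)^{1/2})$ and the second to be at least $-o(n/(\log n)^{1/2})$. Graph distance is not monotone under restriction. So each bound requires cut times with small gaps: for $S[0,n]$ near $t_1$ and $n/2$, and for $S[n,2n]$ near the random intersection time $t_2$. These must hold \emph{on} the intersection event, whose probability is only $\asymp(\log n)^{-1}$.

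The tool you invoke is not available. \eqref{upper+} is a $d\ge5$ estimate that rests on global cut times, and in $d=4$ there are none, since two independent walks intersect infinitely often. For the same reason your justification of $D_1$ via $X^2_{(-\infty,\infty)}$ is vacuous: that quantity is $0$ almost surely.

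What exists in $d=4$ is the Croydon--Shiraishi bound: cut-time gaps of $S[0,n]$ are at most $n/(\log n)^{7/12}$ with probability $1-o(1)$. This cannot be union-bounded against a $(\log n)^{-1}$ event. A single long loop, $S_i=S_j$ with $i\le n/2$ and $j-n/2\ge n/(\log n)^{7/12}$, already has probability of order $\log\log n/\log n$.

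For $S[0,n]$ you could condition on the good event first. For $S[n,2n]$, however, the cut-time structure belongs to the very walk that must realize the intersection. Conditioning on $S_{n/2},S_{3n/2}$ only decouples the middle piece's law of large numbers. Nor can you simply drop $X[n,2n]\ge 0$. The remaining gain $X_n-X[0,t_1]\approx X[n/2,n]$ and the bypass cost $X[t_2,2n]$ are then both of order $\frac12\mathbb{E}[X_n]$, and nothing survives.

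\textbf{The missing idea.} It lies in the configuration you rejected. The bypass should be charged by graph distance inside short end windows, not by their time length. Only the crude bound $X_{2n}\le t_1+(2n-t_2)$ forced $s_n=o((\log n)^{-1/2})$ and hence a prohibitive localization price $s_n^2$.

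The paper takes windows of length $n/(\log n)^{\epsilon}$ at both ends. By the law of large numbers, the bypass then costs at most $n/(\log n)^{1/2+\epsilon/2}$. The two short windows still meet with probability at least $c(\log n)^{-1-2\epsilon}$, via a last-exit decomposition and the capacity lower bound $\ca\gtrsim n/(\log n)^{1+\epsilon}$ of the window. This agrees with your own $s_n^2$ heuristic at $s_n=(\log n)^{-\epsilon}$, and the $(\log n)^{-O(\epsilon)}$ loss is exactly the $o(1)$ in the statement.

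Equally important is how the paper decouples:
\begin{enumerate}
\item It first conditions on the reversed first half $\tilde S$ being good: cut-time gaps at most $n/(\log n)^{7/12}$, $\tilde X_{n/2}\ge \mathbb{E}[X_{n/2}]/(1+\epsilon)$, small graph distance and large capacity on the short window, and localization.
\item It then lower-bounds the conditional probability that the independent second half hits that window.
\item It controls the leftover distance $\hat X[\tau,n]$ by the strong Markov property at the hitting time $\tau$.
\item It drops $\hat X_n\ge 0$ and obtains $E_n\ge \tilde X_n-X_{2n}\ge n/(\log n)^{1/2+\epsilon/2}$.
\end{enumerate}
No cut-point information about the second half is ever needed. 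That is exactly what your symmetric choice of windows cannot avoid.
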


\begin{rem}
    In \cite[Section 4]{Shiraishi2012}, he has already shown 
    $\mathbb{E}[E_n] \ge \frac{n}{(\log n)^{3/2+o(1)}}$ for the effective resistance. 
    We obtain the same result with a slightly simpler proof, which derives intersection estimates by using the capacity of the random walk. 
\end{rem}

\begin{proof}
We split the walk $S[0,2n]$ into two parts. $\tilde{S}$ will be the reversal of the first part, that is, $\tilde{S}_i= S_{n-i} - S_n$ . $\hat{S}$ will be the second part, that is $\hat{S}_i = S_{n+i} - S_n$.

We now define the following events that impose good properties on the first walk $\tilde{S}_n$. 
Pick small $\epsilon>0$. 
We let $\tilde{T}_i$ be the sequence of cut points along the walk $\tilde{S}_n$, and
\begin{equation*}
\begin{aligned}
& A_1:=\{ \max_{\tilde{T}_i \le n} (\tilde{T}_{i+1} - \tilde{T}_i) \le \frac{n}{(\log n)^{7/12}} \},
\quad A_2:=\{\tilde{X}_{n/2} \ge \frac{1}{1+\epsilon} \mathbb{E}[X_{n/2}] \},\\
&A_3:= \{\tilde{X}[n-n/(\log n)^{\epsilon}, n] \le n/(\log n)^{1/2+\epsilon/2} \}, \\
& A_4:= \{\ca(\tilde{S}(n-n/(\log n)^{\epsilon}, n)) \ge \frac{1}{2} \frac{\pi^2}{8}\frac{n}{(\log n)^{1+\epsilon}}  \},\\
&A_5:=\{ \tilde{S}_i \le C_1 \sqrt{n},\quad  \forall i \in [n-n/(\log n)^{\epsilon},n ]  \}.
\end{aligned}
\end{equation*}


In the above equation, we use the notation $\tilde{X}[a,b]$ to denote the graph distance between the points $\tilde{S}_a$ and $\tilde{S}_b$ along the graph $\tilde{S}[a,b]$.
By \cite[Lemma 2.3]{CroydonShiraishi2023},  $\mathbb{P}(A_1) = 1- o(1)$. Similarly, the law of large numbers ensures that $\mathbb{P}(A_2)$, $\mathbb{P}(A_3) = 1-o(1)$ (noting \cite[Theorem 1.2]{CroydonShiraishi2023}) and 
 $\mathbb{P}(A_4)=1-o(1)$ (see \cite[Theorem 1.1]{Chang}). By the local central limit theorem, we have that $\mathbb{P}(A_5) = 1- \epsilon(C_1)$, where $\epsilon(C_1) \to 0$ as $C_1 \to \infty$.  Let $\mathcal{Q}:= A_1 \cap A_2\cap A_3 \cap A_4 \cap A_5$. From  our previous estimates, we can take a union bound to deduce that $\mathbb{P}(\mathcal{Q}) \ge 1- 2 \epsilon(C_1)$, for $n$ sufficiently large.



Now, we need to define an event on the intersection of $\hat{S}$ with $\tilde{S}$. Let $\tau$ be defined as the first time of intersection between $\hat{S}$ and $\tilde{S}[n-n/(\log n)^{\epsilon},n]$ after time $n-n/(\log n)^{\epsilon}$. 
That is, $\tau:=\inf \{i> n-n/(\log n)^{\epsilon}: \hat{S}_i \in \tilde{S}[n-n/(\log n)^{\epsilon},n]  \}$. 
We define,
\begin{equation*}
\begin{aligned}
& B_1:= \{ \hat{X}[\tau, n] \le n/(\log n)^{1/2+\epsilon/2}\},
\quad B_2:= \{ \hat{S}_{n-n/(\log n)^{\epsilon} } \le C_1 \sqrt{n}\}, \\
& B_3:= \{ \hat{S}[n-n/(\log n)^{\epsilon}, n] \cap \tilde{S}[n-n/(\log n)^{\epsilon},n] \ne \emptyset\}.
\end{aligned}
\end{equation*}

As before, we let $\hat{X}[a,b]$ to denote the graph distance between the points $\hat{S}_a$ and $\hat{S}_b$ on the graph connecting $\hat{S}[a,b]$. Note that event $B_1$ is independent of $B_2 \cap B_3$ by the strong Markov property, even if you have conditioned on the walk $\tilde{S}[0,n]$.   By the law of large numbers, we have that $\mathbb{P}(B_1)= 1- o(1)$ and 
\begin{equation} \label{eq:prodprobability}
\begin{aligned}
 \mathbb{P}(B_1 \cap B_2 \cap B_3\cap \mathcal{Q})
 & = \mathbb{P}(B_1| B_2 \cap B_3 \cap \mathcal{Q}) \mathbb{P}(B_2 \cap B_3 \cap \mathcal{Q})\\&= \mathbb{P}(B_1) \mathbb{P}(B_3 | B_2 \cap \mathcal{Q}) \mathbb{P}(B_2 \cap \mathcal{Q}) .
\end{aligned}
\end{equation}

We first claim that we have $\mathbb{P}(B_3| B_2 \cap \mathcal{Q}) \ge \frac{c}{(\log n)^{1+2\epsilon}}$ for some constant $c>0$.
We use $\mathcal{S}$ as a shorthand for the set $\tilde{S}[n-n/(\log n)^{\epsilon},n]$.  
Let $\hat{S}_{n-n/(\log n)^{\epsilon}} \le C_1 n^{1/2}$, as is necessary to satisfy the event $B_2$.  By the last hitting time decomposition, we have that 
on $\mathcal{Q}$
\begin{equation*}
\begin{aligned}
&\mathbb{P}^{\hat{S}_{n-n/(\log n)^{\epsilon}}}(\tau < n/(\log n)^{\epsilon})\\
=&  \sum_{i=1}^{n/(\log n)^{\epsilon}} \sum_{a \in \mathcal{S}} \mathbb{P}^{\hat{S}_{n-n/(\log n)^{\epsilon}}}( \hat{S}_{i + n-n/(\log n)^{\epsilon}} = a) \mathbb{P}^{a}(\tau > n/(\log n)^{\epsilon}-i) \\
\ge& \sum_{i=1}^{n/(\log n)^{\epsilon}} \sum_{a \in \mathcal{S}} \mathbb{P}^{\hat{S}_{n-n/(\log n)^{\epsilon}}}( \hat{S}_{i + n-n/(\log n)^{\epsilon}} = a) \mathbb{P}^{a}(\tau = \infty)  \\
 \ge &\left(\min_{\hat{a} \in \mathcal{S}} \sum_{i = 1}^{n/(\log n)^{\epsilon}} \mathbb{P}^{\hat{S}_{n-n/(\log n)^{\epsilon}}}(\hat{S}_{i+n-n/(\log n)^{\epsilon}} = \hat{a})  \right) \sum_{a \in \mathcal{S}} \mathbb{P}^a(\tau = \infty)\\
= &\left(\min_{\hat{a} \in \mathcal{S}} \sum_{i = 1}^{n/(\log n)^{\epsilon}} \mathbb{P}^{\hat{S}_{n-n/(\log n)^{\epsilon}}}(\hat{S}_{i+n-n/(\log n)^{\epsilon}} = \hat{a})  \right) \ca(\mathcal{S}).
\end{aligned}
\end{equation*}

On the union $A_5 \cap B_2 $, we can apply the local central limit theorem to deduce that $\min_{\hat{a} \in \mathcal{S}} \sum_{i = 1}^{n/(\log n)^{\epsilon}} \mathbb{P}^{\hat{S}_{n-n/(\log n)^{\epsilon}}}(\hat{S}_{i+n-n/(\log n)^{\epsilon}} = \hat{a})  \ge c n^{-1}(\log n)^{-\epsilon}$, for some constant $c>0$ on $\mathcal{Q}$. Furthermore, the event $A_5$ ensures that $\ca(\mathcal{S}) \ge \frac{1}{2} \frac{\pi^2}{8}\frac{n}{(\log n)^{1+\epsilon}}$ on $\mathcal{Q}$. Multiplying these together, we see that there is some $c>0$ such that $\mathbb{P}^{\hat{S}_{n-n/(\log n)^{\epsilon}}}(\tau < n/(\log n)^{\epsilon} ) > c/(\log n)^{1+2\epsilon}$  on $\mathcal{Q}$. This means that $\mathbb{P}(B_3|B_2 \cap \mathcal{Q}) \ge c/(\log n)^{1+2\epsilon}$ for some $c>0$. Returning to equation \eqref{eq:prodprobability}, we see that we see that we must have $\mathbb{P}(B_1\cap B_2 \cap B_3 \cap \mathcal{Q}) \ge c/(\log n)^{1+2\epsilon}$ for some $c>0$.

Now, on the event $B_1 \cap B_2 \cap B_3\cap \mathcal{Q}$,  we claim we have $E_n > n/(\log n)^{1/2+\epsilon/2}$. 
Due to the presence of the cut points from the event $A_1$ and the lower bound on the graph distance $\tilde{X}_{n/2}$ from $A_2$, we have that $\tilde{X}_n \ge \frac{1}{1+\epsilon} \mathbb{E}[X_{n/2}] - \frac{n}{(\log n)^{7/12}}$. 
By events $A_3$, $A_1$, and $B_1$, the graph distance between the points $\hat{S}_n$ and $\tilde{S}_n$, is at most $2n/(\log n)^{1/2+\epsilon/2} +2\frac{n}{(\log n)^{7/12}}$.  Computing the difference, we now see that $E_n > n/(\log n)^{1/2+\epsilon/2}$ on $B_1 \cap B_2 \cap B_3 \cap \mathcal{Q}$ as desired and this event happens with probability $c/(\log n)^{1+2\epsilon}$. By multiplying these two terms together, we see that $\mathbb{E}[E_n]>n/(\log n)^{3/2+\epsilon/2+2\epsilon}$.  
This proof also works even for the number of cut points. 
\end{proof}

\begin{lem} \label{lem:setOmegahigprob}
There exists a set $\Omega$ with probability $\text{O}\left(\frac{\log \log n}{\log n} \right)$ such that we have 
\begin{equation*}
E_n \mathbbm{1}[\Omega^c] \ll \mathbb{E}[E_n].
\end{equation*}
\end{lem}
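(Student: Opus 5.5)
We will take $\Omega$ to be, essentially, the event that the two halves of the walk have a long-range intersection. Concretely, with $\tilde S$ and $\hat S$ the reversed first half and the second half as in the proof of Lemma \ref{lem:lowerbndexpdim4}, set $m:=n/(\log n)^{K}$ for a large constant $K$ to be fixed later. Define
\[
\Omega:=\big\{\hat S[m,n]\cap \tilde S[0,n]\neq\emptyset\big\}\cup\big\{\tilde S[m,n]\cap \hat S[0,n]\neq\emptyset\big\}.
\]
This is exactly the event $A_1^{(a,0,1)}\cup A_2^{(a,0,1)}$ from the proof of Lemma \ref{lem:uprbndvar}, with $a=2m$.

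\textbf{Step 1: the probability of $\Omega$.} In $d=4$, the probability that two independent walks of length $n$ started at the origin intersect decays like $(\log n)^{-1/2}$. The relevant estimate for $\Omega$, however, is that of a separated pair: a walk run from time $m$ to time $n$ must hit the other range. By the last-exit decomposition used in the proof of Lemma \ref{lem:lowerbndexpdim4}, together with the bound $\ca(S[0,n])\asymp n/\log n$, this probability is
\[
\asymp \frac{\log(n/m)}{\log n}=\frac{K\log\log n}{\log n}.
\]
Heuristically, the Green's function sum $\sum_{j\ge m} j^{-1}$ contributes $\log(n/m)$, and the capacity normalisation contributes $1/\log n$. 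This is the classical Lawler estimate for the probability that $S[0,n]$ and $S[m,\cdot]$ intersect in $d=4$. Hence $\mathbb P(\Omega)=O(\log\log n/\log n)$.

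\textbf{Step 2: smallness of $E_n$ off $\Omega$.} On $\Omega^c$, every intersection between the two halves occurs within time $m$ of the junction point $S_n$, on both sides. The argument is as in the proof of \eqref{upper+}. A shortcut created by such a loop only affects the path $S[n-m,n+m]$. Consequently $E_n\le X[n-m,n]+X[n,n+m]\le 2m$, up to the usual adjustment by the first cut points before and after this window. For the graph distance, any geodesic differs from the concatenation of the two half-geodesics only inside this window. For cut points, the points outside the window are unaffected.

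To make this rigorous, I will use the cut times $T^{\pm1}$ of the window. On $\Omega^c$, the sets $S[0,n-m]$ and $S[n+m,2n]$ are joined to the window only through the window itself. If needed, I will also intersect with the event that there exist cut points in $[n-2m,n-m]$ and $[n+m,n+2m]$. By \cite[Lemma 2.3]{CroydonShiraishi2023}, the complement of this event has probability $o(\log\log n/\log n)$ once the polylog powers are chosen suitably, so it can be absorbed into $\Omega$. This gives
\[
E_n\mathbbm 1[\Omega^c]\le 4m=\frac{4n}{(\log n)^{K}}.
\]

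\textbf{Step 3: conclusion.} By Lemma \ref{lem:lowerbndexpdim4}, $\mathbb E[E_n]\ge n/(\log n)^{3/2+o(1)}$. Choosing $K=2$ therefore gives $E_n\mathbbm 1[\Omega^c]\le 4n/(\log n)^2\ll \mathbb E[E_n]$, deterministically. The main obstacle is Step 2, namely showing that intersections confined to the window really cannot change $E_n$ by more than $O(m)$. For cut points, the bound is immediate because $E_n$ only counts cut points lost inside the window. For the graph distance, I first need $X_{2n}\ge X[0,n-m]+X[n+m,2n]-O(1)$ on $\Omega^c$. I would prove this using the existence of cut points at the window boundaries: every path from $S_0$ to $S_{2n}$ must pass through them, so distances split additively there.
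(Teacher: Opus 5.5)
Your proposal is correct and is essentially the paper's argument. The paper's $\Omega$ is the same long-range intersection event (with threshold $n/(\log n)^4$, bounded via Lawler's Theorem 4.3.6(ii)) together with the absence of cut points of $\tilde S$ or $\hat S$ in $[n/(\log n)^4,n/(\log n)^2]$; on $\Omega^c$ it splits distances at those cut points to get $E_n\le 2n/(\log n)^2$. As the paper remarks after the lemma, the cut-point events are unnecessary for the graph distance and the cut-point count: the first and last geodesic vertices lying in both halves of the range are within time $m$ of $S_n$, so $E_n\le 2m$ directly. If you keep them, the paper uses equation (7) of \cite{CroydonShiraishi2023}, which gives $O(\log\log n/\log n)$; that suffices, since you need only $O$, not $o$. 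In Step 1, it is the non-intersection probability of two walks from the origin, not the intersection probability, that decays like $(\log n)^{-1/2}$.
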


\begin{proof}
Recall the notation $\hat{S}$ and $\tilde{S}$ from the proof of Lemma \ref{lem:lowerbndexpdim4}. Consider the event
$$
\begin{aligned}
\Omega:=& \{ \hat{S}[n/(\log n)^4, n] \cap \tilde{S}[0,n] \ne \emptyset \} \bigcup \{ \tilde{S}[n/(\log n)^4,n] \cap \hat{S}[0,n] \ne \emptyset \}\\
& \bigcup \left\{ \sum_{i\in \left[\frac{n}{(\log n)^4}, \frac{n}{(\log n)^{2}}\right]} 
    1\{\tilde{S}[0,i-1 ]\cap  \tilde{S}[i, n] = \emptyset\}=0  \right\} \\
& \bigcup \left\{\sum_{i\in \left[\frac{n}{(\log n)^4}, \frac{n}{(\log n)^{2}}\right]} 
    1\{\hat{S}[0,i-1 ]\cap  \hat{S}[i, n] = \emptyset\}=0 \right\}.
\end{aligned}
$$

The first two events that compose $\Omega$ ensure that there exists some long-range intersection between $\tilde{S}$ and $\hat{S}$. The third and fourth  events force there to be no cut points in $\tilde{S}$ between $n/(\log n)^4$ and $n/(\log n)^2$.

We claim both that $\Omega$ holds with probability $ O\left( \frac{\log \log n}{\log n}\right)$  and that on the event $\Omega^c$, we must have that the cross term $E_n$ for the number of cut points and the graph distance must be less than $\frac{n}{(\log n)^2} \ll \mathbb{E}[E_n]$ by Lemma \ref{lem:lowerbndexpdim4}.

 By \cite[Theorem 4.3.6 part(ii)]{Lawler1991}, we must have that $\mathbb{P}( \hat{S}[n/(\log n)^4, n] \cap \tilde{S}[0,n] \ne \emptyset) = \text{O}\left(\frac{\log \log n}{\log n} \right)$.  Similarly, by \cite[equation (7)]{CroydonShiraishi2023}, 
$$\mathbb{P}\left(\sum_{i\in \left[\frac{n}{(\log n)^4}, \frac{n}{(\log n)^{2}}\right]} 
    1\{\tilde{S}[0,i-1 ]\cap  \tilde{S}[i, n] = \emptyset \}\right) = \text{O}\left( \frac{\log \log n}{\log n}\right). $$
    Combining these bounds gives us that $\mathbb{P}(\Omega) \le \text{O}\left( \frac{\log \log n}{\log n} \right)$, as desired.

Now, we will check that on $\Omega^c$, our cross term can be no more than $\frac{n}{(\log n)^2}$.
Notice that on $\Omega^c$, there must be some cut point in the interval $\left[\frac{n}{(\log n)^4}, \frac{n}{(\log n)^2} \right]$ for both $\tilde{S}$ and $\hat{S}$. Let $\mathcal{C}_{\tilde{S}}$ denote the time instance of the cut point of $\tilde{S}$ (cut time) in this interval, and let $\mathcal{C}_{\hat{S}}$ denote the cut time of $\hat{S}$ in this interval.  Furthermore, since there is no intersection between $\hat{S}[n/(\log n)^4,n]$ and $\tilde{S}$, $\mathcal{C}_{\hat{S}}$ must still be a cut time in the full graph $\hat{S} \cup \tilde{S}.$ 

Notice that we have $\hat{X}_n = \hat{X}[0,\mathcal{C}_{\hat{S}}] + \hat{X}[\mathcal{C}_{\hat{S}}, n] \le \hat{X}[\mathcal{C}_{\hat{S}}, n] + n/(\log n)^2.$
Similarly, we have $\tilde{X}_n \le \tilde{X}[\mathcal{C}_{\tilde{S}}, n]+n/(\log n)^2.$ Lastly,  $X_{2n} \ge \hat{X}[\mathcal{C}_{\hat{S}}, n]  + \tilde{X}[\mathcal{C}_{\tilde{S}}, n] $. Altogether, on $\Omega^c$, $E_n = \tilde{X}_n + \hat{X}_n - X_{2n} \le \frac{2n}{(\log n)^2} \ll \mathbb{E}[E_n]$. 
\end{proof}

\begin{rem}
If one is interested in the graph distance or the number of cut points, then we can proceed to more simply define $\Omega$ by considering only its first line from its definition in Lemma \ref{lem:setOmegahigprob}. We will use this fact later in the proof of Lemma  \ref{high prob.:intersection}.
\end{rem}

By applying the Cauchy-Schwarz inequality, we obtain the following lower bound on $\mathbb{E}[E_n^2]$ as a corollary.

\begin{cor}
Let $E_n$ denote the cross term that appears when computing the number of cut points or the graph distance. Then, we have,
\begin{equation*}
\mathbb{E}[E_n^2] \ge \frac{n^2}{(\log n)^{2+ o(1)}}.
\end{equation*}

\end{cor}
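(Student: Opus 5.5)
The bound will follow from Lemma \ref{lem:lowerbndexpdim4} and Lemma \ref{lem:setOmegahigprob} by a single application of the Cauchy--Schwarz inequality. The idea is that the mean of $E_n$ is carried by an event of small probability, and on such an event the second moment has to be correspondingly large.

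First I split $\mathbb{E}[E_n]$ along the set $\Omega$ of Lemma \ref{lem:setOmegahigprob}:
\begin{equation*}
\mathbb{E}[E_n] = \mathbb{E}[E_n \mathbbm{1}[\Omega]] + \mathbb{E}[E_n \mathbbm{1}[\Omega^c]].
\end{equation*}
The proof of that lemma shows that $E_n\mathbbm{1}[\Omega^c] \le 2n/(\log n)^2$ deterministically. Lemma \ref{lem:lowerbndexpdim4} gives $\mathbb{E}[E_n] \ge n/(\log n)^{3/2+o(1)}$, so the second term is $o(\mathbb{E}[E_n])$. Hence $\mathbb{E}[E_n\mathbbm{1}[\Omega]] \ge \tfrac12 \mathbb{E}[E_n]$ for large $n$.

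Next I apply Cauchy--Schwarz to the remaining term:
\begin{equation*}
\tfrac12\,\mathbb{E}[E_n] \le \mathbb{E}[E_n\mathbbm{1}[\Omega]] \le \mathbb{E}[E_n^2]^{1/2}\,\mathbb{P}(\Omega)^{1/2}.
\end{equation*}
Rearranging gives
\begin{equation*}
\mathbb{E}[E_n^2] \ge \frac{\mathbb{E}[E_n]^2}{4\,\mathbb{P}(\Omega)} \ge \frac{n^2}{(\log n)^{3+o(1)}}\cdot \frac{c\log n}{\log\log n}.
\end{equation*}
Since $\log\log n = (\log n)^{o(1)}$, the right-hand side is $n^2/(\log n)^{2+o(1)}$, which is the claim.

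The main thing to check is the deterministic bound $E_n \le 2n/(\log n)^2$ on $\Omega^c$ for both quantities. For the graph distance this is exactly the cut-point decomposition in the proof of Lemma \ref{lem:setOmegahigprob}. For the number of cut points the same argument goes through: cut points of either half that lie beyond the cut times $\mathcal{C}_{\tilde S}$ and $\mathcal{C}_{\hat S}$ remain cut points of the joined walk, because there is no long-range intersection on $\Omega^c$. Only the at most $2n/(\log n)^2$ cut points before those times can be lost. Everything else is bookkeeping of $(\log n)^{o(1)}$ factors.
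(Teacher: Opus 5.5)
Your proposal is correct and follows essentially the paper's own argument: Cauchy--Schwarz on the small-probability event $\Omega$ of Lemma \ref{lem:setOmegahigprob}, with $E_n\mathbbm{1}[\Omega^c]\ll\mathbb{E}[E_n]$ giving $\mathbb{E}[E_n\mathbbm{1}[\Omega]]\ge(1-o(1))\mathbb{E}[E_n]$, and then division by $\mathbb{P}(\Omega)=O(\log\log n/\log n)$. Your explicit check of the deterministic bound on $\Omega^c$ for the cut-point count is a correct filling-in of a detail the paper leaves implicit.
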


\begin{proof}
Let $\Omega$ be the set from Lemma \ref{lem:setOmegahigprob}.
Note that by the Cauchy-Schwarz inequality, 
\begin{equation*}
\mathbb{E}[E_n^2]\mathbb{P}(\Omega) \ge \mathbb{E}[E_n \mathbbm{1}[\Omega]]^2 \ge (1-o(1)) \mathbb{E}[E_n]^2.
\end{equation*}
Since $\mathbb{P}(\Omega) = O\left( \frac{\log \log n}{\log n}\right)$, we can take ratios and deduce that $\mathbb{E}[E_n^2] \ge \frac{n^2}{(\log n)^{2+o(1)}}$, as desired. 
\end{proof}

\subsection{Upper bounds for the Cross terms}\label{upper:cross}

Our proof of the upper bound for $\mathbb{E}[E_n^2]$ requires more steps. Indeed, unlike our proof of the lower bound, we cannot establish our upper bound directly. However, we will first show that if we have the inequality $\mathbb{E}[E_n^2] \le  \frac{C n^2\log \log n}{(\log n)^2} + (\log n)^{-\epsilon} \mathrm{Var}(X_n)$, this will be enough to show that $\mathrm{Var}(X_n) \le \frac{n^2}{(\log n)^{2-o(1)}}$ and ultimately that $\mathbb{E}[E_n^2] \le \frac{n^2}{(\log n)^{2-o(1)}}$.

\subsubsection{Our first reduction}

The goal of this section is to establish our reduction that showing $\mathbb{E}[E_n^2] \le \frac{C n^2\log \log n}{(\log n)^2}+ (\log n)^{-\epsilon} \mathrm{Var}(X_n)$ is sufficient to establish $\mathbb{E}[E_n^2] ,\mathrm{Var}(X_n) \le \frac{n^2}{(\log n)^{2+o(1)}}$. 

\begin{lem} \label{lem:uprboundEn}
Let $X_n$ denote the number of cut points or the graph distance. 
Assume that there is some $\epsilon >0$ such that for all $n$, we have 
\begin{equation} \label{eq:upren2dim4}
\mathbb{E}[E_n^2] \le \frac{C n^2\log \log n}{(\log n)^{2+o(1)}}  + (\log n)^{-\epsilon} \mathrm{Var}(X_n).
\end{equation}
Then, we must have that,
\begin{equation*}
\mathrm{Var}(X_n) \le \frac{n^2}{(\log n)^{2-o(1)}}, \quad 
\mathbb{E}[E_n^2] \le \frac{n^2}{(\log n)^{2-o(1)}}.
\end{equation*}
\end{lem}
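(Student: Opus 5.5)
We obtain a recursive inequality for $V_n:=\mathrm{Var}(X_n)$ from the one-step decomposition $X_{2n}=X_n+X[n,2n]-E_n$, feed the hypothesis \eqref{eq:upren2dim4} into it, and iterate over dyadic scales. Set $V_n:=\mathrm{Var}(X_n)$ and $e_n:=\mathbb{E}[E_n^2]^{1/2}$. The triangle inequality in $L^2$ and the independence of $X_n$ and $X[n,2n]$ give
\begin{equation*}
V_{2n}^{1/2}\le (2V_n)^{1/2}+e_n ,
\end{equation*}
exactly as in the proof of Lemma \ref{variance}. Substituting \eqref{eq:upren2dim4} and using $\sqrt{a+b}\le\sqrt a+\sqrt b$, we get
\begin{equation*}
V_{2n}^{1/2}\le \sqrt2\,V_n^{1/2}\bigl(1+(\log n)^{-\epsilon/2}\bigr)+ \frac{C n(\log\log n)^{1/2}}{(\log n)^{1-o(1)}} .
\end{equation*}
The proof then reduces to solving this recursion.

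Normalize by $b_k:=V_{2^k}^{1/2}/2^{k/2}$; for non-dyadic $n$ we take suprema over $[2^k,2^{k+1})$ as in Lemma \ref{variance}, using the same inequality with unequal halves. With $\eta_k:=k^{-\epsilon/2}$ this gives
\begin{equation*}
b_{k+1}\le b_k(1+\eta_k)+ \frac{C\,2^{k/2}}{k^{1-o(1)}} .
\end{equation*}
The product $\prod_j(1+j^{-\epsilon/2})\le \exp(Ck^{1-\epsilon/2})$ is not bounded, but it is $2^{o(k)}$, and that is all we need. The forcing term grows geometrically like $2^{k/2}$, so the sum is dominated by its last terms. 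Unrolling,
\begin{equation*}
b_k\lesssim \sum_{j\le k}\frac{2^{j/2}}{j^{1-o(1)}}\prod_{i=j}^{k-1}(1+\eta_i).
\end{equation*}
For $k-j\le k^{\epsilon/4}$ the product is $1+o(1)$. The remaining $j$ contribute at most $2^{(k-k^{\epsilon/4})/2}e^{Ck^{1-\epsilon/2}}$, which is negligible compared with $2^{k/2}/k$.

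Hence $b_k\lesssim 2^{k/2}k^{-1+o(1)}$, that is, $V_n\le n^2/(\log n)^{2-o(1)}$. Plugging this back into \eqref{eq:upren2dim4} gives $\mathbb{E}[E_n^2]\le n^2(\log n)^{-2+o(1)}+n^2(\log n)^{-2-\epsilon+o(1)}$, which is the second claim.

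The step I expect to be the main obstacle is the bookkeeping of the multiplicative factor $(1+(\log n)^{-\epsilon/2})$. A naive bound of the product by a constant fails, because the sum of $k^{-\epsilon/2}$ diverges when $\epsilon<2$. The argument must therefore exploit that the forcing term is geometric, so that only the last $o(k)$ levels matter, and this must be done uniformly in the $o(1)$ exponent. If that becomes delicate, I would first derive the crude a priori bound $V_n\lesssim n^2$ from $E_n\le 2n$. Then I would bootstrap: each pass through the recursion improves the exponent of $\log n$ by $\epsilon$ until it reaches the $2-o(1)$ limit set by the first term.
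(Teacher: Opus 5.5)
Your reduction to the one-step recursion $V_{2n}^{1/2}\le\sqrt2\,V_n^{1/2}\bigl(1+(\log n)^{-\epsilon/2}\bigr)+Cn(\log\log n)^{1/2}(\log n)^{-1+o(1)}$ is correct. The way you solve it, however, fails for the relevant values of $\epsilon$.

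For $j<k-k^{\epsilon/4}$ you bound the product by the global estimate $e^{Ck^{1-\epsilon/2}}$. You then claim that $2^{(k-k^{\epsilon/4})/2}e^{Ck^{1-\epsilon/2}}$ is negligible against $2^{k/2}/k$. The ratio is $k\exp\bigl(-\tfrac{\log 2}{2}k^{\epsilon/4}+Ck^{1-\epsilon/2}\bigr)$, which tends to $0$ only if $\epsilon/4>1-\epsilon/2$, i.e.\ $\epsilon>4/3$. The lemma has to hold for every $\epsilon>0$, and the $\epsilon$ actually supplied through Lemmas \ref{high prob.:intersection} and \ref{lem:pfeqnonintersect} is small.

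Moving the cut-off $m_0=k-j$ does not rescue the scheme. Keeping the product $O(1)$ on the near range forces $m_0\lesssim k^{\epsilon/2}$. Beating $e^{Ck^{1-\epsilon/2}}$ on the far range forces $m_0\gg k^{1-\epsilon/2}$. These are compatible only when $\epsilon>1$.

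The missing observation is that the product never needs to be close to $1$. It only needs to grow more slowly than the geometric factor $2^{(k-j)/2}$. Since $\eta_i\to0$, there is $i_0$ with $1+\eta_i\le 2^{1/4}$ for $i\ge i_0$. Hence $\prod_{i=j}^{k-1}(1+\eta_i)\le 2^{(k-j)/4}$, and
\[
b_k\lesssim 2^{k/4}b_{i_0}+2^{k/4}\sum_{j\le k}\frac{2^{j/4}}{j^{1-o(1)}}\lesssim \frac{2^{k/2}}{k^{1-o(1)}} .
\]
This is exactly the mechanism of the paper's proof, written for the variance. The paper absorbs $(\log n)^{-\epsilon}\mathrm{Var}(X_n)$ and the Cauchy--Schwarz cross terms into a fixed $\delta$, obtaining $\mathrm{Var}(X_{2n})\le(2+\delta)\mathrm{Var}(X_n)+Cn^2\log\log n/(\log n)^{2+o(1)}$. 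It then writes $\mathrm{Var}(X_n)=n^2/(\log n)^{f(n)}$ and shows that $f$ gains at least $c/\log\log n$ per doubling while $f<2-\kappa$, and never falls below $2-\kappa$ afterwards.

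Your bootstrap fallback also works as stated, and is arguably the cleanest route. Insert an a priori bound $\mathrm{Var}(X_n)\le Cn^2(\log n)^{-a}$ into \eqref{eq:upren2dim4}, starting from $a=0$ via $X_n\le n$. This makes the forcing purely additive, so $b_{k+1}\le b_k+C2^{k/2}\bigl(k^{-1+o(1)}+k^{-(a+\epsilon)/2}\bigr)$. That yields exponent $\min(2-o(1),a+\epsilon)$, so $\lceil 2/\epsilon\rceil$ passes suffice.

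Finally, \eqref{eq:upren2dim4} only controls the cross term with equal halves. So instead of taking suprema over $[2^k,2^{k+1})$ with unequal halves as in Lemma \ref{variance}, run the recursion along each chain $m2^k$ with $m\in[n_0,2n_0)$. The paper does the same by halving down from $n$.
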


\begin{proof}



Fix any $\kappa>0$;  we will first show that for $n$ large enough, we must have that,
\begin{equation*}
\mathrm{Var}(X_n) \le \frac{n^2}{(\log n)^{2-\kappa}}.
\end{equation*}
Let us write 
$$
\mathrm{Var}(X_{n}) = \frac{n^2}{(\log n)^{f(n)}}. 
$$
We will use the dyadic decomposition to find an equation for $f(n)$. We see that,
\begin{equation*}
\mathrm{Var}(X_{2n}) = \mathrm{Var}(X_n) + \mathrm{Var}(X_{[n,2n]}) + \mathrm{Var}(\mathcal{E}_n) +\text{Cov}(\mathcal{E}_n, X_{n}+X_{[n,2n]}).
\end{equation*}
By using the previous expansion of the variance and equation \eqref{eq:upren2dim4}, we obtain
\begin{equation*} 
\begin{aligned}
\frac{(2n)^2}{(\log (2n))^{f(2n)}} &\le \frac{2 n^2}{(\log n)^{f(n)}} +  \frac{C n^2 \log \log n}{(\log n)^{2+o(1)}} + (\log n)^{-\epsilon} \frac{n^2}{(\log n)^{f(n)}}\\& + \sqrt{\frac{ (\log n)^{-\epsilon} n^2}{(\log n)^{f(n)}} + \frac{C n^2 \log \log n}{(\log n)^{2+o(1)}}} \sqrt{\frac{2n^2}{(\log n)^{f(n)}}}.
\end{aligned}
\end{equation*}
We can bound the right-hand side as,
\begin{equation*}
\begin{aligned}
& \frac{2n^2}{(\log n)^{f(n)}} + \frac{ C n^2 \log \log n}{(\log n)^{2+o(1)}} + \frac{n^2}{(\log n)^{f(n)+\epsilon}} + C \frac{n^2}{(\log n)^{f(n)+\epsilon/2}} + C \frac{n^2 \sqrt{\log \log n}}{(\log n)^{f(n)/2 +1+o(1)}}\\
& \le  \frac{(2+\delta)n^2}{(\log n)^{f(n)}} + C \frac{n^2 \log \log n}{(\log n)^{2+o(1)}},
\end{aligned}
\end{equation*}
where $\delta>0$ can be set to be any small constant, and $C$ is some universal constant (that could depend on $\delta$) and could change from line to line.

We consider two cases:

\textbf{Case 1:} $f(n) \ge 2- \kappa$

In this case, note that for $n$ large enough, we must have that $\frac{n^2 \log \log n}{(\log n)^{2+o(1)}} \le \delta \frac{n^2}{(\log n)^{2-\kappa}}$. 
Thus, we obtain the inequality,
\begin{equation} \label{eq:case1fn}
\begin{aligned}
& \frac{4n^2}{(\log 2n)^{f(2n)}} \le \frac{(2+2 \delta)n^2}{(\log n)^{2 - \kappa}},\\
& \log \frac{4}{2+2 \delta}  + (2-\kappa) \log\log n \le f(2n)\log \log 2n,\\
&f(2n) \ge (2-\kappa) \frac{\log \log n}{\log \log n + \log \left[ 1+ \frac{\log 2}{\log n} \right]} + \frac{1}{\log \log 2n}\log \frac{4}{2+2 \delta}\\&\ge (2- \kappa) -  \frac{2-\kappa}{2\log n \log \log n} +  \frac{1}{\log \log  2n}\log \frac{4}{2+2 \delta} \ge (2-\kappa),
\end{aligned}
\end{equation}
when $n$ is large enough.

\textbf{Case 2: $f(n) \le 2 - \kappa$}

In this case, when $n$ is large enough, we can still obtain the following inequality for any small $\delta>0$: $\frac{n^2 \log \log n}{(\log n)^{2+o(1)}} \le \delta \frac{n^2}{(\log n)^{f(n)}}$. Thus, we obtain the inequality,
\begin{equation} \label{eq:case2fn}
\begin{aligned}
& \frac{4n^2}{(\log 2n)^{f(2n)}} \le \frac{(2+2\delta)n^2}{(\log n)^{f(n)}},\\
& (\log 2n)^{f(2n)} \ge \frac{4}{2+2\delta} (\log n)^{f(n)},\\
& f(2n) \log \log 2n \ge \log \frac{4}{2+2\delta} + f(n) \log \log n,\\
& f(2n) \ge f(n)\frac{\log \log n}{\log \log 2n} + \frac{1}{\log \log 2n}\log \frac{4}{2+2\delta} \\&\ge f(n) - \frac{f(n)}{2 \log n \log \log n} + \frac{1}{\log \log (2n)} \log \frac{4}{2+2\delta}  \ge f(n)  + \frac{c}{\log \log n},
\end{aligned}
\end{equation}
where $c>0$ will be some universal constant, and the least line will hold provided $n$ is sufficiently large.

Now, let $n_0$ be a large enough integer such that both equations \eqref{eq:case1fn} and \eqref{eq:case2fn} hold. Furthermore, consider $k$ large enough so that 
\begin{equation*}
\sum_{j=0}^{k-1} \frac{c}{\log \log (2^{j+1} n_0)} \ge 2- \kappa.
\end{equation*}

We claim that for any $n \ge 2^k n_0$, we must have that $f(n) \ge 2- \kappa$. First, consider the sequence $f(n), f(n/2), f(n/4), \ldots, f(n/2^L)$, where $L$ is the largest integer such that $n/2^L \ge n_0$. If there existed some $0 \le i \le L$ such that $f(n/2^i) \ge 2- \kappa$, then by repeatedly applying equation \eqref{eq:case1fn}, we must have that $f(n) \ge 2- \kappa$.

Otherwise, \eqref{eq:case2fn} must hold for all $0\le i \le L$, and we see that $f(n) \ge f(n/2^L) + \sum_{j=0}^{L} \frac{c}{\log \log (n/2^j)} \ge \sum_{j=0}^{k-1} \frac{c}{\log \log 2^{j+1}n_0} \ge 2-\kappa$. Here, we remark that we used the fact that $f(n) \ge 0$ is a trivial lower bound  for all $n$.

By substituting our bound on $\mathrm{Var}(X_n)$ into equation \eqref{eq:upren2dim4}, we derive an upper bound on $\mathbb{E}[E_n^2]$.
\end{proof}

Assuming the consequence of the above result, we also have the following corollary.
\begin{cor} \label{coruprboundendime4}
Assume that we know $\mathbb{E}[E_n^2] \le \frac{n^2}{(\log n)^{2+o(1)}}$, then we have $\mathbb{E}[E_n] \le \frac{n}{(\log n)^{3/2+o(1)}}$.
\end{cor}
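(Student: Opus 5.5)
The bound follows from Cauchy--Schwarz applied on the rare event $\Omega$ of Lemma \ref{lem:setOmegahigprob}, which carries essentially all of the mass of $E_n$. Let $\Omega$ be that event, so that $\mathbb{P}(\Omega)=O(\log\log n/\log n)=(\log n)^{-1+o(1)}$. Split
\begin{equation*}
\mathbb{E}[E_n]=\mathbb{E}[E_n\mathbbm{1}[\Omega]]+\mathbb{E}[E_n\mathbbm{1}[\Omega^c]].
\end{equation*}

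For the second term, the proof of Lemma \ref{lem:setOmegahigprob} shows more than the displayed statement. It shows that, deterministically on $\Omega^c$, $E_n\le 2n/(\log n)^2$. This comes from the existence of cut times of $\tilde S$ and $\hat S$ in $[n/(\log n)^4,n/(\log n)^2]$ together with the absence of long-range intersections. Hence
\begin{equation*}
\mathbb{E}[E_n\mathbbm{1}[\Omega^c]]\le \frac{2n}{(\log n)^2}.
\end{equation*}
I will rely on this deterministic bound rather than on the comparison with $\mathbb{E}[E_n]$, because that comparison used the lower bound of Lemma \ref{lem:lowerbndexpdim4} and is not needed here. The bound holds for both the graph distance and the number of cut points.

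For the first term, Cauchy--Schwarz and the hypothesis give
\begin{equation*}
\mathbb{E}[E_n\mathbbm{1}[\Omega]]\le \mathbb{E}[E_n^2]^{1/2}\,\mathbb{P}(\Omega)^{1/2}\le \frac{n}{(\log n)^{1+o(1)}}\cdot\Big(\frac{\log\log n}{\log n}\Big)^{1/2}=\frac{n}{(\log n)^{3/2+o(1)}},
\end{equation*}
since $(\log\log n)^{1/2}=(\log n)^{o(1)}$. Adding the two pieces, and noting that $2n/(\log n)^2$ is of smaller order, gives $\mathbb{E}[E_n]\le n/(\log n)^{3/2+o(1)}$.

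The only point needing care is confirming that the bound on $\Omega^c$ is genuinely deterministic and that $\mathbb{P}(\Omega)$ has the stated order. Both are already established in the proof of Lemma \ref{lem:setOmegahigprob}: the first via \cite[Theorem 4.3.6]{Lawler1991} for the intersection events, and the second via \cite[equation (7)]{CroydonShiraishi2023} for the absence of cut points. So there is no real obstacle, just bookkeeping of the $o(1)$ exponents.
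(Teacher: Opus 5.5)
Your proof is correct and follows the paper's argument: Cauchy--Schwarz on the event $\Omega$ of Lemma \ref{lem:setOmegahigprob}, combined with $\mathbb{P}(\Omega)=O(\log\log n/\log n)$. The only difference is minor. The paper writes $\mathbb{E}[E_n\mathbbm{1}[\Omega]]\ge(1-o(1))\mathbb{E}[E_n]$, which absorbs the $\Omega^c$ part using the lower bound of Lemma \ref{lem:lowerbndexpdim4}, whereas you add the deterministic bound $E_n\le 2n/(\log n)^2$ on $\Omega^c$ directly, which is slightly more self-contained.
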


\begin{proof}
Recall the set $\Omega$ from Lemma \ref{lem:setOmegahigprob}. By the Cauchy-Schwarz inequality and the definition of $\Omega$, we have
$$
\mathbb{E}[E_n^2] \mathbb{P}(\Omega) \ge \mathbb{E}[E_n \mathbbm{1}[\Omega]]^2 \ge (1-o(1))  \mathbb{E}[E_n]^2.
$$
Using the fact that $\mathbb{P}(\Omega) =O(\log \log n/ \log n)$ gives the result.
\end{proof}

\subsubsection{Establishing equation \eqref{eq:upren2dim4}}
The goal of this section is to establish equation \eqref{eq:upren2dim4}. We will first show that this equation holds provided that one derives an appropriate conditional statement for the probability that two random walks intersect.
Before we start, let us recall the notation that we used in the proof of Lemma \ref{lem:lowerbndexpdim4}.  Let us call the reversal of the walk $S[0,n]$ from time $0$ to $n$ $\hat{S}$. That is $\hat{S}_i = S_n - S_{n - i}$. Let us call $S[n, 2n]$ as $\tilde{S}$. That is $\tilde{S}_i = S_{n+i} - S_n$.

\begin{lem}\label{high prob.:intersection}
Let $X_n$ denote the number of cut points, or the graph distance.
Assume that there exists some $\epsilon>0$ such that, with probability  significantly less than $(\log n)^{-2}$, we have that,
\begin{equation} \label{eq:problnonintersect}
\mathbb{P}(\tilde{S}[n/(\log n)^4,n] \cap \hat{S} \ne \emptyset| \hat{S})  \ge (\log n)^{-\epsilon}.
\end{equation}
Then, equation \eqref{eq:upren2dim4} holds.
\end{lem}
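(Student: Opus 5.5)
We use the remark after Lemma \ref{lem:setOmegahigprob}: for the graph distance and the number of cut points, $E_n$ is controlled by long-range intersections. Define
\[
\Omega_1:=\{\tilde S[n/(\log n)^4,n]\cap \hat S\neq\emptyset\}\cup\{\hat S[n/(\log n)^4,n]\cap \tilde S\neq\emptyset\}.
\]
On $\Omega_1^c$ the two halves meet only within their first $n/(\log n)^4$ steps. So every shortcut, and every cut point destroyed by gluing, is localized near the junction, and deterministically $E_n\le 2n/(\log n)^4$. Hence $\mathbb{E}[E_n^2\mathbbm 1[\Omega_1^c]]\le n^2/(\log n)^8$, and it remains to bound $\mathbb{E}[E_n^2\mathbbm 1[\Omega_1]]$. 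By symmetry, and since $\Omega_1$ is a union of two events, it suffices to treat $A:=\{\tilde S[n/(\log n)^4,n]\cap\hat S\ne\emptyset\}$.

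The idea is to bound $E_n$ by a quantity essentially measurable with respect to one half, and then decouple it from the intersection event using \eqref{eq:problnonintersect}. The trivial bound is $E_n\le \tilde X_n+\hat X_n\le 2n$ (in fact $E_n\le \min(\tilde X_n,\hat X_n)$ up to lower-order terms, though we do not need this). We write
\[
\tilde X_n+\hat X_n=\mathbb{E}[\tilde X_n]+\mathbb{E}[\hat X_n]+\overline{\tilde X}_n+\overline{\hat X}_n,
\]
where $\mathbb{E}[X_n]\le n/(\log n)^{1/2-o(1)}$ by the law of large numbers of \cite{CroydonShiraishi2023}. Then
\[
\mathbb{E}[E_n^2\mathbbm 1_A]\lesssim \mathbb{E}[X_n]^2\,\mathbb{P}(A)+\mathbb{E}[\overline{\hat X}_n^{\,2}\mathbbm 1_A]+\mathbb{E}[\overline{\tilde X}_n^{\,2}\mathbbm 1_A].
\]
For the first term, \cite[Theorem 4.3.6]{Lawler1991} (already used in Lemma \ref{lem:setOmegahigprob}) gives $\mathbb{P}(A)=O(\log\log n/\log n)$, so this term is $\lesssim n^2\log\log n/(\log n)^{2-o(1)}$, which is the first term of \eqref{eq:upren2dim4}.

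The main obstacle is the fluctuation terms. For $\overline{\hat X}_n$ we condition on $\hat S$. Let $G$ be the good event that $\mathbb{P}(A\mid\hat S)<(\log n)^{-\epsilon}$; by hypothesis $\mathbb{P}(G^c)\ll(\log n)^{-2}$. Then
\[
\mathbb{E}[\overline{\hat X}_n^{\,2}\mathbbm 1_A]\le \mathbb{E}\bigl[\overline{\hat X}_n^{\,2}\,\mathbb{P}(A\mid\hat S)\mathbbm 1_G\bigr]+\mathbb{E}[\overline{\hat X}_n^{\,2}\mathbbm 1_{G^c}]\le (\log n)^{-\epsilon}\mathrm{Var}(X_n)+(2n)^2\,\mathbb{P}(G^c).
\]
The last term is $o(n^2/(\log n)^2)$ because $|\overline{\hat X}_n|\le 2n$ deterministically. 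For $\overline{\tilde X}_n$ we need the symmetric statement, with the roles of $\tilde S$ and $\hat S$ exchanged. We obtain it from the hypothesis by time reversal and symmetry of the increment law, applied to the event $\{\hat S[n/(\log n)^4,n]\cap\tilde S\ne\emptyset\}$ conditioned on $\tilde S$.

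The remaining issue is the mismatch between the events: the fluctuation of $\tilde X_n$ is paired with $A$, which concerns the far part of $\tilde S$. We handle it by also splitting $A$ and the mirrored event via the conditional bound on the opposite walk. If necessary, we apply a Cauchy--Schwarz step with the cross term $\mathbb{E}[\overline{\tilde X}_n\overline{\hat X}_n\mathbbm 1_A]$, bounded by the product of the two square roots. Summing all contributions yields
\[
\mathbb{E}[E_n^2]\le Cn^2\log\log n/(\log n)^{2+o(1)}+2(\log n)^{-\epsilon}\mathrm{Var}(X_n),
\]
which is \eqref{eq:upren2dim4} after shrinking $\epsilon$.
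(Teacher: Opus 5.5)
There is a genuine gap, at the ``mismatch'' you flag at the end.

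\textbf{Why the proposal does not close.} You bound $E_n\le\tilde X_n+\hat X_n$ on all of $A=\{\tilde S[n/(\log n)^4,n]\cap\hat S\ne\emptyset\}$. This forces you to control both $\mathbb{E}[\overline{\hat X}_n^{\,2}\mathbbm{1}_A]$ and $\mathbb{E}[\overline{\tilde X}_n^{\,2}\mathbbm{1}_A]$. The first needs $\mathbb{P}(A\mid\hat S)$ small, the second needs $\mathbb{P}(A\mid\tilde S)$ small, and the hypothesis gives only one direction.
\begin{itemize}
\item Time reversal does not supply the other direction. It gives smallness of $\mathbb{P}(A'\mid\tilde S)$ for the mirrored event $A'=\{\hat S[n/(\log n)^4,n]\cap\tilde S\ne\emptyset\}$, which is a different event from $A$.
\item Your Cauchy--Schwarz fallback is circular, since it reintroduces $\mathbb{E}[\overline{\tilde X}_n^{\,2}\mathbbm{1}_A]^{1/2}$.
\item The trivial bound $\mathbb{E}[\overline{\tilde X}_n^{\,2}\mathbbm{1}_A]\le\mathrm{Var}(X_n)$ loses the factor $(\log n)^{-\epsilon}$, on which the bootstrap in Lemma \ref{lem:uprboundEn} entirely depends.
\end{itemize}
The same problem recurs, mirrored, on $A'$.

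\textbf{The missing idea} is a one-sided version of the separation argument you already use on $\Omega_1^c$. On $(A')^c$, all common vertices of the two halves lie in $\hat S[0,n/(\log n)^4]$. So the part of an $S_0$--$S_{2n}$ geodesic before it first enters $V(\tilde S)$ is a path in $\hat S$ ending in $\hat S[0,n/(\log n)^4]$. Hence $X_{2n}\ge\hat X_n-n/(\log n)^4$, i.e.\ $E_n\le\tilde X_n+n/(\log n)^4$. For cut points, the analogue is that cut times of the first half before time $n-n/(\log n)^4$ survive in the whole path. Combined with $E_n\le 2n/(\log n)^4$ on $A^c\cap(A')^c$, this gives, for $l\ge 4n/(\log n)^4$,
\[
\{E_n>l\}\subset\big(\{\tilde X_n\ge l/2\}\cap A\big)\cup\big(\{\hat X_n\ge l/2\}\cap A'\big).
\]
This is the inclusion behind steps (1)--(3) of the paper's proof. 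Each walk's distance is now paired only with the event that its own far part is hit. You can therefore condition on that walk and need a single direction: off an exceptional set of walks $\tilde S$ of probability $o((\log n)^{-2})$, $\mathbb{P}(A\mid\tilde S)\le(\log n)^{-\epsilon}$. Splitting $\sum_l l\,\mathbb{P}(\tilde X_n\ge l/2,A)$ at $l=4\mathbb{E}[\tilde X_n]$ then gives $\mathbb{E}[X_n]^2\mathbb{P}(A)+C(\log n)^{-\epsilon}\mathrm{Var}(X_n)+o(n^2/(\log n)^2)$.

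\textbf{On the direction of conditioning.} The estimate needed conditions on $\tilde S$, the walk whose far part is hit. This is what the paper's proof actually uses (its $\Omega_B$ is a set of walks $\tilde S$), and it is what Lemma \ref{lem:pfeqnonintersect} establishes. The ``$\mid\hat S$'' in the displayed hypothesis has to be read that way. With only the literal direction, neither your argument nor the refined pairing closes, so you should say explicitly which direction you are assuming.

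\textbf{A false side claim.} Your parenthetical that $E_n\le\min(\tilde X_n,\hat X_n)$ up to lower-order terms is false in general. If $S_{2n}=S_0$, then $X_{2n}=0$ and $E_n=\tilde X_n+\hat X_n$. Only the event-dependent one-sided bound above holds, and that is exactly the ingredient needed.
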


\begin{proof}

\begin{enumerate}
\item Notice that $E_n> l$ implies that $\tilde{X}_n \ge \frac{l}{2}$ or $\hat{X}_n \ge \frac{l}{2}$. Furthermore, $\Omega$ also holds by Lemma \ref{lem:setOmegahigprob}. Then, 
$$
\{E_n > l\} \subset (\{\tilde{X}_n \ge l/2 \} \cap \Omega) \cup (\{\hat{X}_n \ge l/2 \} \cap \Omega).
$$

\item Furthermore, (again still considering that $E_n >l$), in the case $\tilde{X}_n  \ge \frac{l}{2}$,  there must be an intersection between $\tilde{S}{[n /(\log n)^4, n]}$ and $\hat{S}$ by the proof of Lemma \ref{lem:setOmegahigprob}.  (Recall, we needed the event $\Omega$ to hold in order to ensure that there is a large reduction in the graph distance.)

\item Vice versa, in the case $\hat{X}_n \ge \frac{l}{2}$, it must be the case that there is an intersection between $\hat{S}[n/(\log n)^4, n]$ and $\tilde{S}$.

\end{enumerate}

Thus, we have that,
\begin{equation*}
\mathbb{P}(E_n > l) \le 2\mathbb{P}(\tilde{X}_n \ge l/2, \tilde{S}{[n /(\log n)^4, n]} \cap \hat{S} \ne \emptyset).
\end{equation*}

Furthermore, observe,
\begin{equation} \label{eqLbounden2dim4}
\begin{aligned}
&\mathbb{E}[E_n^2] \le 2 \sum_{l=1}^{n} l \mathbb{P}(\tilde{X}_n \ge l/2, \tilde{S}[n/(\log n)^4, n] \cap \hat{S} \ne \emptyset ) \\
&\le 2 \sum_{l=1}^{4 \mathbb{E}[\tilde{X}_n]} 4 \mathbb{E}[\tilde{X}_n] \mathbb{P}(\tilde{S}[n/(\log n)^4, n] \cap \hat{S} \ne \emptyset) \\
&+ 2 \sum_{l> 4 \mathbb{E}[\tilde{X}_n]} l \mathbb{P}(\tilde{X}_n \ge l/2, \tilde{S}[n/(\log n)^4, n] \cap \hat{S} \ne \emptyset)\\
& \le \frac{C n^2 \log \log n}{(\log n)^{2+o(1)}} +  2 \sum_{l> 4 \mathbb{E}[\tilde{X}_n]} l \mathbb{P}(\tilde{X}_n \ge l/2, \tilde{S}[n/(\log n)^4, n] \cap \hat{S} \ne \emptyset) .
\end{aligned}
\end{equation}
We used $\mathbb{E}[\tilde{X}_n] \le \frac{n}{(\log n)^{1/2+o(1)}}$ to show the final inequality. (Note that in \cite[Lemma 2.2]{CroydonShiraishi2023}, they stated that this bound should be obtained for the graph distance if one follows the proof of \cite{Shiraishi2012}.) 
Now, assume that equation \eqref{eq:problnonintersect} holds with probability significantly less than $(\log n)^{-2}$. Let $\Omega_{B}$ be the set of walks $\tilde{S}$ such that equation \eqref{eq:problnonintersect} holds. We have that,
$$
\begin{aligned}
& \sum_{l> 4 \mathbb{E}[\tilde{X}_n]} l \mathbb{P}(\tilde{X}_n \ge l/2, \tilde{S}[n/(\log n)^4,n] \cap \hat{S} \ne \emptyset, \tilde{S} \not \in \Omega_B)\\
&+ \sum_{l> 4 \mathbb{E}[\tilde{X}_n]} l \mathbb{P}(\tilde{X}_n \ge l/2, \tilde{S}[n/(\log n)^4,n] \cap \hat{S} \ne \emptyset, \tilde{S}  \in \Omega_B)\\
&\le  n \sum_{l> 4 \mathbb{E}[\tilde{X}_n]} \mathbb{P}(\tilde{S} \not \in \Omega_B) \\
& + \sum_{l > 4 \mathbb{E}[\tilde{X}_n]} l \mathbb{P}(\hat{S} \cap \tilde{S}[n/(\log n)^4,n] \ne \emptyset| \tilde{S} \in \Omega_B, \tilde{X}_n > l/2) \mathbb{P}(\tilde{S} \in \Omega_B, \tilde{X}_n \ge l/2)\\
& \le o(n^2 (\log n)^{-2}) + \sum_{l > 4 \mathbb{E}[\tilde{X}_n]} l (\log n)^{-\epsilon} \mathbb{P}(\tilde{X}_n \ge l/2) \le  o(n^2(\log n)^{-2}) + 2 \mathrm{Var}(X_n) (\log n)^{-\epsilon}. 
\end{aligned}
$$
Combining the previous equation with equation \eqref{eqLbounden2dim4} gives the desired result.
\end{proof}

\subsection{Establishing equation \eqref{eq:problnonintersect} }

\begin{lem} \label{lem:pfeqnonintersect}

With probability greater than $1- \frac{1}{(\log n)^2}$ on walks $\tilde{S}$, there exists some $c>0$ such that we have,
\begin{equation*}
\mathbb{P}(\hat{S} \cap \tilde{S}[\frac{n}{(\log n)^4}, n] \neq \emptyset| \tilde{S}) \le (\log n)^{-c}.
\end{equation*}

\end{lem}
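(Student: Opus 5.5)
We read the statement with $\tilde S$ as the walk being conditioned on and $\hat S$ as an independent walk started at the origin. The conditional probability is then an intersection probability between an independent walk and a fixed set. The plan is to bound it with the capacity of $\tilde S[n/(\log n)^4,n]$, in the spirit of the last-exit decompositions used in the proofs of \eqref{lower+} and Lemma \ref{lem:lowerbndexpdim4}.

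Write $\mathcal{S}:=\tilde S[n/(\log n)^4,n]$. For any finite $A\subset\Z^4$, the last-exit decomposition for the infinite walk $\hat S$ gives
\begin{equation*}
\mathbb{P}^0(\hat S[0,\infty)\cap A\neq\emptyset)=\sum_{a\in A}G(0,a)\,\mathbb{P}^a(T_A=\infty)\le \Big(\max_{a\in A}G(0,a)\Big)\ca(A).
\end{equation*}
Here $G(0,a)\asymp |a|^{-2}$ in $d=4$.

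We split $\mathcal{S}$ dyadically in space. Let $A_j:=\mathcal{S}\cap\{2^{j}\le |x|<2^{j+1}\}$ for $2^{j}\ge r_n:=\sqrt{n}(\log n)^{-3}$; these shells cover $\mathcal{S}$ on the good event below. Then
\begin{equation*}
\mathbb{P}(\hat S\cap\mathcal{S}\ne\emptyset\mid\tilde S)\lesssim \sum_{j} 2^{-2j}\ca(A_j).
\end{equation*}
By monotonicity and subadditivity of capacity, $\ca(A_j)\le\ca(\tilde S[\sigma_j,\sigma_j'])$, where $\sigma_j,\sigma_j'$ are the first and last times $\tilde S$ visits the shell. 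The bound then follows once we establish three good events, each holding with probability at least $1-(\log n)^{-3}$.

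\begin{enumerate}
\item \textbf{No early return.} $\min_{i\ge n/(\log n)^4}|\tilde S_i|\ge r_n$. Since $\tilde S_{n/(\log n)^4}$ is typically at distance $\sqrt n(\log n)^{-2}$, the probability of ever coming within $r_n$ of the origin is about $(r_n/(\sqrt n(\log n)^{-2}))^{2}=(\log n)^{-2}$. To push the failure probability below $(\log n)^{-2}$, we either shrink $r_n$ to $\sqrt n(\log n)^{-4}$, which costs only an extra $\log\log n$ shell, or use a Gaussian tail for $|\tilde S_{n/(\log n)^4}|$ together with the return estimate.
\item \textbf{Bounded occupation time.} The time $\tilde S$ spends in the ball of radius $2^{j+1}$ is at most $2^{2j}(\log n)^{2}$ for every $j$. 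Occupation times of balls have exponential tails at scale $2^{2j}$, so a union bound over the $O(\log n)$ shells suffices.
\item \textbf{Capacity of a piece of range.} For time intervals of length $t$, $\ca(\tilde S[a,a+t])\le C t/\log t$ up to factors $(\log\log n)^{C}$, with failure probability $o((\log n)^{-2})$. This uses the concentration of capacity of four-dimensional ranges, $\ca(S[0,t])\sim \frac{\pi^2}{8}\frac{t}{\log t}$, from \cite{Chang} and \cite{DemboOkada}, applied on dyadic time blocks with a union bound.
\end{enumerate}

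On these events, the contribution of shell $j$ is at most $2^{-2j}\cdot 2^{2j}(\log n)^{2}/\log(2^{2j})$. This is not yet small, so the occupation-time bound must be sharpened. A ball of radius $R$ in $\Z^4$ holds expected occupation $\asymp R^{2}$ in total, but most $j$ are far from $\sqrt n$. Only the shells with $2^{j}\in[r_n,\sqrt n\log n]$, about $\log\log n$ of them, receive substantial time; beyond $\sqrt n\log n$ the walk is absent with overwhelming probability. Each such shell contributes $O((\log\log n)^{C}/\log n)$, so the total is $(\log\log n)^{C+1}/\log n\le(\log n)^{-c}$ for any $c<1$.

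The main obstacle is item (2) combined with item (3) at the required failure probability $(\log n)^{-2}$: we need occupation times of order $2^{2j}$ times polylog of $\log n$, not times $\log n$. The first approach to try is to use the exponential tail of the occupation time $\sum_i 1\{|\tilde S_i|<R\}$, whose expectation is $\asymp R^{2}$ in $d=4$ for the infinite walk. This gives $\mathbb{P}(\text{occupation}>K R^{2})\le e^{-cK}$, so $K=C\log\log n$ suffices and only $(\log\log n)$ factors are lost. The capacity bound is then applied to the union of the excursion pieces inside the shell, using subadditivity and $\ca(\text{piece of length }t)\lesssim t/\log t$ for $t\ge (\log n)^{-C}2^{2j}$. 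The short pieces are handled crudely by $\ca(A)\le |A|$; their total length is controlled by the number of excursions, which again has an exponential tail.
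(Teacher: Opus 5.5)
Your architecture matches the paper's. You bound the conditional probability by $\sum \max G\cdot\ca$ over annuli via the last-exit decomposition. You control the capacity of the range in an annulus by its occupation time divided by $\log n$, through an excursion decomposition and the Dembo--Okada concentration. Small radii are treated separately.

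However, the reduction $\ca(A_j)\le\ca(\tilde S[\sigma_j,\sigma_j'])$ does not combine with your events (2)--(3). The window from first to last visit of the shell is not controlled by the occupation time. Take $2^j\asymp\sqrt n(\log n)^{-1/2}$. The walk re-enters $B_{2^{j+1}}$ during $[n/2,n]$ with probability $\asymp 2^{2j}/n\asymp(\log n)^{-1}\gg(\log n)^{-2}$. On that event typically $\sigma_j\le n/4$ and $\sigma_j'\ge n/2$, so your bound for this single shell is of order $2^{-2j}\ca(\tilde S[n/4,n/2])\asymp 2^{-2j}n/\log n\asymp 1$. So the three good events, as you first set them up, do not close.

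The repair is your last paragraph, provided ``excursions'' means macroscopic excursions. These are the paper's $T^i_k,\tau^i_k$: successive entrances into $B_{2^{j+1}}$, each followed by the exit from $B_{2^{j+2}}$. The literal fragments of the path lying in the shell number of order $2^j$ per macroscopic visit, since the radial part crosses level $2^j$ that often. So they have no polylogarithmic exponential tail, and your short-piece bookkeeping fails for them. Macroscopic excursions behave well:
\begin{itemize}
\item they cover $A_j$;
\item their number is dominated by a geometric variable, since from $\partial B_{2R}$ the walk re-enters $B_R$ with probability bounded away from $1$ in $d=4$;
\item their total duration is at most the occupation time of $B_{2^{j+2}}$.
\end{itemize}
With these, your split into pieces longer or shorter than $(\log n)^{-C}2^{2j}$ gives the per-shell bound $O(\log\log n/\log n)$, hence $(\log\log n)^{O(1)}/\log n$ in total.

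With that fix your route is correct and differs from the paper's in two places. For small radii you excise $B_{\sqrt n(\log n)^{-4}}$ by a single hitting estimate: $\mathbb{E}[\min(1,r^2/|\tilde S_t|^2)]\asymp r^2/t=(\log n)^{-4}$ at $t=n/(\log n)^4$. The paper instead descends through geometric annuli $K_i=2K_{i-1}$ from $K_{11}$ to $K_f$. There it bounds capacity by volume, and volume by the local time of Lemma \ref{lem:localtimebnd} via Markov's inequality.

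Near radius $\sqrt n$ the paper uses only eleven annuli, spaced by factors $(\log n)^{1-\epsilon}$ in $K_i$, with crude polynomial thresholds: $K_i^{\pm\kappa}$ for excursion durations, $K_i^{\delta}$ for local times, $(\log n)^{\chi}$ for capacity. That is why it obtains only $(\log n)^{-\epsilon/2}$. Your dyadic shells with exponential tails cut at $C\log\log n$ give the sharper $(\log\log n)^{O(1)}/\log n$, i.e.\ any $c<1$. The price is that you need exponential tails for occupation times and excursion counts, and a capacity bound uniform over all long time intervals.
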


We start by defining a sequence of scales $0=K_0 < K_1< K_2 < K_3< \ldots< K_f$; also, let $A(R_o,R_i)$ denote the annulus around the origin with outer radius $R_o$ and inner radius $R_i$. 
Let $\mathcal{A}_{i,n}:=A[\sqrt{n/K_i}, \sqrt{n/K_{i+1}}]$. 

By considering the last time of intersection between $\hat{S}$ and the set $\tilde{S}[\frac{n}{(\log n)^4},n]$, we can derive the following bound on the probability of intersection.
\begin{align} \notag
&\mathbb{P}(\hat{S} \cap \tilde{S}[\frac{n}{(\log n)^4}, n] \neq \emptyset| \tilde{S})\\
\label{eq:capacitydiv-}
\le &\sum_{i=0}^{10} \text{Cap}(\tilde{S}[\frac{n}{(\log n)^4},n] \cap \mathcal{A}_{i,n}) \max_{z \in \mathcal{A}_{i,n}} G(z)\\
\label{eq:capacitydiv}
+ &\sum_{i=11}^{f-1} \text{Cap}(\tilde{S}[\frac{n}{(\log n)^4},n] \cap \mathcal{A}_{i,n}) \max_{z \in \mathcal{A}_{i,n}} G(z). 
\end{align}

Our heuristics for analyzing \eqref{eq:capacitydiv} are the following:
\begin{enumerate}
\item The capacity of $\text{Cap}(\tilde{S}[\frac{n}{(\log n)^4},n] \cap \mathcal{A}_{i,n})$ can be bounded by $ |\tilde{S}[\frac{n}{(\log n)^4},n] \cap \mathcal{A}_{i,n}|$. Namely, the capacity will be bounded by the volume.

\item The volume can be bounded very closely by  $\frac{n}{K_{i}^{1+\delta}}$, which is related to the expected local time of the random walk $\tilde{S}$ in the annulus $ \mathcal{A}_{i,n}$.

\item For \eqref{eq:capacitydiv-}, we will use improved estimates for the capacity of random walks.

\end{enumerate}

Eventually, we will choose $K_i= \frac{1}{(\log n)^{i(1-\epsilon)}}$ for $i \le 11$, and after this, we will choose $K_i =2 K_{i-1}$ for $i \ge 12$. 

Furthermore, we can choose $K_f = n^{1/10}$. Note that by the local central limit theorem, the probability that a point $\tilde{S}_k$ for $k \in [n/(\log n)^4, n]$ lies in the ball of radius $n^{1/10}$ around the origin is at most $\frac{n^{2/10}}{\frac{n^2}{(\log n)^8}}$. Taking a union bound, the probability that there is a point in $\tilde{S}[\frac{n}{(\log n)^4}, n]$ that lies in the ball of radius $n^{1/10}$ around the origin is at most $\frac{n n^{1/5}(\log n)^8}{n^2} \ll (\log n)^{-2}$.

\subsubsection{The large scales}

We will first begin our analysis of equation \eqref{eq:capacitydiv} by considering the largest scales after $K_{11}$. 

We  need the following lemma. 
\begin{lem} \label{lem:localtimebnd}
Let $S[0,n]$ be a random walk starting at the origin in $d=4$. Let $\mathcal{B}_r$ be a ball of radius $r$ centered at the origin. Let $L(\mathcal{B}_r)$ be defined as,
\begin{equation*}
L(\mathcal{B}_r):= \sum_{t=\frac{n}{(\log n)^4}}^n \mathbbm{1}[S_t \in \mathcal{B}_r].
\end{equation*}
Namely, it is the number of times that $S_t$ lies in the ball $\mathcal{B}_r$.

Then, we have that 
\begin{equation} \label{eq:ballexpect}
\mathbb{E}[L(\mathcal{B}_r)] \le C \frac{r^4 (\log n)^4}{n}.
\end{equation}

Furthermore, we would also have,
\begin{equation} \label{eq:momball}
\mathbb{E}[L(\mathcal{B}_r)^k] \le C^k \frac{r^4 (r^2)^{k-1}(\log n )^4}{n}.
\end{equation}
\end{lem}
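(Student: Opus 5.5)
The plan is to prove both bounds directly from the local central limit theorem in $d=4$, writing $p_t(x):=\mathbb{P}(S_t=x)$ and using $p_t(x)\lesssim t^{-2}$ uniformly in $x$ together with the Gaussian tail of the heat kernel. The first moment is routine; the higher moments follow by expanding the $k$-th power into ordered time sums and applying the Markov property.

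For \eqref{eq:ballexpect}, write
\begin{equation*}
\mathbb{E}[L(\mathcal{B}_r)]=\sum_{t=n/(\log n)^4}^{n}\mathbb{P}(S_t\in\mathcal{B}_r)\le \sum_{t=n/(\log n)^4}^{n} |\mathcal{B}_r|\,\sup_x p_t(x)\lesssim r^4\sum_{t\ge n/(\log n)^4} t^{-2}\lesssim \frac{r^4(\log n)^4}{n}.
\end{equation*}
Only the uniform bound $\sup_x p_t(x)\lesssim t^{-2}$ is used; no Gaussian decay is needed because the lower time cutoff $n/(\log n)^4$ already supplies the decay.

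For \eqref{eq:momball}, bound $L^k$ by $k!$ times the sum over ordered times $t_1\le t_2\le\cdots\le t_k$ in $[n/(\log n)^4,n]$ of $\mathbb{P}(S_{t_1}\in\mathcal{B}_r,\dots,S_{t_k}\in\mathcal{B}_r)$. By the Markov property, sum out $t_k$ first, conditionally on $S_{t_{k-1}}=y\in\mathcal{B}_r$:
\begin{equation*}
\sum_{s\ge 0}\mathbb{P}^y(S_s\in\mathcal{B}_r)\le \sup_{y}\sum_{z\in\mathcal{B}_r}G(y,z)\lesssim \sum_{z\in\mathcal{B}_{2r}}|z|^{-2}\lesssim r^2,
\end{equation*}
using the four-dimensional Green's function bound $G(y,z)\lesssim |y-z|^{-2}$. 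Iterating this $k-1$ times (for $t_k,\dots,t_2$) leaves the first-moment sum in $t_1$, which is bounded by \eqref{eq:ballexpect}. This gives $\mathbb{E}[L^k]\le k!\,(Cr^2)^{k-1}\,C r^4(\log n)^4/n$.

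The main obstacle is the factor $k!$, which does not fit the stated form $C^k$. There are two options. If the lemma is only applied with $k$ fixed (for instance $k\le$ some constant, or $k\sim\log\log n$), the $k!$ can be absorbed into $C^k$ at the cost of $k$-dependence, and this is what I would try first. Alternatively, one can avoid $k!$ entirely by not symmetrizing: bound $\mathbb{E}[L^k]=\sum_{t_1,\dots,t_k}\mathbb{P}(\cdot)$ by ordering the times. Each ordered tuple is counted with multiplicity at most $k!$, but the conditional Green-sum bound $\sup_y\mathbb{E}^y[L_\infty(\mathcal{B}_r)]\lesssim r^2$ gives, via the standard Kac moment formula for local times, $\mathbb{E}^y[L_\infty^{j}]\le j!\,(Cr^2)^{j}$. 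So some factorial is intrinsic, because $L/r^2$ has exponential tails. I would therefore state and use the bound with $C^k k!$, or equivalently $C_k$, noting that every later application uses only finitely many moments (or exponential tails), which suffices for the union-bound estimates over annuli in the analysis of \eqref{eq:capacitydiv}.
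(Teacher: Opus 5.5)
Your argument is essentially the paper's. The first moment comes from $\sup_x p_t(x)\lesssim t^{-2}$ summed over $t\ge n/(\log n)^4$. The higher moments come from ordering the times and summing out the increments one at a time by the Markov property. Your bound $\sup_{y}\sum_{z\in\mathcal{B}_r}G(y,z)\lesssim r^2$ is the paper's $\sum_{t}\frac{Cr^4}{\max(r^4,t^2)}\le Cr^2$, written in space rather than in time.

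Your objection about $k!$ is correct, and it points to a defect in the paper's write-up rather than in your proposal. The paper replaces $\mathbb{E}[L(\mathcal{B}_r)^k]$ by a sum over strictly ordered times $t_1<\cdots<t_k$ and omits the multiplicity $k!$. That factor cannot be removed. For fixed $r$ and large $n$, the walk hits $\mathcal{B}_{r/2}$ during $[n/(\log n)^4,2n/(\log n)^4]$ with probability $\gtrsim r^2(\log n)^4/n$. It then stays in $\mathcal{B}_r$ for $m$ further steps with probability $\gtrsim e^{-cm/r^2}$. This gives $\mathbb{E}[L(\mathcal{B}_r)^k]\gtrsim k!\,(c'r^2)^k\,r^2(\log n)^4/n$, which is incompatible with a $k$-independent $C$.

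The correct form is therefore your $C^k k!$, or a $k$-dependent constant. This is all the paper needs: \eqref{eq:momball} is used only with one fixed large $k$ in Markov's inequality, to get $\mathbb{P}((\mathcal{S}^i)^c)\ll(\log n)^{-2}$.

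Two small fixes to your write-up:
\begin{itemize}
\item For $k\sim\log\log n$, the $k!$ cannot be absorbed into $C^k$ with $C$ fixed, so drop that parenthetical.
\item Delete the sentence about avoiding $k!$ ``by not symmetrizing''; your own next sentence correctly refutes it.
\end{itemize}
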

\begin{proof}
By using the local central limit theorem, the probability that a point, $S_t$, lies in the ball $\mathcal{B}_r$ is of order $\frac{r^4}{t^2}$. If we sum up this probability over all times $t$ from $n/(\log n)^4$ and $n$, we see that the expectation of the local time within this ball is $ \frac{r^4(\log n)^4}{n}$.

Now, if we want to consider the expectation of the $k$th power of the local time. We need to consider the probability that there are k distinct times $t_1<t_2<\ldots<t_k$ such that $S_{t_i}$ are all in the ball with radius $r$. First, we remark that given that $S_{t_1}$ is in $\mathcal{B}_r$, the probability that $S_{t_2} \in \mathcal{B}_r$ is at most $\text{O}\left(\frac{r^4}{(t_2-t_1)^2}\right)$ if $t_2 -t_1 \ge r^2$ or $1$ if $t_2 -t_1 \le r^2$. Thus, we see a bound on $\mathbb{E}[L(\mathcal{B}_r)^k]$ would be the following.
$$
\sum_{t_1= n/(\log n)^4}^n \sum_{t_2 -t_1 =1}^n \ldots \sum_{t_k - t_{k-1}=1}^n \frac{C r^4 (\log n)^4}{n} \prod_{i=1}^{k-1} \frac{C r^4}{\max(r^4, (t_{i+1}-t_i)^2)} \le C^k \frac{r^4 (r^2)^{k-1}(\log n)^4}{n}.
$$

Here, we used the fact that $\sum_{t=1}^n \frac{C r^4}{\max(r^4,t^2)} \le C r^2$. 
\end{proof}

\begin{proof}[Proof of Lemma \ref{lem:pfeqnonintersect}: Estimate for \eqref{eq:capacitydiv}]

Define the event
\begin{equation*}
\mathcal{O}_i:= \{L(\mathcal{B}_{\sqrt{n/K_i}}) \le \frac{n}{(K_i)^{1+\delta}} \},
\end{equation*}
for some small $\delta>0$. By using Markov's inequality and equation \eqref{eq:ballexpect}, we see that we have
\begin{equation*}
 \mathbb{P}(\mathcal{O}_i^c) \le \frac{(\log n)^4}{K_i^{1-\delta}}.
\end{equation*}
Thus, we see that,
\begin{equation*}
\mathbb{P}((\bigcap_{k=11}^f \mathcal{O}_k)^c) \le \sum_{k=11}^f \frac{(\log n)^4}{K_k^{1-\delta}} \le C \frac{(\log n)^4}{K_{11}^{1-\delta}}\ll \frac{1}{(\log n)^2}.
\end{equation*}
In  what follows, we will restrict our study to the event 
\begin{equation} \label{eq:deftileomega}
\tilde{\Omega}:= \bigcap_{k=11}^f \mathcal{O}_k.
\end{equation}
On this event, we will analyze \eqref{eq:capacitydiv}. 
Notice that we must have $|\tilde{S}[\frac{n}{(\log n)^4},n] \cap \mathcal{A}_{i,n})| \le |L(\mathcal{B}_{\sqrt{n/K_i}})| \le \frac{n}{(K_i)^{1+\delta}} \ll 1$.  By  using subadditivity of the capacity and the fact that the capacity must be less than the volume, we must have that
$$
\begin{aligned}
\text{Cap}(\tilde{S}[\frac{n}{(\log n)^4},n] \cap \mathcal{A}_{i,n}) \le |\tilde{S}[\frac{n}{(\log n)^4},n] \cap \mathcal{A}_{i,n}| 
\le |L(\mathcal{B}_{\sqrt{n/K_i}})| \le  \frac{n}{(K_i)^{1+\delta}}.
\end{aligned}
$$

Now, since $ \max_{z \in \mathcal{A}_{i,n}} G(z) \le C\frac{K_{i+1}}{n}$, then  we have,
\begin{equation*}
\begin{aligned}
\sum_{i=11}^{f-1} \text{Cap}(\tilde{S}[\frac{n}{(\log n)^4},n] \cap \mathcal{A}_{i,n}) \max_{z \in \mathcal{A}_{i,n}} G(z) \le \sum_{i=11}^{f-1}  \frac{n}{(K_i)^{1+\delta}} \frac{K_{i+1}}{n} \le C K_{11}^{-\delta}.
\end{aligned}
\end{equation*}
\end{proof}

\subsubsection{The smallest scales}

We now continue our analysis of equation \eqref{eq:capacitydiv} by considering the contributions from the large scales of $K_i$ with $i \le 11$.  We start by giving some lemmas that will be useful for our analysis. In the following, it is assumed that we consider an index with $i \le 11$.

Notice that by the moment estimate for the capacity of the random walk in four dimensions, we have the following lemma.
\begin{lem}
Let $S[0,n]$ be a random walk starting at the origin. For any integer $k>0$ and $\chi>0$, we have that,
\begin{equation} \label{eq:weakercapbound}
\mathbb{P}\left(\ca(S[0,n]) \ge \frac{n}{(\log n)^{1-\chi}}\right) \le \frac{1}{(\log n)^k},
\end{equation}
for any $k \ge 0$.
\end{lem}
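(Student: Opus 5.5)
The plan is to use high moments of the capacity and Markov's inequality. Set $\mu_n:=\mathbb{E}[\ca(S[0,n])]$. The law of large numbers for the capacity in $d=4$ (cf. \cite{Chang}) gives $\mu_n\sim \frac{\pi^2}{8}\frac{n}{\log n}$. The threshold $\frac{n}{(\log n)^{1-\chi}}$ is therefore a factor $(\log n)^{\chi}$ larger than the mean. Hence it suffices to prove a moment bound of the form
\begin{equation*}
\mathbb{E}\big[\ca(S[0,n])^{p}\big]\le C_p\,\mu_n^{p}
\end{equation*}
for every fixed integer $p$. Markov's inequality then gives
\begin{equation*}
\mathbb{P}\Big(\ca(S[0,n])\ge \tfrac{n}{(\log n)^{1-\chi}}\Big)\le C_p\,(\log n)^{-\chi p},
\end{equation*}
and taking $p\ge k/\chi$ yields the claim, after absorbing the constant by enlarging $p$ by one.

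To get the moment bound I would use subadditivity of the capacity along a dyadic-type splitting. Divide $[0,n]$ into $m$ blocks of length $n/m$. Then $\ca(S[0,n])\le\sum_{j=1}^m \ca(S[(j-1)n/m,jn/m])$, and the summands are i.i.d. copies of $\ca(S[0,n/m])$. The $p$-th moment of a sum of $m$ i.i.d. nonnegative variables is bounded, via Rosenthal's inequality, by
\begin{equation*}
C_p\big(m\,\mathbb{E}[\ca(S[0,n/m])]\big)^p+C_p\,m\,\mathbb{E}\big[\ca(S[0,n/m])^p\big].
\end{equation*}
The first term is of order $\big(m\cdot\frac{n/m}{\log(n/m)}\big)^p\asymp \mu_n^p$ as long as $m\le n^{1/2}$. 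For the second term I use the trivial bound $\ca\le$ volume $\le n/m$, which gives $m\,(n/m)^p=n^p m^{1-p}$. Choosing $m=n^{1/2}$, this term is $n^{(p+1)/2}\ll \mu_n^p$ for $p\ge2$. This gives the moment bound directly, without any induction.

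The main obstacle is making sure the mean-level estimate $\mathbb{E}[\ca(S[0,n/m])]\lesssim \frac{n/m}{\log(n/m)}$ holds uniformly at the block scale. This follows from the known asymptotics for $\mathbb{E}[\ca(S[0,n])]$ in $d=4$, since $\log(n^{1/2})=\tfrac12\log n$. If one prefers to avoid Rosenthal's inequality, an alternative is to expand the $p$-th power of the sum and bound the mixed moments using independence together with the volume bound. Either way the constants depend only on $p$, and $\chi>0$ is fixed, so the polylogarithmic gain $(\log n)^{-\chi p}$ can be made smaller than any $(\log n)^{-k}$.
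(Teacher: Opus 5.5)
Your argument is correct, but it takes a different route from the paper. The paper gets the bound in one line from an external concentration result. It quotes the exponential-moment estimate of Dembo--Okada, $\limsup_n \mathbb{E}[\exp(c\frac{(\log n)^2}{n}(\ca(S[0,n])-\mathbb{E}\ca(S[0,n])))]<\infty$, together with $\mathbb{E}[\ca(S[0,n])]\sim\frac{\pi^2}{8}\frac{n}{\log n}$. Markov's inequality applied to the exponential then gives a tail of order $\exp(-c(\log n)^{1+\chi})$.

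You instead use only three ingredients: subadditivity of the capacity, the bound $\ca\le$ volume, and an upper bound $\mathbb{E}[\ca(S[0,N])]\lesssim N/\log N$ at the block scale $N=n^{1/2}$. Rosenthal's inequality then gives $\mathbb{E}[\ca(S[0,n])^p]\lesssim_p (n/\log n)^p$, and Markov's inequality with $p\ge (k+1)/\chi$ finishes. Your checks are sound: $\log(n/m)=\frac12\log n$ keeps the first Rosenthal term at $O(\mu_n^p)$, and $m(n/m+1)^p\asymp n^{(p+1)/2}=o((n/\log n)^p)$ for $p\ge 2$.

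What your route buys is self-containedness. It avoids an exponential concentration estimate that is tuned to fluctuations of size $n/(\log n)^2$, which is far more than a deviation of $(\log n)^{\chi}$ times the mean requires. It also needs only the order of the mean, not its sharp asymptotics. In fact, Hoeffding's inequality applied to the same $n^{1/2}$ i.i.d.\ block capacities, each bounded by $n^{1/2}+1$, gives the even stronger bound $\exp(-c\,n^{1/2}(\log n)^{2\chi-2})$.

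The paper's route is shorter, given the citation. Both arguments yield the bound for $n$ large depending on $k$ and $\chi$, which is all that is used later.
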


\begin{proof}

In \cite[Lemma 4.6]{DemboOkada}, we have that there is $c>0$ such that 
\begin{align*}
    \limsup_{n \to \infty}\mathbb{E}\left[\exp\left(c \frac{(\log n)^2}{n}(\ca(S[0,n]) - \mathbb{E}[\ca(S[0,n])])\right)\right] < \infty 
\end{align*}
and $\mathbb{E}[\ca(S[0,n])] \sim \frac{\pi^2}{8}\frac{n}{(\log n)}$ (e.g., see \cite[Theorem 1.1]{Chang}). 
Hence, we can derive \eqref{eq:weakercapbound} by applying Markov's inequality.
\end{proof}

In order to discuss the properties of the parts of the random walk inside each annulus, we need to consider the following stopping times. 
Define
\begin{equation*}
\begin{aligned}
&T^i_k:= \inf \{t \ge \tau^i_{k-1}: S_t \le \sqrt{n/K_i} \},\quad 
\tau^i_k:= \inf\{ t \ge T^{i}_k: S_t \ge 2\sqrt{n/K_i}\}.
\end{aligned}
\end{equation*}
These are a sequence of times that indicate when the random walk  hits that ball of radius $\sqrt{n/K_i}$, and later details when it escapes that ball.

The following lemma will ensure that, with high probability, there are not too many times in which the random walk will make an excursion into the ball of radius $\sqrt{n/K_i}$. Furthermore, each excursion into this ball will lie very close to its expected time.
\begin{lem}
Let $M^i= \sup\{k: \tau^i_k \le n  \}$. 
Let $\mathcal{H}^i$ be the event that
$$
\mathcal{H}^i:= \{M^i \le (\log n)^{C} \} \bigcap_{k=1}^{(\log n)^C} \{|\tau^i_k - T^i_k| \le \frac{n}{K_i^{1-\kappa}} \} \bigcap_{k=1}^{(\log n)^C}\{|\tau^i_k - T^i_k| \ge \frac{n}{K_i^{1+\kappa}} \}.
$$
We claim that there is some $C$ such that $\mathbb{P}((\mathcal{H}^i)^c) \ll \frac{1}{(\log n)^2}$.

\end{lem}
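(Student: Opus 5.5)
We split $(\mathcal{H}^i)^c$ into three events and bound each probability by $o((\log n)^{-2})$. Recall that for $i\le 11$ we have $K_i=(\log n)^{\pm i(1-\epsilon)}$ up to the paper's convention, so $\sqrt{n/K_i}$ is $\sqrt n$ times a power of $\log n$. The three events are: too many excursions, an excursion that is too long, and an excursion that is too short. We treat them separately and finish with a union bound over $k\le(\log n)^C$.

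\emph{Too short.} Each excursion starts at $S_{T^i_k}$ with $\|S_{T^i_k}\|\le\sqrt{n/K_i}$ and must travel distance at least $\sqrt{n/K_i}$ to reach radius $2\sqrt{n/K_i}$. By the strong Markov property and a maximal (reflection/Azuma-type) inequality,
\[
\mathbb{P}\Big(\max_{t\le m}\|S_t-S_0\|\ge r\Big)\le C\exp(-cr^2/m).
\]
Applying this with $r=\sqrt{n/K_i}$ and $m=n/K_i^{1+\kappa}$ gives the bound $C\exp(-cK_i^{\kappa})$. When $K_i$ is a positive power of $\log n$, this is superpolynomially small in $\log n$. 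A union bound over $(\log n)^C$ excursions then costs nothing.

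\emph{Too long.} From any point inside the ball of radius $2\sqrt{n/K_i}$, the walk exits that ball within time $n/K_i$ with probability at least $c>0$, by the invariance principle. Iterating this over blocks of length $n/K_i$ via the Markov property gives
\[
\mathbb{P}(\tau^i_k-T^i_k>n/K_i^{1-\kappa})\le(1-c)^{K_i^{\kappa}},
\]
which is again negligible after the union bound.

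\emph{Too many excursions.} This is the main step. Each time the walk sits at radius $2\sqrt{n/K_i}$, the probability that it ever returns to radius $\sqrt{n/K_i}$ is $(1/2)^{d-2}=1/4$ in $d=4$, up to $o(1)$ corrections, by the standard hitting estimate for balls (Lawler, Proposition 1.5.10). So, by the strong Markov property at the times $\tau^i_k$, we have $\mathbb{P}(M^i\ge m)\le(1/4+o(1))^{m-1}$. Choosing $m=(\log n)^C$, or even just $m=C\log\log n$, gives probability far below $(\log n)^{-2}$.

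The only delicate point is the regime where $K_i<1$, that is, when the radius exceeds $\sqrt n$ by a power of $\log n$. There the ball is essentially never exited before time $n$. In that case the events are either trivial or the first excursion is truncated at time $n$, and we handle this by declaring $M^i=0$ or $1$ under the convention $\tau^i_k\le n$. If needed, the time-bound requirements are interpreted only for complete excursions. With this convention the statement is verified for all $i\le 11$.
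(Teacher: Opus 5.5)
Your proof is correct. Two of its three pieces coincide with the paper's, but your bound on $M^i$ takes a different route.

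\textbf{The matching pieces.} Your too-short bound is the paper's reflection/maximal-inequality estimate. Your too-long bound iterates a uniform exit probability over $K_i^{\kappa}$ blocks of length $n/K_i$. This is the discrete form of the paper's estimate that Brownian motion stays in a ball of radius $2\sqrt{n/K_i}$ for time $n/K_i^{1-\kappa}$ with probability at most $\exp(-cK_i^{\kappa})$.

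\textbf{The different piece: control of $M^i$.} The paper compares $M^i$ with the number of balls of radius $\sqrt{n/K_i}$ needed to cover $S[1,n]$, and imports Lemma 3.4 of Dembo--Okada. You instead apply the strong Markov property at the exit times $\tau^i_k$, together with the ball-hitting estimate: from distance at least $2\sqrt{n/K_i}$, re-entry into the ball of radius $\sqrt{n/K_i}$ has probability at most $2^{-(d-2)}+o(1)=1/4+o(1)$. This gives $M^i$ a geometric tail.

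Your route buys the following.
\begin{itemize}
\item It is self-contained and uses only transience.
\item It gives a much stronger bound: already $\mathbb{P}(M^i\ge C\log\log n)\ll(\log n)^{-2}$ once $C\log 4>2$.
\item It counts all excursions ever, not only those completed by time $n$.
\item It avoids comparing annulus crossings with covering numbers. That comparison is not a pathwise inequality as stated, since a walk can recross the annulus many times along a piece of range covered by one or two balls.
\end{itemize}
The paper's route simply reuses an estimate from the capacity literature it already relies on.

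\textbf{Your last paragraph can be dropped.} The intended scales are $K_i=(\log n)^{i(1-\epsilon)}$; the displayed reciprocal in the paper is a typo. This reading is forced by $0=K_0<K_1<\dots<K_f=n^{1/10}$ and by the later exponent bookkeeping. So for $1\le i\le 11$ you have $K_i^{\kappa}\ge(\log n)^{(1-\epsilon)\kappa}$, hence $(\log n)^{C}\exp(-cK_i^{\kappa})\ll(\log n)^{-2}$, which is all your exponential bounds need. In a genuine $K_i<1$ regime the window $[n/K_i^{1+\kappa},\,n/K_i^{1-\kappa}]$ would be empty, so no convention about truncated excursions would rescue the statement there anyway.

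The only convention you actually need is that the time constraints are imposed for those $k$ with $T^i_k<\infty$. That is exactly where your strong Markov argument applies.
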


\begin{proof}
Notice that $M^i$ can be bounded from above by the number of spheres of radius $\sqrt{n/K_i}$ that are necessary in order to cover the full random walk $S[1,n]$. Considering a sufficiently large $C<\infty$, bounding $M^i$ with high probability is the estimate of \cite[Lemma 3.4]{DemboOkada}. 

Note that the probability that $|\tau^i_k - T^i_k |\ge \frac{n}{K_i^{1-\kappa}} $ is bounded by the probability that a Brownian motion stays within a ball of radius $2 \sqrt{n/K_i}$ for time $\frac{n}{K_i^{1-\kappa}}$. But, this happens with probability less than $\exp[-cK_i^{\kappa}]$. By taking a union bound, this is less than $(\log n)^C \exp[-cK_i^{\kappa}] \ll (\log n)^{-2} $.

Furthermore, the probability that $|\tau^i_k - T^i_k| \le \frac{n}{K_i^{1+\kappa}}$ can be bounded by $\exp[-cK_i^{\kappa}]$ by the reflection principle.  Again, by taking a union bound, this is less than $(\log n)^C \exp[-c K_i^{\kappa}] \ll (\log n)^{-2}$.
\end{proof}

\begin{proof}[Proof of Lemma \ref{lem:pfeqnonintersect}: Estimate for \eqref{eq:capacitydiv-}]

We can also define the event,
\begin{equation*}
\mathcal{G}^i:=\bigcap_{i=1}^{(\log n)^C} \left\{\ca\left(S\left[T^i_k, T^i_k+ \frac{n}{K_i^{1-\kappa}} \right]\right)\le \frac{n}{(\log n)^{1-\chi} K_i^{1-\kappa}} \right\}.
\end{equation*}
By equation \eqref{eq:weakercapbound}, we see that $\mathbb{P}((\mathcal{G}^i)^c) \ll (\log n)^{-2}$.

Finally, we will need the following event,
\begin{equation*}
\mathcal{S}^i:=\left\{ L(\mathcal{B}_{2\sqrt{n/K_i}}) \le \frac{n}{(K_i)^{1-\delta}} \right \}, 
\end{equation*}
where we recall the notation $L(\mathcal{B}_r)$ from Lemma \ref{lem:localtimebnd}. 
By using equation \eqref{eq:momball} and Markov's inequality with $k$ large enough, we would have that
$$
\mathbb{P}((\mathcal{S}^i)^c) \ll (\log n)^{-2}.
$$

Now, we consider the intersection of events 
$\Omega^i := \mathcal{S}^i \cap \mathcal{G}^i \cap \mathcal{H}^i$. 
By combining the probability bounds from earlier, we have $\mathbb{P}((\Omega^i)^c) \ll (\log n)^{-2}$. 
Notice that, on the event $\Omega^i$, we can show that,
\begin{equation*}
\begin{aligned}
&\ca\left(\tilde{S}\left[\frac{n}{(\log n)^4},n\right] \cap \mathcal{A}_{i,n} \right) \le \sum_{k=1}^{M^i} \ca(\tilde{S}[T^i_k, \tau^i_k]) \le \sum_{k=1}^{M^i} \ca(\tilde{S}[T^i_k, T^i_k + \frac{n}{K_i^{1-\kappa}}]) \\ &\le \sum_{k=1}^{M^i} \frac{n}{(\log n)^{1-\chi} K_{i}^{1-\kappa}} 
 \le K_i^{2\kappa}\sum_{k=1}^{M^i} \frac{n}{(\log n)^{1-\chi} K_i^{1+\kappa}} \le \frac{K_i^{2\kappa}}{(\log n)^{1-\chi}}  \sum_{k=1}^{M^i} |\tau^i_k - T^i_k| \\
 &\le \frac{K_i^{2\kappa}}{(\log n)^{1-\chi}} L(\mathcal{B}_{2\sqrt{n/K_i}}) \le \frac{K_i^{2\kappa}}{(\log n)^{1-\chi}}  \frac{n}{(K_i)^{1-\delta}}.
\end{aligned}
\end{equation*}
The second inequality used the fact that $|\tau^i_k - T^i_k| \le \frac{n}{K_i^{1-\kappa}}$ from the event $\mathcal{H}^i$. The third inequality used the bound on the capacity from the event $\mathcal{G}^i$. The fifth inequality used that $|\tau^i_k - T^i_k| \ge \frac{1}{K_i^{1-\kappa}}$ from the event $\mathcal{H}^i$.  Note that the sum of these differences is a lower bound on the number of times in the intersection between $\tilde{S}[\frac{n}{(\log n)^4}, n]$ and $\mathcal{B}_{2\sqrt{n/K_i}}$. The final inequality used the event $\mathcal{S}^i$.

Now, we see that
$$
\begin{aligned}
&\ca\left(\tilde{S}\left[\frac{n}{(\log n)^4},n\right] \cap \mathcal{A}_{i,n} \right)  \max_{z \in \mathcal{A}_{i,n}} G(z) \\&\le \frac{K_i^{2\kappa} n}{(\log n)^{1-\chi} (K_i)^{1-\delta}} \frac{K_{i+1}}{n} \le (\log n)^{3(2\kappa + \delta) +\chi} (\log n)^{-\epsilon} .
\end{aligned}
$$
By setting $11(2\kappa + \delta) + \chi \le \epsilon/2$, we see that the line above will be less than $(\log n)^{-\epsilon/2}$.  We now consider $\hat{\Omega}:= \bigcap_{i=1}^{11} \Omega^i$, we must have $\mathbb{P}(\hat{\Omega}^c) \ll (\log n)^{-2}$. Furthermore, we must also have
$$
\sum_{i=1}^{11}\ca\left(\tilde{S}\left[\frac{n}{(\log n)^4},n\right] \cap \mathcal{A}_{i,n} \right)  \max_{z \in \mathcal{A}_{i,n}} G(z) \le (\log n)^{-\epsilon/2}.
$$

Now, consider the intersection $\tilde{\Omega} \cap \hat{\Omega}$, where we recall $\tilde{\Omega}$ from equation \eqref{eq:deftileomega}. We must have that $\mathbb{P}((\tilde{\Omega} \cap \hat{\Omega})^c) \ll (\log n)^{-2}$ and, in addition, that the right-hand side of equation \eqref{eq:capacitydiv} can be bounded by $(\log n)^{-c}$ for some $c>0$. This completes the proof of Lemma \ref{lem:pfeqnonintersect}. 
\end{proof}

\subsection{An analysis of $X_n - \mathbb{E}[X_n]$}\label{analysis of centered}

In this section, we will prove that $\frac{X_n - \mathbb{E}[X_n]}{\sqrt{\mathrm{Var}(X_n)}} \to 0$ in distribution. We will first start by proving upper and lower bounds on $\mathrm{Var}(X_n)$. Note that an upper bound on $\mathrm{Var}(X_n)$ was already derived in Lemma \ref{lem:uprboundEn}. We now derive the corresponding lower bound for $\mathrm{Var}(X_n)$.
\begin{lem}
Let $X_n$ denote the number of cut points, or the graph distance. We have that
$$
\mathrm{Var}(X_n) \ge \frac{n^2}{(\log n)^{2+o(1)}}.
$$
\end{lem}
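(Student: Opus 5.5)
The plan is to use the one-step dyadic decomposition $X_{2n} = X_n + X[n,2n] - E_n$, exactly as in the proof of Lemma \ref{lem:uprboundEn}, and to show that the cross term $E_n$ alone forces fluctuations of order $n/(\log n)^{1+o(1)}$. Since $X_n$ and $X[n,2n]$ are independent with the law of $X_n$, expanding the variance gives
\begin{equation*}
\mathrm{Var}(X_{2n}) = 2\mathrm{Var}(X_n) + \mathrm{Var}(E_n) - 2\,\mathrm{Cov}(E_n, X_n + X[n,2n]).
\end{equation*}
One could apply Cauchy--Schwarz in the form $\mathrm{Var}(X_{2n})^{1/2} \ge \mathrm{Var}(E_n)^{1/2} - \mathrm{Var}(X_n + X[n,2n])^{1/2}$. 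However, this is useless if $\mathrm{Var}(X_n)$ is already large. It is also unnecessary then, since a large $\mathrm{Var}(X_n)$ is precisely what we want. So I will argue by the same $f(n)$-bootstrapping as in Lemma \ref{lem:uprboundEn}, with the inequalities reversed.

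The first step is a lower bound on $\mathrm{Var}(E_n)$. The Corollary after Lemma \ref{lem:setOmegahigprob} gives $\mathbb{E}[E_n^2] \ge n^2/(\log n)^{2+o(1)}$. Lemma \ref{lem:lowerbndexpdim4} together with Corollary \ref{coruprboundendime4} (now applicable, since Lemma \ref{lem:uprboundEn} combined with Lemmas \ref{high prob.:intersection} and \ref{lem:pfeqnonintersect} establishes $\mathbb{E}[E_n^2] \le n^2/(\log n)^{2-o(1)}$) gives $\mathbb{E}[E_n] = n/(\log n)^{3/2+o(1)}$. Hence $\mathbb{E}[E_n]^2 = n^2/(\log n)^{3+o(1)} \ll \mathbb{E}[E_n^2]$, and therefore $\mathrm{Var}(E_n) \ge n^2/(\log n)^{2+o(1)}$.

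The second step controls the covariance term, which I expect to be the main obstacle. I would first try the following. Write $V_n := \mathrm{Var}(X_n)$ and use $|\mathrm{Cov}(E_n, X_n + X[n,2n])| \le \sqrt{2V_n}\,\mathrm{Var}(E_n)^{1/2}$. Suppose, for contradiction, that $V_n \le n^2/(\log n)^{2+\kappa}$ along a sequence. Then $\mathrm{Var}(E_n) \ge n^2/(\log n)^{2+\kappa/4}$ dominates both $2V_n$ and the covariance term. The covariance bound is of order $n^2/(\log n)^{2+\kappa/2+\kappa/8}$, which is smaller than $\mathrm{Var}(E_n)$. Consequently $V_{2n} \ge \tfrac12 \mathrm{Var}(E_n) \ge n^2/(\log n)^{2+\kappa/4}$. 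This gives the desired bound at $2n$ whenever it fails at $n$.

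The remaining issue is the case where the bound holds at $n$ but possibly fails at $2n$. Here the covariance must be handled more carefully, and the positivity of $E_n$ is useful. If $V_n \ge n^2/(\log n)^{2+\kappa}$, then $V_{2n} \ge (\sqrt{2V_n} - \sqrt{\mathrm{Var}(E_n)})^2$. This is at least $c V_n$ unless $\mathrm{Var}(E_n)$ is comparable to $V_n$. In that latter case, the Cauchy--Schwarz bound in the opposite direction, $V_{2n} \ge (\sqrt{\mathrm{Var}(E_n)} - \sqrt{2V_n})^2$, can be too small only if $2V_n \approx \mathrm{Var}(E_n) \ge n^2/(\log n)^{2+o(1)}$. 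Either way, one of the two bounds yields $V_{2n} \ge n^2/(\log n)^{2+o(1)}$ up to an absolute loss at each step.

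Finally, I would iterate this along dyadic scales, as in Case 1 and Case 2 of Lemma \ref{lem:uprboundEn}. The $o(1)$ losses in the exponent accumulate only over $O(\log\log n)$ relevant steps when measured as $f(n) := \log(n^2/V_n)/\log\log n$. This should give $f(n) \le 2 + \kappa$ for every $\kappa > 0$ and all large $n$, i.e. $\mathrm{Var}(X_n) \ge n^2/(\log n)^{2+o(1)}$. The delicate point, if the crude iteration leaks too much, is to replace the dyadic step by the decomposition into $(\log n)^{\epsilon}$ blocks. There one uses that the top-level cross term has variance $n^2/(\log n)^{2+o(1)}$, while the block sums have variance at most $(\log n)^{\epsilon} \cdot (n/(\log n)^{\epsilon})^2/(\log n)^{2-o(1)}$ by Lemma \ref{lem:uprboundEn}. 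This is smaller by a factor $(\log n)^{\epsilon}$, so a single Cauchy--Schwarz step suffices without any iteration.
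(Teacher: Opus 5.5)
\textbf{The main route has a genuine gap} at the step you call the ``remaining issue''. The two Cauchy--Schwarz bounds you invoke there are the same quantity, $(\sqrt{2V_n}-\mathrm{Var}(E_n)^{1/2})^2$. When $2V_n\approx\mathrm{Var}(E_n)$ it is essentially zero. The single identity $X_{2n}=X_n+X[n,2n]-E_n$ cannot exclude that $E_n$ is strongly positively correlated with $X_n+X[n,2n]$ and cancels it, so knowing $V_n$ is large tells you nothing about $V_{2n}$.

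This is the generic regime, not an edge case. By Lemma \ref{lem:uprboundEn} and your first step, both $V_n$ and $\mathrm{Var}(E_n)$ are $n^2/(\log n)^{2\pm o(1)}$. Your second step shows that failure at $n$ forces success at $2n$. Applied to $X_n+X[n,2n]=X_{2n}+E_n$, the same argument shows that failure at $2n$ forces $2V_n\ge(1-o(1))\mathrm{Var}(E_n)$. Both facts are consistent with the bound failing at every other dyadic scale.

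Even a genuine constant-factor loss $V_{2n}\ge cV_n$ with $c<4$ would not rescue the iteration. It raises $f$ by about $\log(4/c)/\log\log n$ per doubling, and this does not sum to $o(1)$ over the doublings you need. More generally, a bounded number of dyadic levels gains only a constant factor. That cannot beat the $(\log n)^{o(1)}$ slack between the available upper bound on $V_n$ and the lower bound on $\mathrm{Var}(E_n)$.

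\textbf{Your fallback is the right idea and is essentially the paper's proof.} Split into $(\log n)^{\epsilon}$ blocks, $X_n=MT-\mathcal{E}$ with $\mathcal{E}=\sum_{k<\epsilon\log_2\log n}\sum_l E_n^{(k,l)}$. Lemma \ref{lem:uprboundEn} then gives $\mathrm{Var}(MT)\le n^2/(\log n)^{2+\epsilon-o(1)}$.

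However, the Cauchy--Schwarz step must be run against $\mathrm{Var}(\mathcal{E})$. That is the variance of the sum of all roughly $(\log n)^{\epsilon}$ cross terms on all levels, not of the top-level term alone. You have not shown that the lower-level terms do not cancel the top one. The missing ingredient is positivity: since every $E_n^{(k,l)}\ge 0$, every mixed moment $\mathbb{E}[E_n^{(k_1,l_1)}E_n^{(k_2,l_2)}]$ is nonnegative. Dropping the nonnegative diagonal terms other than $(0,1)$ gives
\begin{equation*}
\mathrm{Var}(\mathcal{E})\ \ge\ \mathrm{Var}\bigl(E_n^{(0,1)}\bigr)-\sum_{(k_1,l_1)\ne(k_2,l_2)}\mathbb{E}\bigl[E_n^{(k_1,l_1)}\bigr]\,\mathbb{E}\bigl[E_n^{(k_2,l_2)}\bigr]\ \ge\ \frac{n^2}{(\log n)^{2+o(1)}}-(\log n)^{2\epsilon}\,\frac{n^2}{(\log n)^{3-o(1)}} .
\end{equation*}
Here you use $\mathbb{E}[E_m]\le m/(\log m)^{3/2-o(1)}$ from your first step, uniformly for $m\ge n/(\log n)^{\epsilon}$. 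The subtracted sum is negligible once $\epsilon<1/2$. Then
\begin{equation*}
\mathrm{Var}(X_n)\ge\bigl(\mathrm{Var}(\mathcal{E})^{1/2}-\mathrm{Var}(MT)^{1/2}\bigr)^2\ge \frac{n^2}{(\log n)^{2+o(1)}} .
\end{equation*}
With this added, the dyadic bootstrap is unnecessary and should be dropped.
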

\begin{proof}
We perform a dyadic decomposition of $X_n$ up to $(\log n)^{\epsilon}$ levels for some $\epsilon>0$. 
We thus write
$$
X_n  = \sum_{k=0}^{\epsilon\log_2(\log n)} \sum_{l=1}^{2^k} - E_n^{(k,l)} + \sum_{j = 0}^{(\log n)^{\epsilon}-1} X[ j n/(\log n)^{\epsilon}, (j+1)n/ (\log n)^{\epsilon}].
$$

Notice, that since  $\sum_{j = 0}^{(\log n)^{\epsilon}-1} X[ j n/(\log n)^{\epsilon}, (j+1)n/ (\log n)^{\epsilon}]$ is a sum of i.i.d. terms where each individual term has variance at most $\frac{n^2}{(\log n)^{2- o(1)}}$, we can compute its variance as
\begin{equation*}
(\log n)^{\epsilon} \frac{n^2}{(\log n)^{2\epsilon} (\log n)^{2-o(1)}} \le \frac{n^2}{(\log n)^{2 + \epsilon - o(1)}}.
\end{equation*}
This will not be the major contributor to the variance.

We will now consider the contribution to the variance from $\sum_{k=0}^{\epsilon\log_2(\log n)} \sum_{l=1}^{2^k} - E_n^{(k,l)}$. 
Notice, we have,
\begin{equation*}
\begin{aligned}
&\mathrm{Var}\left(\sum_{k=0}^{\epsilon\log_2(\log n)} \sum_{l=1}^{2^k} - E_n^{(k,l)}\right)\\ & = \sum_{k=0}^{\epsilon\log_2(\log n)} \sum_{l=1}^{2^k} \mathrm{Var}(-E_n^{(k,l)}) + \sum_{k_1,k_2,l_1,l_2: (k_1,l_1) \ne (k_2,l_2)} \mathbb{E}\left[E_n^{(k_1,l_1)} E_n^{(k_2,l_2)} - \mathbb{E}[E_n^{(k_1,l_1)}] \mathbb{E}[E_n^{(k_2,l_2)}] \right]\\
& \ge \mathrm{Var}(E_n^{(0,1)}) - \sum_{k_1,k_2,l_1,l_2: (k_1,l_1) \ne (k_2,l_2)} \mathbb{E}[E_n^{(k_1,l_1)}] \mathbb{E}[E_n^{(k_2,l_2)}]\\& \ge \frac{n}{(\log n)^{2+o(1)}} - (\log n)^{2\epsilon}  \frac{n^2}{(\log n)^{3+o(1)}} \gg \frac{n}{(\log n)^{2+o(1)}}.
\end{aligned}
\end{equation*}
We used the fact that $E_n^{(k_1,l_1)}$ are all positive terms, so the only possible negative contribution to the covariance comes from the product of expectations. However, we know from Corollary \ref{coruprboundendime4} that the product of expectations of these expectations is bounded from above by $\frac{n^2}{(\log n)^{3+o(1)}}$. 
This completes the proof of our assertion.
\end{proof}

With a small modification to the proof above, we can derive the following main result.

\begin{thm}
Let $X_n$ denote the number of cut points or the graph distance. We have that, as $n\to \infty$, 
$$
\frac{X_n - \mathbb{E}[X_n]}{\sqrt{\mathrm{Var}(X_n)}} \to 0,
$$
in distribution.
\end{thm}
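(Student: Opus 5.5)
I will use the same truncated dyadic decomposition as in the preceding variance lower bound, with $K:=\epsilon\log_2\log n$:
$$
X_n-\mathbb{E}[X_n] = \bigl(MT-\mathbb{E}[MT]\bigr) - \sum_{k=0}^{K}\sum_{l=1}^{2^k}\bigl(E_n^{(k,l)}-\mathbb{E}[E_n^{(k,l)}]\bigr),
$$
where $MT=\sum_{j} X[jn/(\log n)^{\epsilon},(j+1)n/(\log n)^{\epsilon}]$. It suffices to show that each piece, divided by $\sqrt{\mathrm{Var}(X_n)}\ge n/(\log n)^{1+o(1)}$, tends to $0$ in probability.

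\textbf{Main term.} By Lemma \ref{lem:uprboundEn} applied at scale $n/(\log n)^\epsilon$, we get $\mathrm{Var}(MT)\le n^2/(\log n)^{2+\epsilon-o(1)}$. Chebyshev then gives $|MT-\mathbb{E}MT|/\sqrt{\mathrm{Var}(X_n)}\to0$ in probability.

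\textbf{Error terms.} Here we must not use second moments, since $\mathbb{E}[(E_n^{(k,l)})^2]$ is of the same order as $\mathrm{Var}(X_n)$. Instead I will exploit the heavy-tailed, sparse structure of the cross terms.

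\emph{The expectations.} By Corollary \ref{coruprboundendime4}, which applies because Lemma \ref{lem:uprboundEn} holds via Lemmas \ref{high prob.:intersection} and \ref{lem:pfeqnonintersect}, each block satisfies $\mathbb{E}[E_n^{(k,l)}]\le (n/2^k)(\log n)^{-3/2+o(1)}$. Summing over $l$ gives at most $n(\log n)^{-3/2+o(1)}$ per level, hence at most $(\log\log n)\,n(\log n)^{-3/2+o(1)}$ in total. This is $o\bigl(n/(\log n)^{1+o(1)}\bigr)$, so the subtracted means are negligible; the key point is that the $o(1)$ exponents must be controlled uniformly.

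\emph{The positive parts.} Since $E_n^{(k,l)}\ge0$, it remains to show $\sum_{k\le K}\sum_l E_n^{(k,l)}$ is small in probability. Apply Lemma \ref{lem:setOmegahigprob} to each block at its own scale $n/2^k$. Outside an event $\Omega_{k,l}$ of probability $O(\log\log n/\log n)$, we have $E_n^{(k,l)}\le 2(n/2^k)(\log n)^{-2}$, which contributes in total at most $K\,n(\log n)^{-2}\ll n/(\log n)^{1+o(1)}$. The union of all $\Omega_{k,l}$ over the at most $2^{K+1}=2(\log n)^{\epsilon}$ blocks has probability $O\bigl((\log n)^{\epsilon-1}\log\log n\bigr)\to0$. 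Hence, with probability $1-o(1)$, the whole error sum minus its mean is $o(\sqrt{\mathrm{Var}(X_n)})$.

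\textbf{Main obstacle.} The delicate point is that the $o(1)$ exponents in the variance lower bound $\mathrm{Var}(X_n)\ge n^2/(\log n)^{2+o(1)}$ and in the bound on $\mathbb{E}[E_n]$ must be compatible. Concretely, we need $(\log n)^{-3/2+o(1)}\log\log n$ to be $\ll(\log n)^{-1-o(1)}$. This holds because the gap is a full power $(\log n)^{1/2}$, which absorbs any $(\log n)^{o(1)}$ losses. I would also need to verify that Lemma \ref{lem:setOmegahigprob} holds uniformly for the sub-blocks of length $n/2^k\ge n/(\log n)^\epsilon$ with $\log(n/2^k)\asymp\log n$, which is immediate from its proof.

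Combining the three pieces, each normalized contribution tends to $0$ in probability, so the sum does too, and convergence in probability to $0$ implies convergence in distribution.
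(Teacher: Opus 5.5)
Your proposal is correct and follows essentially the same route as the paper. Both use the dyadic decomposition truncated at $\epsilon\log_2\log n$ levels, Chebyshev with Lemma \ref{lem:uprboundEn} for the main blocks, and, for the cross terms, Corollary \ref{coruprboundendime4} together with a union bound over the complements of the small-probability events of Lemma \ref{lem:setOmegahigprob}. The only harmless difference is that you sum the means level by level, which gives a factor $\log\log n$ where the paper uses the block count $(\log n)^{\epsilon}$; as a result you only need $0<\epsilon<1$ rather than $\epsilon<1/2$.
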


\begin{proof}
We proceed with the same dyadic decomposition as in the previous proof.

$$
X_n  = \sum_{k=0}^{\epsilon\log_2(\log n)} \sum_{l=1}^{2^k} - E_n^{(k,l)} + \sum_{j = 0}^{(\log n)^{\epsilon}-1} X[ j n/(\log n)^{\epsilon}, (j+1)n/ (\log n)^{\epsilon}].
$$

Let $\Omega^{(k,l)}$ be the set from  Lemma \ref{lem:setOmegahigprob} (constructed for the appropriate cross term $E_n^{(k,l)}$) and consider $\Omega_A:= \bigcap_{(k,l)} \Omega^{(k,l)}$.
By Lemma \ref{lem:setOmegahigprob}, we can ensure that $(\Omega^{(k,l)})^c$ holds with probability no larger than $\text{O}\left( \frac{\log \log n}{\log n} \right)$. By taking a union bound, we see that $\mathbb{P}(\Omega_A^c) = \text{O}\left(\frac{\log \log n}{(\log n)^{1-\epsilon}}\right) \ll 1.$ 
On the event, $\Omega^{(k,l)}$, we see that
$$
| - E_n^{(k,l)}  + \mathbb{E}[E_n^{(k,l)}]| \le  \frac{n}{(\log n)^{3/2+o(1)}}.
$$
Thus, on $\Omega_A$, we must have
$$
\left|\sum_{k=0}^{\epsilon\log_2(\log n)} \sum_{l=1}^{2^k} - E_n^{(k,l)} + \mathbb{E}[E_n^{(k,l)}] \right| \le  (\log n)^{\epsilon}  \frac{n}{(\log n)^{3/2-o(1)}}.
$$
Furthermore, notice that by Lemma \ref{lem:uprboundEn}, 
$$
\begin{aligned}
 \mathrm{Var}\left(\sum_{j = 0}^{(\log n)^{\epsilon}-1} X[ j n/(\log n)^{\epsilon}, (j+1)n/ (\log n)^{\epsilon}] \right) \le \frac{n^2}{(\log n)^{2+ \epsilon +o(1)}}.
\end{aligned}
$$

Let $\Omega_B$ be the event that 
 $\sum_{j = 0}^{(\log n)^{\epsilon}-1} X[ j n/(\log n)^{\epsilon}, (j+1)n/ (\log n)^{\epsilon}] - \mathbb{E}[X[ j n/(\log n)^{\epsilon}, (j+1)n/ (\log n)^{\epsilon}]] $ is smaller than $ \frac{n}{(\log n)^{1+\epsilon/4}}$. By Markov's inequality $\mathbb{P}(\Omega_B^c)\le (\log n)^{-\epsilon/2}$.

Now, on $\Omega_A \cap \Omega_B$, we have
$$
X_n - \mathbb{E}[X_n] \le \frac{n}{(\log n)^{1+\epsilon/4}} + (\log n)^{\epsilon}  \frac{n}{(\log n)^{3/2-o(1)}} \ll \frac{n}{(\log n)^{1+o(1)}} = \sqrt{\mathrm{Var}(X_n)}.
$$
Furthermore, $\mathbb{P}(\Omega_A \cap \Omega_B) = 1-o(1)$ by a union bound. This completes the proof.
\end{proof}

\subsection{An Analysis of Variables with heavy tailed Cross Terms}\label{analysis of ER}

In this subsection, we present, for independent interest, a similar analysis done in the previous sections when we impose weaker conditions on the bounds we obtain for the variance and the expectation for the cross terms under consideration in $d=4$.  Namely, we will assume that the cross terms are very heavy tailed and that we have some almost optimal upper and lower bounds for the variance up to powers of $\log n$. In particular, we will show that the effective resistance in  $d=4$ satisfies these assumptions in Subsection \ref{general:ER:d=4}. Assumption \ref{asmp:weaksmpheavy} will detail what we assume about the cross terms under consideration.

Before this, in Subsection \ref{UB for cross:ER:d=4}, we will briefly derive estimates for the effective resistance, which ensure that  parts (2) and (3) of Assumption  \ref{asmp:weaksmpheavy} will hold. 

\subsubsection{Upper bound for the cross term of the effective resistance}\label{UB for cross:ER:d=4}
We will start by giving the upper bound for the second moment of the cross term of the effective resistance, $E[E_n^2]$. 

\begin{lem} \label{lem:heavytailfour:ER}

Consider $d=4$. 
There exists an event $\Omega$ of probability $O(\frac{\log \log n}{\log n})$ such that 
\begin{equation} \label{eq:esterror:ER}
\begin{aligned}
  E_n \mathbbm{1}[\Omega^c] \le  \mathbb{E}[ E_n] . 
\end{aligned}
\end{equation}
We have that 
\begin{equation} \label{eq:expuprbnd:ER}
\mathbb{E}[ E_n^2] \le \frac{n^2}{(\log n)^{1-o(1)}}.
\end{equation}

\end{lem}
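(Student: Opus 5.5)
We follow the strategy of Lemma \ref{lem:setOmegahigprob}, adapted to the effective resistance $R$. Write $\tilde{S}$ for the reversal of $S[0,n]$ and $\hat{S}$ for $S[n,2n]$, both started at $S_n$, and let $E_n=R_{\tilde{S}}+R_{\hat{S}}-R_{[0,2n]}$. Subadditivity holds because the concatenated graph contains the series path through $S_n$, and Rayleigh monotonicity gives $E_n\ge 0$. As $\Omega$ we take the same event as before: a long-range intersection $\hat{S}[n/(\log n)^4,n]\cap\tilde{S}\ne\emptyset$ or $\tilde{S}[n/(\log n)^4,n]\cap\hat{S}\ne\emptyset$, together with the events that $\tilde{S}$ or $\hat{S}$ has no cut time in $[n/(\log n)^4,n/(\log n)^2]$. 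By \cite[Theorem 4.3.6]{Lawler1991} and \cite[equation (7)]{CroydonShiraishi2023}, as in Lemma \ref{lem:setOmegahigprob}, $\mathbb{P}(\Omega)=O(\log\log n/\log n)$.

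To prove \eqref{eq:esterror:ER}, work on $\Omega^c$ and pick cut times $\mathcal{C}_{\tilde S},\mathcal{C}_{\hat S}\in[n/(\log n)^4,n/(\log n)^2]$. The no-intersection condition makes them cut points of the full graph. Effective resistance is additive in series across a cut point, so
\[
R_{[0,2n]}=R(\tilde{S}_n,\tilde{S}_{\mathcal{C}})+R_{\text{mid}}+R(\hat{S}_{\mathcal{C}},\hat{S}_n),
\]
where $R_{\text{mid}}\ge0$ is the resistance across the middle piece. Likewise $R_{\tilde S}\le R(\tilde{S}_n,\tilde{S}_{\mathcal{C}})+\mathcal{C}_{\tilde S}$, using that resistance is at most graph distance. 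Hence
\[
E_n\mathbbm{1}[\Omega^c]\le 2n/(\log n)^2 .
\]
It remains to compare this with $\mathbb{E}[E_n]$. The remark after Lemma \ref{lem:lowerbndexpdim4} (from \cite[Section 4]{Shiraishi2012}) gives $\mathbb{E}[E_n]\ge n/(\log n)^{3/2+o(1)}$, and $2n/(\log n)^2$ is smaller for large $n$.

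For \eqref{eq:expuprbnd:ER}, we avoid the bootstrap of Lemma \ref{lem:uprboundEn} and aim for a cruder direct bound. First, $E_n\le R_{\tilde{S}}+R_{\hat{S}}$. Second, on $\Omega^c$ we already have $E_n\le 2n/(\log n)^2$. Therefore
\[
\mathbb{E}[E_n^2]\le \frac{4n^2}{(\log n)^4}+2\,\mathbb{E}\big[(R_{\tilde S}+R_{\hat S})^2\mathbbm{1}[\Omega]\big].
\]
The second term is bounded by Hölder's inequality:
\[
\mathbb{E}\big[(R_{\tilde S}+R_{\hat S})^2\mathbbm{1}[\Omega]\big]\le \|R_{\tilde S}+R_{\hat S}\|_{2p}^2\,\mathbb{P}(\Omega)^{1-1/p},
\]
taking $p$ large. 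This requires a moment bound $\|R_n\|_{2p}\le n/(\log n)^{1/2-o(1)}$ for the effective resistance. Given it, the right side is $\frac{n^2}{(\log n)^{1-o(1)}}\cdot(\log n)^{-1+1/p+o(1)}$, which is better than required.

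The main obstacle is this high-moment bound on $R_n$ in $d=4$; the trivial bound $R_n\le n$ only gives $n^2/(\log n)^{1-o(1)}$. If even that much is enough, the statement follows simply from $\mathbb{E}[E_n^2]\le n^2\,\mathbb{P}(\Omega)+4n^2/(\log n)^4$, which is of order $n^2(\log\log n)/\log n$.

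There is a second, subtler difficulty in reducing to the event $\Omega$. On the long-range intersection part one must actually rely on the upper bound $R\le$ graph distance, and combine it with the cut-point bound, which is exactly what the paper's $X^1\ge X^2$ comparisons provide. For the stated $(\log n)^{1-o(1)}$ bound, the cheap estimate $n^2\,\mathbb{P}(\Omega)$ already suffices, so that is what I would write first.
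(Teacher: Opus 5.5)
Your proposal is correct and, once the H\"older detour is dropped, it matches the paper's proof. You use the same event $\Omega$ from Lemma \ref{lem:setOmegahigprob}, the series decomposition at the two cut points to get $E_n\mathbbm{1}[\Omega^c]\le 2n/(\log n)^2\le\mathbb{E}[E_n]$ via Shiraishi's lower bound, and the cheap estimate $\mathbb{E}[E_n^2]\lesssim n^2\,\mathbb{P}(\Omega)+n^2/(\log n)^4$ from the deterministic bound $E_n\le 2n$; the high-moment bound on $R_n$ and the ``second difficulty'' you raise are not needed for \eqref{eq:expuprbnd:ER}.
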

\begin{proof}
Recall $\Omega$ from Lemma \ref{lem:setOmegahigprob}. 
The same proof will allow us to say that on $\Omega^c$, we have that, 
\begin{equation*}
 E_n \le 2 \frac{n}{(\log n)^{b/2}} .
\end{equation*}
Hence, we obtain \eqref{eq:esterror:ER}. 
Notice also that since $ E_n$ is deterministically less than $n$, we must have that,
\begin{equation*}
\mathbb{E}[ E_n^2] = \mathbb{E}[ E_n^2 \mathbbm{1}[\Omega]] + \mathbb{E}[ E_n^2 \mathbbm{1}[\Omega^c]] \le n^2 \mathbb{P}(\Omega) + 4 \frac{n^2}{(\log n)^b} \le \frac{n^2}{(\log n)^{1-o(1)}}.
\end{equation*}
Hence, we obtain \eqref{eq:expuprbnd:ER}. 
\end{proof}

\begin{proof}[Proof of Proposition \ref{prop1+}]
By Lemmas \ref{lem:lowerbndexpdim4} and \ref{lem:heavytailfour:ER}, which also hold for the effective resistance, 
we obtain results  which correspond to parts (2) and (3) of Assumption \ref{asmp:weaksmpheavy} for $b_1=2+o(1)$ and $b_2=1-o(1)$. 
\end{proof}

\subsubsection{General arguments}\label{general:ER:d=4}

\begin{asmp} \label{asmp:weaksmpheavy}
We let $X[a,b]$ for $a, b\in \mathbb{Z}^+, b >a $ be a collection of random variables satisfying the following properties. First, define $E_n:= X_n + X[n,2n] - X_{2n}.$ 
\begin{enumerate}
\item The law of $X[a,b]$ depends only on the value of $|b-a|$.

\item If $[a,b] \cap [c,d] =\emptyset$, then $X[a,b]$ is independent of $X[c,d]$.

\item There exist constants $b_1,b_2$ such that
$$
\frac{n^2}{(\log n)^{b_1}} \le \mathbb{E}[E_n^2] \le \frac{n^2}{(\log n)^{b_2}}.
$$

\item There is an event $\Omega$ with probability $O(\frac{\log \log n}{\log n})$ such that 
$$
E_n \mathbbm{1}[\Omega^c] \le  \mathbb{E}[E_n].
$$

\end{enumerate}
\end{asmp}


We first define the function $f(n)$ as,
\begin{equation*}
\mathrm{Var}(X_n)= \frac{n^2}{(\log n)^{f(n)}}.
\end{equation*}
In order to characterize this, we need to first define the associated function $g(n)$  as follows:
\begin{equation*}
\mathbb{E}[E_{n}^2]:= \frac{n^2}{(\log n)^{g(n)}}. 
\end{equation*}
We remark that the third part of our Assumption \ref{asmp:weaksmpheavy}  shows that we have $b_1+o(1) \le g(n) \le b_2+o(1)$, but we do not get a uniform control of $g(n)$. 
In addition, as a consequence of the fourth part of Assumption \ref{asmp:weaksmpheavy}, we must also have:
\begin{equation} \label{eq:boundenn}
\mathbb{E}[E_{n}^2]  O\left(\frac{\log \log n}{\log n}\right) \ge \mathbb{E}[E_{n}]^2 .
\end{equation}

To try to get a uniform control of $g(n)$ on a fixed number of scales,  we fix a constant $K$ and additionally define the function $g(n,K)$ as:
\begin{equation*}
g(n,K):= \min_{1 \le j \le K} g(n 2^{-j}).
\end{equation*}
This function will be used to characterize the size of the expectation of the square of the graph distance, as detailed in the following lemma.
\begin{lem} \label{lem:freltog}
Fix some $K>0$. Assume that there exists $n$ such that $f(n2^{-K}) \ge g(n,K) + \frac{1}{4}$. Then, we will have that $f(n) \le g(n,K) + o(1) $, where $o(1)$ is a term that will decay to $0$ as $n$ is taken to $\infty$.
\end{lem}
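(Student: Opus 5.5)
The plan is to run the dyadic variance recursion from the proof of Lemma \ref{lem:uprboundEn} over the $K$ scales $n2^{-K}, n2^{-K+1},\dots,n$. The hypothesis $f(n2^{-K}) \ge g(n,K)+\frac14$ says that at the bottom scale the variance is much smaller than the squared cross terms. The recursion should then show that the cross terms take over and the variance climbs up to order $n^2/(\log n)^{g(n,K)+o(1)}$, but not beyond. Throughout, $K$ is fixed and $n\to\infty$, so $\log(n2^{-j}) = \log n\,(1+o(1))$ uniformly in $0\le j\le K$. Hence every power of $\log(n2^{-j})$ can be replaced by the same power of $\log n$ at the cost of a $(\log n)^{o(1)}$ factor.

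\textbf{Step 1: one-step recursion.} For $m=n2^{-j}$, parts (1) and (2) of Assumption \ref{asmp:weaksmpheavy} give
\begin{equation*}
\mathrm{Var}(X_{2m}) \le 2\mathrm{Var}(X_m) + \mathrm{Var}(E_m) + 2\sqrt{2\mathrm{Var}(X_m)\mathrm{Var}(E_m)},
\end{equation*}
which follows from Cauchy--Schwarz and the decomposition $X_{2m}=X_m+X[m,2m]-E_m$ with independent summands. We then bound $\mathrm{Var}(E_m)\le \mathbb{E}[E_m^2] = m^2(\log m)^{-g(m)} \le m^2(\log n)^{-g(n,K)+o(1)}$ for $m=n2^{-j}$ with $1\le j\le K$. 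Writing $V_j:=\mathrm{Var}(X_{n2^{-j}})/(n2^{-j})^2$ and $\eta := (\log n)^{-g(n,K)+o(1)}$, this yields
\begin{equation*}
V_{j-1} \le \tfrac12 V_j + \tfrac14 \eta + \sqrt{V_j \eta/2} \le \left(\tfrac{\sqrt{2 V_j}+\sqrt{\eta}}{2}\right)^2 .
\end{equation*}
The middle quantity is $\tfrac14(\sqrt{2V_j}+\sqrt\eta)^2$ with the square expanded, so the two inequalities agree.

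\textbf{Step 2: iterate over $K$ levels.} Set $u_j=\sqrt{V_j}$. Step 1 gives $u_{j-1}\le (\sqrt2 u_j+\sqrt\eta)/2 = u_j/\sqrt2 + \sqrt\eta/2$. The hypothesis $f(n2^{-K})\ge g(n,K)+\frac14$ means $u_K \le (\log n)^{-g(n,K)/2-1/8+o(1)} \ll \sqrt\eta$. Iterating $K$ times,
\begin{equation*}
u_0 \le 2^{-K/2}u_K + \tfrac{\sqrt\eta}{2}\sum_{i\ge0}2^{-i/2} \le C\sqrt\eta ,
\end{equation*}
with $C$ an absolute constant independent of $K$. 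Thus $\mathrm{Var}(X_n)\le C^2 n^2(\log n)^{-g(n,K)+o(1)}$, and since $C$ is a constant, $f(n)\ge g(n,K)-o(1)$.

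\textbf{Step 3: the direction stated in the lemma.} The lemma asserts $f(n)\le g(n,K)+o(1)$, that is, a variance lower bound. Step 2 gives the reverse inequality, and it is unclear which direction is intended; possibly the content is the two-sided $|f(n)-g(n,K)|=o(1)$. To obtain a lower bound, I would pick $j^*$ attaining the minimum in $g(n,K)$ and use
\begin{equation*}
\mathrm{Var}(X_{2m})\ge \left(\sqrt{\mathrm{Var}(E_m)}-\sqrt{2\mathrm{Var}(X_m)}\right)^2
\end{equation*}
at $m=n2^{-j^*}$. This requires two ingredients:
\begin{itemize}
\item $\mathrm{Var}(E_m)\ge \mathbb{E}[E_m^2](1-o(1))$, which holds by \eqref{eq:boundenn} since $\mathbb{E}[E_m]^2 = o(\mathbb{E}[E_m^2])$;
\item $\mathrm{Var}(X_m)$ small compared with $\mathbb{E}[E_m^2]$, which I would obtain by applying Step 2 from scale $n2^{-K}$ up to $m$. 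This is exactly where the hypothesis enters.
\end{itemize}
Then I would propagate upward from $2m$ to $n$ using the matching lower recursion $u_{j-1}\ge u_j/\sqrt2 - \sqrt{\eta}/2$, which costs at most a constant factor $2^{-K/2}$ per level.

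\textbf{Main obstacle.} The lower propagation is the hard step. The subtracted cross-term contributions at the intermediate scales may be of the same order $\sqrt\eta$ and could cancel the growth, so a constant-factor control is needed. I would try to handle it through a sharper comparison: keep the explicit $\mathrm{Var}(E_m)$ terms at every level, and use positivity of $E$ together with Assumption \ref{asmp:weaksmpheavy}(4) to show that the covariance between $E_m$ and $X_m+X[m,2m]$ is negligible. In that case the recursion becomes essentially additive, $V_{j-1}\approx \tfrac12V_j+\tfrac14\mathbb{E}[E^2]/m^2$, and it can be iterated in both directions.
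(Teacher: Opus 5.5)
Your Steps 1--2 are correct, but they prove the converse inequality $f(n)\ge g(n,K)-o(1)$, which is an upper bound on $\mathrm{Var}(X_n)$. The intended direction is not ambiguous. The lemma is invoked in Case 1 of the final theorem to show that $\mathbb{E}[\mathcal{I}^2]$ and the truncated second moment of $\mathcal{E}-\mathbb{E}[\mathcal{E}]$ are $\ll\mathrm{Var}(X_n)$, and that requires the lower bound $\mathrm{Var}(X_n)\ge n^2(\log n)^{-g(n,K)-o(1)}$.

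Step 3, which is meant to supply this lower bound, fails for two reasons.
\begin{itemize}
\item At the minimizing scale $m=n2^{-j^*}$, Step 2 only gives $\mathrm{Var}(X_m)\le C\,m^2(\log n)^{-g'+o(1)}$ with $g'=\min_{j^*<j\le K}g(n2^{-j})\ge g(n,K)$. Nothing prevents $g'=g(n,K)$, for instance if $g$ is essentially constant over the $K$ scales. Then $\sqrt{2\mathrm{Var}(X_m)}$ can exceed $\sqrt{\mathrm{Var}(E_m)}$ and the reverse triangle inequality yields nothing. The hypothesis makes the variance small only at the bottom scale $n2^{-K}$, not at intermediate ones.
\item The recursion $u_{j-1}\ge u_j/\sqrt2-\sqrt\eta/2$ does not lose only a constant factor per level. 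The loss is additive, so starting from $u_j\asymp\sqrt\eta$ the bound turns negative after one or two levels.
\end{itemize}
You see this yourself, but your proposed fix aims at the wrong target. The assumptions give no reason for $\mathrm{Cov}(E_m,X_m+X[m,2m])$ to be negligible, and you do not need that. You only need it not to be too \emph{positive}.

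The missing idea is to expand all $K$ levels at once, $X_n-\mathbb{E}[X_n]=\mathcal{I}+\overline{\mathcal{E}}$, and exploit that every cross term is nonnegative.
\begin{itemize}
\item Write each centered cross term as its positive part minus its negative part. The only negative contributions to $\mathbb{E}[\overline{\mathcal{E}}^2]$ are then products of the positive part of one term with the negative part of another.
\item Since $E\ge 0$, the negative part of $E-\mathbb{E}[E]$ is at most $\mathbb{E}[E]$. By Cauchy--Schwarz and \eqref{eq:boundenn}, each such contribution is bounded by
\begin{equation*}
\mathbb{E}\bigl[(E_n^{(k_1,l_1)})^2\bigr]^{1/2}\,\mathbb{E}\bigl[E_n^{(k_2,l_2)}\bigr]\le\frac{n^2}{(\log n)^{g(n,K)+1/2+o(1)}}.
\end{equation*}
\item With $K$ fixed there are only $O(4^K)$ such terms. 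The single diagonal term at the minimizing scale is already $n^2(\log n)^{-g(n,K)-o(1)}$. Hence $\mathbb{E}[\overline{\mathcal{E}}^2]\ge n^2(\log n)^{-g(n,K)-o(1)}$: cross terms at different scales cannot cancel one another.
\item The hypothesis is used exactly once, on the bottom sum: $\mathbb{E}[\mathcal{I}^2]=2^K\mathrm{Var}(X_{n2^{-K}})\le n^2(\log n)^{-g(n,K)-1/4+o(1)}$.
\item A single reverse triangle inequality, $\|\overline{\mathcal{E}}+\mathcal{I}\|_2\ge\|\overline{\mathcal{E}}\|_2-\|\mathcal{I}\|_2$, then gives $f(n)\le g(n,K)+o(1)$.
\end{itemize}
A level-by-level version of your Step 3 can be rescued only by unfolding $X_m$ and $X[m,2m]$ down to scale $n2^{-K}$ in exactly this way. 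The lower-level cross terms enter with a minus sign and are controlled by $\mathrm{Cov}(E,E')\ge-\mathbb{E}[E]\,\mathbb{E}[E']$, and the bottom blocks by Cauchy--Schwarz and the hypothesis. That is the same argument in disguise.
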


\begin{proof}
We start with a dyadic decomposition of the random variables. 
As a consequence, we observe that,
\begin{equation*}
\begin{aligned}
X_n - \mathbb{E}[X_n] = \sum_{j =1}^{2^K} (\tilde{X}_{K,j} - \mathbb{E}[\tilde{X}_{K,j}])
+\sum_{k=1}^K \sum_{l=1}^{2^k} (E_n^{(k,l)} - \mathbb{E}[E_n^{(k,l)}]).
\end{aligned}
\end{equation*}
We first consider the variance of the term,
\begin{equation*}
\overline{\mathcal{E}}:=-\sum_{k=1}^K \sum_{l=1}^{2^k} (E_n^{(k,l)} - \mathbb{E}[E_n^{(k,l)}]).
\end{equation*}
We remark that when computing the second moment $\mathcal{E}^2$, the only negative contribution can come from terms of the form,
\begin{equation} \label{eq:negcont}
\begin{aligned}
&N_{(k_1,l_1),(k_2,l_2)}:=-\mathbb{E}[(E_n^{(k_1,l_1)} - \mathbb{E}[E_n^{(k_1,l_1)}]) \mathbbm{1}[E_n^{(k_1,l_1)} - \mathbb{E}[E_n^{(k_1,l_1)}] >0]\\&\times (E_n^{(k_2,l_2)} - \mathbb{E}[E_n^{(k_2,l_2)}]) \mathbbm{1}[E_n^{(k_2,l_2)} - \mathbb{E}[E_n^{(k_2,l_2)}]< 0]]\\
&\le \mathbb{E}[(E_n^{(k_1,l_1)} - \mathbb{E}[E_n^{(k_1,l_1)}])^2 \mathbbm{1}[E_n^{(k_1,l_1)} - \mathbb{E}[E_n^{(k_1,l_1)}] >0]^{1/2}\\
& \times \mathbb{E}[(E_n^{(k_2,l_2)} - \mathbb{E}[E_n^{(k_2,l_2)}])^2 \mathbbm{1}[E_n^{(k_2,l_2)} - \mathbb{E}[E_n^{(k_2,l_2)}]< 0]]^{1/2}\\
& \le \mathbb{E}[(E_n^{(k_1,l_1)})^2]^{1/2} \mathbb{E}[E_n^{(k_2,l_2)}] \le \frac{n 2^{-k_1}}{(\log (n 2^{-k_1}))^{g(n2^{-k_1})/2}} \frac{n2^{-k_2} }{(\log n2^{-k_2})^{g(n2^{-{k_2}})/2 +1/2 -o(1) } }\\
&\le \frac{n^2}{ (\log n)^{g(n,K) + 1/2 + o(1) }}.
\end{aligned}
\end{equation}
To get the third inequality, we used the fact that $E_n^{(k_2,l_2)}$ must be positive. Thus, under the condition that $E_n^{(k_2,l_2)} - \mathbb{E}[E_n^{(k_2,l_2)}]$ is negative, the expectation of $\mathbb{E}[(E_n^{(k_2,l_2)} - \mathbb{E}[E_n^{(k_2,l_2)}])^2 \mathbbm{1}[E_n^{(k_2,l_2)} - \mathbb{E}[E_n^{(k_2,l_2)}]< 0]]^{1/2} \le \mathbb{E}[E_n^{(k_2,l_2)}]$. The second-to-last inequality used the definition of $g$ along with the equation \eqref{eq:boundenn} for $n$ large enough. Finally, the last inequality used the fact that $\frac{n}{(\log n)^k}$ is increasing for $n$ large enough, as well as the definition of $g(n,K)$.

This should contrast with the positive contribution coming from,
\begin{equation*}
\mathbb{E}[(E_n^{(k_1,l_1)} - \mathbb{E}[E_n^{(k_1,l_1)}])^2] = \frac{(n2^{-k_1})^2}{(\log n2^{-k_1})^{g(n2^{-k_1})}}.
\end{equation*}

There will be at least one term $(k_1,l_1)$ where $g(n2^{-k_1})$ attains its maximum value, and for this specific value $k^{\max}$, we have that,
\begin{equation*}
\frac{(n2^{-k^{\max}})^2}{(\log n2^{-k^{\max}})^{g(n2^{-k^{\max}})}} = \frac{n^2}{(\log n)^{g(n,K) + o(1)}}.
\end{equation*}

The factor of $o(1)$ is to compensate for possible changes of the constant factor in front. We remark that due to these computations, there is a gap of a factor of $(\log n)^{1/2}$ between the positive term above and the negative contribution from  equation \eqref{eq:negcont}. This will ensure a sufficiently good lower bound for the expectation of $\overline{\mathcal{E}}$. 
Indeed, we have that,
\begin{equation} \label{eq:estcalE}
\begin{aligned}
&\mathbb{E}[\mathcal{E}^2] \ge \sum_{k=1}^K \sum_{l=1}^{2^k} \mathbb{E}[(E_n^{(k_1,l_1)} - \mathbb{E}[E_n^{(k_1,l_1)}])^2] + \sum_{k_1=1}^{K}\sum_{l_1=1}^{2^{k_1}} \sum_{k_2=1}^{K} \sum_{l_1 =1}^{2^{k_2}} N_{(k_1,l_1),(k_2,l_2)} \\
&\ge \frac{n^2}{(\log n)^{g(n,K) +o(1)}} - K^2 4^{K} \frac{n^2}{(\log n)^{g(n,K) + \frac{1}{2} +o(1)}} \ge \frac{n^2}{(\log n)^{g(n,K) + o(1)}}.
\end{aligned}
\end{equation}

Furthermore, if we define
$$
\mathcal{I}:= \sum_{j=1}^{2^K} (\tilde{X}_{(K,j)} - \mathbb{E}[\tilde{X}_{(K,j)}]),
$$
we notice that we have,
\begin{equation} \label{eq:estcalI}
\mathbb{E}[\mathcal{I}^2] \le 2^K \frac{(n2^{-K})^2}{(\log n2^{-K})^{f(n2^{-K})}} \le 2^K \frac{(n2^{-K})^2}{(\log n2^{-K})^{g(n,K) + \frac{1}{4}}} \le \frac{n^2}{(\log n)^{g(n,K) + \frac{1}{4}}}.
\end{equation}
In the first inequality, we used the fact that $\mathcal{I}$ is a sum of independent random variables. Since $g(n,K)$ lies between $b_2+o(1)$ and $b_1+o(1)$, the final inequality must be true for a  large enough $n$.

Finally, notice that, by the Cauchy-Schwarz inequality, 
\begin{equation}\label{var4d:lower}
\mathbb{E}[(\overline{\mathcal{E}} + \mathcal{I})^2] \ge( \mathbb{E}[\overline{\mathcal{E}}^2]^{1/2} - \mathbb{E}[\mathcal{I}^2]^{1/2})^2 \ge \frac{n^2}{(\log n)^{g(n,K) + o(1)}}.
\end{equation}
\end{proof}
 
\begin{rem}
  By \eqref{eq:estcalE}, \eqref{eq:estcalI} and \eqref{var4d:lower}, we have $\epsilon>0$,
\begin{align*}
    \limsup_{n\to \infty} \frac{\mathrm{Var}(X_n)}{n^2 /(\log n)^{b_2+\epsilon}} > 0, \quad 
    \limsup_{n\to \infty} \frac{\mathrm{Var}(X_n)}{n^2/ (\log n)^{b_1-\epsilon}} < \infty
\end{align*}
noting $( \mathbb{E}[\overline{\mathcal{E}}^2]^{1/2} - \mathbb{E}[\mathcal{I}^2]^{1/2})^2\le \mathbb{E}[(\overline{\mathcal{E}} + \mathcal{I})^2] \le( \mathbb{E}[\overline{\mathcal{E}}^2]^{1/2} + \mathbb{E}[\mathcal{I}^2]^{1/2})^2$. 
\end{rem}

We now present the heuristic argument we will use to analyze the behavior of $\frac{X_n - \mathbb{E}[X_n]}{\sqrt{\mathrm{Var}(X_n)}}$.

\begin{itemize}
\item  We start by taking the high probability event $\Omega$ to be the event that all $E_n^{(k,l)} - \mathbb{E}[E_n^{(k,l)}] \le \mathbb{E}[E_n^{(k,l)}] $.

\item If there exists an infinite sequence of $m$ such that $f(n2^{-K})$ is greater than $g(n,K) + \frac{1}{4}$, then  $f(n) \le g(n,K) - o(1)$. For such $n$, we will be able to show that $\frac{X_n - \mathbb{E}[X_n]}{\sqrt{\mathrm{Var}(X_n)}}$ will go to $0$.

\item In the case that the above does not happen, it must be the case that eventually all $f(n2^{-K})  \le g(n,K) + \frac{1}{4} < g(n,K) + \frac{1}{2} $. (The latter condition will ensure that the contribution to the variance of the sum of the graph distance terms will be greater than the contribution of the cross terms.) Then,  if we find an infinite sequence of $n$ such that $f(n)- f(n2^{-K}) \le \frac{1}{\log \log n}$ infinitely often, then we will be able to show for these $n$ that $\frac{X_n - \mathbb{E}[X_n]}{\sqrt{\mathrm{Var}(X_n)}}$ will go to $0$.

\item Finally, we will show that there must be an infinite sequence of $n$ such that $f(n)- f(n2^{-K}) \le \frac{1}{\log \log n}$ infinitely often by contradiction.

\end{itemize}

We can now prove our main result
\begin{thm}
Let $X_n$ be a series of random variables that satisfy the assumptions of \ref{asmp:weaksmpheavy}. 
We have that there exists a subsequence such that as $n\to \infty$, 
\begin{equation*}
\frac{X_n - \mathbb{E}[X_n]}{\sqrt{\mathrm{Var}(X_n)}} \stackrel{d}{\to}0.
\end{equation*}
\end{thm}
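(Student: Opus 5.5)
We follow the heuristic outline stated just before the theorem, with $K$ a fixed large integer. We use the dyadic decomposition from the proof of Lemma \ref{lem:freltog},
\begin{equation*}
X_n-\mathbb{E}[X_n]=\mathcal{I}+\sum_{k=1}^K\sum_{l=1}^{2^k}\big(E_n^{(k,l)}-\mathbb{E}[E_n^{(k,l)}]\big),
\end{equation*}
and control each piece on a high-probability event. Let $\Omega_A:=\bigcap_{k\le K,\,l\le 2^k}(\Omega^{(k,l)})^c$, where $\Omega^{(k,l)}$ is the event of part (4) of Assumption \ref{asmp:weaksmpheavy} attached to $E_n^{(k,l)}$. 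Since $K$ is fixed, a union bound gives $\mathbb{P}(\Omega_A^c)\le 2^{K+1}O(\log\log n/\log n)\to0$. On $\Omega_A$ each centered cross term lies in $[-\mathbb{E}E_n^{(k,l)},\mathbb{E}E_n^{(k,l)}]$. By \eqref{eq:boundenn} and part (3), $\mathbb{E}E_n^{(k,l)}\le n/(\log n)^{g(n,K)/2+1/2-o(1)}$ after using the definition of $g(n,K)$. So on $\Omega_A$ the entire cross-term sum is at most $4^K n(\log n)^{-g(n,K)/2-1/2+o(1)}$.

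\textbf{Case A.} Suppose there are infinitely many $n$ with $f(n2^{-K})\ge g(n,K)+1/4$. Along these $n$, Lemma \ref{lem:freltog} (together with \eqref{var4d:lower}) gives $\mathrm{Var}(X_n)=n^2(\log n)^{-g(n,K)+o(1)}$, so $\sqrt{\mathrm{Var}(X_n)}=n(\log n)^{-g(n,K)/2+o(1)}$. The cross-term sum is therefore $o(\sqrt{\mathrm{Var}(X_n)})$ on $\Omega_A$. By \eqref{eq:estcalI} and Chebyshev, $\mathcal{I}$ is at most $n(\log n)^{-g(n,K)/2-1/8+o(1)}$ with probability $1-o(1)$. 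Both pieces are negligible, so the ratio tends to $0$ in probability along this subsequence.

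\textbf{Case B.} Otherwise, for all large $n$ we have $f(n2^{-K})<g(n,K)+1/4$. We first show that $f(n)-f(n2^{-K})\le 1/\log\log n$ for infinitely many $n$. Suppose not. Choose $m_j=m_02^{jK}$; then $f(m_j)\ge f(m_0)+\sum_j 1/\log\log(m_02^{jK})$. Since $\log\log(m_02^{jK})\approx\log j$, this sum diverges, so $f(m_j)\to\infty$. This contradicts $f\le b_1+o(1)$, which follows from the lower bound in the Remark after Lemma \ref{lem:freltog}: there $\mathrm{Var}(X_n)\gtrsim\mathbb{E}[\overline{\mathcal E}^2]$ in this regime, since $\mathcal I$ is now the comparable term. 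The lower bound needs a short recheck here, because the gap $1/4<1/2$ keeps the negative covariances \eqref{eq:negcont} dominated.

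Along such $n$, we have $\mathrm{Var}(\mathcal{I})=2^K(n2^{-K})^2(\log(n2^{-K}))^{-f(n2^{-K})}$, which equals $2^{-K}\mathrm{Var}(X_n)(1+o(1))$ up to the factor $(\log n)^{f(n)-f(n2^{-K})}=e^{O(1)}$. Chebyshev then gives $|\mathcal{I}|\le \eta\sqrt{\mathrm{Var}(X_n)}$ with probability $\ge 1-C2^{-K}\eta^{-2}$.

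The cross terms on $\Omega_A$ are again of size $n(\log n)^{-g(n,K)/2-1/2+o(1)}$. They are negligible provided $f(n)\le g(n,K)+1/2$, and this holds because $f(n)\le f(n2^{-K})+1/\log\log n<g(n,K)+1/4+o(1)$.

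It remains to let $K$ grow. Take $K=K_j\to\infty$ slowly and diagonalize over $j$ to extract a single subsequence along which the probability bound $1-C2^{-K}\eta^{-2}$ tends to $1$ for every fixed $\eta$. This yields convergence to $0$ in distribution.

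\textbf{Main obstacle.} The delicate part is Case B: making the comparison $\mathrm{Var}(\mathcal I)\approx 2^{-K}\mathrm{Var}(X_n)$ rigorous. The $o(1)$ exponents, hidden constants, and the union bound must all remain uniform while $K$ grows along the diagonal subsequence. We would first fix $K$, prove $\limsup$ bounds of order $2^{-K}$ along a $K$-dependent subsequence, and then diagonalize.
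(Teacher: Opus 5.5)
Your proposal follows the paper's proof essentially step for step. It uses the same $K$-level dyadic decomposition and the same three cases: $f(n2^{-K})\ge g(n,K)+\tfrac14$ infinitely often, handled by Lemma \ref{lem:freltog} and \eqref{eq:estcalI}; increments $f(n)-f(n2^{-K})$ eventually larger than $1/\log\log n$, ruled out by divergence of $\sum_j 1/\log\log(m_02^{jK})$; and the remaining case, handled by Chebyshev with the $2^{-K}$ factor. The argument then finishes with diagonalization in $K$, as in the paper. Bounding the cross terms deterministically on the good event from part (4) of Assumption \ref{asmp:weaksmpheavy}, instead of through the truncated second moment \eqref{eq:varEOm}, is an inessential variation.

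The one step to repair is the contradiction in Case B, where you derive $f\le b_1+o(1)$ from the Remark after Lemma \ref{lem:freltog}. That does not work as written, for two reasons.
\begin{itemize}
\item The Remark only gives $\limsup$ statements, i.e.\ information along some subsequence, not along your particular sequence $m_j=m_02^{jK}$.
\item The lower bound $\mathrm{Var}(X_n)\ge(\|\overline{\mathcal E}\|_2-\|\mathcal I\|_2)^2$ degenerates exactly when $\|\mathcal I\|_2$ and $\|\overline{\mathcal E}\|_2$ are comparable, since cancellation between $\mathcal I$ and $\overline{\mathcal E}$ is not excluded. Case B allows this, e.g.\ $f(n2^{-K})\approx g(n,K)$. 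So ``$\mathcal I$ is now the comparable term'' is a reason the bound can fail, not a reason it holds.
\end{itemize}

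No variance lower bound is needed. Apply the Case B hypothesis at $n=m2^K$: it gives $f(m)<g(m2^K,K)+\tfrac14\le g(m)+\tfrac14\le b_1+\tfrac14+o(1)$ for all large $m$, by part (3). This is exactly how the paper closes its Case 2a.

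The same boundedness of $f(n2^{-K})$ is what makes $(\log n/\log(n2^{-K}))^{f(n2^{-K})}=1+o(1)$ in your comparison of $\mathrm{Var}(\mathcal I)$ with $2^{-K}\mathrm{Var}(X_n)$. Note also that there the factor $(\log n)^{f(n)-f(n2^{-K})}$ is only bounded above by $e$, not $e^{O(1)}$ in both directions, since it may be small. The upper bound is all Chebyshev requires.
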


\begin{proof}

\textit{Initial Estimates :}
Recall that the function $f(n)$ is defined as $\mathrm{Var}(X_n) = \frac{n^2}{(\log n)^{f(n)}}$. Now, we start by performing the dyadic decomposition to $K$ levels.
Thus, we have,

\begin{equation*}
X_n = \sum_{l=1}^{2^K} \tilde{X}_{(K,l)} - \sum_{k=1}^K \sum_{l=1}^{2^k} E_n^{(k,l)}.
\end{equation*}
Here, $\tilde{X}_{(K,l)}=X[(l-1)n2^{-K}, ln2^{-K}]$ is a collection of independent copies with the same distribution as $X_{n 2^{-K}}$, and for each given $K$, $E_n^{(k,l)}$ is a collection of independent random variables with the same distribution as $E_{n2^{-k}}$.

We now define the event,
\begin{equation*}
\begin{aligned}
\Omega_{(k,l)}:= \left\{ |E_n^{(k,l)} - \mathbb{E}[E_n^{(k,l)}]| \le \mathbb{E}[E_n^{(k,l)}]\right\}, \quad 
\Omega:= \bigcap_{k,l} \Omega_{(k,l)}.
\end{aligned}
\end{equation*}
By the third part of Assumption \ref{asmp:weaksmpheavy}, we see that $\Omega$ occurs with probability $1- o(1)$.
We define $\mathcal{E}= -\sum_{k=1}^K \sum_{l=1}^{2^k} E_n^{(k,l)} $ and observe that,
\begin{equation} \label{eq:varEOm}
\begin{aligned}
& \mathbb{E}[(\mathcal{E} - \mathbb{E}[\mathcal{E}])^2 \mathbbm{1}[\Omega]] \\
\le & \sum_{(k_1,l_1,k_2,l_2)} \mathbb{E}[(E_n^{(k_1,l_1)} - \mathbb{E}[E_n^{(k_1,j_1)}])^{2} \mathbbm{1}[\Omega_{(k_1,l_1)}]]^{1/2} \mathbb{E}[(E_n^{(k_2,j_2)} - \mathbb{E}[E_n^{(k_2,j_2)}])^2 \mathbbm{1}[\Omega_{(k_2,j_2)}]]^{1/2}
\\ \le & 2^{2K+2}  4\max_{(k,l)}( \mathbb{E}[E_n^{(k,l)}]^2) \le\frac{n^2}{(\log n)^{g(n,K) +1 -o(1)}}.
\end{aligned}
\end{equation}
In the last equation, we used equation \eqref{eq:boundenn} to bound $\mathbb{E}[E_n^{(k,l)}]^2$ relative to $\mathbb{E}[(E_n^{(k,l)})^2]$, and we placed irrelevant constant factors into an $o(1)$ exponent of $\log n$ in the denominator. 
In addition, we have that,
\begin{equation} \label{eq:MtOmeg}
\begin{aligned}
&\mathbb{E}\left[\left(\sum_{l=1}^{2^K}  (\tilde{X}_{(K,l)} - \mathbb{E}[\tilde{X}_{(K,l)}])\right)^2 \mathbbm{1}[\Omega]\right] \\ \le & \mathbb{E}\left[\left(\sum_{l=1}^{2^K} (\tilde{X}_{(K,l)} - \mathbb{E}[\tilde{X}_{(K,l)}])\right)^2 \right] 
= \sum_{l=1}^{2^K} \mathbb{E}[(\tilde{X}_{(K,l)} - \mathbb{E}[\tilde{X}_{(K,l)}])^2]\\
 =& 2^{K}\frac{(n 2^{-K})^2}{(\log n2^{-K})^{f(n2^{-K})}} = \frac{n^2}{(\log n)^{f(n)}} 2^{-K}  \frac{(\log n)^{f(n)}}{(\log n2^{-K})^{f(n2^{-K}})}.
\end{aligned}
\end{equation}

\textit{Analysis of properties of $f$}

Now, we seek to analyze various cases. 

\textbf{Case 1: There exists an infinite sequence of $n$ such that $f(n2^{-K}) \ge g(n,K) + \frac{1}{4}$}

In this case,  we observe that Lemma \ref{lem:freltog} would imply that $f(n) \le g(n,K) + o(1)$ along this infinite sequence of $n$. 
Furthermore, we would have that,
\begin{equation*}
\frac{n^2}{(\log n)^{f(n)}} 2^{-K}  \frac{(\log n)^{f(n)}}{(\log n2^{-K})^{f(n2^{-K}})}  \le C \frac{n^2}{(\log n)^{f(n)}} \frac{1}{ (\log n)^{1/4 + o(1)}} \ll \frac{n^2}{(\log n)^{f(n)}},
\end{equation*}
when $n$ is sufficiently large.
Indeed, we used that for $n$ large and fixed $K$, we can bound $\frac{1}{(\log n2^{-K})^{f(n2^{-K})}} \le \frac{1}{ (\log n)^{g(n,K) + \frac{1}{4} +o(1)}}$, where the $o(1)$ term in the exponent  comes from the change $\log n2^{-K} \to \log n$ in the denominator. Recall, we assume $f(n2^{-K}) \ge g(n,K) + \frac{1}{4}.$ In the numerator, we bounded $(\log n)^{f(n)}$ by $(\log n)^{g(n,K)+o(1)}$.  

Combining this estimate with equation \eqref{eq:MtOmeg}, we see that
\begin{equation*}
\mathbb{E}\left[\left(\sum_{l=1}^{2^K} (\tilde{X}_{(K,l)} - \mathbb{E}[\tilde{X}_{(K,l)}]) \right)^2 \mathbbm{1}[\Omega]\right]  \ll \frac{n^2}{(\log n)^{f(n)}}.
\end{equation*}
Thus, there will be some set $\Omega'$, which will still hold with probability $1-o(1)$, such that,
\begin{equation*}
\left(\sum_{l=1}^{2^K} (\tilde{X}_{(K,l)} - \mathbb{E}[\tilde{X}_{(K,l)}]) \right) < \epsilon \sqrt{\mathrm{Var}(X_m)}.
\end{equation*}



Furthermore, recall from equation \eqref{eq:varEOm} that the sums of the cross terms $\mathcal{E}$ satisfy,
\begin{equation*}
\begin{aligned}
&\mathbb{E}[(\mathcal{E} - \mathbb{E}[\mathcal{E}])^2 \mathbbm{1}[\Omega]]  \le \frac{n^2}{(\log n)^{g(n,K) +1 -o(1)}} \ll \frac{n^2}{(\log n)^{f(n)}}.
\end{aligned}
\end{equation*}

Again, there will be a further sub-event $\Omega'' \subset \Omega'$, that will still hold with probability $1- o(1)$, on which we have that $|\mathcal{E}- \mathbb{E}[\mathcal{E}]| < \epsilon \sqrt{\mathrm{Var}(X_n)}$.  Thus, on $\Omega''$, we have $|X_n - \mathbb{E}[X_n]| < 2 \epsilon \sqrt{\mathrm{Var}(X_n)}$. 

By our assumption, we can do this for an infinite sequence of $n$.

\textbf{Case 2: Eventually, all $f(n2^{-K}) \le g(n,K) + \frac{1}{4}$}

This can be divided into two more subcases.

\textbf{Case 2a: There does not exist an infinite sequence of $n$ such that $f(n) - f(n2^{-K}) \le \frac{1}{\log \log n}$}

In this case, we must be able to find a sequence $f(a 2^{mK})$ such that
$$
f(a 2^{mK}) - f(a 2^{(m-1)K}) \ge \frac{1}{\log \log (a 2^{mK})}.
$$
Now, since $\sum_{m=1}^{\infty}\frac{1}{\log \log (a 2^{mK})}$ diverges,
this must mean that $f(a2^{mK})$ is unbounded from above. This contradicts our assumption that $f(n) \le g(n,K) + \frac{1}{4}$ for $n$ large enough. Thus, we do not have to worry about this case.

The only remaining case is the following.

\textbf{Case 2b: There exists an infinite sequence of $n$ such that $f(n) - f(n2^{-K}) \le \frac{1}{\log \log n}$}

In what follows, we will perform computations on values $n$ that satisfy $f(n) - f(n2^{-K}) \le \frac{1}{\log \log n}$.
In this case, we notice that,
\begin{equation*}
\begin{aligned}
&\frac{(\log n)^{f(n)}}{(\log n2^{-K})^{f(n2^{-K}})} \le (\log n)^{f(n) - f(n2^{-K})} \left(\frac{ \log n}{ \log n - K \log 2} \right)^{f(n2^{-K})}\\
\le &(\log n)^{\frac{1}{\log \log n}} \left(\frac{ \log n}{ \log n - K \log 2} \right)^{b_2 + \frac{1}{4}}\le 2e,
\end{aligned}
\end{equation*}
where the final inequality will hold for large enough $n$. Notice that we also used $f(n2^{-K}) \le g(n,K) + \frac{1}{4} \le b_2 + \frac{1}{4} + o(1)$, by our assumption that eventually all $f(n) \le g(n,K) + \frac{1}{4}$ and our upper bound on $g(n,K)$. 

Returning to equation \eqref{eq:MtOmeg}, we can bound the expectation  of the square of $\sum_{l=1}^{2^K} (\tilde{X}_{(K,l)} - \mathbb{E}[\tilde{X}_{(K,l)}])$ by $2e 2^{-K} \mathrm{Var}(X_n)$ for large enough $n$. Thus, by Markov's inequality, we have that,
\begin{equation*}
|\sum_{l=1}^{2^K} (\tilde{X}_{(K,l)} - \mathbb{E}[\tilde{X}_{(K,l)}])| \ge \epsilon \sqrt{\mathrm{Var}(X_n)},
\end{equation*}
with probability at most $2e 2^{-K} \epsilon^{-2}$.

Furthermore, recall again from equation \eqref{eq:varEOm} that we have,
\begin{equation*}
\mathbb{E}[(\mathcal{E} - \mathbb{E}[\mathcal{E}])^2 \mathbbm{1}[\Omega]] \le \frac{n^2}{(\log n)^{g(n,K) +1 -o(1)}} \ll \frac{n^2}{(\log n)^{f(n)}}.
\end{equation*}

Again, here we used that first we are considering an $n$ such that first we have $f(n) \le f(n2^{-K}) + \frac{1}{\log \log n}$. Secondly, under the general assumption of case 2, we have $f(n2^{-K}) + \frac{1}{\log \log n} \le g(n,K) + \frac{1}{4} + \frac{1}{\log \log n} \le g(n,K)+1 + o(1)$ for $n$ large enough. Thus, we have $f(n) \le g(n,K) + 1 + o(1)$ for some infinite sequence of $n$. 
Thus (again by Markov's inequality on the event $\Omega$), with probability $o(1)$, we have  $|\mathcal{E} - \mathbb{E}[\mathcal{E}]| \ge \epsilon \sqrt{\mathrm{Var}(X_n)}$.

In total, we see that with probability at most $o(1) +2e 2^{-K}\epsilon^{-2}$,
\begin{equation*}
|X_n- \mathbb{E}[X_n]| \ge 2 \epsilon \sqrt{\mathrm{Var}(X_n)}.
\end{equation*}

\textit{Finishing the Argument}

Notice that in all cases, for a fixed $K$, we can find a subsequence $X_{n_1},X_{n_2},\ldots$ such that with probability at least $1- 4e 2^{K(1-q)} \epsilon^{-2}$, we must have that
$$
|X_{n_i} - \mathbb{E}[X_{n_i}]| \le \epsilon \sqrt{\mathrm{Var}(X_{n_i})}.
$$
As $K \to \infty$, we see that $1- 4e 2^{-K} \epsilon^{-2} \to 0 $. By a diagonalization argument, we can find a subsequence $\tilde{n}_i$ such that $|X_{\tilde{n}_i} - \mathbb{E}[X_{\tilde{n}_i}]| \le \epsilon \sqrt{\mathrm{Var}(X_{\tilde{n}_i})}$ with probability going to 1 for each $\epsilon$. This subsequence converges in distribution to $0$.
\end{proof}

\end{document}